\renewcommand{\todo}[2][]{\tikzexternaldisable\@todo[#1]{#2}\tikzexternalenable}
\newtheorem{definition}{Definition}[section]
\newtheorem{remark}{Remark}[section]
\newtheorem{theorem}{Theorem}[section]
\newtheorem{lemma}[theorem]{Lemma}
\newtheorem{proposition}[theorem]{Proposition}
\newtheorem{corollary}[theorem]{Corollary}
\newtheorem{assumption}{Assumption}
\newtheorem*{assumption*}{Assumption}
\newcommand{\supp}{\mathrm{supp}}
\newcommand{\csupp}{\overline{\mathrm{supp}}}
\definecolor{forestgreen}{cmyk}{0.76,0,0.76,0.45}
\definecolor{lightgray}{cmyk}{0.2,0.2,0.2,0.2}
\definecolor{burntorange}{rgb}{0.74902,0.341176,0}
\def\Og{\color{burntorange}}
\definecolor{myBlue}{rgb} {0,    0.4470,    0.7410}
\let\hat\widehat
\newcommand{\RR}{\mathbb{R}}
\newcommand{\mbf}[1]{\mathbf{#1}}
\newcommand{\vspan}[1]{\left\langle #1 \right\rangle}
\newcommand{\Bezier}{B\'ezier}
\NewDocumentCommand{\inpr}{ O{\domain} m m }{\left(#2, #3\right)_{#1}}
\newcommand{\ndim}{n}
\newcommand{\nref}{L}
\newcommand{\ndofSym}{m}
\newcommand{\pdegSym}{p}
\newcommand{\ndof}[2]{\ndofSym_{(#1,#2)}}
\newcommand{\pdeg}[2]{\pdegSym_{(#1,#2)}}
\newcommand{\xBoxDim}{4cm}
\newcommand{\yBoxDim}{4cm}
\newcommand*{\thistextwidth}{0.24}
\newcommand{\kntSym}{\xi}
\newcommand{\knt}[1]{\kntSym_{#1}}
\newcommand{\kntsSym}{\Xi}
\newcommand{\knts}[1]{\kntsSym_{#1}}
\newcommand{\lknts}[3]{\kntsSym[#1]^{#2}_{#3}}
\newcommand{\xknts}[3]{\hat{\kntsSym}[#1]^{#2}_{#3}}
\newcommand{\tpbSym}{\mathcal{B}}
\newcommand{\tpbsSym}{\mathcal{X}}
\newcommand{\tpb}[2]{\tpbSym_{#1}^{#2}}
\newcommand{\tpbs}[2]{\tpbsSym_{#1}^{#2}}
\newcommand{\tpcmplx}[1]{\tpbsSym_{#1}}
\newcommand{\xtpbSym}{\mathbb{X}}
\newcommand{\xtpb}[2]{\xtpbSym_{#1}^{#2}}
\newcommand{\xgb}{\chi}
\newcommand{\itpb}[2]{\mathbb{I}_{#1}^{#2}}
\newcommand{\zerob}{\alpha}
\newcommand{\zerof}{a}
\newcommand{\threeb}{\zeta}
\newcommand{\threef}{z}
\newcommand{\gb}{\phi}
\newcommand{\gf}{f}
\newcommand{\unibmeshSym}{M}
\newcommand{\grSym}{\mu}
\newcommand{\greville}[2]{\grSym(\lknts{#1}{0}{#2})}
\newcommand{\grevilleE}[3]{\grSym(\lknts{#1}{#3}{#2})}
\newcommand{\unigmeshSym}{G}
\newcommand{\gmap}[2]{g_{#1}^{#2}}
\newcommand{\gmesh}[1]{\mathcal{\unigmeshSym}_{#1}}
\newcommand{\hbSym}{\mathcal{H}}
\newcommand{\hbsSym}{\mathcal{W}}
\newcommand{\hb}[2]{\hbSym_{#1}^{#2}}
\newcommand{\hbs}[2]{\hbsSym_{#1}^{#2}}
\newcommand{\xder}[2]{d_{#1}^{#2}}
\newcommand{\splinelevelinclusion}[1]{\iota_{#1}}
\newcommand{\cocomplex}[2]{\left(#1,#2\right)}
\newcommand{\image}{\mathrm{Im}}
\newcommand{\kernel}{\mathrm{Ker}}
\newcommand{\derivSym}{D}
\newcommand{\deriv}[1]{\derivSym_{#1}}
\newcommand{\textknotvector}{knot vector}
\newcommand{\textextendeduniknotdomain}{extended univariate knot domain}
\newcommand{\textextendedknotdomain}{extended knot domain}
\newcommand{\textextendedknotdomains}{extended knot domains}
  \newcommand{\miniscule}{\@setfontsize\miniscule{3}{4}}
  \newcommand{\miniscule}{\@setfontsize\miniscule{4}{5}}
  \newcommand{\miniscule}{\@setfontsize\miniscule{4}{5}}
    \def\head#1{\expandafter\@head#1,\@eol}
    \def\@head#1,#2\@eol{#1}
    \def\tail#1{\expandafter\@tail#1,\@eol}
    \def\@tail#1,#2\@eol{\ifx\@eol#2\@eol\relax\else\@@tail#2\@eol\fi}
    \def\@@tail#1,\@eol{#1}
    \def\last#1{\expandafter\@last#1,\@eol}
    \def\@last#1,#2\@eol{\ifx\@eol#2\@eol#1\else\@last#2\@eol\fi}
\definecolor{myCPlot1}{rgb} {0,    0.4470,    0.7410}
\definecolor{myCPlot2}{rgb} {0.8500,   0.3250,    0.0980}
\definecolor{myCPlot3}{rgb} {0.9290,    0.6940,    0.1250}
\definecolor{myCPlot4}{rgb} {0.4940,    0.1840,    0.5560}
\definecolor{myCPlot5}{rgb} {0.4660,    0.6740,    0.1880}
\definecolor{myCPlot6}{rgb} {0.3010,    0.7450,    0.9330}
\definecolor{myCPlot7}{rgb} {0.6350,    0.0780,    0.1840}
\def\mygrid[#1,#2,#3,#4,#5,#6,#7]{ 
	\coordinate (bl) at (#1);
	\coordinate (tr) at (#2);
	\coordinate (tl) at (#1 |- #2);
	\coordinate (br) at (#1 -| #2);
	\fill[#7] (bl) -- (br) -- (tr) -- (tl) -- cycle;
	\foreach \i in {0,...,#3}{
		\pgfmathsetmacro{\ii}{\i/#3}
		\coordinate (a) at ($(bl)!\ii!(br)$);
		\coordinate (b) at ($(tl)!\ii!(tr)$);
		\draw[line width=#5] (a) -- (b);
	}
	\foreach \j in {0,...,#4}{
		\pgfmathsetmacro{\jj}{\j/#4}
		\coordinate (a) at ($(bl)!\jj!(tl)$);
		\coordinate (b) at ($(br)!\jj!(tr)$);
		\draw[line width=#5] (a) -- (b);
	}
	\foreach \i in {0,...,#3}{
		\pgfmathsetmacro{\ii}{\i/#3}
		\coordinate (a) at ($(bl)!\ii!(br)$);
		\coordinate (b) at ($(tl)!\ii!(tr)$);
		\foreach \j in {0,...,#4}{
			\pgfmathsetmacro{\jj}{\j/#4}
			\coordinate (#6-\i-\j) at ($(a)!\jj!(b)$);
		}
	}
}
\newcommand{\mymanualgrid}[5]{ 

	\pgfmathsetmacro{\imin}{\head{#1}}
	
	\pgfmathsetmacro{\imax}{\last{#1}}

	\pgfmathsetmacro{\jmin}{\head{#2}}
	
	\pgfmathsetmacro{\jmax}{\last{#2}}

	\coordinate (bl) at (\imin,\jmin);
	\coordinate (tr) at (\imax,\jmax);
	\coordinate (tl) at (\imin,\jmax);
	\coordinate (br) at (\imax,\jmin);

	\fill[#5] (bl) -- (br) -- (tr) -- (tl) -- cycle;

	\foreach \i in {#1}{
		\draw[line width=#3,#4] (\i,\jmin) -- (\i,\jmax);
	}
	\foreach \j in {#2}{
		\draw[line width=#3,#4] (\imin,\j) -- (\imax,\j);
	}

}
\newcommand{\plotGrevillePoints}[7]{ 
	\foreach \i in {#3}{
		\foreach \j in {#4}{
                    	\pgfmathparse{{#1}[\i]}
                    	\pgfmathsetmacro{\centralx}{\pgfmathresult}
                    	
                    	\pgfmathparse{\i-1}
                    	\pgfmathparse{{#1}[\pgfmathresult]}
                    	\pgfmathsetmacro{\lowerboundx}{\pgfmathresult}
                    	
                    	\pgfmathparse{\i+1}
                    	\pgfmathparse{{#1}[\pgfmathresult]}
                    	\pgfmathsetmacro{\upperboundx}{\pgfmathresult}
                    
                    	\pgfmathparse{(\centralx+\lowerboundx)/2}
                    	\pgfmathsetmacro{\leftx}{\pgfmathresult}
                    
                    	\pgfmathparse{(\centralx+\upperboundx)/2}
                    	\pgfmathsetmacro{\rightx}{\pgfmathresult}
                    	
                    	\pgfmathparse{{#2}[\j]}
                    	\pgfmathsetmacro{\centraly}{\pgfmathresult}
                    	
                    	\pgfmathparse{\j-1}
                    	\pgfmathparse{{#2}[\pgfmathresult]}
                    	\pgfmathsetmacro{\lowerboundy}{\pgfmathresult}
                    
                    	\pgfmathparse{\j+1}
                    	\pgfmathparse{{#2}[\pgfmathresult]}
                    	\pgfmathsetmacro{\upperboundy}{\pgfmathresult}
                    
                    	\pgfmathparse{(\centraly+\lowerboundy)/2}
                    	\pgfmathsetmacro{\bottomy}{\pgfmathresult}
                    
                    	\pgfmathparse{(\centraly+\upperboundy)/2}
                    	\pgfmathsetmacro{\topy}{\pgfmathresult}

                    	\node at (\centralx,\centraly) [circle,fill=#5,minimum size=#6]{};
                    	\node at (\leftx,\bottomy) [rectangle,fill=#5,minimum size=#6]{};
                    	\node at (\leftx,\topy) [rectangle,fill=#5,minimum size=#6]{};
                    	\node at (\rightx,\bottomy) [rectangle,fill=#5,minimum size=#6]{};
                    	\node at (\rightx,\topy) [rectangle,fill=#5,minimum size=#6]{};
                    	\node at (\centralx,\topy) [regular polygon, regular polygon sides=3,fill=#5,minimum size=#7]{};
                    	\node at (\centralx,\bottomy) [regular polygon, regular polygon sides=3,fill=#5,minimum size=#7]{};
                    	\node at (\leftx,\centraly) [regular polygon, regular polygon sides=3,fill=#5,minimum size=#7,rotate=-90]{};
                    	\node at (\rightx,\centraly) [regular polygon, regular polygon sides=3,fill=#5,minimum size=#7,rotate=-90]{};
		}
	}
}
\newtheorem*{theorem*}{Theorem}
\begin{document}
\title{Locally-verifiable sufficient conditions for exactness of the hierarchical B-spline discrete de Rham complex in $\RR^\ndim$}
\author{
	Kendrick Shepherd\thanks{ Civil \& Construction Engineering, Brigham Young University, Provo UT 84602, USA \newline \textit{E-mail address: } \texttt{kendrick\_shepherd@byu.edu}}
  \and
	Deepesh Toshniwal\thanks{Delft Institute of Applied Mathematics, Delft University of Technology, Delft, The Netherlands \newline \textit{E-mail address: } \texttt{d.toshniwal@tudelft.nl}}
}
\maketitle

\begin{abstract}
	Given a domain $\Omega \subset \RR^n$, the de Rham complex of differential forms arises naturally in the study of problems in electromagnetism and fluid mechanics defined on $\Omega$, and its discretization helps build stable numerical methods for such problems.
	For constructing such stable methods,  one critical requirement is ensuring that the discrete subcomplex is cohomologically equivalent to the continuous complex.
	When $\Omega$ is a hypercube, we thus require that the discrete subcomplex be exact.
	Focusing on such $\Omega$, we theoretically analyze the discrete de Rham complex built from hierarchical B-spline differential forms, i.e., the discrete differential forms are smooth splines and support adaptive refinements -- these properties are key to enabling accurate and efficient numerical simulations.
	We provide locally-verifiable sufficient conditions that ensure that the discrete spline complex is exact.
	Numerical tests are presented to support the theoretical results, and the examples discussed include complexes that satisfy our prescribed conditions as well as those that violate them.
\end{abstract}

\section{Introduction}

While many partial differential equations (PDEs) may be couched as minimization problems, a large swath of them are more naturally described as saddle-point problems.
Numerical solutions of these PDEs require special care since the discrete problems is not guaranteed to be well-posed for all choices of finite dimensional spaces even if the continuous problem is.
For instance, it may be necessary to verify (for each combination of finite dimensional spaces) the Ladyzhenskaya--Babuska--Brezzi condition for problems of interest such as electromagnetism and fluid flows.
On the other hand, the language of { exterior calculus} provides an abstract framework that can be used to uniformly discuss a large class of PDEs as well as their discretizations.
This framework has yielded an elegant approach, dubbed { \emph{finite element exterior calculus} }\cite{AFW06,AFW-2}, for developing well-posed discrete formulations.
In this document, and for problems posed on $n$-dimensional domains in $\RR^n$, $n \geq 1$, we provide new theoretical results that can help build such well-posed discretizations using finite dimensional spaces of locally-refinable spline functions.

The use of spline functions for numerically solving PDEs has been popularized by the emergence of \emph{isogeometric analysis} \cite{Hughes:2005}.
A generalization of the classical { finite element method} \cite{Hughes_book}, the { isogeometric analysis} philosophy relies on the use of spline functions \cite{DeBoor,Schumaker:2007} for describing both the domain on which the problem is posed as well as the discrete solution.
One objective of this approach is to greatly simplify the application of numerical methods to geometries of engineering interest, which are themselves designed within { computer-aided design} software using spline functions \cite{Farin:2002}.
The last two decades have seen this approach applied successfully to challenging problems such as full scale wind turbine simulations \cite{bazilevs2012isogeometric} and performance assessment of cardiac implants \cite{Kamensky17}.
Moreover, recent theoretical developments \cite{Bressan:2019,Sande:2019} have also given credence to the large amounts of numerical evidence that suggested that smooth
splines demonstrate better approximation behaviour per degree of freedom than less smooth or classical $C^0$ and $C^{-1}$ finite element spaces \cite{Evans_Bazilevs_Babuska_Hughes}.

Therefore, in this document we focus on methods that extend { finite element exterior calculus} by utilizing smooth splines in lieu of the classical $C^0$ and $C^{-1}$ finite element spaces.
The existing approaches \cite{BRSV11,Buffa_Sangalli_Vazquez,EvHu12} in this line of research have predominantly focused on PDEs that arise as Laplacians of the de Rham cochain complex of ($L^2$ completions of) smooth differential forms.
The said approaches either focus on identifying spline spaces that can be used to stably discretize such PDEs \cite{BRSV11,BSV14,Evans19,Johannessen15,toshniwal2021isogeometric}, or they use those spline spaces in applications of interest such as magnetohydrodynamics \cite{perse2021geometric} where exact satisfaction of physical conservation laws in the discrete setting is desirable.

When identifying stable discretizations for a PDE arising from some cochain complex, one of the main considerations in (spline-based) { finite element exterior calculus} is the identification of discrete spaces that together form a cohomologically-equivalent subcomplex of the continuous one.\footnote{The other main considerations (e.g., commuting projections) are outside the scope of the present work.}
Evans et al. \cite{Evans19} were the first to study this for locally refinable spline functions called hierarchical B-splines \cite{Kraft}.
Their main result identifies sufficient conditions for ensuring that the hierarchical B-spline spaces on a rectangular $\Omega \subset \RR^2$ form an exact subcomplex of the de Rham complex of $L^2$ differential forms on $\Omega$.
In this manuscript, we generalize and extend this to subdomains of $\RR^n$ for any $n \geq 1$.
The main contribution of this manuscript is the identification of locally-verifiable sufficient conditions that guarantee the exactness of the complex of hierarchical B-spline differential forms defined on a box (a.k.a.~a hypercube) $\Omega \subset \RR^\ndim$.

We describe our main result here (albeit a bit imprecisely) so that a reader familiar with hierarchical B-splines \cite{Kraft} and the complex of B-spline differential forms \cite{BRSV11} can get a flavor of our main result.
\begin{mdframed}[style=MyFrame]
	\begin{theorem*}
	Consider an $n$-dimensional box $\Omega \subset \RR^\ndim$ and an associated domain hierarchy 
	\begin{equation*}
		\Omega =: \Omega_0 \supseteq \Omega_1 \supseteq \cdots \supseteq \Omega_\nref \supseteq \Omega_{\nref+1} := \emptyset\;.
	\end{equation*}
	On each $\Omega_{\ell}$, define a complex of tensor-product B-spline differential forms as in \cite{BRSV11} and, assuming nestedness, use Kraft's selection mechanism\footnotemark \cite{Kraft} to build a complex of hierarchical B-spline differential forms \cite{Evans19}.
	Moreover, assume\footnotemark~the following for any $\ell < \nref$.
	\begin{itemize}
		\item $\Omega_{\ell+1}$ is the union of supports of level $\ell$ B-spline $0$-forms.
		\item  If two level $\ell$ B-spline $0$-forms $\alpha_i$ and $\alpha_j$ are supported on $\Omega_{\ell+1}$, and if the intersections of their supports has a ``minimal size'', then there exists a ``shortest chain'' of level $\ell$ B-spline $0$-forms from $\alpha_i$ to $\alpha_j$, with each B-spline in the chain supported on $\Omega_{\ell+1}$.
	\end{itemize}	
	Then, the complex of hierarchical B-spline differential forms is exact.
	\end{theorem*}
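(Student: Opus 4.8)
The plan is to induct on the number of refinement levels $\nref$, peeling off the finest level at each step and comparing the $\nref$-level hierarchical complex with an $(\nref-1)$-level one on the \emph{same} box $\Omega$. For $\nref=0$ the hierarchical complex is precisely the tensor-product B-spline de Rham complex on the box $\Omega$, whose exactness is the classical result of \cite{BRSV11} (a box is contractible and the B-spline complex resolves the constants). For the inductive step, let $\hbsSym$ be the complex of the hierarchy $\Omega_0 \supseteq \cdots \supseteq \Omega_\nref$ and $\hbsSym'$ the complex of $\Omega_0 \supseteq \cdots \supseteq \Omega_{\nref-1} \supseteq \emptyset$; since the hypotheses assumed for $\hbsSym$ contain those needed for the $(\nref-1)$-level complex $\hbsSym'$, the inductive hypothesis gives that $\hbsSym'$ is exact. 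It then remains to show that passing from $\hbsSym'$ to $\hbsSym$, which merely refines over $\Omega_\nref$ those level-$(\nref-1)$ forms whose support lies in $\Omega_\nref$, preserves cohomology.

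For this I would introduce, degreewise, the two short exact sequences of cochain complexes
\begin{equation*}
0 \to C^\bullet \xrightarrow{\iota} \hbsSym' \xrightarrow{\pi} Q^\bullet \to 0,
\qquad
0 \to \tilde C^\bullet \xrightarrow{\tilde\iota} \hbsSym \xrightarrow{\tilde\pi} \tilde Q^\bullet \to 0,
\end{equation*}
where $C^\bullet$ (resp.\ $\tilde C^\bullet$) is the subcomplex of level-$(\nref-1)$ (resp.\ level-$\nref$) B-spline differential forms supported in $\Omega_\nref$ — a genuine subcomplex since the exterior derivative never enlarges supports — and $Q^\bullet,\tilde Q^\bullet$ are the quotient complexes. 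Condition~1 forces $\Omega_\nref$ to be a union of level-$(\nref-1)$, hence level-$\nref$, mesh cells, which does two things: the surviving basis forms in the two quotients coincide, so $Q^\bullet\cong\tilde Q^\bullet$; and the two-scale relations rewrite each level-$(\nref-1)$ form supported in $\Omega_\nref$ using only level-$\nref$ forms that are again supported in $\Omega_\nref$. The latter yields a chain map $\Phi\colon\hbsSym'\to\hbsSym$ — identity on quotient generators, refinement on $C^\bullet$ — intertwining the structure maps and restricting to the refinement morphism $C^\bullet\to\tilde C^\bullet$. Feeding the induced ladder of long exact cohomology sequences into the five lemma then reduces the theorem to a single claim: \emph{the refinement morphism $C^\bullet\to\tilde C^\bullet$ is a quasi-isomorphism}.

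Proving that claim is the heart of the matter and the point at which Condition~2 enters; I expect it to be the main obstacle, the rest being homological bookkeeping. The complexes $C^\bullet$ and $\tilde C^\bullet$ are tensor-product B-spline de Rham complexes with homogeneous essential boundary conditions on $\Omega_\nref$, at levels $\nref-1$ and $\nref$, and the target is to show they share the same cohomology (morally $H^\bullet_c(\Omega_\nref)$) — equivalently, that the coarse B-splines supported inside $\Omega_\nref$ already resolve the topology of $\Omega_\nref$, so that refining them changes nothing. Condition~1 keeps $\Omega_\nref$ aligned with both meshes, which handles the finer complex $\tilde C^\bullet$ by \cite{BRSV11}-type arguments; the delicate case is $C^\bullet$, since a region $\Omega_\nref$ too thin relative to the level-$(\nref-1)$ mesh may not carry enough coarse B-splines to detect its topology. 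Condition~2 — a \emph{shortest} chain of level-$(\nref-1)$ $0$-forms supported in $\Omega_\nref$ joining any two whose supports meet minimally — is exactly the ``discrete geodesic'' ingredient that makes the path-integration (telescoping) construction inside $\Omega_\nref$ well defined, producing a primitive for each closed form of intermediate degree and pinning down the terminal cohomologies of $C^\bullet$; one then checks, e.g.\ by a cell-by-cell count or by building cochain-level inverses from the same geodesics, that refinement induces an isomorphism on cohomology. With $C^\bullet\to\tilde C^\bullet$ a quasi-isomorphism the five lemma gives $H^\bullet(\hbsSym)\cong H^\bullet(\hbsSym')$, which is exact by the inductive hypothesis, and the induction closes.
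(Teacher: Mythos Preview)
Your reduction to the claim that the refinement inclusion $C^\bullet\hookrightarrow\tilde C^\bullet$ is a quasi-isomorphism is correct and coincides with the paper's: this is precisely what \cite[Theorem~5.5]{Evans19} (invoked in Theorem~\ref{thrm:exactness}) packages, and your $C^\bullet$, $\tilde C^\bullet$ are the paper's complexes $\tpbs{\ell,\ell+1}{}(\Omega_{\ell+1})$ and $\tpbs{\ell+1,\ell+1}{}(\Omega_{\ell+1})$ for $\ell=\nref-1$.

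The gap is in your argument for that claim. Your assertion that $\tilde C^\bullet$ is handled by ``\cite{BRSV11}-type arguments'' is unjustified: $\Omega_\nref$ is not a box, and in general neither $C^\bullet$ nor $\tilde C^\bullet$ has trivial cohomology --- they must carry the topology of $\Omega_\nref$, which may have several components, handles, or cavities --- so one cannot compute the two cohomologies separately and then compare. More seriously, ``path-integration (telescoping)'' is at best a heuristic for $1$-forms; you give no mechanism for producing primitives of closed $j$-forms for $2\le j\le n-1$, nor any account of how a condition on chains of $0$-forms would enter such a construction at higher form-degree. The paper's argument for this step is the main technical content and is structurally different from what you sketch: $\Omega_{\ell+1}$ is covered by overlapping subdomains $\Omega_{\ell+1}^{\mbf{j}}(\mbf{i})$ that are provably balls (Proposition~\ref{prop:is_ball}), on each of which both the coarse and the fine spline complexes are exact (Corollary~\ref{corol:3d_local_isomorphism}); these local quasi-isomorphisms are then glued by iterated Mayer--Vietoris and the Five Lemma, via a nested induction over the $n$ parametric directions (Propositions~\ref{prop:1d_result} and~\ref{prop:nd_result}). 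Assumption~\ref{assume:shortest_path} enters only at the gluing stage, in Lemmas~\ref{lem:intersections} and~\ref{lem:unions}: the shortest-chain hypothesis is exactly what forces the B-spline sets supported on $A\cap B$ and on $A\cup B$ to match those on the auxiliary slices required for the induction to close.
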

\end{mdframed}
\addtocounter{footnote}{-1}
\footnotetext{Here, Kraft's selection mechanism is the selection mechanism of Definition \ref{def:hbs}.}
\addtocounter{footnote}{1}
\footnotetext{See Definitions \ref{def:n-1-intersection}, \ref{def:chain}, \ref{def:shortest_path} and Assumption \ref{assume:shortest_path} for the precise statement. Proof of this result is given in Theorem \ref{thrm:exactness}.}

In the above, by a ``chain'' of B-splines from $\alpha_i$ to $\alpha_j$, we mean a sequence of B-splines -- with the first and the last B-splines in the sequence being $\alpha_i$ and $\alpha_j$ -- such that each B-spline in the sequence is obtained from the preceding one by a unit translation of its support in index space \cite{Hughes:2005}.
By { a} ``shortest chain'', we mean { a} sequence with the smallest number of B-splines.
Note that this assumption is locally-verifiable, i.e., it can be checked in a local manner for any given hierarchical B-spline mesh.

We start this manuscript by recalling the basics of the de Rham complex of differential forms (Section \ref{sec:deRham}) and the construction of the hierarchical B-spline complex (Section \ref{sec:notation}).
Section \ref{sec:main-result} contains our main result, which is derived by using the notion of Mayer--Vietoris sequences \cite{Hatcher}.
We discuss implementational aspects and present numerical tests that complement our theoretical results in Section \ref{sec:numerics} before concluding in Section \ref{sec:conclusions}.	
	
\section{The de Rham Complex of Differential Forms}\label{sec:deRham}
Before discussing exactness of the discrete (spline) complex of interest for numerical analysis, we first briefly introduce the continuous de Rham complex that we aim to approximate using these splines.
For the sake of clarity and brevity, we do not attempt to fully characterize all of the mathematical objects with which we work.
Instead, we emphasize only those properties that will be necessary for understanding the following mathematical presentation.
A reader more interested in a full picture may want to read  \cite{Arnold:2018} or \cite{Spivak:1995} for a more thorough introduction.

\subsection{Hilbert cochain complexes}\label{sec:prelims-complexes}
Let $V$ denote a sequence of Hilbert spaces $\left\{ V^{j} \right\}_{j = 0}^n$, and let $d$ denote a sequence $\left\{ d^{j} \right\}_{j = 0}^{n-1}$ of connecting closed, linear maps, $d^{j} : V^{j} \rightarrow V^{j+1}$.
By convention, $d^{j}$ is the zero map if $j < 0$ or $j \geq n$, and $V^{j} := 0$ if $j < 0$ or $j > n$.
If $d^{j} \circ d^{j-1} = 0$, $j = 1, \dots, n$, then $V$ and $d$ together form a \emph{Hilbert complex} $\mathcal{V} := (V, d)$,
\begin{equation}
	\begin{tikzcd}
		\mathcal{V}~:~ V^{0} \arrow[r, "d^{0}"]& V^{1} \arrow[r, "d^{1}"] & \cdots \arrow[r, "d^{n-1}"]& V^{n}\;.
	\end{tikzcd}
	\label{eq:hilbert_complex}
\end{equation}
The connecting maps $d^{j}$ are called the differentials of the complex $\mathcal{V}$. Moreover, $\mathcal{V}$ is called bounded if its differentials are bounded linear operators, and it is called closed if the image of $d^{j}$ is closed in $V^{j+1}$ for all $j$.

The composition property of the differentials implies that the following containment holds for all $j$,
\begin{equation}
	\image{(d^{j-1})} \subseteq \kernel{(d^{j})}\;.
	\label{eq:image_in_kernel}
\end{equation}
Members of $V^{j}$ in $\kernel{(d^{j})}$ are called $j$-cocycles or closed, and the members of $V^{j}$ in $\image{(d^{j-1})}$ are called $j$-coboundaries or exact.
The $j^{th}$ cohomology space associated to the complex $\mathcal{V}$, $H^{(j)}(\mathcal{V})$, is defined as the following quotient,
\begin{equation}
	H^{(j)}(\mathcal{V}) = \kernel{(d^{j})}/\image{(d^{j-1})}\;.
	\label{eq:cohomology}
\end{equation}
The cohomology space $H^{(j)}(\mathcal{V})$ measures the extent to which the equality in Equation \eqref{eq:image_in_kernel} fails to hold.

Given two complexes $\mathcal{V} = (V, d_V)$ and $\mathcal{W} = (W, d_W)$, linear maps $f^{j} : V^{j} \rightarrow W^{j}$ are called cochain maps if they commute with the differentials for all $j$,
\begin{equation}
	d_W^{j} \circ f^{j} = f^{j+1} \circ d_V^{j}\;.
\end{equation}
Cochain maps preserve closed and exact forms and, consequently, induce maps between cohomology spaces of the two complexes, $f^{*,(j)} : H^{(j)}(\mathcal{V}) \rightarrow H^{(j)}(\mathcal{W})$.
Additionally, for all $j$, if $W^{j} \subseteq V^{j}$ and all differentials $d_W^{j}$ are obtained from $d_V^{j}$ by restriction, then the complex $\mathcal{W}$ is called a subcomplex of $\mathcal{V}$.
In this case, the inclusion $\iota : \mathcal{W} \rightarrow \mathcal{V}$ is a cochain map and induces a natural map between their cohomologies.
If, additionally, there exists a cochain projection map from $\mathcal{V}$ to $\mathcal{W}$, it induces a surjection of cohomologies.
In particular, the dimensions of $H^{(j)}(\mathcal{W})$ are then bounded from above by those of $H^{(j)}(\mathcal{V})$ for all $j$.

\subsection{The de Rham complex of differential forms}\label{sec:prelims-deRham}

Given a (sufficiently) smooth $n$-manifold $\Omega \subset \RR^n$, let $T_{\mbf{y}}\Omega$ denote the $n$-dimensional tangent space at $\mbf{y} \in \Omega$.
A smooth differential $j$-form $f$, $j = 0, \dots, n$, on $\Omega$ is a smooth field such that $f_{\mbf{y}}$ is a real-valued skew-symmetric $j$-linear form on $T_{\mbf{y}}{\Omega} \times \cdots \times T_{\mbf{y}}{\Omega}$.
Let $\Lambda^{j}(\Omega)$ denote the space of all smooth $j$-forms, $j = 0, \dots, n$.

For $j = 0, \dots, n$, and $f \in \Lambda^{j}(\Omega)$, the exterior derivative is a linear map, $d^j:\Lambda^{j}(\Omega) \rightarrow \Lambda^{j+1}(\Omega)$, such that $d^{j+1} \circ d^j = 0$.
For our purposes, we need not expose the precise definition of $d^j$, but just need to recognize that it (or rather its soon-defined extension to the $L^2$ completion) meet the criteria above.

With $L^2\Lambda^{j}(\Omega)$ denoting the completion of $\Lambda^{j}(\Omega)$ with respect to the $L^2$ inner product of $j$-forms $\inpr[L^2\Lambda^{j}(\Omega)]{\cdot}{\cdot}$, 
we define $H\Lambda^{j}(\Omega)$ as
\begin{equation}
	{
	H\Lambda^{j}(\Omega) := \left\{
	f \in L^2\Lambda^{j}(\Omega)~:~ d^j f \in L^2\Lambda^{j+1}(\Omega)
	\right\}\;.
	}
\end{equation}
With $\inpr[]{\cdot}{\cdot} := \inpr[L^2\Lambda^{j}(\Omega)]{\cdot}{\cdot}$,
we equip $H\Lambda^{j}(\Omega)$ with the following graph norm-induced inner-product,
\begin{equation}
	\inpr[H\Lambda^{j}(\Omega)]{f}{g}
	:= \inpr[]{f}{g} + \inpr[]{d^jf}{d^jg}\;.
\end{equation}
{Note that $H\Lambda^{0} = H^1(\Omega)$ and $H\Lambda^{n}(\Omega)$ = $L^2(\Omega)$.}
Then, the $L^2$ de Rham complex on $\Omega$ is the closed and bounded Hilbert complex defined as
\begin{equation}
	\begin{tikzcd}
		\mathcal{R}~:~ H\Lambda^{0}(\Omega) \arrow[r, "d^0"]& H\Lambda^{1}(\Omega) \arrow[r, "d^1"] & \cdots \arrow[r, "d^{\ndim-1}"] & H\Lambda^{\ndim}(\Omega)\;.
	\end{tikzcd}
\end{equation}
When $\Omega$ is {contractible}, we have {$H^{(0)}(\mathcal{R}) = \RR$} and $H^{(j)}(\mathcal{R}) = 0$, $j = 1, \dots, \ndim$.

In this document, we will only consider the above complex with homogeneous boundary conditions.
This complex is be built up from differential forms of compact support and will be denoted as
\begin{equation}
	\begin{tikzcd}
		\mathcal{R}_0~:~ H\Lambda^{0}_0(\Omega) \arrow[r, "d^0"]& H\Lambda^{1}_0(\Omega) \arrow[r, "d^1"] & \cdots \arrow[r, "d^{\ndim-1}"] & H\Lambda^{\ndim}_0(\Omega)\;,
	\end{tikzcd}
	\label{eq:deRham}
\end{equation}
where $H\Lambda^{j}_0(\Omega)$ is the space of $j$-forms with vanishing trace on $\partial \Omega$, $j = 0, \dots, n-1$, and $H\Lambda^{\ndim}_0(\Omega) = H\Lambda^{\ndim}(\Omega)$.
When $\Omega$ is {contractible}, we have $H^{(j)}(\mathcal{R}_0) = 0$, $j = 0, \dots, \ndim-1$ and {$H^{(\ndim)}(\mathcal{R}_0) = \RR$}.

\section{Notation and preliminaries}\label{sec:notation}
We now set the framework to define the hierarchical B-spline complex of discrete differential forms, proceeding largely as in \cite{Evans19}, albeit in parametric dimension $\ndim$.
To begin, we first describe the univariate scenario.
Note that all the spaces defined here already incorporate the relevant homogeneous boundary conditions.

\subsection{Univariate B-splines}
Given polynomial degree $p \geq 1$ and an integer $m \geq 1$, consider a \textknotvector, i.e.,~a non-decreasing sequence of real numbers called knots, $\kntsSym = (\knt{1},\dots,\knt{\ndofSym+\pdegSym+1})$, such that
\begin{equation}
	\knt{1} = \dots = \knt{\pdegSym} < \knt{\pdegSym+1} \leq \dots \leq \knt{\ndofSym+1} < \knt{\ndofSym+2} = \dots = \knt{\ndofSym + \pdegSym + 1}.
\end{equation}
For simplicity, we will assume that all \textknotvector{s}~are such that $\knt{1} = 0,~\knt{\ndofSym+\pdegSym+1} = 1$.
Note that the first and last knots both appear with multiplicity $p$.
We also assume that no other knot in $\kntsSym$ appears more than $\pdegSym$ times.

Given \textknotvector~$\kntsSym$, we will define two spaces of piecewise-polynomial functions.
First, denote with $\tpbs{}{0}(\kntsSym)$ the space of  piecewise-polynomial functions of degree $p$ defined on the partition of $[0, 1]$ defined by the unique values of the knots $\knt{i}$, such that:
\begin{itemize}
	\item the functions are $C^{\pdegSym - r}$ smooth at a knot $\knt{i}$ that appears in $\kntsSym$ with multiplicity $r$, and,
	\item the functions vanish at $\xi = 0$ and $\xi = 1$.
\end{itemize}
Next, denote with $\tpbs{}{1}(\kntsSym)$ the space of  piecewise-polynomial functions of degree $p-1$ defined on the partition of $[0, 1]$ defined by the unique values of the knots $\knt{i}$, such that the functions are $C^{\pdegSym - r - 1}$ smooth at a knot $\knt{i}$ that appears in $\kntsSym$ with multiplicity $r$.

The dimension of $\tpbs{}{j}(\kntsSym)$ is $\ndofSym+j$, $j = 0, 1$, and we can define $\ndofSym+j$ basis functions that span $\tpbs{}{j}(\kntsSym)$.
We will choose univariate B-splines as the basis for these spaces and we will denote them with $\gb^j_{i,\pdegSym}$, $i = 1, \dots, \ndofSym+j$; the set containing these basis functions will be denoted as $\tpb{}{j}(\kntsSym)$.
This basis has many useful properties, and the ones most interesting for this manuscript are the boundary conditions satisfied by the functions (by definition) and minimal support.
\begin{itemize}
	\item Boundary conditions: All B-splines $\gb^0_{i,\pdegSym}$ vanish at positions $\xi = 0$ and $\xi = 1$.
	Moreover, the only B-splines $\gb^1_{i,\pdegSym}$ that are non-zero at $\xi = 0$ and $\xi = 1$ are, respectively, $\gb^1_{1,\pdegSym}$ and $\gb^1_{\ndofSym+1,\pdegSym}$.
	
	\item Minimal support: We will need the fact that $\supp(\gb^j_{i,\pdegSym}) = (\knt{i},\knt{i+\pdegSym-j+1})$ and, moreover, $\gb^j_{i,\pdegSym}$ is defined by, and thus can be uniquely identified with, the following subsequence of $\kntsSym$,
	\begin{equation}
		\gb^j_{i,\pdegSym} \longleftrightarrow \lknts{i}{j}{} := (\knt{i}, \dots,  \knt{i+\pdegSym-j+1})\;.
	\end{equation}
	By convention, we will define all supports to be open sets in this manuscript.
	Hereafter, we will exclusively employ this unique identification, i.e., instead of talking about $B^j_{i,\pdegSym}$, we will only talk about $\lknts{i}{j}{}$.
\end{itemize}

The last things we define in the univariate setting are the \Bezier~and Greville meshes.
The knots in $\kntsSym$ partition $(0,1)$ into a mesh that will be called the univariate \Bezier{} mesh and denoted as $\unibmeshSym(\kntsSym)$.
Moreover, we associate the B-spline $\lknts{i}{0}{}$ with a point in $(0,1)$ called the $i$-th Greville point\footnote{
	Ignore boundary conditions and consider degree $\pdegSym$ B-splines defined on the augmented \textknotvector~$(\knt{0},\kntsSym,\knt{\ndofSym+\pdegSym+2})$, with $\knt{0} = 0$ and $\knt{\ndofSym+\pdegSym+2} = 1$.
	Then, the linear polynomial $f(\kntSym) = \kntSym$ can be expressed as a unique linear combination of the B-splines $\lknts{i}{0}{}$, $i = 0, \dots, \ndofSym+1$, and the points $\greville{i}{}$, $i = 0, \dots, \ndofSym+1$, are the corresponding coefficients of linear combination.
} which is defined as
\begin{equation}
	\grevilleE{i}{}{0} := \frac{\knt{i+1} + \cdots + \knt{i + \pdegSym}}{\pdegSym}\;,
\end{equation}
with the convention that $\greville{0}{} := 0$ and $\greville{\ndofSym+1}{} = 1$.
By the assumptions placed on the knots, $0 = \greville{0}{} < \greville{1}{} < \cdots < \greville{\ndofSym}{} < \greville{\ndofSym+1}{} = 1$.
The Greville points help partition $[0,1]$ into $(\ndofSym+1)$ intervals.
We will call the interior of the $i$-th interval in this partition a Greville edge identified with the B-spline $\lknts{i}{1}{}$, $i = 1, \dots, \ndofSym+1$,
\begin{equation}
	\grevilleE{i}{}{1} := (\greville{i-1}{}, \greville{i}{})\;.
\end{equation}
Then, collecting the Greville points in the set $\gmesh{}^0(\kntsSym)$ and the Greville edges in the set $\gmesh{}^1(\kntsSym)$, the mesh formed by them will be called a Greville mesh will be denoted by $\gmesh{}(\kntsSym)$.
Finally, $\gmap{}{j}(\kntsSym)$ will denote the map that performs the above identification of B-splines with the $j$-dimensional Greville mesh entities,
\begin{equation}
	\gmap{}{j}(\kntsSym)~:~ \tpb{}{j}(\kntsSym) \rightarrow \gmesh{}^j(\kntsSym)\;.
\end{equation}

Figure \ref{fig:one_d_isomorphism} shows an example of the B-splines in $\tpb{}{j}(\kntsSym)$ and their correspondence to the members of $\gmesh{}^j(\kntsSym)$.
Note that the evaluation of the B-spline basis functions can be performed, for instance, with the Cox--de Boor recursion formula \cite{DeBoor}.

\begin{figure}
	\centering
	\includegraphics[width=0.9\textwidth]{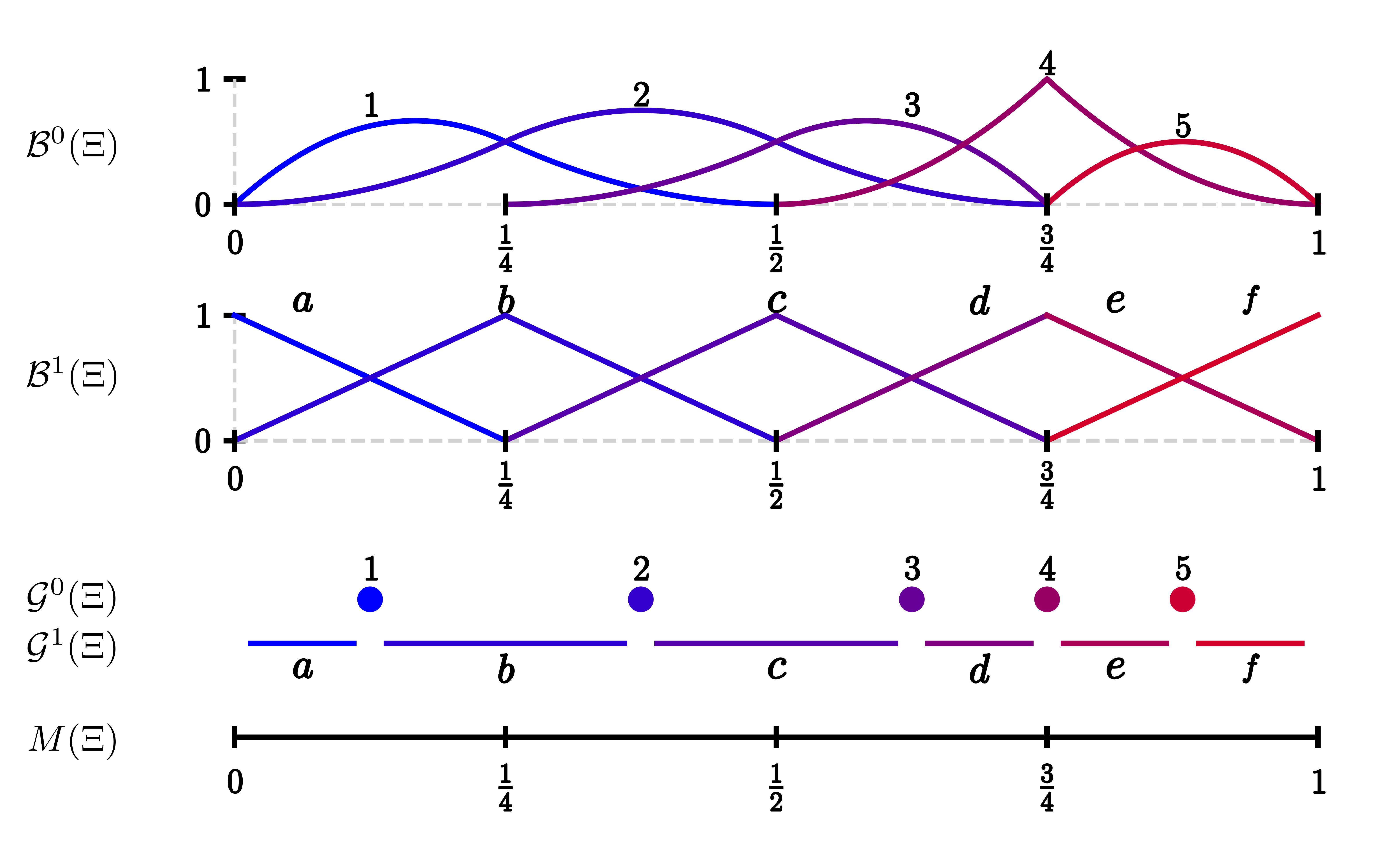}
	\caption{The one-to-one correspondence between a set of univariate splines with a Greville grid is shown here in one dimension.
	On the top, the quadratic B-spline basis $\tpb{}{0}(\kntsSym)$ corresponding to $\kntsSym = \{0,0,\frac{1}{4},\frac{1}{2},\frac{3}{4},\frac{3}{4},1,1\}$ is shown. 
	On the bottom, the spline basis $\tpb{}{1}(\kntsSym)$ is shown.
	The Greville points $\gmesh{}^0(\kntsSym)$ and edges $\gmesh{}^1(\kntsSym)$ are shown next.
	Basis functions of $\tpb{}{0}(\kntsSym)$ are in one-to-one correspondence with points of $\gmesh{}^0(\kntsSym)$ as indicated with the numerical labeling, while basis functions of $\tpb{}{1}(\kntsSym)$ are in one-to-one-correspondence with edges of $\gmesh{}^1(\kntsSym)$ through the alphabetically labeled relationship.
	Finally, the univariate B\'ezier mesh, $\unibmeshSym(\kntsSym),$ is shown at the bottom, with ticks indicating vertices.
	}\label{fig:one_d_isomorphism}
\end{figure}

\subsection{Tensor-product B-splines}\label{sec:notation-tpbs}

Using the univariate splines defined above, tensor products can be used to define multivariate spline spaces and a basis for them.
Specifically, on $\Omega = (0,1)^\ndim \subset \RR^\ndim$, a tensor product spline space and its B-spline basis are defined by choosing $\ndim$ univariate \textknotvector{s} and taking the tensor-products of the associated univariate spline spaces and their B-spline bases, respectively.

With a view toward the next sections where we introduce the hierarchical spline spaces, we describe here the construction of $\nref+1$ nested tensor-product spline spaces, $\nref \geq 0$.
Given $0 \leq \ell \leq \nref$, let the \textknotvector{s}~in the $k$-th direction be denoted by $\knts{\ell,k}$, $k = 1, \dots, \ndim$.
We assume that these univariate \textknotvector{s}~satisfy the assumptions placed on \textknotvector{s}~in the previous subsection.
With $\pdeg{\ell}{k}$ denoting the corresponding polynomial degree, we will define the spaces $\tpbs{}{j_k}(\knts{\ell,k})$, $j_k \in \{0,1\}$ as in the previous subsection, and the dimension of $\tpbs{}{0}(\knts{\ell,k})$ will be denoted by $\ndof{\ell}{k}$; the dimension of $\tpbs{}{1}(\knts{\ell,k})$ will thus be $\ndof{\ell}{k}+1$.

Then, given an $\ndim$-tuple $\mbf{j} = (j_1,\dots,j_\ndim)$ such that $j_k \in \{0,1\}$ for all $k$, we define the tensor-product spline space $\tpbs{\ell}{\mbf{j}}$ as the following tensor product of univariate spline spaces,
\begin{equation}
	\tpbs{\ell}{\mbf{j}} := \tpbs{}{j_1}(\knts{\ell,1}) \otimes \cdots \otimes \tpbs{}{j_\ndim}(\knts{\ell,\ndim})\;.
\end{equation}

\begin{remark}
	To differentiate notation in the univariate setting from the general tensor-product setting, we will explicitly incorporate the (local) univariate \textknotvector{s} in the former; the multidimensional \textknotvector{s} will not be included in the latter notation.
	For instance, we have suppressed the explicit dependence of $\tpbs{\ell}{\mbf{j}}$ on the underlying knot sequences.
\end{remark}

\begin{assumption}\label{assume0}
	We assume that the \textknotvector{s}~are nested.
	That is, for all $k$ and $\ell' > \ell$, if a knot appears in $\knts{\ell,k}$ with multiplicity $r$, then its multiplicity in $\knts{\ell',k}$ is at least $r$.
	This automatically implies that, for all $\mbf{j}$,
	\begin{equation}
		\tpbs{0}{\mbf{j}} \subset \tpbs{1}{\mbf{j}} \subset \cdots \subset \tpbs{\nref}{\mbf{j}}\;.
	\end{equation}
\end{assumption}

Let us now dive into the details of these spaces and identify the B-splines and the Greville meshes by considering a fixed level $0 \leq \ell \leq \nref$.
The $i$-th knot in $\knts{\ell,k}$ will be denoted with $\knt{i,\ell,k}$.
For the $k$-th parametric direction, the $i_k$-th univariate B-spline will be uniquely identified with the local \textknotvector~$\lknts{i_k}{j_k}{\ell,k}$.
Consequently, the $\mbf{i}$-th tensor-product B-spline $\gb^{\mbf{j}}_{\mbf{i}} \in \tpbs{\ell}{\mbf{j}}$, $1 \leq i_k \leq \ndof{\ell}{k} + j_k$ for all $k$, will be identified with the following Cartesian product of the local \textknotvector{s} that define it,
\begin{equation}
	\gb^{\mbf{j}}_{\mbf{i}} \longleftrightarrow \bigtimes_{k=1}^\ndim  \lknts{i_k}{j_k}{\ell,k}\;.
\end{equation}
All such B-splines in $\tpbs{\ell}{\mbf{j}}$ will be collected in the set $\tpb{\ell}{\mbf{j}}$.
Their support is the Cartesian product of the univariate B-splines' support,
\begin{equation}
	\begin{split}
		\supp(\phi^\mbf{j}_\mbf{i}) = \bigtimes_{k=1}^\ndim\supp(\lknts{i_k}{j_k}{\ell,k})\;.
	\end{split}
\end{equation}
The Greville entity in $\RR^\ndim$ associated to $\phi^\mbf{j}_\mbf{j}$ is simply the Cartesian product of those associated to the univariate B-splines,
\begin{equation}
	\mu\left( \phi^\mbf{j}_\mbf{i} \right) = 
	\bigtimes_{k=1}^n \grevilleE{i_k}{\ell,k}{j_k}\;.
\end{equation}
The set $\gmesh{\ell}^j$ will contain all Greville entities $\mu\left( \phi^\mbf{j}_\mbf{i} \right)$ such that $|\mbf{j}| = j$, where $|\mbf{j}| = \sum_{k=1}^n j_k$.
The map $\gmap{\ell}{j}$ will perform the one-to-one identification of the B-splines with the corresponding elements of $\gmesh{\ell}^j$,
\begin{equation}
	\gmap{\ell}{j}:\bigcup_{|\mbf{j}| = j} \tpb{\ell}{\mbf{j}} \rightarrow \gmesh{\ell}^j\;.
\end{equation}
Combining $\gmesh{\ell}^j$ for all $j$, we obtain the cuboidal Greville mesh $\gmesh{\ell}$.
Similarly, the tensor-product \Bezier~mesh $\unibmeshSym$ will be defined as the Cartesian product of the univariate meshes defined by the \textknotvector{s} $\knts{\ell,k}$,
\begin{equation}
	\unibmeshSym_\ell := \bigtimes_{k=1}^n \unibmeshSym(\knts{\ell,k})\;.
\end{equation}
	
\subsection{Tensor-product spline differential forms}
Using the tensor-product spline spaces $\tpbs{\ell}{\mbf{j}}$, we can now define the space of tensor-product spline differential $j$-forms at the $\ell$-th refinement level as below,
\begin{equation}
	\tpbs{\ell}{j} := \bigtimes_{|\mbf{j}| = j}\tpbs{\ell}{\mbf{j}}\;.
\end{equation}
Equivalently, we can rewrite the above as follows, noting that all superscripts on the right are $n$-tuples,
\begin{equation}
	\begin{split}
		0\text{-forms, }\tpbs{\ell}{0}\;\;\;\; &:= \tpbs{\ell}{(0,0,\dots,0)}\;,\\
		1\text{-forms, }\tpbs{\ell}{1}\;\;\;\; &:= \tpbs{\ell}{(1,0,\dots,0)} \times \tpbs{\ell}{(0,1,0,\dots,0)} \times \cdots \times \tpbs{\ell}{(0,\dots,0,1)}\;,\\
		&\vdots \\
		\ndim\text{-forms, }\tpbs{\ell}{\ndim}\;\;\;\; &:= \tpbs{\ell}{(1,1,\dots,1)}\;.
	\end{split}
\end{equation}
As noted in the previous section, $j$-forms can be naturally identified with $j$-cells in the Greville mesh.

\subsection{Hierarchical B-splines}
For all $j$-forms, $j = 0,\dots,\ndim$, we will use the $(\nref+1)$ levels of nested tensor-product spline spaces defined above to build a space of hierarchical B-splines.
To do so, we first build a domain hierarchy
\begin{equation}
	\Omega =: \Omega_0 \supseteq \Omega_1 \supseteq \cdots \supseteq \Omega_\nref \supseteq \Omega_{\nref+1} := \emptyset\;,
\end{equation}
that helps indicate which tensor-product B-splines contribute to the hierarchical space.

\begin{definition}[Hierarchical B-splines]\label{def:hbs}
	The set of hierarchical B-splines $\hb{\nref}{\mbf{j}}$ is constructed using the following recursive algorithm \cite{Evans19,Kraft,Vuong_giannelli_juttler_simeon}:
	\begin{enumerate}
		\item Initialization, $\ell=0$: $\hb{0}{\mbf{j}} := \tpb{0}{\mbf{j}}$.
		\item Recursion, $\ell=0, \dots, \nref-1$:
		Construct $\hb{\ell+1}{\mbf{j}}$ from $\hb{\ell}{\mbf{j}}$ by setting
		\[
		\hb{\ell+1}{\mbf{j}} = \hb{\ell+1,c}{\mbf{j}}\cup \hb{\ell+1,f}{\mbf{j}}\;,
		\]
		where
		\begin{align*}
			\hb{\ell+1,c}{\mbf{j}} &:= \{\gb \in \hb{\ell}{\mbf{j}}: \supp(\gb) \not\subset \Omega_{\ell+1}\}
			\;,\\
			\hb{\ell+1,f}{\mbf{j}} &:= \{\gb \in \tpb{\ell+1}{\mbf{j}}:\supp(\gb) \subset \Omega_{\ell+1}\}\;.
		\end{align*}
	\end{enumerate}
	For $\ell = 0, \dots, \nref$, the spaces of hierarchical splines $\hbs{\ell}{\mbf{j}}$ are simply defined to be the span of $\hb{\ell}{\mbf{j}}$,
	\begin{equation}
		\hbs{\ell}{\mbf{j}} := \vspan{\hb{\ell}{\mbf{j}}}\;.
	\end{equation}
\end{definition}

Taking inspiration from \cite{Evans19,Vuong_giannelli_juttler_simeon}, we place the following assumption on these domains.
This is a common assumption on the construction of hierarchical spline spaces and places a (level-dependent) lower-bound on the sizes of the refinement domains.
\begin{assumption}\label{assume:zero_union}
	The subdomain $\Omega_{\ell+1}$ is given as the union of supports of $0$-form basis functions from the previous level.
	That is, for $\ell = 0,\dots,\nref-1$, there exists $S \subset \tpb{\ell}{\mbf{0}}$ such that
	\begin{equation}
		\Omega_{\ell+1} = \bigcup_{\zerob \in S} \supp(\zerob)\;.
	\end{equation}
\end{assumption}
	
\subsection{Hierarchical spline differential forms}
Using the above definition, and similarly to the tensor-product case, we define the spaces of hierarchical B-spline differential forms, $\hbs{\ell}{j}$, $j = 0, \dots, n$, $\ell = 0, \dots, \nref$, as
\begin{equation}
	\hbs{\ell}{j} := \bigtimes_{|\mbf{j}| = j}\hbs{\ell}{\mbf{j}}\;.
\end{equation}
Equivalently, we can rewrite the above as follows, noting that all superscripts on the right are $n$-tuples,
\begin{equation}
	\begin{split}
		0\text{-forms, }\hbs{\ell}{0} &:= \hbs{\ell}{(0,0,\dots,0)}\;,\\
		1\text{-forms, }\hbs{\ell}{1} &:= \hbs{\ell}{(1,0,\dots,0)} \times \hbs{\ell}{(0,1,0,\dots,0)} \times \cdots \times \hbs{\ell}{(0,\dots,0,1)}\;,\\
		&\vdots \\
		n\text{-forms, }\hbs{\ell}{n} &:= \hbs{\ell}{(1,1,\dots,1)}\;.
	\end{split}
\end{equation}

For ease of reading, we will reserve certain symbols for referring to (hierarchical) B-splines and general (hierarchical) splines for certain choices of $j$; we tabulate this below.
This notation will be reused for the entirety of this document, and is also consistent with the notation used thus far.
\begin{table}[ht]
	\centering
	\begin{tabular}{|c|c|c|}
			\hline
			\hline
			\rowcolor{gray!14}
			$j$-forms & (Hierarchical) B-splines & General splines\\\hline
			$j = 0$ & $\zerob$ & $\zerof$\\\hline
			$j = \ndim$ & $\threeb$ & $\threef$\\\hline
			Unspecified $k$ & $\gb$ & $\gf$\\\hline
			\hline
		\end{tabular}
		\caption{To simplify reading of the text, the above symbols will be reserved and consistently reused for talking about (B-)spline differential forms.}
\end{table}

\section{Exactness of the $n$-Dimensional Hierarchical Spline Complex of Differential Forms}\label{sec:main-result}
We now present several results that will be useful for proving exactness of the hierarchical spline de Rham complex in $n$ dimensions.
Section \ref{sec:mayer-vietoris} collects some elementary results from algebraic topology without proof, since their proofs can be found in multiple references; we will use \cite{Hatcher} as our main reference.
Section \ref{sec:exactness} contains the main result of this paper -- a proof of exactness for the $n$-dimensional hierarchical B-spline complex under suitable assumptions.

\subsection{Mayer--Vietoris on Spline Functions}\label{sec:mayer-vietoris}
{
We start by defining here some notation and associated shorthand that will be used extensively in the following.
The purpose is to collect and highlight the definitions so that the reader can easily find them, as well as compare them against similar notations that will be introduced later in Section \ref{sec:mayer-vietoris}.
We define a special subset of B-splines in $\tpb{\ell}{\mbf{j}}$ (and the associated spline space, Greville submesh and the space of $j$-forms), denoted $\tpb{\ell}{\mbf{j}}(Y)$, that contains B-splines in $\tpb{\ell}{\mbf{j}}$ that are supported on $Y \subset \Omega$, i.e.,
\begin{subequations}
\begin{align}
	\tpb{\ell}{\mbf{j}}(Y) &:= \left\{ \gb \in \tpb{\ell}{\mbf{j}}~:~ \supp(\gb) \subset Y\right\}\;,\\
		\tpbs{\ell}{\mbf{j}}(Y) &:= \vspan{\tpb{\ell}{\mbf{j}}(Y)}\;,\\
		\tpbs{\ell}{j} &:= \bigtimes_{|\mbf{j}| = j}\tpbs{\ell}{\mbf{j}}(Y)\;,\\
		\gmesh{\ell}(Y) &:= \bigcup_{\substack{\gb \in \tpb{\ell}{\mbf{j}}(Y)\\ 0\leq |\mbf{j}|\leq n}} \gmap{\ell}{|\mbf{j}|}(\gb)\;.
\end{align}\label{eq:bspl_l_subset}
\end{subequations}
This notation and the associated shorthand is collected in Table \ref{tab:bspl_l_subset} for convenience.
Observe that the last row of Table \ref{tab:bspl_l_subset} coincides with definitions introduced previously.
\begin{table}[ht]
	\centering
	\begin{tabular}{|>{\centering\arraybackslash}m{2.8cm}|>{\centering\arraybackslash}m{2cm}|>{\centering\arraybackslash}m{2cm}|>{\centering\arraybackslash}m{2cm}|>{\centering\arraybackslash}m{2cm}|}
		\hline\hline
		\textsc{Sub-domain} & \multicolumn{4}{c|}{\textsc{(Shorthand) Notation for \eqref{eq:bspl_l_subset}}}\\\hline
		\cellcolor[gray]{0.9}$Y \subset \Omega$ & $\tpb{\ell}{\mbf{j}}(Y)$ & $\tpbs{\ell}{\mbf{j}}(Y)$ & $\tpbs{\ell}{j}(Y)$ & $\gmesh{\ell}(Y)$ \\\hline
		\cellcolor[gray]{0.9}$Y = \Omega_{\ell'},~\ell' \geq \ell$ & $\tpb{\ell,\ell'}{\mbf{j}}$ & $\tpbs{\ell,\ell'}{\mbf{j}}$ & $\tpbs{\ell,\ell'}{j}$ & $\gmesh{\ell,\ell'}$ \\\hline
		\cellcolor[gray]{0.9}$Y = \Omega$ & $\tpb{\ell}{\mbf{j}}$ & $\tpbs{\ell}{\mbf{j}}$ & $\tpbs{\ell}{j}$ & $\gmesh{\ell}$ \\\hline
		\hline
	\end{tabular}
\caption{General and shorthand notation corresponding to Equation \eqref{eq:bspl_l_subset} that will be used extensively in the following sections for different choices of $Y \subset \Omega$.
Observe that the notation in the last row above coincides with definitions introduced previously in Section \ref{sec:notation-tpbs}.
	}\label{tab:bspl_l_subset}
\end{table}
}

\subsubsection{Local Cochain Complexes}
For the domain $\Omega$, the coboundary operator for the de Rham cochain complex of differential forms is the exterior derivative,
$\xder{}{j}~:~H\Lambda^j_0(\Omega) \rightarrow H\Lambda^{j+1}_0(\Omega)$, and it maps $j$-forms to $(j+1)$-forms and enjoys the property
\begin{equation}\label{eq:tp_xder_comp}
	\xder{}{j+1} \circ \xder{}{j} = 0\;.
\end{equation}
For any subspace of $H\Lambda^j_0(\Omega)$ (e.g., $\tpbs{\ell}{j}$) the exterior derivative is obtained by restriction of $\xder{}{}$ to the subspace.
Note that the combination of the (restrictions of) the exterior derivative with the spaces $\tpbs{\ell}{j}$ for $j=0,\dots,n$, we obtain a cochain complex $\cocomplex{\tpbs{\ell}{}}{\xder{}{}}$ or simply $\tpbs{\ell}{}$,
\begin{equation}
	\begin{tikzcd}
		\tpbs{\ell}{}~:~\tpbs{\ell}{0} \arrow{r}{d^0} & \tpbs{\ell}{1} \arrow{r}{d^1} & \cdots \arrow{r}{d^{n-1}}  & \tpbs{\ell}{n}\;.
	\end{tikzcd}
\end{equation}

\begin{remark}
For simplicity of notation, and because it will always be clear from the context, we will remove the superscript from the exterior derivative.
\end{remark}

By restricting to a subdomain, $Y \subset \Omega$, we can create subcomplexes of the above complex.
This is encapsulated in the following elementary results.

\begin{lemma}\label{lem:subspace_complex_defined}
	For $Y \subset \Omega$, the set of spline spaces $\tpbs{\ell}{j}(Y)$ for $j=0,\dots,n$ and (restriction of) exterior derivative define a cochain complex $\cocomplex{\tpbs{\ell}{}(Y)}{\xder{}{}}$, that is a subcomplex of $\cocomplex{\tpbs{\ell}{}}{\xder{}{}}$.
\end{lemma}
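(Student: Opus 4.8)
The plan is to reduce the whole statement to a single support-containment property of the exterior derivative acting on tensor-product B-spline forms. Since $\tpbs{\ell}{j}(Y) \subseteq \tpbs{\ell}{j} \subseteq H\Lambda^j_0(\Omega)$ for every $j$ (recall that the spline spaces introduced in Section \ref{sec:notation} already incorporate the homogeneous boundary conditions), and since the exterior derivative on the ambient complex $\mathcal{R}_0$ already satisfies $\xder{}{j+1} \circ \xder{}{j} = 0$, the composition property is inherited automatically by any family of subspaces that is closed under $\xder{}{}$. Consequently the only thing left to verify is that $\xder{}{}$ maps $\tpbs{\ell}{j}(Y)$ into $\tpbs{\ell}{j+1}(Y)$ for each $j = 0, \dots, n-1$. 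Once this is shown, $\cocomplex{\tpbs{\ell}{}(Y)}{\xder{}{}}$ is a cochain complex, and it is a subcomplex of $\cocomplex{\tpbs{\ell}{}}{\xder{}{}}$ by construction, because each $\tpbs{\ell}{j}(Y)$ is a subspace of $\tpbs{\ell}{j}$ and the differentials are restrictions of one and the same operator $\xder{}{}$.

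By linearity it suffices to check the mapping property on the B-spline basis, i.e., to show that for every $\mbf{j}$ with $|\mbf{j}| = j$ and every $\gb \in \tpb{\ell}{\mbf{j}}(Y)$, the form $\xder{}{}\gb$ lies in $\tpbs{\ell}{j+1}(Y)$. First I would recall the structure of $\xder{}{}\gb$: from the univariate B-spline derivative formula \cite{DeBoor}, $\tfrac{d}{d\kntSym}\gb^0_{i,\pdegSym}$ is a linear combination of $\gb^1_{i,\pdegSym}$ and $\gb^1_{i+1,\pdegSym}$, whose supports $(\knt{i},\knt{i+\pdegSym})$ and $(\knt{i+1},\knt{i+\pdegSym+1})$ are both contained in $\supp(\gb^0_{i,\pdegSym}) = (\knt{i},\knt{i+\pdegSym+1})$. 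Taking tensor products and using the exterior-derivative structure of the tensor-product B-spline complex of \cite{BRSV11}, one sees that $\xder{}{}\gb$ is a linear combination of tensor-product B-spline $(j+1)$-forms $\gb'$ at the same level $\ell$, each of which satisfies $\supp(\gb') \subseteq \supp(\gb)$.

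The conclusion is then immediate: since $\gb \in \tpb{\ell}{\mbf{j}}(Y)$ means $\supp(\gb) \subset Y$, every B-spline $\gb'$ appearing in $\xder{}{}\gb$ has $\supp(\gb') \subset Y$, hence $\gb' \in \tpb{\ell}{\mbf{j}'}(Y)$ for the appropriate $\mbf{j}'$ with $|\mbf{j}'| = j+1$. Therefore $\xder{}{}\gb \in \vspan{\bigcup_{|\mbf{j}'| = j+1}\tpb{\ell}{\mbf{j}'}(Y)} = \tpbs{\ell}{j+1}(Y)$, and extending by linearity over all of $\tpbs{\ell}{j}(Y)$ completes the argument.

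I do not anticipate a serious obstacle; this is really a bookkeeping statement. The only point requiring any care is the support-containment claim for the B-splines occurring in $\xder{}{}\gb$, which rests on the two elementary facts that differentiation never enlarges the support of a B-spline and that the exterior derivative of the tensor-product B-spline complex is ``triangular'' in the sense that it only produces B-spline forms whose multi-index dominates $\mbf{j}$ componentwise (differing from it in exactly one slot). Both facts are standard and are already used implicitly in the very definition of $\cocomplex{\tpbs{\ell}{}}{\xder{}{}}$, so once they are recorded the lemma follows at once.
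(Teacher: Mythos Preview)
Your proposal is correct. In fact the paper states this lemma without proof, explicitly flagging it (together with the surrounding results in Section~\ref{sec:mayer-vietoris}) as elementary; your argument supplies precisely the one nontrivial verification---that the exterior derivative of a B-spline $\gb \in \tpb{\ell}{\mbf{j}}(Y)$ expands in B-splines whose supports are contained in $\supp(\gb) \subset Y$---and this is exactly the right way to fill in the details.
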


\begin{corollary}\label{corol:subdomain_complexes}
	Let $X \subset Y \subset \Omega$. Then the cochain complex 
		$\cocomplex{\tpbs{\ell}{}(X)}{\xder{}{}}$
	is a subcomplex of $\cocomplex{\tpbs{\ell}{}(Y)}{\xder{}{}}$.
\end{corollary}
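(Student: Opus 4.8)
The plan is to reduce the statement to the support-containment observation $\supp(\gb) \subset X \subset Y$ combined with Lemma \ref{lem:subspace_complex_defined} and the definition of a subcomplex recalled in Section \ref{sec:prelims-complexes}. First I would invoke Lemma \ref{lem:subspace_complex_defined} for both $X$ and $Y$, which guarantees that $\cocomplex{\tpbs{\ell}{}(X)}{\xder{}{}}$ and $\cocomplex{\tpbs{\ell}{}(Y)}{\xder{}{}}$ are each well-defined cochain complexes, obtained from $\cocomplex{\tpbs{\ell}{}}{\xder{}{}}$ by taking the support-restricted subspaces at each level and restricting $\xder{}{}$; in particular both are subcomplexes of $\cocomplex{\tpbs{\ell}{}}{\xder{}{}}$.

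Next I would establish the level-wise space inclusion. Fix $j \in \{0,\dots,n\}$ and an $n$-tuple $\mbf{j}$ with $|\mbf{j}| = j$. Any B-spline $\gb \in \tpb{\ell}{\mbf{j}}(X)$ has $\supp(\gb) \subset X$, and since $X \subset Y$, transitivity of set inclusion gives $\supp(\gb) \subset Y$, i.e.~$\gb \in \tpb{\ell}{\mbf{j}}(Y)$. Hence $\tpb{\ell}{\mbf{j}}(X) \subseteq \tpb{\ell}{\mbf{j}}(Y)$; passing to spans yields $\tpbs{\ell}{\mbf{j}}(X) \subseteq \tpbs{\ell}{\mbf{j}}(Y)$, and forming the Cartesian product over all $\mbf{j}$ with $|\mbf{j}| = j$ gives $\tpbs{\ell}{j}(X) \subseteq \tpbs{\ell}{j}(Y)$ for every $j$.

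Finally I would observe that the differential on $\tpbs{\ell}{}(X)$ and the differential on $\tpbs{\ell}{}(Y)$ are both simply the restriction of $\xder{}{}$ to the respective subspaces; since $\tpbs{\ell}{j}(X) \subseteq \tpbs{\ell}{j}(Y)$ at each level, the former is precisely the restriction of the latter. By the definition of subcomplex in Section \ref{sec:prelims-complexes} — a level-wise subspace inclusion whose differentials are obtained by restriction — this says exactly that $\cocomplex{\tpbs{\ell}{}(X)}{\xder{}{}}$ is a subcomplex of $\cocomplex{\tpbs{\ell}{}(Y)}{\xder{}{}}$, and the inclusion is a cochain map.

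There is no substantive obstacle here: the corollary is a bookkeeping consequence of transitivity of support containment. The only point deserving explicit care is that the spaces $\tpbs{\ell}{j}(\cdot)$ are defined through spanning sets of B-splines, so one should pass from the inclusion of B-spline sets to the inclusion of their spans (and then of the Cartesian products) as above, rather than arguing directly at the level of the spline spaces.
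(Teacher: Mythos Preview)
Your proposal is correct. The paper states this corollary without proof, treating it (together with the preceding lemma) as an ``elementary result''; your argument simply fills in the natural details---support containment gives the B-spline set inclusion, hence the span inclusion at each level, and the differentials agree since both are restrictions of $\xder{}{}$---which is exactly the intended justification.
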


With the above definitions in place for the cochain complex $\cocomplex{\tpbs{\ell}{}(Y)}{\xder{}{}}$, we define the following spaces that respectively contain the $\ell$-th level $j$-forms that are in the kernel and range of the exterior derivative,
\begin{equation}
	\begin{split}
		\kernel^j_\ell(Y) := \left\{ \gf \in \tpbs{\ell}{j}(Y)~:~d\gf = 0\right\}\;,\quad
		\image^{j-1}_\ell(Y) := \left\{ d\gf ~:~\gf \in \tpbs{\ell}{j-1}(Y)\right\}\;.
	\end{split}
\end{equation}
The corresponding $j$-th cohomology group is then defined as
\begin{equation}
	{H^{(j)}_{\ell}(Y) := \kernel^j_\ell(Y) / \image^{j-1}_\ell(Y)\;.}
\end{equation}

\subsubsection{Construction of Mayer--Vietoris Sequences}

Let $A,B \subset \Omega$ be domains with cochain complexes $\cocomplex{\tpbs{\ell}{}(A)}{\xder{}{}}$ and $\cocomplex{\tpbs{\ell}{}(B)}{\xder{}{}}$.
Similarly, define the spline complexes $\cocomplex{\tpbs{\ell}{}(A\cup B)}{\xder{}{}}$ and $\cocomplex{\tpbs{\ell}{}(A\cap B)}{\xder{}{}}$.
Also, we define 
\begin{subequations}
\begin{align}
	\tpb{\ell}{j}(A \boxplus B) &:= \tpb{\ell}{j}(A) \cup \tpb{\ell}{j}(B) \;,\\
	\tpbs{\ell}{j}(A \boxplus B) &:= \left\langle \tpb{\ell}{j}(A \boxplus B) \right\rangle = \tpbs{\ell}{j}(A) + \tpbs{\ell}{j}(B)\;.
\end{align}
\end{subequations}
Then, we obtain a short exact sequence of chain complexes,
\begin{equation}
	\tpbs{\ell}{}(A \cap B) \rightarrow \tpbs{\ell}{}(A) \oplus \tpbs{\ell}{}(B) \rightarrow \tpbs{\ell}{}(A \boxplus B)\;,
\end{equation}
as shown explicitly in the following commutative diagram; see \cite[p.~149]{Hatcher} for details.

\begin{lemma}\label{lem:Mayer_vietoris}
	The following commutative diagram of short exact sequences holds\\
	\begin{tikzcd}
		 & 0 \arrow{d} & 0 \arrow{d} & 0 \arrow{d} & \\
		\cdots \arrow{r}{d} & \tpbs{\ell}{j-1}(A \cap B) \arrow{r}{d}\arrow{d}{\varphi} & \tpbs{\ell}{j}(A \cap B) \arrow{r}{d} \arrow{d}{\varphi} & \tpbs{\ell}{j+1}(A \cap B) \arrow{r}{d} \arrow{d}{\varphi} & \cdots\\
		\cdots \arrow{r}{d} & \tpbs{\ell}{j-1}(A)\oplus \tpbs{\ell}{j-1}(B) \arrow{r}{d} \arrow{d}{\psi} & \tpbs{\ell}{j}(A)\oplus \tpbs{\ell}{j}(B) \arrow{r}{d} \arrow{d}{\psi} & \tpbs{\ell}{j+1}(A)\oplus \tpbs{\ell}{j+1}(B) \arrow{r}{d} \arrow{d}{\psi} & \cdots\\
		\cdots \arrow{r}{d} & \tpbs{\ell}{j-1}(A \boxplus B) \arrow{r}{d}\arrow{d} & \tpbs{\ell}{j}(A \boxplus B) \arrow{r}{d} \arrow{d} & \tpbs{\ell}{j+1}(A \boxplus B) \arrow{r}{d} \arrow{d}& \cdots\\
		& 0 & 0 & 0 & 
	\end{tikzcd}
	where
	\begin{equation*}
		\varphi~:~\phi \mapsto (\phi, \phi) \;,
		\qquad
	 	\psi~:~(\phi_1,\phi_2) \mapsto \phi_1-\phi_2\;.
	\end{equation*}
\end{lemma}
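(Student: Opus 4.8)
The plan is to verify the three columns are short exact sequences, then invoke the standard algebraic fact (see \cite[p.~149]{Hatcher}) that a commutative diagram of cochain complexes with short exact columns yields the Mayer--Vietoris long exact sequence in cohomology; but since the statement as phrased only asks for the commutative diagram of short exact sequences, the work reduces to checking exactness of each column and commutativity of each square.

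First I would establish commutativity of the squares. The horizontal maps are restrictions of the exterior derivative $d$ and the vertical maps are $\varphi$ and $\psi$. Commutativity of the $\varphi$-squares, $d \circ \varphi = \varphi \circ d$, is immediate: $d(\phi,\phi) = (d\phi,d\phi) = \varphi(d\phi)$, using that $d$ acts diagonally on the direct sum. Commutativity of the $\psi$-squares, $d \circ \psi = \psi \circ d$, is equally direct: $\psi(d\phi_1, d\phi_2) = d\phi_1 - d\phi_2 = d(\phi_1 - \phi_2) = d(\psi(\phi_1,\phi_2))$, using linearity of $d$. One must also note here that $\psi$ genuinely lands in $\tpbs{\ell}{j}(A \boxplus B)$, which holds because $\tpbs{\ell}{j}(A) + \tpbs{\ell}{j}(B) = \tpbs{\ell}{j}(A \boxplus B)$ by construction.

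Next I would verify that each column $0 \to \tpbs{\ell}{j}(A \cap B) \xrightarrow{\varphi} \tpbs{\ell}{j}(A) \oplus \tpbs{\ell}{j}(B) \xrightarrow{\psi} \tpbs{\ell}{j}(A \boxplus B) \to 0$ is short exact, for each fixed $j$. Injectivity of $\varphi$ is clear since $\phi \mapsto (\phi,\phi)$ has trivial kernel. Surjectivity of $\psi$ follows from $\tpbs{\ell}{j}(A \boxplus B) = \tpbs{\ell}{j}(A) + \tpbs{\ell}{j}(B)$: any element is $\phi_1 + \phi_2$ with $\phi_i \in \tpbs{\ell}{j}(\,\cdot\,)$, hence equals $\psi(\phi_1, -\phi_2)$. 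Exactness at the middle, $\image(\varphi) = \kernel(\psi)$, is the crux: the inclusion $\image(\varphi) \subseteq \kernel(\psi)$ is trivial, so the content is the reverse. If $(\phi_1, \phi_2) \in \kernel(\psi)$, then $\phi_1 = \phi_2 =: \phi$ as elements of $\tpbs{\ell}{j}(\Omega)$; the point requiring care is that this common function $\phi$, which a priori lies only in $\tpbs{\ell}{j}(A) \cap \tpbs{\ell}{j}(B)$, actually lies in $\tpbs{\ell}{j}(A \cap B)$ --- i.e.\ it is a span of B-splines each supported in $A \cap B$. This is where one uses the B-spline structure: $\phi \in \tpbs{\ell}{j}(A)$ means $\phi$ is a linear combination of B-splines each supported in $A$, and likewise for $B$; by the linear independence of the tensor-product B-spline basis these two representations coincide coefficient-by-coefficient, so every B-spline with nonzero coefficient is supported in both $A$ and $B$, hence in $A \cap B$. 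Therefore $\phi \in \tpbs{\ell}{j}(A \cap B)$ and $(\phi_1,\phi_2) = \varphi(\phi) \in \image(\varphi)$.

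The main obstacle is precisely this last exactness-at-the-middle point: it is \emph{not} a purely formal consequence of the set-theoretic definitions but genuinely relies on $\tpbs{\ell}{j}(A) \cap \tpbs{\ell}{j}(B) = \tpbs{\ell}{j}(A \cap B)$, which in turn rests on the linear independence of B-splines and the characterization of membership via supports of basis functions. (Note this equality would \emph{fail} for a general subspace-valued sheaf; it is special to the B-spline spanning sets. The analogous statement at the level of the sets $\tpb{\ell}{j}$ is $\tpb{\ell}{j}(A) \cap \tpb{\ell}{j}(B) = \tpb{\ell}{j}(A \cap B)$, which is immediate since $\supp(\gb) \subset A$ and $\supp(\gb) \subset B$ iff $\supp(\gb) \subset A \cap B$; one then lifts this to the spans using linear independence.) Once this is in hand, the diagram assembles formally: each row is a cochain complex by Lemma~\ref{lem:subspace_complex_defined}, each square commutes by the two one-line checks above, and each column is short exact by the argument just given, which is exactly the assertion of the lemma.
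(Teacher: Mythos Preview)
Your proof is correct and complete. The paper itself does not prove this lemma: it states the result and refers the reader to \cite[p.~149]{Hatcher}, having announced at the start of Section~\ref{sec:mayer-vietoris} that these elementary algebraic-topology results are collected without proof. Your write-up therefore supplies strictly more detail than the paper, and in particular you correctly isolate the one point that is \emph{not} purely formal---namely that $\tpbs{\ell}{j}(A)\cap\tpbs{\ell}{j}(B)=\tpbs{\ell}{j}(A\cap B)$, which follows from linear independence of the tensor-product B-spline basis together with the immediate set-level identity $\tpb{\ell}{j}(A)\cap\tpb{\ell}{j}(B)=\tpb{\ell}{j}(A\cap B)$. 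This is the spline-specific ingredient that makes the Mayer--Vietoris machinery applicable here, and the paper leaves it implicit.
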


Moreover, from the above commutative diagram and the Snake Lemma \cite[p.~116-117]{Hatcher}, we obtain a long exact sequence connecting the different cohomologies of the three complexes.

\begin{corollary}\label{corol:Mayer_vietoris}
	The following long exact sequence connects the cohomologies and is called the Mayer--Vietoris sequence,
	\[
	\begin{tikzcd}[column sep=0.13in]
		\cdots \arrow{r} 
		& 
		{H^{(j)}_{\ell}(A\cap B) \arrow{r}{\varphi^*} }
		&
		{H^{(j)}_{\ell}(A) \oplus H^{(j)}_{\ell}(B)\arrow{r}{\psi^*}}
		 &
		 {H^{(j)}_{\ell}(A \boxplus B)\arrow{r}{\partial^*} }
		 &
		 {H^{(j+1)}_{\ell}(A \cap B) \arrow{r}}
		 & \cdots
	\end{tikzcd}
	\]
\end{corollary}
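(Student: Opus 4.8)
The plan is to obtain the long exact sequence by the standard homological-algebra construction applied to the short exact sequence of cochain complexes furnished by Lemma \ref{lem:Mayer_vietoris}. Read column by column, that lemma asserts precisely that
\begin{equation*}
	0 \longrightarrow \tpbs{\ell}{}(A\cap B) \xrightarrow{\varphi} \tpbs{\ell}{}(A)\oplus\tpbs{\ell}{}(B) \xrightarrow{\psi} \tpbs{\ell}{}(A\boxplus B) \longrightarrow 0
\end{equation*}
is a short exact sequence of cochain complexes: each column is a short exact sequence of finite-dimensional vector spaces, and every square commutes with $d$, so $\varphi$ and $\psi$ are cochain maps. As recalled in Section \ref{sec:prelims-complexes}, cochain maps preserve cocycles and coboundaries and hence induce the maps $\varphi^*$ and $\psi^*$ on cohomology appearing in the statement.

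It remains to produce the connecting homomorphism $\partial^*\colon H^{(j)}_\ell(A\boxplus B) \to H^{(j+1)}_\ell(A\cap B)$, which I would define by the usual diagram chase (the Snake Lemma). Given a cocycle $\gf \in \tpbs{\ell}{j}(A\boxplus B)$, surjectivity of $\psi$ in degree $j$ yields $(\gf_A,\gf_B) \in \tpbs{\ell}{j}(A)\oplus\tpbs{\ell}{j}(B)$ with $\psi(\gf_A,\gf_B)=\gf$; then $\psi(d\gf_A,d\gf_B)=d\gf=0$, so exactness in degree $j+1$ gives a unique $\eta \in \tpbs{\ell}{j+1}(A\cap B)$ with $\varphi(\eta)=(d\gf_A,d\gf_B)$, and $\varphi(d\eta)=0$ together with injectivity of $\varphi$ forces $d\eta=0$, so $\eta$ is a cocycle; one sets $\partial^*[\gf]:=[\eta]$. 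I would then perform the two routine checks: that $\partial^*$ does not depend on the lift $(\gf_A,\gf_B)$ nor on the cocycle representative $\gf$ of its class, and that the resulting sequence is exact at each of its three recurring nodes, $H^{(j)}_\ell(A\cap B)$, $H^{(j)}_\ell(A)\oplus H^{(j)}_\ell(B)$, and $H^{(j)}_\ell(A\boxplus B)$. Each is the diagram chase of \cite[pp.~116--117]{Hatcher}, carried out with arrows reversed since we work with cochain rather than chain complexes.

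I do not expect a genuine obstacle here: all of the spline-specific content has already been packaged into Lemma \ref{lem:Mayer_vietoris} --- notably the identity $\tpbs{\ell}{j}(A)\cap\tpbs{\ell}{j}(B)=\tpbs{\ell}{j}(A\cap B)$, which follows from linear independence of the B-spline basis and is what makes the middle column exact --- and once that short exact sequence of complexes is in hand, the corollary is formal. The only point worth stating with care is that every space in sight is a finite-dimensional vector space and every map is linear, so the Snake Lemma applies verbatim and the Mayer--Vietoris sequence is an exact sequence of vector spaces; in effect the proof is the invocation of \cite[pp.~116--117, 149]{Hatcher} on the short exact sequence of Lemma \ref{lem:Mayer_vietoris}.
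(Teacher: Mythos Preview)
Your proposal is correct and matches the paper's approach exactly: the paper simply states that the corollary follows from the commutative diagram of Lemma \ref{lem:Mayer_vietoris} and the Snake Lemma, citing \cite[pp.~116--117]{Hatcher}, without giving any further details. Your write-up is in fact more explicit than the paper's, but the underlying argument is identical.
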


Due to nestedness of the multi-level spline spaces, the level $\ell$ spline spaces are contained in the level $\ell'$ spline spaces, $\ell' \geq \ell$.
These inclusions imply the following commutative diagram between the corresponding long exact sequences of cohomologies; see \cite[p.~127-128]{Hatcher}.

\begin{theorem}\label{thrm:natural_Mayer_vietoris}
	For domains $A,B \subset \Omega$, $\ell \leq \ell' \leq \nref$, and the spaces that appear in the Mayer--Vietoris sequence above, the inclusion operator 
	$\splinelevelinclusion{\ell,\ell'}:\tpbs{\ell}{}(\cdot)\rightarrow \tpbs{\ell'}{}(\cdot)$
	 induces a function on cohomology {$\splinelevelinclusion{\ell,\ell'}^*:H^{(j)}_{\ell}(\cdot)\rightarrow H^{(j)}_{\ell'}(\cdot)$} for $j=0,\dots,n$, such that the following Mayer--Vietoris sequences commute:
	\footnotesize
	\[
	\begin{tikzcd}
		\cdots \arrow{r} & H^{(j)}_{\ell}(A \cap B) \arrow{r}{\varphi^*} \arrow{d}{\splinelevelinclusion{\ell,\ell'}^*} &H^{(j)}_{\ell}(A) \oplus H^{(j)}_{\ell}(B) \arrow{r}{\psi^*} \arrow{d}{\splinelevelinclusion{\ell,\ell'}^*} & H^{(j)}_{\ell}(A \boxplus B) \arrow{r}{\partial^*} \arrow{d}{\splinelevelinclusion{\ell,\ell'}^*} & H^{(j+1)}_{\ell}(A \cap B) \arrow{r} \arrow{d}{\splinelevelinclusion{\ell,\ell'}^*} & \cdots\\
		\cdots \arrow{r} & H^{(j)}_{\ell'}(A\cap B) \arrow{r}{\varphi^*} &H^{(j)}_{\ell'}(A) \oplus H^{(j)}_{\ell'}(B) \arrow{r}{\psi^*}  & H^{(j)}_{\ell'}(A \boxplus B) \arrow{r}{\partial^*} & H^{(j+1)}_{\ell'}(A \cap B) \arrow{r} & \cdots
	\end{tikzcd}
	\]
\end{theorem}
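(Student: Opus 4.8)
The plan is to verify that the inclusion maps $\splinelevelinclusion{\ell,\ell'}$ assemble into a morphism of short exact sequences of cochain complexes, and then invoke the naturality of the long exact sequence produced by the Snake Lemma. First I would observe that, by Assumption \ref{assume0}, for every multi-index $\mbf{j}$ and every $\ell' \geq \ell$ we have $\tpbs{\ell}{\mbf{j}} \subset \tpbs{\ell'}{\mbf{j}}$; since a B-spline supported on $Y$ remains supported on $Y$ after refinement, this restricts to $\tpbs{\ell}{\mbf{j}}(Y) \subset \tpbs{\ell'}{\mbf{j}}(Y)$ for any $Y \subset \Omega$, and hence to inclusions $\tpbs{\ell}{j}(Y) \hookrightarrow \tpbs{\ell'}{j}(Y)$ at the level of differential forms. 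Because the exterior derivative $\xder{}{j}$ on each of these spaces is merely the restriction of the ambient operator on $H\Lambda^j_0(\Omega)$, the inclusion commutes with $d$ on the nose; thus $\splinelevelinclusion{\ell,\ell'}$ is a cochain map $\cocomplex{\tpbs{\ell}{}(Y)}{\xder{}{}} \to \cocomplex{\tpbs{\ell'}{}(Y)}{\xder{}{}}$, and by Lemma \ref{lem:subspace_complex_defined} and Corollary \ref{corol:subdomain_complexes} all four complexes in the Mayer--Vietoris construction (for $Y \in \{A \cap B,\ A,\ B,\ A \boxplus B\}$) receive such compatible inclusions.

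Next I would check that these inclusions are compatible with the maps $\varphi$ and $\psi$ of Lemma \ref{lem:Mayer_vietoris}. This is immediate from their definitions: $\varphi \colon \phi \mapsto (\phi,\phi)$ and $\psi \colon (\phi_1,\phi_2) \mapsto \phi_1 - \phi_2$ are defined by the same formulas at level $\ell$ and at level $\ell'$, and the level-$\ell$ spaces sit inside the level-$\ell'$ spaces as subspaces, so the relevant squares commute. The only genuinely verification-worthy point here is that the surjection onto $\tpbs{\ell}{j}(A \boxplus B)$ is still surjective after including into $\tpbs{\ell'}{j}(A \boxplus B)$ — but we do not need that: we only need the \emph{square} with the level-$\ell'$ short exact sequence to commute, which it does because $\tpbs{\ell'}{j}(A \boxplus B) = \tpbs{\ell'}{j}(A) + \tpbs{\ell'}{j}(B)$ by the same additivity used at level $\ell$. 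Hence the diagram of Lemma \ref{lem:Mayer_vietoris} at level $\ell$ maps, column by column, to the corresponding diagram at level $\ell'$ via $\splinelevelinclusion{\ell,\ell'}$, i.e.\ we have a morphism of short exact sequences of cochain complexes.

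Finally I would appeal to the standard naturality of the connecting homomorphism: a morphism of short exact sequences of chain complexes induces a morphism of the associated long exact sequences in (co)homology, with all squares — including those involving $\partial^*$ — commuting. This is exactly the content of \cite[p.~127--128]{Hatcher}; applying it to the morphism constructed above yields the commuting ladder of Mayer--Vietoris sequences in the statement, with vertical maps $\splinelevelinclusion{\ell,\ell'}^* \colon H^{(j)}_\ell(\cdot) \to H^{(j)}_{\ell'}(\cdot)$. I expect the main (and really only) obstacle to be bookkeeping: making sure the restriction-of-$d$ conventions genuinely make each inclusion a cochain map rather than merely a linear map, and being careful that ``$\tpbs{\ell}{j}(A \boxplus B)$'' denotes the sum of the two subspaces (so that additivity transfers level-by-level). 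Once those are pinned down, the result is a direct invocation of homological naturality with no further computation.
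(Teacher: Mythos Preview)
Your proposal is correct and follows exactly the approach the paper takes: the paper itself does not give a detailed proof but simply notes that nestedness of the spline spaces yields inclusions and then cites \cite[p.~127--128]{Hatcher} for the naturality of the long exact sequence, which is precisely what you invoke after spelling out the (standard) verification that $\splinelevelinclusion{\ell,\ell'}$ is a morphism of short exact sequences of cochain complexes. Your added detail is appropriate filler for what the paper leaves implicit.
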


Finally, the following result of this section follows from the Five Lemma \cite[p.~129]{Hatcher} applied to the commuting diagram from Theorem \ref{thrm:natural_Mayer_vietoris}.

\begin{corollary}\label{cor:natural_Mayer_vietoris}
	Consider the commuting diagram of Mayer--Vietoris sequences from Theorem \ref{thrm:natural_Mayer_vietoris}.
	If any two of the following vertical maps are isomorphisms for all $j$,
	\begin{itemize}
		\item $H^{(j)}_\ell (A \cap B) \rightarrow H^{(j)}_{\ell'} (A \cap B)$,
		\item $H^{(j)}_\ell (A) \oplus H^{(j)}_\ell (B) \rightarrow H^{(j)}_{\ell'} (A) \oplus H^{(j)}_{\ell'} (B)$,
		\item $H^{(j)}_\ell (A \boxplus B) \rightarrow H^{(j)}_{\ell'} (A \boxplus B)$,
	\end{itemize}
	then so is the third map.
\end{corollary}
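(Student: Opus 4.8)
The plan is to read off the conclusion from the Five Lemma applied to the commuting ladder of Mayer--Vietoris sequences supplied by Theorem \ref{thrm:natural_Mayer_vietoris}; recall that both rows of this ladder are exact by Corollary \ref{corol:Mayer_vietoris} and that all squares commute by Theorem \ref{thrm:natural_Mayer_vietoris}. The only real content is a combinatorial observation about the shape of the Mayer--Vietoris sequence: the three ``species'' of cohomology groups appearing in it -- those of $A \cap B$, of $A \oplus B$, and of $A \boxplus B$ -- recur cyclically (with the degree index $j$ increasing by one every three steps), so if we single out any two of the three species as ``known to map isomorphically'', then every group of the remaining species sits in the ladder with its two immediate left neighbours and its two immediate right neighbours all belonging to the two known species.

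Concretely, I would argue by cases on which pair of vertical maps is assumed to be an isomorphism, treating one case in detail. Suppose the maps on $A \cap B$ and on $A \oplus B$ are isomorphisms in every degree, and fix $j$; to show $\splinelevelinclusion{\ell,\ell'}^* : H^{(j)}_\ell(A \boxplus B) \to H^{(j)}_{\ell'}(A \boxplus B)$ is an isomorphism, extract from the ladder the five-term window centred at $H^{(j)}_\ell(A \boxplus B)$,
\[
H^{(j)}_\ell(A\cap B) \to H^{(j)}_\ell(A)\oplus H^{(j)}_\ell(B) \to H^{(j)}_\ell(A\boxplus B) \to H^{(j+1)}_\ell(A\cap B) \to H^{(j+1)}_\ell(A)\oplus H^{(j+1)}_\ell(B),
\]
together with the corresponding window at level $\ell'$ and the four vertical maps connecting them. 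Those four outer vertical maps are isomorphisms by hypothesis (they are maps on $A \cap B$ or on $A \oplus B$), the two rows are exact, and the squares commute, so the Five Lemma \cite[p.~129]{Hatcher} forces the middle vertical map to be an isomorphism. The remaining two cases are identical after relabelling: if the maps on $A\cap B$ and $A\boxplus B$ are isomorphisms one centres the window at $H^{(j)}_\ell(A)\oplus H^{(j)}_\ell(B)$, and if the maps on $A\oplus B$ and $A\boxplus B$ are isomorphisms one centres it at $H^{(j)}_\ell(A\cap B)$; in each case the four flanking groups again belong to the two designated species, and the Five Lemma applies verbatim.

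I do not expect any genuine obstacle here -- the statement is a textbook consequence of the Five Lemma, and the work amounts only to bookkeeping the cyclic pattern of the Mayer--Vietoris sequence correctly and checking that the five-term window remains meaningful at the ends (for $j$ outside $\{0,\dots,n\}$ all the relevant groups vanish, so those boundary instances are vacuous). It is worth noting only that our hypotheses supply genuine isomorphisms -- hence in particular the surjectivity and injectivity hypotheses -- for all four outer maps, which is more than the Five Lemma requires, so one never has to invoke the sharper ``four lemma'' half-statements.
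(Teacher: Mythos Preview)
Your proposal is correct and matches the paper's approach exactly: the paper simply states that the result ``follows from the Five Lemma \cite[p.~129]{Hatcher} applied to the commuting diagram from Theorem \ref{thrm:natural_Mayer_vietoris}'' without further elaboration, so your detailed unpacking of the cyclic pattern and the three cases is a faithful (and more explicit) rendering of what the authors intend.
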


\subsection{Application of Mayer--Vietoris to Exactness}\label{sec:exactness}
{
We are now ready to embark on the exactness proof.
The proof has three main steps: decomposition of $\Omega_{\ell+1}$ into small overlapping subdomains; proof that the level $\ell$ and $\ell+1$ spline complexes on these subdomains are cohomologically equivalent; and, finally, piecing together the subdomains to show the same for the level $\ell$ and $\ell+1$ spline complexes on all of $\Omega_{\ell+1}$.
Along the way, we will need to introduce one additional assumption for the third step.

\begin{remark}
	All our results are based on the relationship between the refinement domains at two successive levels, and therefore all our figures will only show two levels of refinement.
\end{remark}

As in Section \ref{sec:exactness}, we start by defining some notation and associated shorthand that will used extensively in what follows. In particular, for $s \in \{0, 1\}$, we define a subset of B-splines in  $\tpb{\ell+s}{\mbf{j}}$ (and the associated spline space, Greville submesh and the space of $j$-forms), denoted $\tpb{\ell+s,\ell+1}{\mbf{j}}(Y)$, that contains the B-splines in $\tpb{\ell+s}{\mbf{j}}$ that are supported on $\supp(\zerob)$ where $\zerob \in \tpb{\ell}{\mbf{0}}$ is itself supported on $Y \cap \Omega_{\ell+1}$,
\begin{subequations}
	\begin{align}
		\tpb{\ell+s,\ell+1}{\mbf{j}}(Y) &:= 
		\left\{
		\phi \in \tpb{\ell+s}{\mbf{j}}~:~
		\exists \zerob \in \tpb{\ell}{\mbf{0}}\;,~\supp(\phi) \subset \supp(\zerob) \subset Y \cap \Omega_{\ell+1}
		\right\}\;,\\
		\tpbs{\ell+s,\ell+1}{\mbf{j}}(Y) &:= 
		\vspan{\tpb{\ell+s,\ell+1}{\mbf{j}}(Y)}\;,\\
		\tpbs{\ell+s,\ell+1}{j}(Y) &:= \bigtimes_{|\mbf{j}| = j}\tpbs{\ell+s,\ell+1}{\mbf{j}}(Y)\;,\\
		\gmesh{\ell+s,\ell+1}(Y)&:= \bigcup_{\substack{\gb \in \tpb{\ell+s,\ell+1}{\mbf{j}}(Y)\\ 0\leq |\mbf{j}|\leq n}} \gmap{\ell}{|\mbf{j}|}(\gb)\;.
	\end{align}\label{eq:bspl_ll_subset}
\end{subequations}
This notation and the associated shorthand is collected in Table \ref{tab:bspl_l_subset} for convenience.
Observe that the shorthand notation in the second row of Table \ref{tab:bspl_ll_subset} is the same as the notation in the second and third rows of Table \ref{tab:bspl_l_subset}---they refer to the same objects that can be defined using both Equations \eqref{eq:bspl_l_subset} and \eqref{eq:bspl_ll_subset}.
\begin{table}[ht]
	\centering
	\begin{tabular}{|>{\centering\arraybackslash}m{4.2cm}|>{\centering\arraybackslash}m{2cm}|>{\centering\arraybackslash}m{2cm}|>{\centering\arraybackslash}m{2cm}|}
		\hline\hline
		\textsc{Sub-domain \& Levels} & \multicolumn{3}{c|}{\textsc{Notation for \eqref{eq:bspl_ll_subset}}}\\\hline
		\cellcolor[gray]{0.9}$Y \subset \Omega$~,~$s \in \{0, 1\}$ & $\tpb{\ell+s,\ell+1}{\mbf{j}}(Y)$ & $\tpbs{\ell+s,\ell+1}{\mbf{j}}(Y)$ & $\gmesh{\ell+s,\ell+1}(Y)$ \\\hline
		\cellcolor[gray]{0.9}$Y = \Omega_{\ell+1}$~,~$s = 0$ & $\tpb{\ell,\ell+1}{\mbf{j}}$ & $\tpbs{\ell,\ell+1}{\mbf{j}}$ & $\gmesh{\ell,\ell+1}$ \\\hline
		\cellcolor[gray]{0.9}$Y = \Omega_{\ell+1}$~,~$s = 1$ & $\tpb{\ell+1,\ell+1}{\mbf{j}}$ & $\tpbs{\ell+1,\ell+1}{\mbf{j}}$ & $\gmesh{\ell+1,\ell+1}$ \\\hline\hline
	\end{tabular}
\caption{General and shorthand notation corresponding to equation \eqref{eq:bspl_ll_subset} that will be used extensively in the following sections for different choices of $Y \subset \Omega$ and $s \in \{0,1\}$.
Observe that the shorthand notation in the second and third rows above is the same as the notation in the second row of Table \ref{tab:bspl_l_subset}.
}\label{tab:bspl_ll_subset}
\end{table}

We will also make use of \textextendedknotdomain{s} which are defined for each parametric direction as
\begin{equation}
	\xgb^{\mbf{j}}_{\mbf{i}} := \bigtimes_{k=1}^\ndim  \xknts{i_k}{j_k}{\ell,k},
\end{equation}
where the \textextendeduniknotdomain{s} $\xknts{i_k}{j_k}{\ell,k}$ are defined as
\begin{equation}
	\begin{split}
		\xknts{i_k}{j_k}{\ell,k} &:= \left(\knt{i_k,\ell,k}, \knt{i_k+\pdeg{\ell}{k}-j_k+2,\ell,k}\right) \subset \RR\;.
	\end{split}
\end{equation}
With $\mbf{j} = (j_1, j_2, \dots, j_n)$, we will denote the set of all potential extended local knot domains as
\begin{equation}\label{eq:space_of_extended_domains}
	\xtpb{\ell}{\mbf{j}} = \left\{\xgb^{\mbf{j}}_{\mbf{i}} ~:~ 1 \leq i_k \leq  \ndof{\ell}{k}+j_k-1\;,\;k = 1, 2, \dots, n\right\}\;,
\end{equation}

\begin{remark}
	For any practical numerical analysis problem, one would pick spline spaces such that $\ndof{\ell}{k} \geq 2$ for all $k$ and $\ell$, thus making the set $\xtpb{\ell}{\mbf{j}}$ non-empty.
	We therefore assume that $\ndof{\ell}{k} \geq 2$ for the rest of the document.
	However, for completeness we comment here on the special case when $\ndof{\ell}{k} = 1$ for at least one $k$ --- this will mean $\xtpb{\ell}{\mbf{j}}$ is an empty set when $j_k = 0$.
	In such cases, the results of this work still hold but the proofs (which use finite induction on the number of parametric dimensions) would need to be modified to omit the directions where $\ndof{\ell}{k} = 1$ since the inductive argument holds automatically in such directions.
	To keep notation clean from this special case, we thus assume that $\ndof{\ell}{k} \geq 2$ henceforth.
\end{remark}

\noindent
\emph{\textbf{Part 1: Decomposing $\Omega_{\ell+1}$ into small subdomains}}\\
\noindent
The proof will utilize a decomposition of $\Omega_{\ell+1}$ into overlapping \textextendedknotdomains; so we begin by defining certain index sets associated to those \textextendedknotdomains, among other notation.

For any $\zerob_{\overline{\mbf{i}}} \in \tpb{\ell}{\mbf{0}}$, define $\itpb{\ell}{\mbf{j}}(\overline{\mbf{i}})$ as the set of indices of $\mbf{j}$-\textextendedknotdomains~ that are index-space neighbours of $\zerob_{\overline{\mbf{i}}}$,
\begin{equation}
	\itpb{\ell}{\mbf{j}}(\overline{\mbf{i}}) :=
	\left\{
	{\mbf{i}}~:~
	j_k-1 \leq {i}_k-\overline{i}_k \leq 0~~,\;k = 1, \dots, n
	\right\}\;.
\end{equation}
We use the convention that, if $\mbf{i} \in \itpb{\ell}{\mbf{j}}(\overline{\mbf{i}})$ is such that any $i_k \leq 0$ or $i_k \geq \ndof{\ell}{k}+j_k-1$, then $\xgb_{{\mbf{i}}}^\mbf{j} := \emptyset$.
Then, if ${\mbf{i}} \in \itpb{\ell}{\mbf{j}}(\overline{\mbf{i}})$ then $\xgb_{{\mbf{i}}}^\mbf{j} \supseteq \supp(\zerob_{\overline{\mbf{i}}})$ for any $\mbf{j}$; the converse is not true in general.
Moreover, $\itpb{\ell}{\mbf{0}}(\overline{\mbf{i}})$ has cardinality $\leq 2^n$ (the inequality holding only when $\overline{\mbf{i}}$ corresponds to a boundary-adjacent zero form B-spline) while $\itpb{\ell}{\mbf{1}}(\overline{\mbf{i}})$ has cardinality $1$.

Denote with $\itpb{\ell,\ell+1}{\mbf{j}}$ the union of such index-space neighbours for all $\zerob_{\overline{\mbf{i}}} \in \tpb{\ell,\ell+1}{\mbf{0}}$, i.e.,
\begin{equation}
	\itpb{\ell,\ell+1}{\mbf{j}} := \bigcup_{\zerob_{\overline{\mbf{i}}}\in \tpb{\ell,\ell+1}{\mbf{0}}}\itpb{\ell}{\mbf{j}}(\overline{\mbf{i}})\;.
\end{equation}
Next, if ${\mbf{i}}$ is the index of an index-space neighbour, define $\deriv{\ell,\ell+1}^\mbf{j}({\mbf{i}})$ as the set of $\mbf{1}$-extended knot domains that are supported on $\Omega_{\ell+1}$ as well as the given index-space neighbour,
\begin{equation}
	\deriv{\ell,\ell+1}^\mbf{j}({\mbf{i}}) := 
	\left\{
	\overline{\mbf{i}}~:~
	\xgb_\mbf{\overline{\mbf{i}}}^\mbf{1} \subset \Omega_{\ell+1}
	\text{~~and~~}
	\forall k~,~
	0 \leq \overline{i}_k-{i}_k \leq 1 - j_k
	\right\}\;.
\end{equation}
Observe that $\deriv{\ell,\ell+1}^\mbf{1}({\mbf{i}})$ has cardinality $\leq 1$ while $\deriv{\ell,\ell+1}^\mbf{0}({\mbf{i}})$ has cardinality $\leq 2^n$.

\begin{remark}
	Since each $\mbf{1}$-extended knot domain can be interpreted as the support of a zero form B-spline, notice that $\itpb{\ell,\ell+1}{\mbf{j}}$ and $\deriv{\ell,\ell+1}^\mbf{j}({\mbf{i}})$ are sort of ``inverses'' of each other -- the former contains the indices of all $\mbf{j}$-extended knot domains that are index-space neighbours of zero form B-splines supported on $\Omega_{\ell+1}$, while the latter contains the indices of all zero form B-splines supported on $\Omega_{\ell+1}$ for which a given $\mbf{j}$-extended knot domain is an index-space neighbour.
	This observation is encapsulated in Lemma \ref{lem:index_inequalities} below.
\end{remark}

\begin{lemma}\label{lem:index_inequalities}
	Let $\mbf{i}$ and $\overline{\mbf{i}}$ be such that, for all $k$,
	\begin{equation*}
		j_k-1 \leq {i}_k - \overline{i}_k \leq 0\;.
	\end{equation*}
	If $\zerob_{\overline{\mbf{i}}} \in \tpb{\ell,\ell+1}{\mbf{0}}$, then $\mbf{i} \in \itpb{\ell,\ell+1}{\mbf{j}}$ and $\overline{\mbf{i}} \in \deriv{\ell,\ell+1}^\mbf{j}({\mbf{i}})$.
\end{lemma}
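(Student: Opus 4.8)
The statement consists of two essentially independent bookkeeping claims, and the plan is to verify each by directly unwinding the relevant definitions; no deep idea is needed.

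For the first claim, $\mbf{i} \in \itpb{\ell,\ell+1}{\mbf{j}}$, I would argue as follows. The hypothesis $j_k-1 \le i_k - \overline{i}_k \le 0$ for all $k$ is precisely the condition defining membership of $\mbf{i}$ in $\itpb{\ell}{\mbf{j}}(\overline{\mbf{i}})$. Since we are given $\zerob_{\overline{\mbf{i}}} \in \tpb{\ell,\ell+1}{\mbf{0}}$, the index $\overline{\mbf{i}}$ is among those indexing the union that defines $\itpb{\ell,\ell+1}{\mbf{j}}$, so $\itpb{\ell}{\mbf{j}}(\overline{\mbf{i}}) \subset \itpb{\ell,\ell+1}{\mbf{j}}$ and the claim follows at once.

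For the second claim, $\overline{\mbf{i}} \in \deriv{\ell,\ell+1}^{\mbf{j}}(\mbf{i})$, two conditions must be checked: the index inequality $0 \le \overline{i}_k - i_k \le 1-j_k$ for all $k$, and the support condition $\xgb_{\overline{\mbf{i}}}^{\mbf{1}} \subset \Omega_{\ell+1}$. The index inequality is immediate: multiply the hypothesis $j_k-1 \le i_k - \overline{i}_k \le 0$ by $-1$. For the support condition, the one ingredient worth recording explicitly -- already flagged in the remark preceding the statement -- is that the $\mbf{1}$-extended knot domain $\xgb_{\overline{\mbf{i}}}^{\mbf{1}}$ is nothing but the support of the level-$\ell$ zero-form B-spline $\zerob_{\overline{\mbf{i}}}$. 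Indeed, in each direction $k$ the extended univariate knot domain $\xknts{\overline{i}_k}{1}{\ell,k} = (\knt{\overline{i}_k,\ell,k},\knt{\overline{i}_k+\pdeg{\ell}{k}+1,\ell,k})$ is exactly $\supp(\lknts{\overline{i}_k}{0}{\ell,k})$ by the minimal-support property of univariate B-splines; taking Cartesian products over $k$ gives $\xgb_{\overline{\mbf{i}}}^{\mbf{1}} = \supp(\zerob_{\overline{\mbf{i}}})$. Because $\zerob_{\overline{\mbf{i}}} \in \tpb{\ell,\ell+1}{\mbf{0}}$ means precisely $\supp(\zerob_{\overline{\mbf{i}}}) \subset \Omega_{\ell+1}$, the support condition holds, and together with the index inequality this yields $\overline{\mbf{i}} \in \deriv{\ell,\ell+1}^{\mbf{j}}(\mbf{i})$.

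There is no real obstacle here; the only point needing a moment's attention is that the index ranges line up: a valid level-$\ell$ zero-form index $\overline{\mbf{i}}$ satisfies $1 \le \overline{i}_k \le \ndof{\ell}{k}$, which is exactly the admissible range for $\mbf{1}$-extended knot domains, so the identification $\xgb_{\overline{\mbf{i}}}^{\mbf{1}} = \supp(\zerob_{\overline{\mbf{i}}})$ is genuine and the empty-set convention on extended knot domains is never invoked for such an $\overline{\mbf{i}}$. I would present this identification as the single substantive line and let all the inequalities drop out mechanically.
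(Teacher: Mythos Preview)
Your proof is correct and follows essentially the same approach as the paper's own proof: both verify $\mbf{i}\in\itpb{\ell,\ell+1}{\mbf{j}}$ directly from the definition of index-space neighbours, then flip the inequality and invoke the identification $\xgb_{\overline{\mbf{i}}}^{\mbf{1}}=\supp(\zerob_{\overline{\mbf{i}}})\subset\Omega_{\ell+1}$ to obtain $\overline{\mbf{i}}\in\deriv{\ell,\ell+1}^{\mbf{j}}(\mbf{i})$. Your version is simply more explicit about why that identification holds and about the index ranges, which the paper leaves implicit.
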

\begin{proof}
	By definition, $\xgb_{{\mbf{i}}}^\mbf{j}$ is an index space neighbour of $\zerob_{\overline{\mbf{i}}}$ and the latter is supported on $\Omega_{\ell+1}$, thus implying $\mbf{i} \in \itpb{\ell,\ell+1}{\mbf{j}}$.
	Then, combining with the given inequality, we see that, for all $k$,
	\begin{equation*}
		0 \leq \overline{i}_k - {i}_k \leq 1-j_k\;.
	\end{equation*}
	Clearly, $\xgb_{\overline{\mbf{i}}}^\mbf{1} = \supp(\zerob_{\overline{\mbf{i}}}) \subset \Omega_{\ell+1}$, and thus $\overline{\mbf{i}} \in \deriv{\ell,\ell+1}^\mbf{j}({\mbf{i}})$.
\end{proof}

The union of all $\mbf{1}$-\textextendedknotdomains~with indices in $\deriv{\ell,\ell+1}^\mbf{j}({\mbf{i}})$ forms a subset of $\Omega_{\ell+1}$, we denote this subset with $\Omega^\mbf{j}_{\ell+1}({\mbf{i}})$,
\begin{equation}
	\begin{split}
		\Omega_{\ell+1} \supset \Omega_{\ell+1}^\mbf{j}({\mbf{i}}) &:=
		\begin{dcases}
			\bigcup_{\overline{\mbf{i}} \in \deriv{\ell,\ell+1}^\mbf{j}({\mbf{i}})} \xgb_{\overline{\mbf{i}}}^\mbf{1}\;, & {\mbf{i}} \in \itpb{\ell,\ell+1}{\mbf{j}}\;,\\
			\emptyset\;, & \text{otherwise}\;.
		\end{dcases}
	\end{split}
\end{equation}
Figure \ref{fig:disconnected_components}, in conjunction with the following results, helps illustrate the motivation for this definition.

\begin{figure}
	\includegraphics[width=1\textwidth]{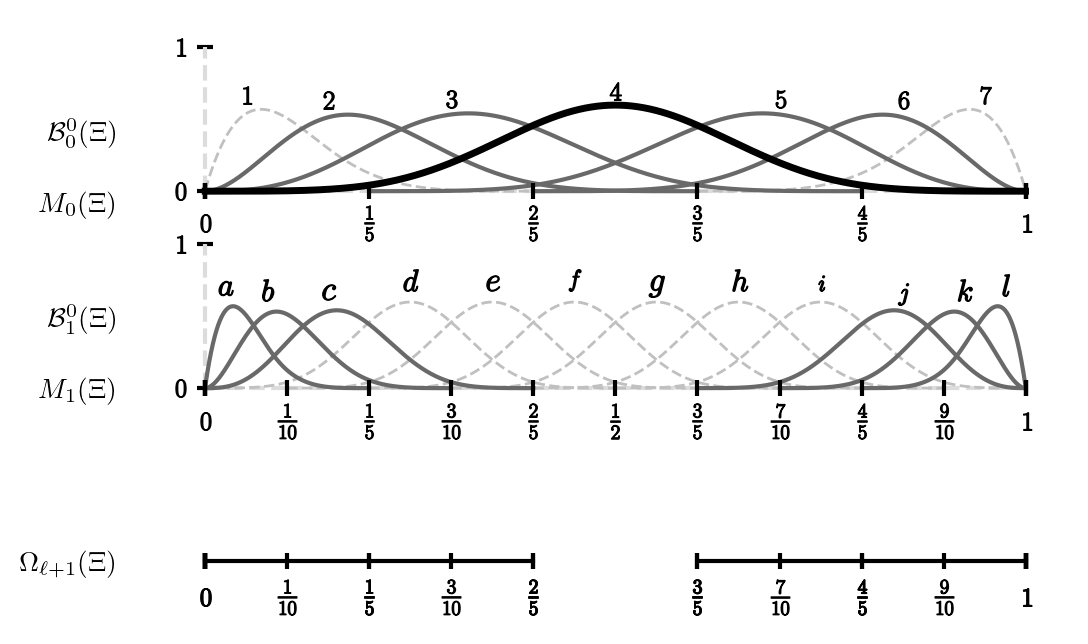}
	\caption{
		Given a knot vector $\kntsSym_0 = \{0,0,0,0,\frac{1}{5},\frac{2}{5},\frac{3}{5},\frac{4}{5},1,1,1,1\}$ associated with univariate splines of degree $p=4$ on level $\ell=0$ and a spline space of the same polynomial degree defined by dyadic refinement of level $\ell=0$ for level $\ell=1$, splines in $\hb{1}{\mbf{j}}$ are shown, with $\Omega_{1}=(0,\frac{3}{10}) \cup (\frac{3}{5},1)$.
	Under this refinement, the domain $\supp(\alpha_4) \cap \Omega_{\ell+1}$ is composed of two disconnected components.
	Nonetheless, based on its definition, $\Omega^j_1(4) = \emptyset.$
	Indeed, the definition of $\deriv{\ell,\ell+1}^j(i)$ assures that  $\Omega^j_1(i)$ is always either empty or a topological ball.
	This result will help us build the hierarchical B-spline complex by piecing together complexes defined on such subdomains.}\label{fig:disconnected_components}
\end{figure}

\begin{corollary}\label{cor:ball_definition}
	If ${\mbf{i}} \in \itpb{\ell,\ell+1}{\mbf{j}}$ then $\Omega_{\ell+1}^\mbf{j}({\mbf{i}})$ is non-empty for all $\mbf{j}$.
\end{corollary}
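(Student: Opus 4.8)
\textbf{Proof plan for Corollary \ref{cor:ball_definition}.}
The statement to prove is that if $\mbf{i} \in \itpb{\ell,\ell+1}{\mbf{j}}$, then $\Omega_{\ell+1}^{\mbf{j}}(\mbf{i})$ is non-empty. The plan is to trace the definitions: by hypothesis, $\mbf{i}$ lies in $\itpb{\ell,\ell+1}{\mbf{j}} = \bigcup_{\zerob_{\overline{\mbf{i}}} \in \tpb{\ell,\ell+1}{\mbf{0}}} \itpb{\ell}{\mbf{j}}(\overline{\mbf{i}})$, so there exists some index $\overline{\mbf{i}}$ with $\zerob_{\overline{\mbf{i}}} \in \tpb{\ell,\ell+1}{\mbf{0}}$ and $\mbf{i} \in \itpb{\ell}{\mbf{j}}(\overline{\mbf{i}})$. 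The latter membership unpacks to the inequalities $j_k - 1 \leq i_k - \overline{i}_k \leq 0$ for all $k$. This is exactly the hypothesis of Lemma \ref{lem:index_inequalities}, which then gives $\overline{\mbf{i}} \in \deriv{\ell,\ell+1}^{\mbf{j}}(\mbf{i})$.

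Having established that $\deriv{\ell,\ell+1}^{\mbf{j}}(\mbf{i})$ is non-empty (it contains $\overline{\mbf{i}}$), I would then invoke the definition of $\Omega_{\ell+1}^{\mbf{j}}(\mbf{i})$: since $\mbf{i} \in \itpb{\ell,\ell+1}{\mbf{j}}$, we are in the first case of the piecewise definition, so $\Omega_{\ell+1}^{\mbf{j}}(\mbf{i}) = \bigcup_{\overline{\mbf{i}}' \in \deriv{\ell,\ell+1}^{\mbf{j}}(\mbf{i})} \xgb_{\overline{\mbf{i}}'}^{\mbf{1}}$. This union is over a non-empty index set and includes the term $\xgb_{\overline{\mbf{i}}}^{\mbf{1}} = \supp(\zerob_{\overline{\mbf{i}}})$, which is a non-empty open set (the support of an actual B-spline, since $\zerob_{\overline{\mbf{i}}} \in \tpb{\ell,\ell+1}{\mbf{0}} \subset \tpb{\ell}{\mbf{0}}$). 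Hence $\Omega_{\ell+1}^{\mbf{j}}(\mbf{i}) \neq \emptyset$.

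This is essentially a definition-chase, so there is no serious obstacle; the only subtlety to be careful about is the edge-case convention that $\xgb_{\mbf{i}}^{\mbf{j}} := \emptyset$ when some index is out of range. I should confirm that $\overline{\mbf{i}}$ itself does not trigger this degeneracy — but it cannot, because $\zerob_{\overline{\mbf{i}}}$ is assumed to be a genuine member of $\tpb{\ell}{\mbf{0}}$, so its index satisfies $1 \leq \overline{i}_k \leq \ndof{\ell}{k}$, well within the valid range for a $\mbf{1}$-extended knot domain. Thus the term $\xgb_{\overline{\mbf{i}}}^{\mbf{1}}$ in the union is a legitimate non-empty set, and the conclusion follows. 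The whole argument is two or three lines once Lemma \ref{lem:index_inequalities} is in hand.
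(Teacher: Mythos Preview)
Your proposal is correct and follows essentially the same approach as the paper: unpack the definition of $\itpb{\ell,\ell+1}{\mbf{j}}$ to obtain a $0$-form B-spline $\zerob_{\overline{\mbf{i}}} \in \tpb{\ell,\ell+1}{\mbf{0}}$ satisfying $j_k-1 \leq i_k - \overline{i}_k \leq 0$, invoke Lemma~\ref{lem:index_inequalities} to place $\overline{\mbf{i}}$ in $\deriv{\ell,\ell+1}^{\mbf{j}}(\mbf{i})$, and conclude $\Omega_{\ell+1}^{\mbf{j}}(\mbf{i}) \supset \xgb_{\overline{\mbf{i}}}^{\mbf{1}} \neq \emptyset$. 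Your additional remark about the out-of-range convention is a harmless extra check not made explicit in the paper.
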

\begin{proof}
	Since ${\mbf{i}} \in \itpb{\ell,\ell+1}{\mbf{j}}$, there is a 0-form B-spline $\zerob_{\overline{\mbf{i}}}$ supported on $\xgb_{{\mbf{i}}}^\mbf{j} \cap \Omega_{\ell+1}$ such that, for all $k$,
	\begin{equation*}
		j_k-1 \leq {i}_k - \overline{i}_k \leq 0\;.
	\end{equation*}
	Then, from Lemma \ref{lem:index_inequalities}, $\Omega_{\ell+1}^\mbf{j}({\mbf{i}}) \supset \xgb_{\overline{\mbf{i}}}^\mbf{1}$.
\end{proof}

\begin{lemma}\label{lem:partition}
	The domain $\Omega_{\ell+1}$ can be constructed by taking the union of $\Omega_{\ell+1}^\mbf{j}({\mbf{i}})$ over all ${\mbf{i}} \in \itpb{\ell,\ell+1}{\mbf{j}}$ for any $\mbf{j}$, i.e.,
	\begin{equation}
		\Omega_{\ell+1} = \bigcup_{{\mbf{i}} \in \itpb{\ell,\ell+1}{\mbf{j}}}\Omega_{\ell+1}^\mbf{j}({\mbf{i}})\;.
	\end{equation}
\end{lemma}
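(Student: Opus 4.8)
The plan is to prove the stated set equality by establishing the two inclusions separately. The inclusion $\bigcup_{\mbf{i} \in \itpb{\ell,\ell+1}{\mbf{j}}}\Omega_{\ell+1}^\mbf{j}(\mbf{i}) \subseteq \Omega_{\ell+1}$ will be immediate from the way $\Omega_{\ell+1}^\mbf{j}(\mbf{i})$ was defined, so all the (modest) content lies in the reverse inclusion, which I would deduce by feeding Assumption~\ref{assume:zero_union} into Lemma~\ref{lem:index_inequalities}, the latter applied to a pair of coincident indices.

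First I would dispatch $\bigcup_{\mbf{i} \in \itpb{\ell,\ell+1}{\mbf{j}}}\Omega_{\ell+1}^\mbf{j}(\mbf{i}) \subseteq \Omega_{\ell+1}$. Each set $\Omega_{\ell+1}^\mbf{j}(\mbf{i})$ is, by definition, either empty or a union of $\mbf{1}$-extended knot domains every one of which is required to lie in $\Omega_{\ell+1}$; hence $\Omega_{\ell+1}^\mbf{j}(\mbf{i}) \subseteq \Omega_{\ell+1}$ for every $\mbf{i}$, and taking the union over $\mbf{i} \in \itpb{\ell,\ell+1}{\mbf{j}}$ preserves this containment. (This is exactly the remark made just before Corollary~\ref{cor:ball_definition}.)

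For the reverse inclusion $\Omega_{\ell+1} \subseteq \bigcup_{\mbf{i} \in \itpb{\ell,\ell+1}{\mbf{j}}}\Omega_{\ell+1}^\mbf{j}(\mbf{i})$, the first step is to re-express $\Omega_{\ell+1}$ as a union of supports of level-$\ell$ zero-form B-splines that themselves lie in $\Omega_{\ell+1}$: by Assumption~\ref{assume:zero_union} we may write $\Omega_{\ell+1} = \bigcup_{\zerob \in S}\supp(\zerob)$ for some $S \subseteq \tpb{\ell}{\mbf{0}}$, and since each $\zerob \in S$ has $\supp(\zerob) \subseteq \Omega_{\ell+1}$ we get $S \subseteq \tpb{\ell,\ell+1}{\mbf{0}}$, whence $\Omega_{\ell+1} = \bigcup_{\zerob \in \tpb{\ell,\ell+1}{\mbf{0}}}\supp(\zerob)$ (the opposite containment being trivial). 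The second step is to place each such support on the right-hand side: fix $\zerob_{\overline{\mbf{i}}} \in \tpb{\ell,\ell+1}{\mbf{0}}$; since each $j_k \in \{0,1\}$, the indices $\mbf{i} := \overline{\mbf{i}}$ and $\overline{\mbf{i}}$ satisfy $j_k - 1 \leq i_k - \overline{i}_k = 0 \leq 0$ for all $k$, so Lemma~\ref{lem:index_inequalities} gives $\overline{\mbf{i}} \in \itpb{\ell,\ell+1}{\mbf{j}}$ together with $\overline{\mbf{i}} \in \deriv{\ell,\ell+1}^\mbf{j}(\overline{\mbf{i}})$. Therefore $\Omega_{\ell+1}^\mbf{j}(\overline{\mbf{i}})$ occurs in the union on the right-hand side and, by its definition, contains the $\mbf{1}$-extended knot domain $\xgb_{\overline{\mbf{i}}}^\mbf{1} = \supp(\zerob_{\overline{\mbf{i}}})$. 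Taking the union over all $\zerob_{\overline{\mbf{i}}} \in \tpb{\ell,\ell+1}{\mbf{0}}$ then yields $\Omega_{\ell+1} \subseteq \bigcup_{\mbf{i} \in \itpb{\ell,\ell+1}{\mbf{j}}}\Omega_{\ell+1}^\mbf{j}(\mbf{i})$, and together with the first inclusion this proves the identity.

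I do not expect a genuine obstacle here: the statement is essentially bookkeeping with the definitions of $\itpb{\ell,\ell+1}{\mbf{j}}$ and $\deriv{\ell,\ell+1}^\mbf{j}(\cdot)$. The only point that deserves a word of care is that the identity must hold for \emph{every} admissible multi-index $\mbf{j}$ simultaneously, and this is handled uniformly by the observation that the ``diagonal'' choice $\mbf{i} = \overline{\mbf{i}}$ always lies in both $\itpb{\ell}{\mbf{j}}(\overline{\mbf{i}})$ and $\deriv{\ell,\ell+1}^\mbf{j}(\overline{\mbf{i}})$ regardless of $\mbf{j}$ — precisely the content of Lemma~\ref{lem:index_inequalities} specialized to coincident indices.
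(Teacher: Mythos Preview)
Your proof is correct and follows essentially the same approach as the paper. The paper's one-line proof cites Lemma~\ref{lem:index_inequalities} and Corollary~\ref{cor:ball_definition}; your argument unpacks this by explicitly invoking Assumption~\ref{assume:zero_union} and applying Lemma~\ref{lem:index_inequalities} with the diagonal choice $\mbf{i} = \overline{\mbf{i}}$, which is precisely the content of the proof of Corollary~\ref{cor:ball_definition}.
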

\begin{proof}
	The claim follows from Lemma \ref{lem:index_inequalities} and Corollary \ref{cor:ball_definition}.
\end{proof}

\begin{proposition}\label{prop:is_ball}
	If ${\mbf{i}} \in \itpb{\ell,\ell+1}{\mbf{j}}$ then $\Omega_{\ell+1}^\mbf{j}({\mbf{i}})$ is a ball.
\end{proposition}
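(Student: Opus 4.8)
The plan is to recognize $\Omega_{\ell+1}^{\mbf{j}}({\mbf{i}})$ as a finite union of open boxes that all contain one fixed common open sub-box, and then conclude that it is star-shaped, hence a ball. First I would recall that each $\mbf{1}$-extended knot domain is exactly the support of a level-$\ell$ $0$-form B-spline, $\xgb_{\overline{\mbf{i}}}^{\mbf{1}} = \supp(\zerob_{\overline{\mbf{i}}}) = \prod_{k=1}^{\ndim}\left(\knt{\overline{i}_k,\ell,k},\,\knt{\overline{i}_k+\pdeg{\ell}{k}+1,\ell,k}\right)$, which is an open box (a product of open intervals). Hence, directly from the definition,
\[
\Omega_{\ell+1}^{\mbf{j}}({\mbf{i}}) = \bigcup_{\overline{\mbf{i}}\in\deriv{\ell,\ell+1}^{\mbf{j}}({\mbf{i}})} \xgb_{\overline{\mbf{i}}}^{\mbf{1}}
\]
is a finite union of open boxes, and it is non-empty by Corollary~\ref{cor:ball_definition}. (Any $\overline{\mbf{i}}\in\deriv{\ell,\ell+1}^{\mbf{j}}({\mbf{i}})$ whose index is out of range has $\xgb_{\overline{\mbf{i}}}^{\mbf{1}}=\emptyset$ and may simply be dropped from the union.)

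Next I would extract the common sub-box. Every $\overline{\mbf{i}}\in\deriv{\ell,\ell+1}^{\mbf{j}}({\mbf{i}})$ satisfies $0\le\overline{i}_k-i_k\le 1-j_k$ for all $k$, so as $\overline{i}_k$ ranges over $\{i_k,\dots,i_k+1-j_k\}$ the left endpoint $\knt{\overline{i}_k,\ell,k}$ is largest at $\overline{i}_k=i_k+1-j_k$ and the right endpoint $\knt{\overline{i}_k+\pdeg{\ell}{k}+1,\ell,k}$ is smallest at $\overline{i}_k=i_k$. Therefore the $k$-th factor of every $\xgb_{\overline{\mbf{i}}}^{\mbf{1}}$ in the union contains the interval $\left(\knt{i_k+1-j_k,\ell,k},\,\knt{i_k+\pdeg{\ell}{k}+1,\ell,k}\right)$, which is non-empty because no knot is repeated more than $\pdeg{\ell}{k}$ times, so the $\pdeg{\ell}{k}+1+j_k$ consecutive knots involved cannot all coincide. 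Thus the open box
\[
C := \prod_{k=1}^{\ndim}\left(\knt{i_k+1-j_k,\ell,k},\,\knt{i_k+\pdeg{\ell}{k}+1,\ell,k}\right)
\]
is non-empty and contained in every box appearing in the union for $\Omega_{\ell+1}^{\mbf{j}}({\mbf{i}})$.

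Finally I would close with a soft-topology step. Fix $\mbf{x}_0\in C$. For any $\mbf{y}\in\Omega_{\ell+1}^{\mbf{j}}({\mbf{i}})$ we have $\mbf{y}\in\xgb_{\overline{\mbf{i}}}^{\mbf{1}}$ for some $\overline{\mbf{i}}$, and since this box is convex and also contains $\mbf{x}_0$, the whole segment $[\mbf{x}_0,\mbf{y}]$ lies in $\xgb_{\overline{\mbf{i}}}^{\mbf{1}}\subseteq\Omega_{\ell+1}^{\mbf{j}}({\mbf{i}})$. Hence $\Omega_{\ell+1}^{\mbf{j}}({\mbf{i}})$ is star-shaped with respect to $\mbf{x}_0$ (indeed with respect to every point of the open box $C$), so it is contractible; being moreover a bounded open finite union of boxes star-shaped about an open box, it is homeomorphic to an open ball of $\RR^{\ndim}$ (e.g.\ via radial reparametrization based at $\mbf{x}_0$), i.e.\ it is a ball.

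I expect the only genuine work to be the index/knot bookkeeping in the middle step: one must carefully track the inequalities $0\le\overline{i}_k-i_k\le 1-j_k$ to see which endpoints are extremal, and invoke the bounded-multiplicity hypothesis on the knot vectors to guarantee that the resulting common interval in each direction is non-degenerate. Everything else — the identification of $\xgb^{\mbf 1}_{\overline{\mbf i}}$ with a $0$-form support, the reduction to a finite union of convex sets, and the star-shaped-implies-ball conclusion — is routine, and the lone edge case (a formally listed but empty/out-of-range extended knot domain) is dispatched by observing it contributes nothing to the union.
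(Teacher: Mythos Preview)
Your proposal is correct and follows essentially the same approach as the paper: both arguments observe that $\Omega_{\ell+1}^{\mbf{j}}(\mbf{i})$ is a finite union of convex open boxes sharing a common non-empty open sub-box, conclude star-shapedness, and then invoke the fact that an open star-shaped set in $\RR^{\ndim}$ is a ball. The paper identifies the common sub-box more succinctly as $\supp(\threeb)$ for the $n$-form B-spline $\threeb := \bigtimes_{k=1}^{\ndim}\lknts{i_k+1}{1}{\ell,k}$ (whose $k$-th factor is exactly your interval when $j_k=0$ and a subinterval of yours when $j_k=1$), a formulation it reuses in the subsequent Greville-mesh proposition; your explicit endpoint bookkeeping is equivalent.
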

\begin{proof}
	As ${\mbf{i}} \in \itpb{\ell,\ell+1}{\mbf{j}}$, $\Omega_{\ell+1}^\mbf{j}({\mbf{i}})$ is not empty.
	In fact, by definition, $\Omega_{\ell+1}^\mbf{j}({\mbf{i}})$ is equal to the union of at most $2^n$ $\mbf{1}$-\textextendedknotdomains.
	Specifically, $\Omega_{\ell+1}^\mbf{j}({\mbf{i}})$ is the union of one or more members of the set
	\begin{equation*}
		\left\{
		\bigtimes_{k=1}^n  \xknts{\overline{i}_k}{1}{\ell,k}~:~\forall k~,~0 \leq \overline{i}_k - {i}_k \leq 1
		\right\}\;.
	\end{equation*}
	Observe that all members of the above set are $n$-dimensional hypercubes (and thus are all convex) and contain $\supp(\threeb)$, where $\threeb$ is an $n$-form B-spline defined as
	\begin{equation*}
		\threeb := \bigtimes_{k=1}^n \lknts{{i}_k+1}{1}{\ell,k}\;.
	\end{equation*}
	Therefore, $\Omega_{\ell+1}^\mbf{j}({\mbf{i}})$ is open and star-shaped with respect to any point in $\supp(\threeb)$.
	As a result \cite{Ferus:2008}, $\Omega_{\ell+1}^\mbf{j}(\mbf{i})$ is an open ball.
\end{proof}

~\\

\noindent
\emph{\textbf{Part 2: Cohomologically equivalent spline complexes on the subdomains}}\\
\noindent
We will now show that not only are $\Omega_{\ell+1}^\mbf{j}({\mbf{i}})$ nice, topologically trivial domains, the level $\ell$ and $\ell+1$ splines supported on them form cohomologically equivalent subcomplexes.

\begin{lemma}\label{lem:g_ball_definition}
	If ${\mbf{i}} \in \itpb{\ell,\ell+1}{\mbf{j}}$ then $\gmesh{\ell+s,\ell+1}\big(A\big)$, $s = 0, 1$, is non-empty for $A = \Omega_{\ell+1}^\mbf{j}({\mbf{i}})$.
\end{lemma}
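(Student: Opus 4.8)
The plan is to exhibit, for each $s \in \{0,1\}$, a level-$(\ell+s)$ $0$-form B-spline whose Greville point belongs to $\gmesh{\ell+s,\ell+1}(A)$. In view of the definition \eqref{eq:bspl_ll_subset}, it suffices to produce a $\phi \in \tpb{\ell+s}{\mbf{0}}$ together with some $\zerob \in \tpb{\ell}{\mbf{0}}$ such that $\supp(\phi) \subset \supp(\zerob) \subset A \cap \Omega_{\ell+1}$; then $\phi \in \tpb{\ell+s,\ell+1}{\mbf{0}}(A)$ and its Greville point lies in $\gmesh{\ell+s,\ell+1}(A)$, which is therefore non-empty.

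First I would unwind the hypothesis. By definition of $\itpb{\ell,\ell+1}{\mbf{j}}$, there is a level-$\ell$ zero-form B-spline $\zerob_{\overline{\mbf{i}}} \in \tpb{\ell,\ell+1}{\mbf{0}}$ with $\mbf{i} \in \itpb{\ell}{\mbf{j}}(\overline{\mbf{i}})$; Lemma \ref{lem:index_inequalities} then yields $\overline{\mbf{i}} \in \deriv{\ell,\ell+1}^\mbf{j}(\mbf{i})$, so by the definition of $A = \Omega_{\ell+1}^\mbf{j}(\mbf{i})$ we obtain $\supp(\zerob_{\overline{\mbf{i}}}) = \xgb_{\overline{\mbf{i}}}^\mbf{1} \subset A$ (this is exactly the computation in the proof of Corollary \ref{cor:ball_definition}). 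Since $\zerob_{\overline{\mbf{i}}} \in \tpb{\ell,\ell+1}{\mbf{0}}$ also forces $\supp(\zerob_{\overline{\mbf{i}}}) \subset \Omega_{\ell+1}$, and $A \subset \Omega_{\ell+1}$, these combine to $\supp(\zerob_{\overline{\mbf{i}}}) \subset A \cap \Omega_{\ell+1}$. I then set $\zerob := \zerob_{\overline{\mbf{i}}} \in \tpb{\ell}{\mbf{0}}$.

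Next I would pass to level $\ell+s$ by refinement. By the nestedness of Assumption \ref{assume0}, $\zerob \in \tpbs{\ell}{\mbf{0}} \subseteq \tpbs{\ell+s}{\mbf{0}} = \vspan{\tpb{\ell+s}{\mbf{0}}}$, so $\zerob$ has a (unique) expansion $\zerob = \sum_m c_m \tilde{\zerob}_m$ with $\tilde{\zerob}_m \in \tpb{\ell+s}{\mbf{0}}$. The support-locality of univariate B-spline knot insertion \cite{DeBoor}, applied in each parametric direction and tensored together, ensures that $c_m \neq 0$ implies $\supp(\tilde{\zerob}_m) \subset \supp(\zerob)$; and since $\zerob$ is a nonzero function, the expansion has at least one nonzero coefficient. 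Choosing any such index $m$ and putting $\phi := \tilde{\zerob}_m$ gives $\phi \in \tpb{\ell+s}{\mbf{0}}$ with $\supp(\phi) \subset \supp(\zerob) \subset A \cap \Omega_{\ell+1}$, which is what was needed. (For $s = 0$ the expansion is trivial and $\phi = \zerob$, so no refinement is actually invoked in that case.)

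I do not expect a substantive obstacle here: the statement amounts to saying that the non-empty region $A$ contains the support of some B-spline at each of the two levels, and the proof is bookkeeping on the definitions plus one elementary refinement fact. The only step deserving care is the refinement-locality claim for $s = 1$ — one must use both that the finer B-splines occurring with nonzero coefficient are supported inside $\supp(\zerob)$ and that the expansion is non-trivial; both are standard consequences of the local knot-insertion formula for univariate B-splines and the tensor-product structure of $\tpb{\ell+s}{\mbf{0}}$.
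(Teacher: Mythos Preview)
Your proof is correct and follows the same approach as the paper, which dispatches the lemma in a single line: ``The claim follows from Corollary \ref{cor:ball_definition}.'' You have simply unpacked that citation explicitly---recovering the $0$-form $\zerob_{\overline{\mbf{i}}}$ with $\supp(\zerob_{\overline{\mbf{i}}}) \subset A$ exactly as in the proof of Corollary \ref{cor:ball_definition}, and then supplying the (implicit) refinement step needed for $s=1$.
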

\begin{proof}
	The claim follows from Corollary \ref{cor:ball_definition}.
\end{proof} 
\begin{lemma}\label{lem:space_containment}
	If $B \subset A$, then
	\begin{equation*}
		\begin{split}
			\tpb{\ell+s,\ell+1}{\mbf{j}}(B) &\subset \tpb{\ell+s,\ell+1}{\mbf{j}}(A)\;,\\
			\tpbs{\ell+s,\ell+1}{\mbf{j}}(B) &\subset \tpbs{\ell+s,\ell+1}{\mbf{j}}(A)\;,\\
			\gmesh{\ell+s,\ell+1}(B) &\subset \gmesh{\ell+s,\ell+1}(A)\;.
		\end{split}
	\end{equation*}
\end{lemma}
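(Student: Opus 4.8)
The plan is to observe that each of the three objects in the statement is built, via a union or a span, from the set of tensor-product B-splines $\tpb{\ell+s,\ell+1}{\mbf{j}}(Y)$, and that this set depends monotonically on the domain $Y$. So it suffices to establish the first inclusion; the other two then follow formally.

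For the first inclusion, recall from \eqref{eq:bspl_ll_subset} that $\phi \in \tpb{\ell+s,\ell+1}{\mbf{j}}(Y)$ means precisely that $\phi \in \tpb{\ell+s}{\mbf{j}}$ and there exists a $0$-form B-spline $\zerob \in \tpb{\ell}{\mbf{0}}$ with $\supp(\phi) \subset \supp(\zerob) \subset Y \cap \Omega_{\ell+1}$. Since $B \subset A$ gives $B \cap \Omega_{\ell+1} \subset A \cap \Omega_{\ell+1}$, any $\zerob$ witnessing membership of $\phi$ in $\tpb{\ell+s,\ell+1}{\mbf{j}}(B)$ also witnesses membership of $\phi$ in $\tpb{\ell+s,\ell+1}{\mbf{j}}(A)$, so $\tpb{\ell+s,\ell+1}{\mbf{j}}(B) \subset \tpb{\ell+s,\ell+1}{\mbf{j}}(A)$. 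For the second inclusion, taking spans of both sides and using $\tpbs{\ell+s,\ell+1}{\mbf{j}}(Y) = \vspan{\tpb{\ell+s,\ell+1}{\mbf{j}}(Y)}$ immediately yields $\tpbs{\ell+s,\ell+1}{\mbf{j}}(B) \subset \tpbs{\ell+s,\ell+1}{\mbf{j}}(A)$. For the third, since $\gmesh{\ell+s,\ell+1}(Y)$ is by definition the union of $\gmap{\ell}{|\mbf{j}|}(\gb)$ over all $\gb \in \tpb{\ell+s,\ell+1}{\mbf{j}}(Y)$ with $0 \le |\mbf{j}| \le n$, enlarging $Y$ from $B$ to $A$ only enlarges the index set over which this union is taken, whence $\gmesh{\ell+s,\ell+1}(B) \subset \gmesh{\ell+s,\ell+1}(A)$.

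There is no real obstacle here: the statement is a pure monotonicity fact obtained by unwinding the definitions in \eqref{eq:bspl_ll_subset}. The only point worth keeping straight is that the domain $Y$ enters those definitions solely through the intersection $Y \cap \Omega_{\ell+1}$, which is itself monotone in $Y$, so no interaction with the refinement domain $\Omega_{\ell+1}$ can spoil the containment.
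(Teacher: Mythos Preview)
Your proof is correct and takes essentially the same approach as the paper, which simply states that the claim follows from the definitions of the three objects; you have merely unpacked those definitions explicitly.
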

\begin{proof}
	The claim follows from the definitions of the three objects.
\end{proof}

\begin{proposition}\label{prop:g_is_ball}
	If ${\mbf{i}} \in \itpb{\ell,\ell+1}{\mbf{j}}$ then $\gmesh{\ell+s,\ell+1}\big(A\big)$, $s = 0, 1$, is also a ball for $A = \Omega_{\ell+1}^\mbf{j}({\mbf{i}})$.
\end{proposition}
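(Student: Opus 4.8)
The plan is to mimic the proof of Proposition \ref{prop:is_ball}: I will show that $\gmesh{\ell+s,\ell+1}(A)$ is an open subset of $\RR^\ndim$ that is star-shaped with respect to a fixed nonempty box, and then conclude with \cite{Ferus:2008}. The geometric picture is that $\gmesh{\ell+s,\ell+1}(A)$, like $A$ itself, is a finite union of axis-aligned open boxes that all contain a common core box.

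\emph{Step 1: identifying the contributing B-splines.} Write $\overline{\mbf i}$ for a generic element of the index set $\deriv{\ell,\ell+1}^{\mbf j}(\mbf i)$ appearing in the definition of $A = \Omega_{\ell+1}^{\mbf j}(\mbf i)$, so that $A = \bigcup_{\overline{\mbf i}}\xgb_{\overline{\mbf i}}^{\mbf 1}$, and recall that each $\xgb_{\overline{\mbf i}}^{\mbf 1}$ equals $\supp(\zerob_{\overline{\mbf i}})$ for the level-$\ell$ $0$-form B-spline $\zerob_{\overline{\mbf i}}$. The first claim is that $\gb \in \tpb{\ell+s}{\mbf j}$ lies in $\tpb{\ell+s,\ell+1}{\mbf j}(A)$ if and only if $\supp(\gb) \subset \supp(\zerob_{\overline{\mbf i}})$ for some $\overline{\mbf i} \in \deriv{\ell,\ell+1}^{\mbf j}(\mbf i)$, and hence $\gmesh{\ell+s,\ell+1}(A) = \bigcup_{\overline{\mbf i}}\gmesh{\ell+s}(\supp(\zerob_{\overline{\mbf i}}))$ in the notation of \eqref{eq:bspl_l_subset}. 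The ``if'' direction is immediate because $\supp(\zerob_{\overline{\mbf i}}) \subset A \subset \Omega_{\ell+1}$. For ``only if'', a witnessing level-$\ell$ $0$-form B-spline $\zerob'$ with $\supp(\gb) \subset \supp(\zerob') \subset A$ has support a box whose $k$-th factor is an interval of the form $(\knt{c_k,\ell,k},\knt{c_k+\pdeg{\ell}{k}+1,\ell,k})$; since the $k$-th factors of the supports $\supp(\zerob_{\overline{\mbf i}})$ are intervals of this same form with index shifts $\overline i_k$ ranging over at most two consecutive integers, this $k$-th factor of $\supp(\zerob')$, being contained in their union, must coincide with one of them. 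Doing this in every direction forces $\supp(\zerob') = \xgb_{\overline{\mbf i}}^{\mbf 1}$ for an index $\overline{\mbf i}$ meeting the index inequalities in the definition of $\deriv{\ell,\ell+1}^{\mbf j}(\mbf i)$, while the remaining requirement $\xgb_{\overline{\mbf i}}^{\mbf 1} \subset \Omega_{\ell+1}$ holds since $\supp(\zerob') \subset A \subset \Omega_{\ell+1}$; thus $\overline{\mbf i} \in \deriv{\ell,\ell+1}^{\mbf j}(\mbf i)$ and $\supp(\gb) \subset \supp(\zerob_{\overline{\mbf i}})$.

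\emph{Step 2: each piece is an open box, and they share a core.} For any box $B$ whose $k$-th factor is an open interval bounded by two knots of $\knts{\ell+s,k}$, the condition $\supp(\gb) \subset B$ for $\gb \in \tpb{\ell+s}{\mbf j}$ with $0 \le |\mbf j| \le \ndim$ factors over the $\ndim$ directions, so the set of contributing B-spline indices is a Cartesian product and $\gmesh{\ell+s}(B) = \bigtimes_{k=1}^\ndim G_k$, where $G_k$ is the union of the Greville points of the univariate $0$-forms supported in the $k$-th factor of $B$ together with the Greville edges of the univariate $1$-forms supported there. A short comparison of index ranges shows that these Greville edges are consecutive and that the Greville points are precisely their interior junctions, so $G_k$ is a single open interval, nondegenerate whenever the $k$-th factor of $B$ contains the support of a univariate level-$(\ell+s)$ $1$-form; applying this with $B = \supp(\zerob_{\overline{\mbf i}})$ shows $\gmesh{\ell+s,\ell+1}(A)$ is a finite union of open boxes. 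Finally, let $\threeb_{\mbf i} := \bigtimes_{k=1}^\ndim \lknts{i_k+1}{1}{\ell,k}$ be the level-$\ell$ $\ndim$-form from the proof of Proposition \ref{prop:is_ball}; it was shown there that $\supp(\threeb_{\mbf i}) \subset \xgb_{\overline{\mbf i}}^{\mbf 1} = \supp(\zerob_{\overline{\mbf i}})$ for every $\overline{\mbf i} \in \deriv{\ell,\ell+1}^{\mbf j}(\mbf i)$, so Lemma \ref{lem:space_containment} gives that every box in the union contains $\gmesh{\ell+s}(\supp(\threeb_{\mbf i}))$, and this common box is nonempty since $\supp(\threeb_{\mbf i})$ spans $\pdeg{\ell}{k} \ge 1$ knot-spans of $\knts{\ell,k}$ --- hence at least that many of the refined $\knts{\ell+s,k}$ --- in each direction, so it contains the support of a level-$(\ell+s)$ $\ndim$-form.

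\emph{Conclusion and main obstacle.} Since $\gmesh{\ell+s,\ell+1}(A)$ is a finite union of open boxes all containing the fixed nonempty box $\gmesh{\ell+s}(\supp(\threeb_{\mbf i}))$, it is open and star-shaped with respect to every point of that box, hence an open ball by \cite{Ferus:2008}. I expect the delicate point to be the bookkeeping in Steps 1 and 2: getting the univariate index ranges exactly right so that the chosen Greville points and edges glue into a single open interval, and correctly handling repeated interior knots in the ``integer translate inside a union of consecutive translates'' argument of Step 1, since these can make a level-$\ell$ $0$-form B-spline support a proper sub-box of one of the pieces $\xgb_{\overline{\mbf i}}^{\mbf 1}$ rather than equal to it.
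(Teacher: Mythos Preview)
Your proposal is correct and follows essentially the same approach as the paper: decompose $\gmesh{\ell+s,\ell+1}(A)$ into the Greville sub-meshes associated with the constituent $0$-form supports, observe that each piece is an open box containing the common core $\gmesh{\ell+s,\ell+1}(\supp(\threeb))$, and conclude star-shapedness and hence ball-ness via \cite{Ferus:2008}. Your Steps 1 and 2 make explicit two things the paper's proof asserts without justification---namely, that $\gmesh{\ell+s,\ell+1}(A)$ really equals the union of the pieces $\gmesh{\ell+s,\ell+1}(\supp(\zerob_{\overline{\mbf i}}))$, and that each such piece is an open hypercube---so your argument is in fact more detailed than the paper's, and the repeated-knot concern you flag is a technicality both proofs elide.
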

\begin{proof}
	We proceed as in Proposition \ref{prop:is_ball}.
	Recall from that proof that $A$ is the union of at most $2^n$ $\mbf{1}$-\textextendedknotdomain{s}, and note that each $\mbf{1}$-\textextendedknotdomain~contains a zero form whose support is the same as that of the \textextendedknotdomain.
	Then, for each such zero form $\zerob$, and with $\threeb$  defined to be the $n$-form B-spline used in the proof of Proposition  \ref{prop:is_ball}, $\supp(\threeb) \subset \supp(\zerob) \subset A$.
	Observe that each $\gmesh{\ell+s,\ell+1}\big(\supp(\zerob)\big)$ is an $n$-dimensional hypercube (and thus convex) and, from Lemma \ref{lem:space_containment}, contains $\gmesh{\ell+s,\ell+1}\big(\supp(\threeb)\big)$.
	Therefore, as before, $\gmesh{\ell+s,\ell+1}(A)$ is open and star-shaped with respect to any point in $\gmesh{\ell+s,\ell+1}\big(\supp(\threeb)\big)$, and is thus an open ball.
\end{proof}

Using the above and given a domain $A$, we now define spline complex $\tpbs{\ell+s,\ell+1}{}(A)$, $s = 0, 1$, as
\begin{equation}
	\begin{tikzcd}[column sep=0.28in]
		\tpbs{\ell+s,\ell+1}{}(A)~:~\tpbs{\ell+s,\ell+1}{0}(A) \arrow{r}{d} & \tpbs{\ell+s,\ell+1}{1}(A) \arrow{r}{d} & \cdots \arrow{r}{d}  & \tpbs{\ell+s,\ell+1}{n}(A)\;.
	\end{tikzcd}
\end{equation}
Lemma \ref{lem:complex_equality} shows that, for certain domains $A$, the complex $\tpbs{\ell+s,\ell+1}{}(A)$ is the same as $\tpbs{\ell+s}{}(A)$.
Moreover, Corollary \ref{corol:3d_local_isomorphism} shows that these complexes are exact.
\begin{lemma}\label{lem:complex_equality}
	If $A \subset \Omega_{\ell+1}$ is a union of supports of a subset of $\tpb{\ell,\ell+1}{\mbf{0}}$ then, for all $j$,
	\begin{equation}
		\tpbs{\ell+s}{j}(A) = \tpbs{\ell+s,\ell+1}{j}(A)\;.
	\end{equation}
\end{lemma}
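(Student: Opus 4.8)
The plan is to reduce the statement to an identity between sets of B-spline basis functions, dispatch one inclusion immediately, and then establish the reverse inclusion by a combinatorial argument on index space.

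First I would observe that, by \eqref{eq:bspl_l_subset} and \eqref{eq:bspl_ll_subset}, both $\tpbs{\ell+s}{j}(A)$ and $\tpbs{\ell+s,\ell+1}{j}(A)$ are Cartesian products, over all multi-indices $\mbf{j}$ with $|\mbf{j}| = j$, of the spans of the B-spline sets $\tpb{\ell+s}{\mbf{j}}(A)$ and $\tpb{\ell+s,\ell+1}{\mbf{j}}(A)$; since these sets are subsets of the linearly independent family $\tpb{\ell+s}{\mbf{j}}$, a subset is uniquely recovered from its span, so it suffices to prove $\tpb{\ell+s}{\mbf{j}}(A) = \tpb{\ell+s,\ell+1}{\mbf{j}}(A)$ for every $\mbf{j}$. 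The inclusion $\tpb{\ell+s,\ell+1}{\mbf{j}}(A) \subseteq \tpb{\ell+s}{\mbf{j}}(A)$ is immediate from the definition \eqref{eq:bspl_ll_subset}: if $\phi \in \tpb{\ell+s,\ell+1}{\mbf{j}}(A)$ then $\supp(\phi) \subset \supp(\zerob) \subset A \cap \Omega_{\ell+1} \subseteq A$ for some level $\ell$ $0$-form $\zerob$.

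For the reverse inclusion, fix $\phi \in \tpb{\ell+s}{\mbf{j}}$ with $\supp(\phi) \subseteq A$; I must produce a level $\ell$ $0$-form $\zerob$ with $\supp(\phi) \subseteq \supp(\zerob) \subseteq A$ (such a $\zerob$ then lies in $\tpb{\ell,\ell+1}{\mbf{0}}$, since $\supp(\zerob) \subseteq A \subseteq \Omega_{\ell+1}$). I would use two elementary facts. (i) By the nestedness of the knot vectors (Assumption \ref{assume0}) together with the minimal-support property of B-splines, $\supp(\phi)$ is ``small'': in each parametric direction $k$ it spans at most $\pdeg{\ell}{k} - j_k + 1 \le \pdeg{\ell}{k}+1$ cells of the level $\ell$ \Bezier{} mesh, and hence its $k$-th component is contained in the support of at least one univariate level $\ell$ $0$-form; the univariate level $\ell$ $0$-forms with this property have indices forming a nonempty contiguous block, so the level $\ell$ $0$-forms whose support contains $\supp(\phi)$ form a nonempty product of index-intervals (equivalently, a nonempty family of $\mbf{1}$-\textextendedknotdomains~containing $\supp(\phi)$). (ii) Since $A$ is a union of level $\ell$ $0$-form supports (Assumption \ref{assume:zero_union} is the prototype of this hypothesis), $A$ is a union of level $\ell$ \Bezier{} cells together with their shared faces; because $A$ and $\supp(\phi)$ are open, every level $\ell$ \Bezier{} cell meeting $\supp(\phi)$ lies in $A$, and therefore the bounding box $W$ of the cells spanned by $\supp(\phi)$ satisfies $\supp(\phi) \subseteq W \subseteq A$.

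What remains, and what I expect to be the main obstacle, is to combine (i) and (ii): to select from the family in (i) a single level $\ell$ $0$-form whose \emph{whole} support lies in $A$. The difficulty is genuinely one of coupling across directions — although $A$ is a union of product boxes, $A$ itself need not be a product, so one cannot choose the enclosing $0$-form direction-by-direction and simply conclude that its support lies in $A$. I would resolve this by induction on the parametric dimension $\ndim$. For $\ndim = 1$ it is elementary: the connected component of $A$ containing $\supp(\phi)$ is a contiguous run of level $\ell$ cells that is itself a union of univariate level $\ell$ $0$-form supports, and any run of at most $\pdeg{\ell}{1}+1$ consecutive cells inside such a union is contained in one of the constituting $0$-form supports (a short index computation, treating the domain-boundary cases separately). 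For the inductive step, one fixes the cell spanned by $\supp(\phi)$ in the last direction: the corresponding slice of $A$ is again a union of $(\ndim-1)$-dimensional level $\ell$ $0$-form supports containing the projection of $\supp(\phi)$, so the inductive hypothesis applies; one then shows that the slice-wise enclosing $0$-forms can be chosen compatibly across the at most $\pdeg{\ell}{\ndim} - j_\ndim + 1$ cells that $\supp(\phi)$ spans in the last direction, and pairs them with a univariate level $\ell$ $0$-form support containing the last component of $\supp(\phi)$, by a sliding-window argument in index space. (An alternative to the induction, in the spirit of the proof of Proposition \ref{prop:is_ball}, is a direct extremal argument: among all level $\ell$ $0$-forms whose support contains $\supp(\phi)$, pick one whose support has maximal intersection with $A$, and derive a contradiction from any cell of its support lying outside $A$ by translating the $0$-form one index toward $\supp(\phi)$.) Granting this selection step, the desired inclusion, and hence the lemma, follows; everything else — the reduction to B-spline sets, the trivial inclusion, and observations (i) and (ii) — is routine.
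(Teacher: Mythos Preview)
Your reduction to the identity $\tpb{\ell+s}{\mbf{j}}(A) = \tpb{\ell+s,\ell+1}{\mbf{j}}(A)$ and your identification of the reverse inclusion as the only nontrivial step are both correct, and in fact you have been far more careful than the paper: the paper's own proof dispatches the reverse inclusion in a single sentence, simply asserting that ``by the assumption on $A$, there exists an $\zerob \in \tpb{\ell,\ell+1}{\mbf{0}}(A)$ such that $\supp(\phi) \subset \supp(\zerob)$.'' You correctly flagged the coupling-across-directions difficulty that this one-liner hides.

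However, that difficulty cannot be overcome, because the lemma as stated is false. Take $n=2$, $\pdeg{\ell}{1}=\pdeg{\ell}{2}=3$, and
\[
A \;=\; \supp\big(\zerob_{(2,4)}\big)\cup\supp\big(\zerob_{(5,3)}\big)
\;=\; (\xi_2,\xi_6)\times(\xi_4,\xi_8)\ \cup\ (\xi_5,\xi_9)\times(\xi_3,\xi_7)\,.
\]
The level-$\ell$ $2$-form $\phi=\phi^{(1,1)}_{(4,4)}$ has $\supp(\phi)=(\xi_4,\xi_7)^2\subset A$ (points with $x<\xi_6$ lie in the first box, points with $x\ge\xi_6$ in the second). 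But the only $0$-forms whose support contains $\supp(\phi)$ are $\zerob_{(i,j)}$ with $i,j\in\{3,4\}$, and each of these four supports contains a point outside $A$ (for instance $\supp(\zerob_{(3,3)})\ni(\xi_3+\varepsilon,\xi_3+\varepsilon)\notin A$). Hence $\phi\in\tpb{\ell}{(1,1)}(A)\setminus\tpb{\ell,\ell+1}{(1,1)}(A)$. Both your inductive scheme and your extremal ``slide toward $\supp(\phi)$'' alternative break on exactly this configuration: the two defining $0$-forms are offset in \emph{opposite} coordinate directions, so no slice-wise compatible choice and no monotone translation exists. The paper's assertion is therefore unjustified as written; the statement seems to need an extra hypothesis---either restriction to the specific star-shaped domains $\Omega_{\ell+1}^{\mbf j}(\mbf i)$ and their slabs, or the shortest-chain Assumption~\ref{assume:shortest_path}, which is in force at every point where the lemma is actually invoked and which this counterexample violates.
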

\begin{proof}
	It is clear that $\tpbs{\ell+s,\ell+1}{j}(A) \subset \tpbs{\ell+s}{j}(A)$ from the definitions.
	For the inclusion in the other direction, notice that if there is a B-spline $\phi \in \tpbs{\ell+s}{j}(A)$, then there exists a $\mbf{j}$, $|\mbf{j}|=j$, such that $\phi \in \tpb{\ell+s}{\mbf{j}}(A)$.
	However, by the assumption on $A$, there exists an $\zerob \in \tpb{\ell,\ell+1}{\mbf{0}}(A)$ such that $\supp(\phi) \subset \supp(\zerob)$.
	Then, by definition, $\phi \in \tpbs{\ell+s,\ell+1}{j}(A)$.
\end{proof}

When $\tpbs{\ell+s,\ell+1}{}(A) = \tpbs{\ell+s}{}(A)$ for some choice of $A$, then we will denote the $k$-cohomology of both $\tpbs{\ell+s,\ell+1}{}(A)$ and $\tpbs{\ell+s}{}(A)$ with $H^{(j)}_{\ell+s}(A)$.
One such choice is when $A$ is a union of supports of splines in $\tpb{\ell,\ell+1}{\mbf{0}}$, as in Lemma \ref{lem:complex_equality}.

\begin{corollary}\label{corol:3d_local_isomorphism}
	For any ${\mbf{i}} \in \itpb{\ell,\ell+1}{\mbf{j}}$ and $A = \Omega_{\ell+1}^\mbf{j}({\mbf{i}})$, $\tpbs{\ell+s,\ell+1}{}(A)$ is exact, $s = 0, 1$,
	i.e.,
	\begin{equation}
		H^{(j)}_\ell(A) = H^{(j)}_{\ell+1}(A) = 
		\begin{dcases}
			0 \;, & j = 0, 1, \dots, n-1\;,\\
			\RR \;, & j = n\;.
		\end{dcases}
	\end{equation}
\end{corollary}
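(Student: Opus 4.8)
My plan is to reduce the statement, via Lemma \ref{lem:complex_equality}, to computing the cohomology of an ordinary tensor-product spline complex restricted to $A$, and then to read that cohomology off from the combinatorics of the Greville mesh, which Proposition \ref{prop:g_is_ball} shows is a ball. To start, note that $A = \Omega_{\ell+1}^\mbf{j}(\mbf{i})$ is, by the proof of Proposition \ref{prop:is_ball}, the union of the extended knot domains $\xgb^\mbf{1}_{\overline{\mbf{i}}}$ with $\overline{\mbf{i}} \in \deriv{\ell,\ell+1}^\mbf{j}(\mbf{i})$; each such $\xgb^\mbf{1}_{\overline{\mbf{i}}}$ is the support of a level-$\ell$ zero-form B-spline $\zerob_{\overline{\mbf{i}}}$, and the defining condition $\xgb^\mbf{1}_{\overline{\mbf{i}}} \subset \Omega_{\ell+1}$ of $\deriv{\ell,\ell+1}^\mbf{j}(\mbf{i})$ is exactly the statement that $\zerob_{\overline{\mbf{i}}} \in \tpb{\ell,\ell+1}{\mbf{0}}$. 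Thus $A$ is a union of supports of a subset of $\tpb{\ell,\ell+1}{\mbf{0}}$, so Lemma \ref{lem:complex_equality} gives $\tpbs{\ell+s,\ell+1}{}(A) = \tpbs{\ell+s}{}(A)$ for $s\in\{0,1\}$, and it remains to show that the tensor-product spline complex $\tpbs{\ell+s}{}(A)$ is exact with the asserted cohomology. Since this reduced claim concerns a tensor-product complex on a fixed region $A$, one and the same argument will dispatch both levels $\ell$ and $\ell+1$.

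For the reduced claim, the key is to identify $\tpbs{\ell+s}{}(A)$, as a cochain complex, with the (cubical) cellular cochain complex of the Greville mesh $\gmesh{\ell+s,\ell+1}(A)$ carrying the homogeneous boundary behaviour inherited from $\partial\Omega$. The underlying linear isomorphism is already in hand: the Greville correspondence maps the B-spline $j$-forms of level $\ell+s$ supported on $A$ bijectively onto the $j$-cells of $\gmesh{\ell+s,\ell+1}(A)$. What must be checked is that the exterior derivative agrees, up to positive rescalings of basis elements (which do not change cohomology), with the cellular coboundary. This reduces to the univariate B-spline differentiation rule --- the derivative of a degree-$p$ B-spline is a positive multiple of a consecutive degree-$(p-1)$ B-spline minus a positive multiple of the next one, which is precisely the signed incidence of a Greville edge with its two endpoints --- propagated through the tensor-product structure by the Leibniz rule. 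Granting this identification, the cohomology of $\tpbs{\ell+s}{}(A)$ equals that of the ball $\gmesh{\ell+s,\ell+1}(A)$ with the relevant boundary conditions, which is $\RR$ in top degree $n$ and $0$ otherwise (matching the contractible, homogeneous-boundary-condition case recorded in Section \ref{sec:prelims-deRham}).

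If one instead prefers to use the Mayer--Vietoris apparatus of Section \ref{sec:mayer-vietoris} directly, one can induct on the number of extended knot domains making up $A$: the base case is a single hypercube, where exactness of the tensor-product complex is classical \cite{BRSV11,Evans19}; for the step, write $A = A' \cup \xgb^\mbf{1}_{\overline{\mbf{i}}}$ with $A'$ the union of the remaining hypercubes, observe that all the hypercubes constituting $A$ share the common sub-box $\supp(\threeb)$ used in the proof of Proposition \ref{prop:is_ball}, so that $A' \cap \xgb^\mbf{1}_{\overline{\mbf{i}}}$ is again such a union, and feed the three pieces into the long exact sequence of Corollary \ref{corol:Mayer_vietoris} at fixed level; this forces $H^{(j)}_{\ell+s}(A) = 0$ for $j<n$, and $H^{(n)}_{\ell+s}(A) = \RR$ follows from the tail of the sequence (equivalently, the image of $d$ on $n$-forms has codimension one, the sole relation being vanishing of the integral).

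The step I expect to be the main obstacle is the careful handling of boundary conditions: one must ensure that the invariant being computed is the relative / compactly-supported cohomology of the ball (so that $H^{(n)}$ comes out $\RR$, not $0$), and that the pieces $A$, $A'$ and their intersections are treated correctly according to whether or not they abut $\partial\Omega$. In the first route, the accompanying technical point is getting the sign and orientation conventions right so that the exterior derivative and the cellular coboundary coincide as maps of cochain complexes; this is routine but must be carried out with care.
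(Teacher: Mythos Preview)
Your primary argument is correct and is essentially the paper's own proof: both reduce, via Lemma~\ref{lem:complex_equality}, to the tensor-product complex $\tpbs{\ell+s}{}(A)$, identify its cohomology with the relative cellular cohomology of the Greville submesh $\gmesh{\ell+s,\ell+1}(A)$, and then use that this submesh is a ball (Proposition~\ref{prop:g_is_ball}). The only difference is packaging: the paper cites \cite{Evans19} for the spline--Greville correspondence and invokes Lefschetz duality explicitly to pass from $H^{(j)}\big(\gmesh{},\partial\gmesh{}\big)$ to $H_{(n-j)}(\gmesh{})$, whereas you sketch the correspondence directly via the univariate differentiation rule and appeal to the relative cohomology of a ball.

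Your alternative Mayer--Vietoris route has a gap worth flagging. The long exact sequence of Corollary~\ref{corol:Mayer_vietoris} computes the cohomology of $\tpbs{\ell+s}{}(A' \boxplus \xgb^\mbf{1}_{\overline{\mbf{i}}})$, not of $\tpbs{\ell+s}{}(A' \cup \xgb^\mbf{1}_{\overline{\mbf{i}}})$, and you have not argued that these coincide---i.e., that every B-spline supported on the union is already supported on one of the two pieces. This is exactly the kind of statement proved later in Lemma~\ref{lem:unions}, which relies on Assumption~\ref{assume:shortest_path}; at this stage you would need to supply a direct argument specific to the star-shaped configuration of at most $2^n$ overlapping hypercubes sharing $\supp(\threeb)$. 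That is doable, but it is not automatic from what you have written.
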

\begin{proof}
	The proof follows the same argument as in \cite[Corollary 5.12]{Evans19} and uses standard results from homology and cohomology.
	From Propositions \ref{prop:is_ball} and \ref{prop:g_is_ball} and direct computation \cite[Chapter 2]{Hatcher},
	\begin{equation*}
		H_{(n-j)}(A) \approx H_{(n-j)}\big(\gmesh{\ell,\ell+1}(A)\big) \approx
		H_{(n-j)}\big(\gmesh{\ell+1,\ell+1}(A)\big) \approx 
		\begin{dcases}
			\RR\;,& j = n\;,\\
			0\;, & \text{otherwise}\;.
		\end{dcases}
	\end{equation*}
	By the correspondence between the tensor-product splines and their Greville grids \cite{Evans19}, we also have for $s = 0, 1$,
	\begin{equation*}
		H^{(j)}_{\ell+s}(A) \approx H^{(j)}\big(\gmesh{\ell+s,\ell+1}(A),\partial\gmesh{\ell+s,\ell+1}(A)\big)\;,\;s = 0, 1\;.
	\end{equation*}
	Then, the claim follows from the the Lefschetz duality theorem \cite[Theorem 3.43]{Hatcher} which states that
	\begin{equation*}
		H^{(j)}\big(\gmesh{\ell+s,\ell+1}(A),\partial\gmesh{\ell+s,\ell+1}(A)\big) \approx H_{(n-j)}\big(\gmesh{\ell,\ell+1}(A)\big)\;,\;s = 0, 1\;.
	\end{equation*}	
\end{proof}

\begin{corollary}\label{corol:injection_cohomology}
	Let $A = \Omega_{\ell+1}^\mbf{j}({\mbf{i}})$ for any multiindex ${\mbf{i}}$.
	Then, the inclusion operator $\splinelevelinclusion{\ell,\ell+1}:\tpbs{\ell,\ell+1}{}(A)\rightarrow\tpbs{\ell+1,\ell+1}{}(A)$ induces an isomorphism on cohomology.
\end{corollary}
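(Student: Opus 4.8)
The plan is to reduce everything to the top cohomological degree and then detect the top cohomology by integration. First, note that if $A = \Omega_{\ell+1}^{\mbf{j}}({\mbf{i}})$ is empty (which happens precisely when ${\mbf{i}} \notin \itpb{\ell,\ell+1}{\mbf{j}}$), then all the spaces in both complexes are trivial and the claim is vacuous, so assume $A \neq \emptyset$. By Corollary \ref{corol:3d_local_isomorphism}, both complexes $\tpbs{\ell,\ell+1}{}(A)$ and $\tpbs{\ell+1,\ell+1}{}(A)$ are exact, with $H^{(j)}_{\ell+s}(A) = 0$ for $j = 0, \dots, n-1$ and $H^{(n)}_{\ell+s}(A) = \RR$, for $s = 0, 1$. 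Hence for $j = 0, \dots, n-1$ the induced map $\splinelevelinclusion{\ell,\ell+1}^*\colon H^{(j)}_\ell(A) \to H^{(j)}_{\ell+1}(A)$ is a linear map between two trivial spaces and is automatically an isomorphism. All the content therefore lies in degree $j = n$, where we must show that the induced linear map $\RR \to \RR$ is nonzero.

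To handle the top degree, I would introduce, for $s \in \{0,1\}$, the integration functional $I_{\ell+s}\colon \tpbs{\ell+s,\ell+1}{n}(A) \to \RR$ given by $I_{\ell+s}(\threef) := \int_A \threef$. The key point is that $I_{\ell+s}$ vanishes on $\image^{n-1}_{\ell+s}(A)$: every element of $\tpbs{\ell+s,\ell+1}{n-1}(A)$ is a linear combination of B-spline $(n-1)$-forms, and in the single direction $k$ where $j_k = 0$ the corresponding tensor factor is a $0$-form B-spline, which vanishes at the endpoints of its support; by Stokes' theorem (Fubini in the $x_k$-direction) this forces $\int_A d\twof = 0$ for all such $\twof$. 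Consequently $I_{\ell+s}$ descends to a linear functional $\bar{I}_{\ell+s}\colon H^{(n)}_{\ell+s}(A) \to \RR$. Moreover $\bar{I}_{\ell+s}$ is nonzero, because $\tpbs{\ell+s,\ell+1}{n}(A)$ contains a nonnegative, nonzero $n$-form B-spline supported in $A$ (for instance the $n$-form $\threeb$ exhibited in the proof of Proposition \ref{prop:is_ball}), whose integral is strictly positive. Being a nonzero functional on the one-dimensional space $H^{(n)}_{\ell+s}(A)$, $\bar{I}_{\ell+s}$ is an isomorphism onto $\RR$.

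Finally, the inclusion $\splinelevelinclusion{\ell,\ell+1}$ simply reinterprets a level-$\ell$ $n$-form as the same level-$(\ell+1)$ $n$-form (this is legitimate by the nestedness of Assumption \ref{assume0} together with Lemma \ref{lem:complex_equality}), so it preserves the integral: $I_{\ell+1}\circ\splinelevelinclusion{\ell,\ell+1} = I_\ell$ on $\tpbs{\ell,\ell+1}{n}(A)$. Passing to cohomology yields $\bar{I}_{\ell+1}\circ\splinelevelinclusion{\ell,\ell+1}^* = \bar{I}_\ell$, and since both $\bar{I}_\ell$ and $\bar{I}_{\ell+1}$ are isomorphisms, so is $\splinelevelinclusion{\ell,\ell+1}^*$ in degree $n$. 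Together with the vanishing cohomologies in degrees $j < n$, this shows the inclusion induces an isomorphism on cohomology in every degree. The main obstacle I anticipate is establishing that $\bar{I}_{\ell+s}$ is well defined, i.e.\ that the boundary terms produced by Stokes' theorem genuinely vanish on $\partial A$; this is precisely where the homogeneous boundary conditions built into the spline $0$-form factors are indispensable, and it is also the reason one would not expect the statement to survive if those conditions were dropped.
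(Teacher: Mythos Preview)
Your argument is correct and follows the same skeleton as the paper's proof: handle the empty case $\mbf{i}\notin\itpb{\ell,\ell+1}{\mbf{j}}$ separately, and otherwise invoke Corollary~\ref{corol:3d_local_isomorphism} to see that both complexes have vanishing cohomology below degree $n$ and one-dimensional cohomology in degree $n$. The paper's proof stops there, asserting that the claim ``follows directly'' from exactness; you go further and actually verify that the induced map in degree $n$ is nonzero by factoring through the integration functional $\int_A$, which is precisely the missing step needed to upgrade ``same dimensions'' to ``isomorphism.'' This is not a different route so much as a more honest one, and in fact your integration argument is exactly what the paper gestures at in the remark immediately following the corollary, where the projection $\hat{\Pi}^n$ is pinned down by the condition $\int_A \hat{\Pi}^n f = \int_A f$.
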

\begin{proof}
	For ${\mbf{i}}  \in \itpb{\ell,\ell+1}{\mbf{j}}$, this follows directly from Corollary \ref{corol:3d_local_isomorphism} since the complexes are exact.
	Otherwise, if ${\mbf{i}}  \notin \itpb{\ell,\ell+1}{\mbf{j}}$, then $A = \emptyset$ and the result holds trivially.	
\end{proof}

\begin{remark}
	As was done in \cite{Evans19}, it is also possible to show Corollary \ref{corol:injection_cohomology} by defining projection operators $\hat{\Pi}^j:\tpbs{\ell+1,\ell+1}{j}(A) \rightarrow \tpbs{\ell,\ell+1}{j}(A)$ via the following problem,
	\begin{subequations}
	\begin{align}
		\Big(\hat{\Pi}^j\gf,d^{j-1} g\Big) &= (\gf, d^{j-1}g)\;,\quad \forall g \in \tpbs{\ell,\ell+1}{j-1}(A)\;,\\
		\Big(d^j\hat{\Pi}^j\gf,d^j g\Big) &= (d^j\gf, d^j g)\;,\quad \forall g \in \tpbs{\ell,\ell+1}{j}(A)\;,\;j = 0, \dots, n-1\;,\\
		\int_{A}\hat{\Pi}^n\gf &= \int_{A}\gf\;.
	\end{align}
	\end{subequations}
	In the above system, the first sub-equation helps determine the part of $\hat{\Pi}^jf$ that is exact.
	Then, from Propositions \ref{prop:is_ball} and \ref{prop:g_is_ball}, the remaining part of $\hat{\Pi}^jf$ that needs to be determined is co-exact for $j = 0, \dots, n-1$, and harmonic for $j = n$; these are respectively determined by the second (recall that we are working with homogeneous boundary conditions here) and third sub-equations.
\end{remark}
~\\

\noindent
\emph{\textbf{Part 3: Piecing together the subdomain complexes}}\\
\noindent
We now piece-together the subdomain complexes to show that the hierarchical spline complex is exact, given that an additional assumption is satisfied.

\begin{definition}[An $(n-1,\ell+1)$-intersection]\label{def:n-1-intersection}
	Let $\zerob_{\overline{\mbf{i}}}, \zerob_{\overline{\mbf{i}}+\Delta\overline{\mbf{i}}} \in \tpb{\ell}{\mbf{0}}(\Omega_{\ell+1})$ and, without loss of generality, assume that $\Delta\overline{\mbf{i}}$ is component-wise non-negative.
	We say that $\zerob_{\overline{\mbf{i}}}$ and $\zerob_{\overline{\mbf{i}}+\Delta\overline{\mbf{i}}}$ share an $(n-1,\ell+1)$-intersection if $\exists \overline{\mbf{i}}', k_0$ such that
	\begin{subequations}
	\begin{align}
		\csupp(\zerob_{\overline{\mbf{i}}}) \cap \csupp(\zerob_{\overline{\mbf{i}}+\Delta\overline{\mbf{i}}}) &\supseteq \bigtimes_{k=1}^n I_k\;,\\
		I_k &:= 
			\begin{dcases}
				\left(\knt{\overline{i}'_k,\ell+1,k}, \knt{\overline{i}'_k+\pdeg{\ell+1}{k},\ell+1,k}\right)\;, & k \neq k_0\;,\\
				\{\kntSym_{\overline{i}'_k,\ell+1,k}\}\;, & k = k_0\;.\
			\end{dcases}\;.
	\end{align}
	\end{subequations}
	If needed, we will say that $\zerob_{\overline{\mbf{i}}}$ and  $\zerob_{\overline{\mbf{i}}+\Delta\overline{\mbf{i}}}$ share an $(n-1,\ell+1)$-intersection w.r.t. the direction $k_0$.
\end{definition}

\begin{definition}[A chain]\label{def:chain}
	{Let $\zerob_{\overline{\mbf{i}}}, \zerob_{\overline{\mbf{i}}+\Delta\overline{\mbf{i}}} \in \tpb{\ell,\ell+1}{\mbf{0}}$.
	There is said to be a chain between $\zerob_{\overline{\mbf{i}}}$ and $\zerob_{\overline{\mbf{i}}+\Delta\overline{\mbf{i}}}$ if there is some positive integer $r$ and a set of B-splines $\zerob_{\overline{\mbf{i}} + \Delta\overline{\mbf{i}}_l} \in \tpb{\ell,\ell+1}{\mbf{0}}, l = 0,\dots,r$ such that
	\begin{itemize}
		\item $\Delta\overline{\mbf{i}}_0 = \mbf{0}$, $\Delta\overline{\mbf{i}}_r = \Delta\overline{\mbf{i}}$;
		\item $\Delta\overline{\mbf{i}}_l - \Delta\overline{\mbf{i}}_{l-1}$ is zero in each component but one, and the sum of all components equal to $\pm1$ for all $l = 1, \dots, r$.
	\end{itemize}
	}
\end{definition}

\begin{definition}[A shortest chain]\label{def:shortest_path}
	Let $\zerob_{\overline{\mbf{i}}}, \zerob_{\overline{\mbf{i}}+\Delta\overline{\mbf{i}}} \in \tpb{\ell,\ell+1}{\mbf{0}}$ and, without loss of generality, assume that $\Delta\overline{\mbf{i}}$ is component-wise non-negative.
	A chain between $\zerob_{\overline{\mbf{i}}}$ and $\zerob_{\overline{\mbf{i}}+\Delta\overline{\mbf{i}}}$ is a shortest chain if 
	\begin{itemize}
		\item$r := \sum_k \Delta \overline{i}_k$;
		\item $\Delta\overline{\mbf{i}}_l - \Delta\overline{\mbf{i}}_{l-1}$ is component-wise non-negative, with the sum of all components being equal to $1$ for all $l = 1, \dots, r$.
	\end{itemize}
\end{definition}

\begin{lemma}\label{lem:shortest_chain_intersection}
	Let there be a shortest chain between $\zerob_{\overline{\mbf{i}}}, \zerob_{\overline{\mbf{i}}+\Delta\overline{\mbf{i}}} \in \tpb{\ell}{\mbf{0}}(\Omega_{\ell+1})$.
	Then the closure of the support of any B-spline in the shortest chain contains $\csupp(\zerob_{\overline{\mbf{i}}}) \cap \csupp(\zerob_{\overline{\mbf{i}}+\Delta\overline{\mbf{i}}})$.
\end{lemma}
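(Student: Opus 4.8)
The plan is to reduce the statement to a one-dimensional monotonicity argument on knot indices, exploiting the Cartesian-product structure of $0$-form B-spline supports. First I would recall the minimal-support property for level-$\ell$ $0$-forms: for any multi-index $\mbf{m}$,
\[
	\csupp(\zerob_{\mbf{m}}) = \bigtimes_{k=1}^{n} \left[\,\knt{m_k,\ell,k},\ \knt{m_k + \pdeg{\ell}{k} + 1,\ell,k}\,\right]\;.
\]
Since the intersection of finitely many Cartesian products equals the product of the factor-wise intersections, $\csupp(\zerob_{\overline{\mbf{i}}}) \cap \csupp(\zerob_{\overline{\mbf{i}}+\Delta\overline{\mbf{i}}})$ factors direction-by-direction, and so does $\csupp(\zerob_{\overline{\mbf{i}}+\Delta\overline{\mbf{i}}_l})$ for each element $\zerob_{\overline{\mbf{i}}+\Delta\overline{\mbf{i}}_l}$ of the chain; hence it suffices to establish the claimed containment one parametric direction at a time.

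Next I would isolate the structural property that makes a \emph{shortest} chain work (where a generic chain would not). By Definition \ref{def:shortest_path} each increment $\Delta\overline{\mbf{i}}_l - \Delta\overline{\mbf{i}}_{l-1}$ is component-wise non-negative, so telescoping from $\Delta\overline{\mbf{i}}_0 = \mbf{0}$ and from $\Delta\overline{\mbf{i}}_r = \Delta\overline{\mbf{i}}$ gives $0 \le (\Delta\overline{\mbf{i}}_l)_k \le \Delta\overline{i}_k$ for every $k$ and every $l$; that is, the intermediate indices stay inside the box $[\mbf{0},\Delta\overline{\mbf{i}}]$. Fixing a direction $k$ and writing $A_k$, $B_k$, $C_k$ for the $k$-th closed-interval factors of $\csupp(\zerob_{\overline{\mbf{i}}})$, $\csupp(\zerob_{\overline{\mbf{i}}+\Delta\overline{\mbf{i}}})$, and $\csupp(\zerob_{\overline{\mbf{i}}+\Delta\overline{\mbf{i}}_l})$, the inequalities $0 \le (\Delta\overline{\mbf{i}}_l)_k \le \Delta\overline{i}_k$ combined with the fact that $\knts{\ell,k}$ is non-decreasing show that the left endpoint of $C_k$ is at most the left endpoint of $B_k$ and the right endpoint of $C_k$ is at least the right endpoint of $A_k$; consequently $C_k \supseteq A_k \cap B_k$ (this also covers the case $A_k \cap B_k = \emptyset$ trivially). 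Taking the product over $k = 1,\dots,n$ yields $\csupp(\zerob_{\overline{\mbf{i}}+\Delta\overline{\mbf{i}}_l}) \supseteq \csupp(\zerob_{\overline{\mbf{i}}}) \cap \csupp(\zerob_{\overline{\mbf{i}}+\Delta\overline{\mbf{i}}})$, with $l = 0$ and $l = r$ recovering the two endpoints of the chain.

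I do not anticipate a substantive obstacle: after splitting into directions, everything is a comparison of knots indexed in monotone order. The one point that deserves explicit justification is the bound $0 \le (\Delta\overline{\mbf{i}}_l)_k \le \Delta\overline{i}_k$, because this containment genuinely fails for the weaker notion of Definition \ref{def:chain} (whose steps may leave the box); so the write-up should make clear that it invokes the additional monotonicity clause of Definition \ref{def:shortest_path}.
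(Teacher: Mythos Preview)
Your argument is correct. The paper states this lemma without proof, treating it as an immediate consequence of the definitions; your write-up supplies exactly the natural direction-by-direction argument one would expect, using the Cartesian-product form of the closed supports and the monotonicity of the knot sequence together with the index bound $0 \le (\Delta\overline{\mbf{i}}_l)_k \le \Delta\overline{i}_k$ coming from Definition~\ref{def:shortest_path}. Your remark that this bound genuinely requires the \emph{shortest} chain hypothesis (and would fail for a general chain in the sense of Definition~\ref{def:chain}) is also to the point.
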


\begin{assumption}\label{assume:shortest_path}
	Let $\zerob_{\overline{\mbf{i}}}, \zerob_{\overline{\mbf{i}}+\Delta\overline{\mbf{i}}} \in \tpb{\ell}{\mbf{0}}(\Omega_{\ell+1})$.
	If $\zerob_{\overline{\mbf{i}}}$ and $\zerob_{\overline{\mbf{i}}+\Delta\overline{\mbf{i}}}$ share an $(n-1,\ell+1)$-intersection, then there exists a shortest chain between them.
\end{assumption}

Figure \ref{fig:shortest_chain} illustrates the concept of $(\ndim-1,\ell+1)$ intersections, chains, and shortest chains between two $0$-forms under different refinement scenarios.
Of these configurations, only that of subfigure \ref{fig:shortest_chain}(b) satisfies the conditions of Assumption \ref{assume:shortest_path}.

\begin{figure}
    \begin{subfigure}{0.32\textwidth}
    	\centering
    		\resizebox{\xBoxDim}{\yBoxDim}{%
               	\begin{tikzpicture}
                		\mymanualgrid{0,...,10}{-2,...,9}{0.25}{gray}{white};
    			\mymanualgrid{1,1.5,2,2.5,3,3.5,4}{1,1.5,2,2.5,3,3.5,4}{1}{black}{cyan}
    			\mymanualgrid{3,3.5,4,4.5,5,5.5,6}{4,4.5,5,5.5,6,6.5,7}{1}{black}{yellow}
    			\mymanualgrid{3,4}{4}{5}{red}{red}
               	 \end{tikzpicture}
               	 }
	 \caption{No chain}
    \end{subfigure}
    \;
    \begin{subfigure}{0.32\textwidth}
    	\centering
    		\resizebox{\xBoxDim}{\yBoxDim}{%
               	\begin{tikzpicture}
                		\mymanualgrid{0,...,10}{-2,...,9}{0.25}{gray}{white};
    			\mymanualgrid{1,1.5,2,2.5,3,3.5,4}{1,1.5,2,2.5,3,3.5,4}{1}{black}{cyan}
    			\mymanualgrid{3,3.5,4,4.5,5,5.5,6}{4,4.5,5,5.5,6,6.5,7}{1}{black}{yellow}
			\mymanualgrid{4,4.5,5}{1,1.5,2,2.5,3,3.5,4}{1}{black}{lightgray}
			\mymanualgrid{5,5.5,6}{2,2.5,3,3.5,4}{1}{black}{lightgray}
			\mymanualgrid{2,2.5,3}{4,4.5,5}{1}{black}{lightgray}			
    			\mymanualgrid{3,4}{4}{5}{red}{red}
			
			\draw [decorate, decoration = {calligraphic brace, raise=4pt, amplitude=5pt, aspect=0.75}, line width=3, black] (4,1) --  (1,1);
			\draw [decorate, decoration = {calligraphic brace, raise=10pt, amplitude=5pt, aspect=0.75}, line width=3, pen colour={blue}] (5,1) --  (2,1);
			\draw [decorate, decoration = {calligraphic brace, raise=16pt, amplitude=5pt, aspect=0.25}, line width=3, pen colour={orange}] (6,5) --  (6,2);
			\draw [decorate, decoration = {calligraphic brace, raise=16pt, amplitude=5pt, aspect=0.75}, line width=3, pen colour={cyan}] (6,1) --  (3,1);
			\draw [decorate, decoration = {calligraphic brace, raise=10pt, amplitude=5pt, aspect=0.25}, line width=3, pen colour={red}] (6,6) --  (6,3);
			\draw [decorate, decoration = {calligraphic brace, raise=4pt, amplitude=5pt, aspect=0.25}, line width=3, pen colour={black}] (6,7) --  (6,4);
			
			\node[scale=2] at (1.77,0.3) {0};
			\node[scale=2] at (2.77,0.15) {1};
			\node[scale=2] at (3.77,-0.05) {3};

			\node[scale=2] at (7.05,4.23) {2};
			\node[scale=2] at (6.85,5.23) {4};
			\node[scale=2] at (6.7,6.23) {5};

               	 \end{tikzpicture}
               	 }
	 \caption{A (unique) shortest chain}
    \end{subfigure}
    \;
    \begin{subfigure}{0.32\textwidth}
    	\centering
    		\resizebox{\xBoxDim}{\yBoxDim}{%
               	\begin{tikzpicture}
                		\mymanualgrid{0,...,10}{-2,...,9}{0.25}{gray}{white};
    			\mymanualgrid{1,1.5,2,2.5,3,3.5,4}{1,1.5,2,2.5,3,3.5,4}{1}{black}{cyan}
    			\mymanualgrid{3,3.5,4,4.5,5,5.5,6}{4,4.5,5,5.5,6,6.5,7}{1}{black}{yellow}
			\mymanualgrid{1,1.5,2,2.5,3,3.5,4}{0,0.5,1}{1}{black}{lightgray}
			\mymanualgrid{4,4.5,5,5.5,6,6.5,7,7.5,8}{0,0.5,1,1.5,2,2.5,3}{1}{black}{lightgray}
			\mymanualgrid{6,6.5,7,7.5,8}{3,3.5,4,4.5,5,5.5,6,6.5,7}{1}{black}{lightgray}			
    			\mymanualgrid{3,4}{4}{5}{red}{red}
			
			\draw [decorate, decoration = {calligraphic brace, raise=4pt, amplitude=5pt, aspect=0.75}, line width=3, pen colour={black}] (1,1) --  (1,4);
			\draw [decorate, decoration = {calligraphic brace, raise=10pt, amplitude=5pt, aspect=0.75}, line width=3, pen colour={brown}] (1,0) --  (1,3);
			\draw [decorate, decoration = {calligraphic brace, raise=4pt, amplitude=5pt, aspect=0.75}, line width=3, pen colour={blue}] (5,0) --  (2,0);
			\draw [decorate, decoration = {calligraphic brace, raise=10pt, amplitude=5pt, aspect=0.75}, line width=3, pen colour={teal}] (6,0) --  (3,0);
			\draw [decorate, decoration = {calligraphic brace, raise=16pt, amplitude=5pt, aspect=0.75}, line width=3, pen colour={cyan}] (7,0) --  (4,0);
			\draw [decorate, decoration = {calligraphic brace, raise=22pt, amplitude=5pt, aspect=0.75}, line width=3, pen colour={olive}] (8,0) --  (5,0);
			\draw [decorate, decoration = {calligraphic brace, raise=22pt, amplitude=5pt, aspect=0.25}, line width=3, pen colour={orange}] (8,4) --  (8,1);
			\draw [decorate, decoration = {calligraphic brace, raise=16pt, amplitude=5pt, aspect=0.25}, line width=3, pen colour={red}] (8,5) --  (8,2);
			\draw [decorate, decoration = {calligraphic brace, raise=10pt, amplitude=5pt, aspect=0.25}, line width=3, pen colour={magenta}] (8,6) --  (8,3);
			\draw [decorate, decoration = {calligraphic brace, raise=4pt, amplitude=5pt, aspect=0.25}, line width=3, pen colour={violet}] (8,7) --  (8,4);
			\draw [decorate, decoration = {calligraphic brace, raise=10pt, amplitude=5pt, aspect=0.25}, line width=3, pen colour={darkgray}] (4,7) --  (7,7);
			\draw [decorate, decoration = {calligraphic brace, raise=4pt, amplitude=5pt, aspect=0.25}, line width=3, pen colour={black}] (3,7) --  (6,7);

			\node[scale=2] at (0.4,3.23) {0};
			\node[scale=2] at (0.2,2.23) {1};
			
			\node[scale=2] at (2.77,-0.7) {2};
			\node[scale=2] at (3.77,-0.85) {3};
			\node[scale=2] at (4.77,-1.05) {4};
			\node[scale=2] at (5.77,-1.25) {5};

			\node[scale=2] at (9.25,3.23) {6};
			\node[scale=2] at (9.05,4.23) {7};
			\node[scale=2] at (8.85,5.23) {8};
			\node[scale=2] at (8.65,6.23) {9};
			
			\node[scale=2] at (4.77,7.85) {10};
			\node[scale=2] at (3.67,7.65) {11};
               	 \end{tikzpicture}
               	 }
	 \caption{No shortest chain}
    \end{subfigure}
    \caption{
   	In the above figures, we consider two levels of a maximally regular hierarchical spline space defined using $\pdeg{\ell}{k} = 2$ for all $\ell$ and $k$.
   	The shaded cells (of any colour) constitute $\Omega_{\ell+1}$.
   	In particular, the two biquadratic 0-form B-splines $\zerob_{\overline{\mbf{i}}},~\zerob_{\overline{\mbf{i}}+(2,3)} \in \tpb{\ell,\ell+1}{\mbf{0}}$ share an $(\ndim-1,\ell+1)$ intersection, depicted in red.
    Here, $\zerob_{\overline{\mbf{i}}}$ is on the lower-left and its support is shaded blue, while $\zerob_{\overline{\mbf{i}}+(2,3)} \in \tpb{\ell,\ell+1}{\mbf{0}}$ is on the upper-right and its support is shaded yellow; the rest of $\Omega_{\ell+1}$ is shown in grey.
    In figure (a), there are no other B-splines in $\tpb{\ell,\ell+1}{\mbf{0}}$, and thus no chain of indices traversing from one to the other.
    In figure (b), the refinement pattern supports a shortest chain between the $0$-form B-splines, with numbers indicating the magnitude of $\Delta\overline{\mbf{i}}_l$, as in Definition \ref{def:shortest_path}.
    On the right, the refinement supports a chain, with numbers also indicating the number $l$ in $\Delta\overline{\mbf{i}}_l$, but the chain is not a shortest chain.
    }
    \label{fig:shortest_chain}
\end{figure}

For this final part of the proof and for given $0 \leq \ell \leq L-1$, define the following ``slices'' of $\Omega_{\ell+1}$, where $\mbf{i}$, $\mbf{j}$, $\mbf{r}$ and $\mbf{m}$ are $n$-tuples,
\begin{equation}
	\begin{split}
		S_\mbf{i}^\mbf{j}[\mbf{m}] &:= \bigcup_{r_1=0}^{m_1}\cdots\bigcup_{r_n=0}^{m_n}\Omega^{\mbf{j}}_{\ell+1}(\mbf{i}+\mbf{r})\;,
	\end{split}
\end{equation}
and recall that $\Omega^{\mbf{j}}_{\ell+1}(\mbf{i}+\mbf{r}) = \emptyset$ by definition if $\mbf{i}+\mbf{r} \notin \itpb{\ell,\ell+1}{\mbf{j}}$.
Note that, from Lemma \ref{lem:partition}, if $m_i = \ndof{\ell}{i}$ for all $i$ then
\begin{equation}
	S_\mbf{1}^\mbf{j}[\mbf{m}]
	=  \bigcup_{r_1=0}^{\ndof{\ell}{1}}\cdots\bigcup_{r_n=0}^{\ndof{\ell}{n}}\Omega^\mbf{j}_{\ell+1}(\mbf{1}+\mbf{r})
	= \Omega_{\ell+1}\;.
	\label{eq:sliced_partition}
\end{equation}

\begin{lemma}\label{lem:unions_intersections_domains}
	Let Assumption \ref{assume:shortest_path} hold.
	For a fixed $1 \leq k \leq n$, let $\delta_k$ be an $n$-tuple with the value $1$ in index $k$ and $0$ elsewhere, and let $\mbf{i}$, $\mbf{j}$ and $\mbf{m}$ be given.
	Then, with $\overline{\mbf{r}} := (r_1,\dots,r_{k-1},0,r_{k+1},\dots,r_n)$ and with
	\begin{equation*}
		\begin{split}
			A_\mbf{i}^\mbf{j} &:= S_\mbf{i}^\mbf{j}[\mbf{m}]\;,\\
			B^\mbf{j}_\mbf{i} &:= \bigcup_{r_1=0}^{m_1}\cdots\bigcup_{r_{k-1}=0}^{m_{k-1}}\bigcup_{r_{k+1}=0}^{m_{k+1}}\cdots\bigcup_{r_n=0}^{m_n}\Omega^\mbf{j}_{\ell+1}(\mbf{i}+\overline{\mbf{r}}+(m_k+1)\delta_k)\;,\\
			C_\mbf{i}^\mbf{j} &:= S_\mbf{i}^\mbf{j}[\mbf{m}+\delta_k]\;,
		\end{split}
	\end{equation*}
	the following hold,
	{
	\begin{align}
		A_\mbf{i}^\mbf{j} \cap B^\mbf{j}_\mbf{i} \supset B^{\mbf{j}+\delta_k}_{\mbf{i}}\;\quad& \mathrm{ for }\; j_k = 0\;,\label{eq:slice_inclusion}\\
		A_\mbf{i}^\mbf{j} \cup B^\mbf{j}_\mbf{i} = C_\mbf{i}^\mbf{j}\;\quad&  \mathrm{ for }\; j_k = 0,1\;.
	\end{align}
	}
\end{lemma}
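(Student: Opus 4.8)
The plan is to prove both assertions by directly unwinding the definitions of the slices $S_{\mbf i}^{\mbf j}[\cdot]$, of the subdomains $\Omega_{\ell+1}^{\mbf j}(\cdot)$ and of the index sets $\deriv{\ell,\ell+1}^{\mbf j}(\cdot)$, while carefully tracking the component-wise index constraints. Beyond these definitions, the only earlier result I expect to invoke is Lemma~\ref{lem:index_inequalities}.

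I would dispose of the union identity first, as it is purely combinatorial and uses nothing about $j_k$. By definition $C_{\mbf i}^{\mbf j} = S_{\mbf i}^{\mbf j}[\mbf m + \delta_k]$ is the union of the subdomains $\Omega_{\ell+1}^{\mbf j}(\mbf i + \mbf r)$ as $\mbf r$ ranges over all $n$-tuples with $0 \le r_{k'} \le m_{k'}$ for $k' \ne k$ and $0 \le r_k \le m_k + 1$. Splitting the range of the $k$-th index into $\{0, \dots, m_k\}$ and $\{m_k + 1\}$ reproduces, respectively, $S_{\mbf i}^{\mbf j}[\mbf m] = A_{\mbf i}^{\mbf j}$ and, after the substitution $\mbf r = \overline{\mbf r} + (m_k + 1)\delta_k$, the set $B_{\mbf i}^{\mbf j}$; hence $A_{\mbf i}^{\mbf j} \cup B_{\mbf i}^{\mbf j} = C_{\mbf i}^{\mbf j}$ for $j_k \in \{0,1\}$.

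For the inclusion \eqref{eq:slice_inclusion} I would fix $j_k = 0$, so that $(\mbf j + \delta_k)_k = 1$, and show that for every admissible $\overline{\mbf r}$ (that is, $\overline r_k = 0$ and $0 \le \overline r_{k'} \le m_{k'}$ for $k' \ne k$) one has $\Omega_{\ell+1}^{\mbf j + \delta_k}(\mbf i + \overline{\mbf r} + (m_k + 1)\delta_k) \subseteq A_{\mbf i}^{\mbf j} \cap B_{\mbf i}^{\mbf j}$; taking unions over $\overline{\mbf r}$ then gives $B_{\mbf i}^{\mbf j + \delta_k} \subseteq A_{\mbf i}^{\mbf j} \cap B_{\mbf i}^{\mbf j}$. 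The crux is the following index computation. Let $\overline{\mbf i} \in \deriv{\ell,\ell+1}^{\mbf j + \delta_k}(\mbf i + \overline{\mbf r} + (m_k + 1)\delta_k)$. Unwinding the definition of the $D$-set, the $k$-th constraint --- whose upper bound is $1 - (\mbf j + \delta_k)_k = 0$ --- forces $\overline i_k = i_k + m_k + 1$; the remaining constraints read $0 \le \overline i_{k'} - (i_{k'} + \overline r_{k'}) \le 1 - j_{k'}$ for $k' \ne k$; and $\xgb_{\overline{\mbf i}}^{\mbf 1} \subset \Omega_{\ell+1}$. One then checks that this same $\overline{\mbf i}$ satisfies the $\mbf j$-superscripted constraints at two different argument multiindices. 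At $\mbf i + \overline{\mbf r} + (m_k + 1)\delta_k$ the $k$-th constraint $0 \le \overline i_k - (i_k + m_k + 1) \le 1$ holds with value $0$, giving $\xgb_{\overline{\mbf i}}^{\mbf 1} \subseteq \Omega_{\ell+1}^{\mbf j}(\mbf i + \overline{\mbf r} + (m_k + 1)\delta_k) \subseteq B_{\mbf i}^{\mbf j}$. At $\mbf i + \overline{\mbf r} + m_k \delta_k$ the $k$-th constraint $0 \le \overline i_k - (i_k + m_k) \le 1$ holds with value $1$, so that --- using Lemma~\ref{lem:index_inequalities} to ensure the argument multiindex lies in $\itpb{\ell,\ell+1}{\mbf j}$ and hence that $\Omega_{\ell+1}^{\mbf j}$ is not declared empty there --- $\xgb_{\overline{\mbf i}}^{\mbf 1} \subseteq \Omega_{\ell+1}^{\mbf j}(\mbf i + \overline{\mbf r} + m_k \delta_k) \subseteq S_{\mbf i}^{\mbf j}[\mbf m] = A_{\mbf i}^{\mbf j}$. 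Taking the union over $\overline{\mbf i}$ completes the subdomain-level inclusion.

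The only point requiring care --- and the one thing that could go wrong if the bookkeeping is sloppy --- is the observation driving the previous paragraph: promoting $j_k$ from $0$ to $1$ collapses the two-element set of admissible $k$-th offsets $\{0,1\}$ to the singleton $\{0\}$, which is exactly what lets one and the same extended knot domain $\xgb_{\overline{\mbf i}}^{\mbf 1}$ sit both in the last layer ($r_k = m_k$) of $A_{\mbf i}^{\mbf j}$ and in the layer ($r_k = m_k + 1$) that constitutes $B_{\mbf i}^{\mbf j}$, since consecutive slices along direction $k$ overlap in one index when $j_k = 0$. I note in passing that Assumption~\ref{assume:shortest_path}, although listed as a hypothesis here for consistency with the rest of Part~3, does not seem to enter the proofs of \eqref{eq:slice_inclusion} or the union identity themselves; it becomes relevant only when these set relations are subsequently fed into a Mayer--Vietoris argument and one needs to control --- rather than merely bound --- the cohomology of $A_{\mbf i}^{\mbf j} \cap B_{\mbf i}^{\mbf j}$.
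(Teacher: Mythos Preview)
Your proposal is correct and follows essentially the same approach as the paper. The union identity is proved identically (splitting the range of $r_k$), and for the inclusion \eqref{eq:slice_inclusion} both you and the paper reduce to the same index computation on the $D$-sets; the paper declares $B_{\mbf i}^{\mbf j+\delta_k}\subset A_{\mbf i}^{\mbf j}$ ``clear from the definitions'' and only writes out the $B_{\mbf i}^{\mbf j}$ containment, whereas you verify both containments explicitly and carefully invoke Lemma~\ref{lem:index_inequalities} to ensure the relevant $\Omega_{\ell+1}^{\mbf j}(\cdot)$ is not declared empty --- a point the paper leaves implicit. Your closing remark that Assumption~\ref{assume:shortest_path} is not actually used in this lemma is also correct; the paper's proof never invokes it.
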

\begin{proof}
	Given $r_k$, let $\mbf{r} := \overline{\mbf{r}} + r_k\delta_k$.
	Then, since $\cup_{r_k=0}^{m_k+1} = \left(\cup_{r_k=0}^{m_k}\right) \cup \left(\cup_{r_k=m_k+1}\right)$, the second equality follows from definitions of the domains,
	\begin{equation*}
		\begin{split}
			C_\mbf{i}^\mbf{j} &= \bigcup_{r_1=0}^{m_1}\cdots\bigcup_{r_{k-1}=0}^{m_{k-1}}\bigcup_{r_k=0}^{m_k+1}\bigcup_{r_{k+1}=0}^{m_{k+1}}\cdots\bigcup_{r_n=0}^{m_n}\Omega_{\ell+1}^\mbf{j}(\mbf{i}+\mbf{r})\\
			&= \left(\bigcup_{r_1=0}^{m_1}\cdots\bigcup_{r_{k-1}=0}^{m_{k-1}}\bigcup_{r_k=0}^{m_k}\bigcup_{r_{k+1}=0}^{m_{k+1}}\cdots\bigcup_{r_n=0}^{m_n}\Omega^\mbf{j}_{\ell+1}(\mbf{i}+\mbf{r})\right) \\
			&\quad\quad \bigcup~\left(\bigcup_{r_1=0}^{m_1}\cdots\bigcup_{r_{k-1}=0}^{m_{k-1}}\bigcup_{r_{k+1}=0}^{m_{k+1}}\cdots\bigcup_{r_n=0}^{m_n}\Omega^\mbf{j}_{\ell+1}(\mbf{i}+\overline{\mbf{r}} + (m_k+1)\delta_k)\right)\;,\\
			&= A_\mbf{i}^\mbf{j} \cup B^\mbf{j}_\mbf{i}\;.
		\end{split}
	\end{equation*}
	
	The inclusion $B^{\mbf{j}+\delta_k}_{\mbf{i}} \subset A_\mbf{i}^\mbf{j}$ is clear from the definitions so we only need to show the containment $B^{\mbf{j}+\delta_k}_{\mbf{i}} \subset B_\mbf{i}^\mbf{j}$.
	Consider a \Bezier~element contained in $B^{\mbf{j}+\delta_k}_{\mbf{i}}$.
	Then, there exists a $\mbf{1}$-\textextendedknotdomain~$\xgb^\mbf{1}_{\overline{\mbf{i}}}$ contained in $B^{\mbf{j}+\delta_k}_{\mbf{i}}$ and containing this \Bezier~element, i.e., $\overline{\mbf{i}} \in \deriv{\ell,\ell+1}^{\mbf{j}+\delta_k}(\mbf{i}+\overline{\mbf{r}}+(m_k+1)\delta_k)$ for some $\overline{\mbf{r}}$.
	Thus, $(\overline{\mbf{i}} - \mbf{i} - \overline{\mbf{r}} - (m_k+1)\delta_k)$ is a non-negative $n$-tuple that is component-wise less than or equal to $(\mbf{1} - \mbf{j} - \delta_k)$, and thus component-wise less than or equal to $(\mbf{1} - \mbf{j})$.
	Therefore, $\overline{\mbf{i}} \in \deriv{\ell,\ell+1}^\mbf{j}(\mbf{i}+\overline{\mbf{r}} + (m_k+1)\delta_k)$ and thus $\xgb^\mbf{1}_{\overline{\mbf{i}}} \subset B_\mbf{i}^\mbf{j}$.
\end{proof}

\begin{remark}
	{The relationship of equation \eqref{eq:slice_inclusion} in Lemma \ref{lem:unions_intersections_domains} may actually be an equality or may be strict, depending on the refinement pattern of $\Omega_{\ell+1}$.
	Figure \ref{fig:unequal_AB_domains} depicts one scenario in which the subset relationship is a proper subset relationship because $A_\mbf{i}^\mbf{j} \cap B^\mbf{j}_\mbf{i}$ is non-empty, but $B^{\mbf{j}+\delta_k}_\mbf{i}$ is.
	If, however, the two refined $0$-form domains of this picture had a shortest chain connecting each other, the subset relationship would actually be an equality.}
\end{remark}
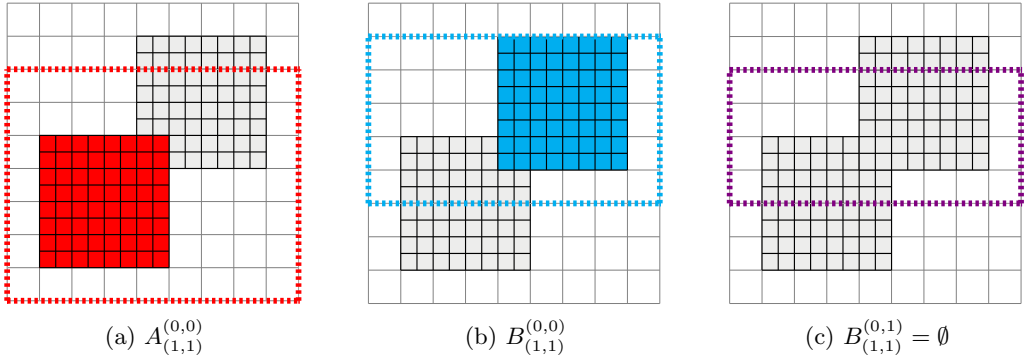
\begin{figure}
    \begin{subfigure}{0.32\textwidth}
    	\centering
    		\resizebox{\xBoxDim}{\yBoxDim}{%
               	\begin{tikzpicture}
                		\mymanualgrid{0,...,9}{0,...,9}{0.25}{gray}{white};
    			\mymanualgrid{4,4.5,5,5.5,6,6.5,7,7.5,8}{4,4.5,5,5.5,6,6.5,7,7.5,8}{1}{black}{lightgray!15}
    			\mymanualgrid{1,1.5,2,2.5,3,3.5,4,4.5,5}{1,1.5,2,2.5,3,3.5,4,4.5,5}{1}{black}{red}
    			\mymanualgrid{0,9}{0}{5,dashed}{red}{lightgray}
    			\mymanualgrid{0,9}{7}{5,dashed}{red}{lightgray}
    			\mymanualgrid{0}{0,7}{5,dashed}{red}{lightgray}
    			\mymanualgrid{9}{0,7}{5,dashed}{red}{lightgray}
               	 \end{tikzpicture}
               	 }
	 \caption{$A_{(1,1)}^{(0,0)}$}%
    \end{subfigure}
    \;
    \begin{subfigure}{0.32\textwidth}
    	\centering
    		\resizebox{\xBoxDim}{\yBoxDim}{%
               	\begin{tikzpicture}
                		\mymanualgrid{0,...,9}{0,...,9}{0.25}{gray}{white};
    			\mymanualgrid{1,1.5,2,2.5,3,3.5,4,4.5,5}{1,1.5,2,2.5,3,3.5,4,4.5,5}{1}{black}{lightgray!15}
    			\mymanualgrid{4,4.5,5,5.5,6,6.5,7,7.5,8}{4,4.5,5,5.5,6,6.5,7,7.5,8}{1}{black}{cyan}
    			\mymanualgrid{0,9}{3}{5,dashed}{cyan}{lightgray}
    			\mymanualgrid{0,9}{8}{5,dashed}{cyan}{lightgray}
    			\mymanualgrid{0}{3,8}{5,dashed}{cyan}{lightgray}
    			\mymanualgrid{9}{3,8}{5,dashed}{cyan}{lightgray}
               	 \end{tikzpicture}
               	 }
	 \caption{$B_{(1,1)}^{(0,0)}$}
    \end{subfigure}
    \;
    \begin{subfigure}{0.32\textwidth}
    	\centering
    		\resizebox{\xBoxDim}{\yBoxDim}{%
               	\begin{tikzpicture}
                		\mymanualgrid{0,...,9}{0,...,9}{0.25}{gray}{white};
    			\mymanualgrid{1,1.5,2,2.5,3,3.5,4,4.5,5}{1,1.5,2,2.5,3,3.5,4,4.5,5}{1}{black}{lightgray!15}
    			\mymanualgrid{4,4.5,5,5.5,6,6.5,7,7.5,8}{4,4.5,5,5.5,6,6.5,7,7.5,8}{1}{black}{lightgray!15}
    			\mymanualgrid{0,9}{3}{5,dashed}{violet}{lightgray}
    			\mymanualgrid{0,9}{7}{5,dashed}{violet}{lightgray}
    			\mymanualgrid{0}{3,7}{5,dashed}{violet}{lightgray}
    			\mymanualgrid{9}{3,7}{5,dashed}{violet}{lightgray}
               	 \end{tikzpicture}
               	 }
	 \caption{$B_{(1,1)}^{(0,1)} = \emptyset$}
    \end{subfigure}
    \caption{
    	The above figures correspond to a maximally regular hierarchical spline space defined using $\pdeg{\ell}{k} = 3$ for all $\ell$ and $k$.
    	Figures (a), (b) and (c) depict domains $A_\mbf{i}^\mbf{j},$ $B^\mbf{j}_\mbf{i}$, and $B^{\mbf{j}+\delta_k}_\mbf{i}$, respectively, as defined in Lemma \ref{lem:unions_intersections_domains} for $k = 2$.
	    Here, dashed lines indicate the extents of $0$-form B-splines whose support could potentially contribute to the domain, while the shaded cells of the same color correspond to the actual domain.
	    Greyed-out sections are portions of $\Omega_{\ell+1}$ that are not contained in the referenced domain.
	    This refinement pattern obeys all assumptions of this paper, so the inequality of equation \eqref{eq:slice_inclusion} holds and takes the form of $A_\mbf{i}^\mbf{j} \cap B^\mbf{j}_\mbf{i} \neq \emptyset = B^{\mbf{j}+\delta_k}_\mbf{i}$.
    }
    \label{fig:unequal_AB_domains}
\end{figure}

\begin{lemma}\label{lem:intersections}
	Let Assumption \ref{assume:shortest_path} hold.
	For a fixed $1 \leq k \leq n$, let $\delta_k$ be an $n$-tuple with the value $1$ in index $k$ and $0$ elsewhere, and let $\mbf{i}$, $\mbf{j}$ and $\mbf{m}$ be given with $j_k = 0$.
	Then, with $\overline{\mbf{r}} := (r_1,\dots,r_{k-1},0,r_{k+1},\dots,r_n)$ and with
	\begin{equation*}
		\begin{split}
			A_\mbf{i}^\mbf{j} &:= S_\mbf{i}^\mbf{j}[\mbf{m}]\;,\\
			B^\mbf{j}_\mbf{i} &:= \bigcup_{r_1=0}^{m_1}\cdots\bigcup_{r_{k-1}=0}^{m_{k-1}}\bigcup_{r_{k+1}=0}^{m_{k+1}}\cdots\bigcup_{r_n=0}^{m_n}\Omega^\mbf{j}_{\ell+1}(\mbf{i}+\overline{\mbf{r}}+(m_k+1)\delta_k)\;,
		\end{split}
	\end{equation*}
	the following holds for $s = 0, 1$,	
	\begin{align}
		\tpb{\ell+s,\ell+1}{*}(A_\mbf{i}^\mbf{j}) \cap \tpb{\ell+s,\ell+1}{*}(B^\mbf{j}_\mbf{i})
		&= \tpb{\ell+s,\ell+1}{*}(A_\mbf{i}^\mbf{j} \cap B^\mbf{j}_\mbf{i}) = \tpb{\ell+s,\ell+1}{*}(B^{\mbf{j}+\delta_k}_{\mbf{i}})\label{eq:equal_intersections}\;.
	\end{align}
	Here, a superscript of $*$ denotes that the statement is true for all sets of B-splines.
\end{lemma}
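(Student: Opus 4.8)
The plan is to reduce the asserted chain of equalities to the single containment
\[
	\tpb{\ell+s,\ell+1}{*}(A_\mbf{i}^\mbf{j}) \cap \tpb{\ell+s,\ell+1}{*}(B^\mbf{j}_\mbf{i}) \subseteq \tpb{\ell+s,\ell+1}{*}(B^{\mbf{j}+\delta_k}_{\mbf{i}})\;.
\]
Indeed, equation \eqref{eq:slice_inclusion} of Lemma \ref{lem:unions_intersections_domains} (which uses $j_k = 0$) gives $B^{\mbf{j}+\delta_k}_{\mbf{i}}\subseteq A_\mbf{i}^\mbf{j}\cap B^\mbf{j}_\mbf{i}$, and $A_\mbf{i}^\mbf{j}\cap B^\mbf{j}_\mbf{i}$ sits inside both $A_\mbf{i}^\mbf{j}$ and $B^\mbf{j}_\mbf{i}$, so monotonicity of $\tpb{\ell+s,\ell+1}{*}(\cdot)$ (Lemma \ref{lem:space_containment}) already yields $\tpb{\ell+s,\ell+1}{*}(B^{\mbf{j}+\delta_k}_{\mbf{i}})\subseteq\tpb{\ell+s,\ell+1}{*}(A_\mbf{i}^\mbf{j}\cap B^\mbf{j}_\mbf{i})\subseteq\tpb{\ell+s,\ell+1}{*}(A_\mbf{i}^\mbf{j})\cap\tpb{\ell+s,\ell+1}{*}(B^\mbf{j}_\mbf{i})$; the displayed containment then forces all three sets to agree.

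To set up the remaining inclusion I would first unwind the definitions of $S_\mbf{i}^\mbf{j}[\mbf{m}]$, $\Omega^\mbf{j}_{\ell+1}(\cdot)$ and $\deriv{\ell,\ell+1}^\mbf{j}(\cdot)$ into an explicit description of the three domains as unions of $\mbf{1}$-extended knot domains: $A_\mbf{i}^\mbf{j}$ is the union of all $\xgb^\mbf{1}_{\overline{\mbf{i}}}$ with $\xgb^\mbf{1}_{\overline{\mbf{i}}}\subset\Omega_{\ell+1}$ and $i_l\le\overline{i}_l\le i_l+m_l+1-j_l$ for every $l$; $B^\mbf{j}_\mbf{i}$ is the analogous union with the $k$-th index constrained instead to $\{i_k+m_k+1,\,i_k+m_k+2\}$; and $B^{\mbf{j}+\delta_k}_{\mbf{i}}$ is the analogous union with the $k$-th index pinned to $i_k+m_k+1$. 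By the minimal-support property of B-splines (and the identification of a B-spline with its local knot vector), a $\zerob_{\overline{\mbf{i}}}\in\tpb{\ell,\ell+1}{\mbf{0}}$ has $\supp(\zerob_{\overline{\mbf{i}}})$ contained in one of these unions exactly when its index $\overline{\mbf{i}}$ lies in the corresponding index box; hence, for a B-spline $\phi$ of form degree $\mbf{j}'$, we have $\phi\in\tpb{\ell+s,\ell+1}{\mbf{j}'}(Y)$, with $Y$ one of these domains, if and only if there is a $\zerob_{\overline{\mbf{i}}}\in\tpb{\ell,\ell+1}{\mbf{0}}$ with $\supp(\phi)\subset\supp(\zerob_{\overline{\mbf{i}}})$ and $\overline{\mbf{i}}$ in the corresponding box.

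Now fix a form degree $\mbf{j}'$ and take $\phi\in\tpb{\ell+s,\ell+1}{\mbf{j}'}(A_\mbf{i}^\mbf{j})\cap\tpb{\ell+s,\ell+1}{\mbf{j}'}(B^\mbf{j}_\mbf{i})$, with witnesses $\zerob_{\overline{\mbf{i}}_A},\zerob_{\overline{\mbf{i}}_B}\in\tpb{\ell,\ell+1}{\mbf{0}}$ as above; thus $\supp(\phi)\subset\supp(\zerob_{\overline{\mbf{i}}_A})\cap\supp(\zerob_{\overline{\mbf{i}}_B})$, $(\overline{\mbf{i}}_A)_k\le i_k+m_k+1\le(\overline{\mbf{i}}_B)_k$, and $(\overline{\mbf{i}}_A)_l,(\overline{\mbf{i}}_B)_l\in[i_l,i_l+m_l+1-j_l]$ for $l\ne k$. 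If either witness already has $k$-th index $i_k+m_k+1$ (in particular if $\zerob_{\overline{\mbf{i}}_A}=\zerob_{\overline{\mbf{i}}_B}$), that witness's index is in the box of $B^{\mbf{j}+\delta_k}_{\mbf{i}}$ and we are done. Otherwise the witnesses are distinct; since $\supp(\phi)$ is the support of a level-$(\ell+s)$ B-spline, it spans at least $\pdeg{\ell+1}{l}$ level-$(\ell+1)$ knot intervals in every direction $l$ (by minimal support and knot nestedness, Assumption \ref{assume0}), so $\csupp(\zerob_{\overline{\mbf{i}}_A})\cap\csupp(\zerob_{\overline{\mbf{i}}_B})\supseteq\csupp(\phi)$ contains a box of the form required by Definition \ref{def:n-1-intersection}; hence $\zerob_{\overline{\mbf{i}}_A}$ and $\zerob_{\overline{\mbf{i}}_B}$ share an $(n-1,\ell+1)$-intersection, and Assumption \ref{assume:shortest_path} supplies a shortest chain $\zerob_{\overline{\mbf{i}}_A}=\zerob^{(0)},\dots,\zerob^{(r)}=\zerob_{\overline{\mbf{i}}_B}$ in $\tpb{\ell,\ell+1}{\mbf{0}}$ (in coordinates arranged so that $\overline{\mbf{i}}_B-\overline{\mbf{i}}_A$ is component-wise non-negative, its $k$-th component already being so). Along this chain the $k$-th index climbs in unit steps from $(\overline{\mbf{i}}_A)_k$ to $(\overline{\mbf{i}}_B)_k$, so some term $\zerob^{(t)}=:\zerob_{\overline{\mbf{i}}_*}$ has $k$-th index exactly $i_k+m_k+1$, while its other indices lie between those of the two witnesses and hence in $[i_l,i_l+m_l+1-j_l]$; thus $\overline{\mbf{i}}_*$ is in the index box of $B^{\mbf{j}+\delta_k}_{\mbf{i}}$, i.e., $\supp(\zerob_{\overline{\mbf{i}}_*})\subset B^{\mbf{j}+\delta_k}_{\mbf{i}}$. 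Finally Lemma \ref{lem:shortest_chain_intersection} gives $\csupp(\zerob_{\overline{\mbf{i}}_*})\supseteq\csupp(\zerob_{\overline{\mbf{i}}_A})\cap\csupp(\zerob_{\overline{\mbf{i}}_B})\supseteq\csupp(\phi)$, whence $\supp(\phi)\subset\supp(\zerob_{\overline{\mbf{i}}_*})$ and therefore $\phi\in\tpb{\ell+s,\ell+1}{\mbf{j}'}(B^{\mbf{j}+\delta_k}_{\mbf{i}})$. Since $\mbf{j}'$ and $s$ were arbitrary, this is the desired inclusion.

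The crux is the step where the two witnessing $0$-forms are shown to share an $(n-1,\ell+1)$-intersection, since this is exactly the hook that lets Assumption \ref{assume:shortest_path} enter; here one must match the minimal-support dimensions of $\phi$ against the codimension-one box prescribed in Definition \ref{def:n-1-intersection}. The surrounding bookkeeping --- the explicit index boxes for $A_\mbf{i}^\mbf{j}$, $B^\mbf{j}_\mbf{i}$ and $B^{\mbf{j}+\delta_k}_{\mbf{i}}$, and the equivalence of ``$0$-form supported inside such a union'' with ``its index lies in the corresponding box'' --- is routine but must be handled with care because of possible knot repetitions.
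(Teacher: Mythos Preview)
Your proposal is correct and follows essentially the same route as the paper: reduce to the single hard inclusion via Lemma \ref{lem:unions_intersections_domains} and Lemma \ref{lem:space_containment}, pick two $0$-form witnesses for $\phi$ in $A_\mbf{i}^\mbf{j}$ and $B_\mbf{i}^\mbf{j}$, verify they share an $(n-1,\ell+1)$-intersection, invoke Assumption \ref{assume:shortest_path} to obtain a shortest chain, and extract from it a $0$-form whose $k$-th index is exactly $i_k+m_k+1$ (with the remaining indices trapped between those of the two witnesses) to witness membership in $\tpb{\ell+s,\ell+1}{*}(B^{\mbf{j}+\delta_k}_{\mbf{i}})$. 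The only cosmetic differences are that the paper certifies the $(n-1,\ell+1)$-intersection via an auxiliary level-$(\ell+1)$ $n$-form $\threeb$ with $\supp(\threeb)\subset\supp(\phi)$ (equivalent to your support-width count), and phrases the index bookkeeping through $\deriv{\ell,\ell+1}^{\mbf{j}}$ rather than through your explicit index boxes; your explicit appeal to Lemma \ref{lem:shortest_chain_intersection} is exactly what the paper leaves implicit when it asserts that a suitable $\Delta\hat{\mbf{i}}$ on the shortest chain still has $\supp(\phi)\subset\supp(\zerob_{\overline{\mbf{i}}+\Delta\hat{\mbf{i}}})$.
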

\begin{proof}
	If $\gb \in \tpb{\ell+s,\ell+1}{*}(A_\mbf{i}^\mbf{j}) \cap \tpb{\ell+s,\ell+1}{*}(B^\mbf{j}_\mbf{i})$, then by definition there must exist an $n$-form $\threeb \in \tpb{\ell+1,\ell+1}{\mbf{1}}$ and two
	0-form B-splines $\zerob_{\overline{\mbf{i}}},\zerob_{\overline{\mbf{i}}+\Delta\overline{\mbf{i}}} \in \tpb{\ell,\ell+1}{\mbf{0}}$, such that
	\begin{equation*}
		\begin{split}
			&\supp(\threeb) \subset \supp(\gb) \subset \supp(\zerob_{\overline{\mbf{i}}}) \subset A^\mbf{j}_\mbf{i}\;,\\
			&\supp(\threeb) \subset \supp(\gb) \subset \supp(\zerob_{\overline{\mbf{i}}+\Delta\overline{\mbf{i}}}) \subset B^\mbf{j}_\mbf{i}\;.
		\end{split}
	\end{equation*}
	Without loss of generality, assume that $\Delta\overline{\mbf{i}}$ is component-wise non-negative.
	The above implies that $\zerob_{\overline{\mbf{i}}}$ and $\zerob_{\overline{\mbf{i}}+\Delta\overline{\mbf{i}}}$ share an $(n-1,\ell+1)$-intersection and thus, by Assumption \ref{assume:shortest_path}, there exists a shortest chain between them.
	Moreover, from the definitions of $A_\mbf{i}^\mbf{j}$ and $B_\mbf{i}^\mbf{j}$, there exist $\mbf{r}_0$ and $\overline{\mbf{r}}_1$ such that $\overline{\mbf{i}} \in \deriv{\ell,\ell+1}^\mbf{j}({\mbf{i}+\mbf{r}_0})$ and $\overline{\mbf{i}} + \Delta\overline{\mbf{i}} \in \deriv{\ell,\ell+1}^\mbf{j}(\mbf{i}+\overline{\mbf{r}}_1+(m_k+1)\delta_k)$.
	Thus, the following inequalities hold \textit{component-wise},
	\begin{equation*}
		\begin{gathered}
			\mbf{j}-\mbf{1} \leq \mbf{i} + \mbf{r}_0 - \overline{\mbf{i}} \leq \mbf{0}\;,\\
			\mbf{j}-\mbf{1} \leq \mbf{i} + \overline{\mbf{r}}_1 + (m_k+1)\delta_k - \overline{\mbf{i}} - \Delta\overline{\mbf{i}} \leq \mbf{0}\;.
		\end{gathered}
	\end{equation*}
	In particular, the second inequality implies that
	\begin{equation*}
		i_k + (m_k + 1) - \overline{i}_k \leq \Delta \overline{i}_k\;.
	\end{equation*}

	Next, since a shortest chain exists between $\zerob_{\overline{\mbf{i}}}$ and $\zerob_{\overline{\mbf{i}}+\Delta\overline{\mbf{i}}}$, let $\Delta\hat{\mbf{i}}$ be such that it satisfies the following conditions:
	\begin{itemize}
		\item component-wise, $\mbf{0} \leq \Delta\hat{\mbf{i}} \leq \Delta\overline{\mbf{i}}$;
		\item $\Delta\hat{i}_k = i_k + (m_k+1) - \overline{i}_k \leq \Delta\overline{i}_k$;
		\item $\zerob_{\overline{\mbf{i}}+\Delta\hat{\mbf{i}}} \in \tpb{\ell,\ell+1}{\mbf{0}}$ and $\supp(\threeb) \subset \supp(\phi) \subset \supp(\zerob_{\overline{\mbf{i}}+\Delta\hat{\mbf{i}}}) \subset B^\mbf{j}_\mbf{i}$.
	\end{itemize}
	Such a $\Delta\hat{\mbf{i}}$ exists because of Assumption \ref{assume:shortest_path}.
	In particular, the last condition implies that there exists $\overline{\mbf{r}}_2$ such that
	\begin{equation*}
		\mbf{j}-\mbf{1} \leq \mbf{i} + \overline{\mbf{r}}_2 + (m_k+1)\delta_k - \overline{\mbf{i}} - \Delta\hat{\mbf{i}} \leq \mbf{0}\;,
	\end{equation*}
	with equality holding on the right for the $k$-th component because of the definition of $\Delta \hat{i}_k$.
	Therefore, we see that the following inequality holds \textit{component-wise},
	\begin{equation*}
		\begin{gathered}
			\mbf{0} \leq \overline{\mbf{i}} + \Delta\hat{\mbf{i}} - \mbf{i} - \overline{\mbf{r}}_2 - (m_k+1)\delta_k \leq \mbf{1}-\mbf{j}-\delta_k\;.
		\end{gathered}
	\end{equation*}
	Therefore, $\overline{\mbf{i}}+\Delta\hat{\mbf{i}} \in \deriv{\ell,\ell+1}^{\mbf{j}+\delta_k}({\mbf{i}+\overline{\mbf{r}}_2+(m_k+1)\delta_k})$ and consequently $\supp(\zerob_{\overline{\mbf{i}}+\Delta\hat{\mbf{i}}}) \subset B^{\mbf{j}+\delta_k}_{\mbf{i}}$.
	This implies that
	\begin{equation*}\label{eq:partial_containment1_intersection}
		\tpb{\ell+s,\ell+1}{*}(A_\mbf{i}^\mbf{j}) \cap \tpb{\ell+s,\ell+1}{*}(B^\mbf{j}_\mbf{i})
		\subset \tpb{\ell+s,\ell+1}{*}(B^{\mbf{j}+\delta_k}_{\mbf{i}})
		\subset \tpb{\ell+s,\ell+1}{*}(A_\mbf{i}^\mbf{j} \cap B^\mbf{j}_\mbf{i})\;.
	\end{equation*}
	
	Finally, from Lemma \ref{lem:space_containment}, if $\gb \in \tpb{\ell+s,\ell+1}{*}(A_\mbf{i}^\mbf{j} \cap B^\mbf{j}_\mbf{i})$ then $\gb \in \tpb{\ell+s,\ell+1}{*}(A_\mbf{i}^\mbf{j})$ and $\gb \in \tpb{\ell+s,\ell+1}{*}(B^\mbf{j}_\mbf{i})$.
	As a result
	\begin{equation*}\label{eq:partial_containment2_intersection}
		\tpb{\ell+s,\ell+1}{*}(A_\mbf{i}^\mbf{j} \cap B^\mbf{j}_\mbf{i}) \subset \tpb{\ell+s,\ell+1}{*}(A_\mbf{i}^\mbf{j}) \cap \tpb{\ell+s,\ell+1}{*}(B^\mbf{j}_\mbf{i})\;.
	\end{equation*}
	The claim of equation \eqref{eq:equal_intersections} follows.
\end{proof}

\begin{corollary}\label{cor:complex_equality_intersections}
	With $A_\mbf{i}^\mbf{j}$ and $B^\mbf{j}_\mbf{i}$ defined as in Lemma \ref{lem:intersections},
	\begin{equation}\label{eq:intersection_space_equality}
		\tpbs{\ell+s}{}(\square) = \tpbs{\ell+s,\ell+1}{}(\square)
	\end{equation}
	for $\square \in \{ A_\mbf{i}^\mbf{j}, B_\mbf{i}^\mbf{j}, B_\mbf{i}^{\mbf{j}+\delta_k}, A_\mbf{i}^\mbf{j} \cap B^\mbf{j}_\mbf{i}\}$.
\end{corollary}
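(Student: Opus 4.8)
The plan is to reduce the statement to Lemma~\ref{lem:complex_equality}, which already gives $\tpbs{\ell+s}{}(Y) = \tpbs{\ell+s,\ell+1}{}(Y)$ whenever $Y$ is a union of supports of a subset of $\tpb{\ell,\ell+1}{\mbf{0}}$. Three of the four choices of $\square$ are of exactly this form, and only the intersection $A_\mbf{i}^\mbf{j} \cap B^\mbf{j}_\mbf{i}$ will require genuinely new work. First I would dispatch $\square \in \{A_\mbf{i}^\mbf{j}, B^\mbf{j}_\mbf{i}, B^{\mbf{j}+\delta_k}_\mbf{i}\}$: by their definitions each of these is a union of domains $\Omega^{\mbf{j}'}_{\ell+1}(\cdot)$, and each such domain is in turn a union of $\mbf{1}$-extended knot domains $\xgb^\mbf{1}_{\overline{\mbf{i}}}$ with index in some $\deriv{\ell,\ell+1}^{\mbf{j}'}(\cdot)$, hence with $\xgb^\mbf{1}_{\overline{\mbf{i}}} \subset \Omega_{\ell+1}$; as noted in the proof of Proposition~\ref{prop:g_is_ball}, each $\xgb^\mbf{1}_{\overline{\mbf{i}}}$ is the support of a $0$-form B-spline $\zerob_{\overline{\mbf{i}}} \in \tpb{\ell}{\mbf{0}}$, and $\supp(\zerob_{\overline{\mbf{i}}}) \subset \Omega_{\ell+1}$ means precisely $\zerob_{\overline{\mbf{i}}} \in \tpb{\ell,\ell+1}{\mbf{0}}$. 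So these three domains are unions of supports of subsets of $\tpb{\ell,\ell+1}{\mbf{0}}$ and \eqref{eq:intersection_space_equality} follows from Lemma~\ref{lem:complex_equality}. Along the way I would record the B-spline-set sharpening of that lemma: for such a $Y$ one in fact has $\tpb{\ell+s}{\mbf{j}'}(Y) = \tpb{\ell+s,\ell+1}{\mbf{j}'}(Y)$ for every multi-index $\mbf{j}'$, since $\supseteq$ is trivial and, for $\subseteq$, any $\phi \in \tpb{\ell+s}{\mbf{j}'}(Y)$ lies (by Lemma~\ref{lem:complex_equality}) in the span of $\tpb{\ell+s,\ell+1}{\mbf{j}'}(Y) \subseteq \tpb{\ell+s}{\mbf{j}'}$, so linear independence of the full B-spline basis forces $\phi$ to be one of those functions --- this is also precisely what the reverse-inclusion paragraph of the proof of Lemma~\ref{lem:complex_equality} establishes.

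For $\square = A_\mbf{i}^\mbf{j} \cap B^\mbf{j}_\mbf{i}$ the obstruction is that this set need not itself be a union of $0$-form supports --- Figure~\ref{fig:unequal_AB_domains} exhibits a case where it is strictly larger than $B^{\mbf{j}+\delta_k}_\mbf{i}$ --- so Lemma~\ref{lem:complex_equality} does not apply directly, and I would instead argue at the level of B-spline sets. Since both $\tpbs{\ell+s}{j}(\square)$ and $\tpbs{\ell+s,\ell+1}{j}(\square)$ are products over $\{\mbf{j}' : |\mbf{j}'| = j\}$ of spans of $\tpb{\ell+s}{\mbf{j}'}(\square)$, respectively $\tpb{\ell+s,\ell+1}{\mbf{j}'}(\square)$, it suffices to prove $\tpb{\ell+s}{\mbf{j}'}(\square) = \tpb{\ell+s,\ell+1}{\mbf{j}'}(\square)$ for every $\mbf{j}'$. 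I would chain three identities: (i) purely set-theoretically, because membership in $\tpb{\ell+s}{\mbf{j}'}(\cdot)$ is controlled by support containment, $\tpb{\ell+s}{\mbf{j}'}(A_\mbf{i}^\mbf{j} \cap B^\mbf{j}_\mbf{i}) = \tpb{\ell+s}{\mbf{j}'}(A_\mbf{i}^\mbf{j}) \cap \tpb{\ell+s}{\mbf{j}'}(B^\mbf{j}_\mbf{i})$; (ii) by the set-level sharpening from the previous paragraph (applicable since $A_\mbf{i}^\mbf{j}$ and $B^\mbf{j}_\mbf{i}$ are unions of $0$-form supports), $\tpb{\ell+s}{\mbf{j}'}(A_\mbf{i}^\mbf{j}) = \tpb{\ell+s,\ell+1}{\mbf{j}'}(A_\mbf{i}^\mbf{j})$ and likewise for $B^\mbf{j}_\mbf{i}$; and (iii) by Lemma~\ref{lem:intersections} with the wildcard $*$ taken to be $\mbf{j}'$ --- this is the only place Assumption~\ref{assume:shortest_path} enters --- $\tpb{\ell+s,\ell+1}{\mbf{j}'}(A_\mbf{i}^\mbf{j}) \cap \tpb{\ell+s,\ell+1}{\mbf{j}'}(B^\mbf{j}_\mbf{i}) = \tpb{\ell+s,\ell+1}{\mbf{j}'}(A_\mbf{i}^\mbf{j} \cap B^\mbf{j}_\mbf{i})$. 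Composing (i)--(iii) gives the desired multi-index identity, and taking spans and then products over $\{\mbf{j}' : |\mbf{j}'| = j\}$ recovers \eqref{eq:intersection_space_equality} at each grade $j$.

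The main (and essentially only) obstacle is the intersection case, and it has already been absorbed into Lemma~\ref{lem:intersections}; the remaining content of the corollary is bookkeeping that rests on two elementary observations --- that a domain built as a union of $\Omega^{\mbf{j}'}_{\ell+1}(\cdot)$'s is a union of level-$\ell$ $0$-form supports contained in $\Omega_{\ell+1}$, and that the B-spline set associated with an intersection of domains is the intersection of the associated B-spline sets. I do not anticipate any calculation beyond these; the care needed is simply in keeping the grade index $j$, the fixed domain superscript $\mbf{j}$, and the running multi-index $\mbf{j}'$ distinct, and in using the set-level (rather than merely space-level) form of Lemma~\ref{lem:complex_equality} so that the intersection identity of Lemma~\ref{lem:intersections} can be applied.
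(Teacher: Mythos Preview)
Your proposal is correct and follows essentially the same approach as the paper: the three ``easy'' choices of $\square$ are handled directly by Lemma~\ref{lem:complex_equality} (since each is a union of supports of B-splines in $\tpb{\ell,\ell+1}{\mbf{0}}$), and the intersection case is reduced to Lemma~\ref{lem:intersections} via the chain $\tpbs{\ell+s}{}(A\cap B) \hookrightarrow \tpbs{\ell+s}{}(A)\cap\tpbs{\ell+s}{}(B) = \tpbs{\ell+s,\ell+1}{}(A)\cap\tpbs{\ell+s,\ell+1}{}(B) \hookrightarrow \tpbs{\ell+s,\ell+1}{}(A\cap B)$. The only cosmetic difference is that you work entirely at the B-spline-set level before taking spans, whereas the paper phrases the same chain at the space level and invokes equation~\eqref{eq:equal_intersections} for the last step; the underlying argument is identical.
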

\begin{proof}
	For $\square \in \{A_\mbf{i}^\mbf{j}, B_\mbf{i}^\mbf{j},  B^{\mbf{j}+\delta_k}_{\mbf{i}}\},$ equation \eqref{eq:intersection_space_equality} follows directly from Lemma \ref{lem:complex_equality}.
	It remains to show that $\tpbs{\ell+s}{}(A_\mbf{i}^\mbf{j} \cap B_\mbf{i}^\mbf{j}) = \tpbs{\ell+s,\ell+1}{}(A_\mbf{i}^\mbf{j} \cap B_\mbf{i}^\mbf{j}).$
	By definition, $\tpbs{\ell+s}{}(A_\mbf{i}^\mbf{j} \cap B_\mbf{i}^\mbf{j}) \supset \tpbs{\ell+s,\ell+1}{}(A_\mbf{i}^\mbf{j} \cap B_\mbf{i}^\mbf{j})$.
	The reverse inclusion holds since
	\begin{align*}
			\gf &\in \tpbs{\ell+s}{}(A_\mbf{i}^\mbf{j} \cap B_\mbf{i}^\mbf{j})\;, &\\
			\Rightarrow \gf &\in \tpbs{\ell+s}{}(A_\mbf{i}^\mbf{j}) \cap \tpbs{\ell+s}{}(B_\mbf{i}^\mbf{j})\;, &\\
			\Rightarrow f &\in \tpbs{\ell+s,\ell+1}{}(A_\mbf{i}^\mbf{j}) \cap \tpbs{\ell+s,\ell+1}{}(B_\mbf{i}^\mbf{j})\;, &\mbox{(Lemma \ref{lem:complex_equality})}\;\;\\
			\Rightarrow f &\in \tpbs{\ell+s,\ell+1}{}(A_\mbf{i}^\mbf{j} \cap B_\mbf{i}^\mbf{j}) & \mbox{(Equation \eqref{eq:equal_intersections})}\;.
	\end{align*}
\end{proof}

\begin{lemma}\label{lem:unions}
	For a fixed $1 \leq k \leq n$, let $\delta_k$ be an $n$-tuple with the value $1$ in index $k$ and $0$ elsewhere, and let $\mbf{i}$, $\mbf{j}$ and $\mbf{m}$ be given.
	Then, with $\overline{\mbf{r}} := (r_1,\dots,r_{k-1},0,r_{k+1},\dots,r_n)$ and with
	\begin{equation*}
		\begin{split}
			A_\mbf{i}^\mbf{j} &:= S_\mbf{i}^\mbf{j}[\mbf{m}]\;,\\
			B^\mbf{j}_\mbf{i} &:= \bigcup_{r_1=0}^{m_1}\cdots\bigcup_{r_{k-1}=0}^{m_{k-1}}\bigcup_{r_{k+1}=0}^{m_{k+1}}\cdots\bigcup_{r_n=0}^{m_n}\Omega^\mbf{j}_{\ell+1}(\mbf{i}+\overline{\mbf{r}}+(m_k+1)\delta_k)\;,\\
			C_\mbf{i}^\mbf{j} &:= S_\mbf{i}^\mbf{j}[\mbf{m}+\delta_k]\;,
		\end{split}
	\end{equation*}
	the following holds for $s = 0, 1$,	
	\begin{align}
		\tpb{\ell+s,\ell+1}{*}(A_\mbf{i}^\mbf{j}) \cup \tpb{\ell+s,\ell+1}{*}(B^\mbf{j}_\mbf{i}) 
		&= \tpb{\ell+s,\ell+1}{*}(A_\mbf{i}^\mbf{j} \cup B^\mbf{j}_\mbf{i})
		= \tpb{\ell+s,\ell+1}{*}(C_\mbf{i}^\mbf{j})\label{eq:equal_unions} \;.
	\end{align}
	In the above a superscript of $*$ denotes that the statement is true for all B-splines.
\end{lemma}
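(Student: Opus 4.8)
The plan is to strip off the two trivial equalities and reduce the lemma to a single component-wise index inequality. Write $A := A_\mbf{i}^\mbf{j}$, $B := B_\mbf{i}^\mbf{j}$, $C := C_\mbf{i}^\mbf{j}$ for brevity. For the middle equality in \eqref{eq:equal_unions}, recall that the set identity $A \cup B = C$ was already established inside the proof of Lemma \ref{lem:unions_intersections_domains} purely from the definitions of the slices (it uses neither Assumption \ref{assume:shortest_path} nor any restriction on the entries of $\mbf{j}$); since $\tpb{\ell+s,\ell+1}{*}(\cdot)$ depends only on its subdomain argument, this gives $\tpb{\ell+s,\ell+1}{*}(A \cup B) = \tpb{\ell+s,\ell+1}{*}(C)$ immediately. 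For the first equality, the inclusion ``$\subseteq$'' is just monotonicity (Lemma \ref{lem:space_containment}) applied to $A \subset A \cup B$ and $B \subset A \cup B$. So the entire content is the reverse inclusion $\tpb{\ell+s,\ell+1}{*}(A \cup B) \subseteq \tpb{\ell+s,\ell+1}{*}(A) \cup \tpb{\ell+s,\ell+1}{*}(B)$: every B-spline (of any form degree) supported on the combined slice is already supported on $A$ or on $B$. For a generic pair of subdomains this is false, so the argument must exploit the special structure of consecutive slices.

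For the reverse inclusion I would unwind the definitions. Given $\gb \in \tpb{\ell+s,\ell+1}{*}(C)$, there is a level-$\ell$ $0$-form $\zerob_{\overline{\mbf{i}}} \in \tpb{\ell}{\mbf{0}}$ with $\supp(\gb) \subset \supp(\zerob_{\overline{\mbf{i}}}) \subset C$, and since $C \subset \Omega_{\ell+1}$ we automatically get $\zerob_{\overline{\mbf{i}}} \in \tpb{\ell,\ell+1}{\mbf{0}}$. It then suffices to find an $n$-tuple $\mbf{r}$ with $\mbf{0} \leq \mbf{r} \leq \mbf{m}+\delta_k$ and $\overline{\mbf{i}} \in \deriv{\ell,\ell+1}^{\mbf{j}}(\mbf{i}+\mbf{r})$: indeed then $\supp(\zerob_{\overline{\mbf{i}}}) \subset \Omega_{\ell+1}^{\mbf{j}}(\mbf{i}+\mbf{r})$, and if $\mbf{r}$ can be taken with $r_k \leq m_k$ this places $\supp(\zerob_{\overline{\mbf{i}}})$ inside $A$, giving $\gb \in \tpb{\ell+s,\ell+1}{*}(A)$, whereas if every admissible $\mbf{r}$ has $r_k = m_k+1$ then $\mbf{r} = \overline{\mbf{r}} + (m_k+1)\delta_k$ with $\overline{\mbf{r}}$ vanishing in direction $k$ and bounded by $\mbf{m}$ in the other directions, so $\supp(\zerob_{\overline{\mbf{i}}})$ lies inside $B$ and $\gb \in \tpb{\ell+s,\ell+1}{*}(B)$. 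Either way we are finished.

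The only real work is to produce this $\mbf{r}$, equivalently to prove the component-wise bound $\mbf{0} \leq \overline{\mbf{i}} - \mbf{i} \leq \mbf{m}+\delta_k+\mbf{1}-\mbf{j}$; once it holds, an admissible $\mbf{r}$ is obtained direction by direction by setting $r_{k'} := \min\{(\mbf{m}+\delta_k)_{k'},\, \overline{i}_{k'} - i_{k'}\}$, and membership $\zerob_{\overline{\mbf{i}}} \in \tpb{\ell,\ell+1}{\mbf{0}}$ supplies the remaining requirement $\supp(\zerob_{\overline{\mbf{i}}}) = \xgb_{\overline{\mbf{i}}}^{\mbf{1}} \subset \Omega_{\ell+1}$ built into the definition of $\deriv{\ell,\ell+1}^{\mbf{j}}$. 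To get the bound: by construction $C$ is a union of $\mbf{1}$-extended knot domains $\xgb_{\overline{\mbf{i}}'}^{\mbf{1}}$ with $\overline{\mbf{i}}' \in \deriv{\ell,\ell+1}^{\mbf{j}}(\mbf{i}+\mbf{r})$ for some $\mbf{0} \leq \mbf{r} \leq \mbf{m}+\delta_k$, and such membership forces $\mbf{i}+\mbf{r} \leq \overline{\mbf{i}}' \leq \mbf{i}+\mbf{r}+\mbf{1}-\mbf{j}$; hence every index of every $\mbf{1}$-extended knot domain occurring in $C$ satisfies $\mbf{i} \leq \overline{\mbf{i}}' \leq \mbf{i}+\mbf{m}+\delta_k+\mbf{1}-\mbf{j}$, so $C$ sits inside the Cartesian product, over the parametric directions $k'$, of the open interval between the left endpoint of the left-most and the right endpoint of the right-most of these domains in direction $k'$. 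Since $\supp(\zerob_{\overline{\mbf{i}}})$ is a hypercube contained in $C$, comparing endpoints direction by direction --- a routine knot-index computation using only that the univariate knot vectors are non-decreasing and that each index in sight labels a B-spline with non-degenerate support --- yields the stated bound on $\overline{\mbf{i}}-\mbf{i}$. The main obstacle is keeping this book-keeping honest: it must run uniformly for $j_k = 0$ (where $A$ and $B$ share a whole slab of $\mbf{1}$-extended knot domains) and for $j_k = 1$ (where the two slices merely abut), and one must observe that $\supp(\zerob_{\overline{\mbf{i}}})$, being connected, cannot straddle two connected pieces of $C$ --- which the bounding-box estimate makes automatic, since it forces $\overline{\mbf{i}}$ into the admissible index range and hence, together with $\supp(\zerob_{\overline{\mbf{i}}}) \subset \Omega_{\ell+1}$, makes $\supp(\zerob_{\overline{\mbf{i}}})$ genuinely one of the constituent $\mbf{1}$-extended knot domains of some $\Omega_{\ell+1}^{\mbf{j}}(\mbf{i}+\mbf{r})$.
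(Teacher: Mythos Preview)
Your proof diverges from the paper's in a crucial way: the paper argues by contradiction and invokes Assumption~\ref{assume:shortest_path} (the shortest-chain condition), whereas you never use it and instead try to pin down the index $\overline{\mbf{i}}$ of the enclosing level-$\ell$ $0$-form directly via a bounding-box estimate. In the paper's argument one supposes $\supp(\phi)\not\subset A$ and $\supp(\phi)\not\subset B$, finds $0$-forms $\zerob_{\overline{\mbf{i}}}$ in $A$ and $\zerob_{\overline{\mbf{i}}+\Delta\overline{\mbf{i}}}$ in $B$ whose closures jointly cover $\supp(\phi)$ and share an $(n-1,\ell+1)$-intersection, and then uses the shortest chain between them (guaranteed by Assumption~\ref{assume:shortest_path}) to produce a single $0$-form in $\tpb{\ell,\ell+1}{\mbf{0}}$ whose support contains $\supp(\phi)$ and lies entirely in $A$ or in $B$.

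The gap in your approach is the passage from ``$\supp(\zerob_{\overline{\mbf{i}}})$ is a hypercube contained in $C$'' to the componentwise index bound $\mbf{0}\le\overline{\mbf{i}}-\mbf{i}\le\mbf{m}+\delta_k+\mbf{1}-\mbf{j}$. Your justification is to compare the endpoints of $\supp(\zerob_{\overline{\mbf{i}}})$ against those of the bounding box of $C$ in each direction and call this a ``routine knot-index computation''. But support containment yields only knot-\emph{value} inequalities $\xi_{i_{k'}}\le\xi_{\overline{i}_{k'}}$ and $\xi_{\overline{i}_{k'}+p+1}\le\xi_{i_{k'}+M_{k'}+p+1}$, and these do not imply the corresponding \emph{index} inequalities when knots repeat --- which, in the paper's setup, they always do at the boundary (multiplicity $p$). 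For instance, if $i_{k'}=2$ and $\overline{i}_{k'}=1$, then $\xi_{i_{k'}}=\xi_{\overline{i}_{k'}}=0$ and the support inclusion can hold while $\overline{i}_{k'}<i_{k'}$. In that case your chosen $r_{k'}$ does not exist, and the argument that $\overline{\mbf{i}}\in\deriv{\ell,\ell+1}^{\mbf{j}}(\mbf{i}+\mbf{r})$ breaks down. More generally, the bounding box of $C$ can be strictly larger than $C$, so the only information you are extracting is this weak endpoint comparison; you never use the actual shape of $C$ beyond this, and that is precisely where Assumption~\ref{assume:shortest_path} enters in the paper's proof to bridge the two pieces $A$ and $B$. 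Your final paragraph acknowledges the difficulty (``keeping this book-keeping honest'') but does not resolve it.
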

\begin{proof}
	From Lemma \ref{lem:unions_intersections_domains},  $A_\mbf{i}^\mbf{j} \cup B^\mbf{j}_\mbf{i} = C^\mbf{j}_\mbf{i}$, and so we focus only on the first equality of the claim.
	However, the containment
		$\tpb{\ell+s,\ell+1}{*}(A^\mbf{j}_\mbf{i}) \cup \tpb{\ell+s,\ell+1}{*}(B^\mbf{j}_\mbf{i}) 
		\subset \tpb{\ell+s,\ell+1}{*}(A^\mbf{j}_\mbf{i} \cup B^\mbf{j}_\mbf{i})$
	again follows from Lemma \ref{lem:space_containment}.
	
	We prove inclusion in the other direction by contradiction.
	Let there be a B-spline $\phi \in \tpb{\ell+s,\ell+1}{\mbf{j}'}(A^\mbf{j}_\mbf{i} \cup B^\mbf{j}_\mbf{i})$ that is fully supported neither on $A^\mbf{j}_\mbf{i}$ nor on $B^\mbf{j}_\mbf{i}$, i.e., $\supp(\phi) \not\subset A^\mbf{j}_\mbf{i}$ and  $\supp(\phi) \not\subset  B^\mbf{j}_\mbf{i}$.
	By definition of $A^\mbf{j}_\mbf{i}$ and $B^\mbf{j}_\mbf{i}$, this means that $\supp(\phi)$ is split by a hyperplane perpendicular to the $k$-th direction into two disjoint parts --- denote the first part contained in $A^\mbf{j}_\mbf{i}$ as $\supp_A(\phi)$, and denote the second part contained in $B^\mbf{j}_\mbf{i}$ as $\supp_B(\phi)$.
	Moreover, again from the definitions of these domains, there exist $\zerob_{\overline{\mbf{i}}}, \zerob_{\overline{\mbf{i}} + \Delta\overline{\mbf{i}}} \in \tpb{\ell,\ell+1}{\mbf{0}}$ such that
	\begin{equation*}
		\begin{split}
			\supp_A(\phi) \subset \supp(\zerob_{\overline{\mbf{i}}}) \subset A^\mbf{j}_\mbf{i}\;,\quad
			\supp_B(\phi) \subset \supp(\zerob_{\overline{\mbf{i}} + \Delta\overline{\mbf{i}}}) \subset B^\mbf{j}_\mbf{i}\;.
		\end{split}
	\end{equation*}
	The intersection of $\supp(\phi)$ with the aforementioned hyperplane is contained in $\csupp(\zerob_{\overline{\mbf{i}}}) \cap \csupp(\zerob_{\overline{\mbf{i}} + \Delta\overline{\mbf{i}}})$.
	Assumption \ref{assume:shortest_path} then implies that there exists a shortest chain between $\zerob_{\overline{\mbf{i}}}$ and  $\zerob_{\overline{\mbf{i}} + \Delta\overline{\mbf{i}}}$.
	Thus, there must exist a $0$-form B-spline in this shortest chain whose support contains $\supp(\phi)$ and which itself is fully supported either on $A^\mbf{j}_\mbf{i}$ or on $B^\mbf{j}_\mbf{i}$ (since both domains are themselves built as unions of $0$-form supports).
	
\end{proof}

We are now ready to show how the above results and equation \eqref{eq:sliced_partition} can be used to show exactness of the hierarchical spline complex.
We will do this by considering some special slices $S^\mbf{j}_\mbf{i}[\mbf{m}]$.
In the following, we fix $\mbf{i} = (i_1,\dots,i_n)$, $\mbf{j} = (j_1,\dots,j_n)$,
and for $k \geq 1$ define
\begin{subequations}
	\begin{align}
		\mbf{i}_k &:= \sum_{l=1}^k \delta_l + \sum_{l=k+1}^n i_l\delta_l\;,\quad
		\mbf{j}_k := \sum_{l=k+1}^n j_l\delta_l\;,\\
		S[k, m_k] &:= \bigcup_{r_1=0}^{\ndof{\ell}{1}}\cdots\bigcup_{r_{k-1}=0}^{\ndof{\ell}{k-1}}\bigcup_{r_k=0}^{m_k}\Omega^{\mbf{j}_k}_{\ell+1}\Big(\mbf{i}_k+\sum_{l=1}^kr_l\delta_l\Big)\;,\\
		S[k] &:= S[k, \ndof{\ell}{k}]\;.
	\end{align}
\end{subequations}
Observe that, from equation \eqref{eq:sliced_partition}, $S[n] = \Omega_{\ell+1}$.
Moreover, we will say that a domain, say $B$, is an $S[k]$-type domains if it is defined as above but with possibly different choices of $\mbf{i}$ and $\mbf{j}$.

\begin{proposition}\label{prop:1d_result}
	Let Assumption \ref{assume:shortest_path} hold.
	Then, the inclusion operation
	\begin{equation}
		\splinelevelinclusion{\ell,\ell+1}:\tpbs{\ell,\ell+1}{}\Big(S[1]\Big)\rightarrow\tpbs{\ell+1,\ell+1}{}\Big(S[1]\Big)
	\end{equation}
	induces an isomorphism on the cohomology of the spline spaces.
\end{proposition}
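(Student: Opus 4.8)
The plan is to prove the statement by induction on $m_1$, applied to the nested family $S[1,m_1]=\bigcup_{r_1=0}^{m_1}\Omega_{\ell+1}^{\mbf{j}_1}(\mbf{i}_1+r_1\delta_1)$, $0\le m_1\le\ndof{\ell}{1}$, and then to specialize to $m_1=\ndof{\ell}{1}$, since $S[1]=S[1,\ndof{\ell}{1}]$. For the base case $m_1=0$ we have $S[1,0]=\Omega_{\ell+1}^{\mbf{j}_1}(\mbf{i}_1)$, which is either empty or a topological ball of exactly the type treated in Corollary \ref{corol:injection_cohomology}; in either case that corollary shows directly that $\splinelevelinclusion{\ell,\ell+1}$ induces an isomorphism on cohomology.

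For the inductive step, assume the claim for $S[1,m_1]$ and set $A:=S[1,m_1]$ and $B:=\Omega_{\ell+1}^{\mbf{j}_1}(\mbf{i}_1+(m_1+1)\delta_1)$. Choosing $k=1$ and $\mbf{m}=(m_1,0,\dots,0)$ in Lemmas \ref{lem:unions_intersections_domains}, \ref{lem:intersections} and \ref{lem:unions} collapses all the ranges of $r_2,\dots,r_n$ to the single value $0$, so that $A$ and $B$ are precisely the domains $A_{\mbf{i}_1}^{\mbf{j}_1}$ and $B_{\mbf{i}_1}^{\mbf{j}_1}$ occurring there, and $A\cup B=C_{\mbf{i}_1}^{\mbf{j}_1}=S[1,m_1+1]$ by Lemma \ref{lem:unions_intersections_domains}; note that $(\mbf{j}_1)_1=0$, so the hypothesis $j_k=0$ required by Lemmas \ref{lem:unions_intersections_domains} and \ref{lem:intersections} holds. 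I would then record two structural facts. First, by Corollary \ref{cor:complex_equality_intersections} the complexes $\tpbs{\ell+s,\ell+1}{}(\square)$ and $\tpbs{\ell+s}{}(\square)$ coincide for $\square\in\{A,B,A\cap B\}$, and, combining Lemma \ref{lem:unions} with Lemma \ref{lem:complex_equality}, the spline spaces attached to $A\boxplus B$ agree with those attached to $A\cup B=S[1,m_1+1]$ at both levels $\ell$ and $\ell+1$. Second, by Lemma \ref{lem:intersections} the spline complex on the a priori possibly disconnected set $A\cap B$ is literally the complex on the single ball $\Omega_{\ell+1}^{\mbf{j}_1+\delta_1}(\mbf{i}_1+(m_1+1)\delta_1)=B_{\mbf{i}_1}^{\mbf{j}_1+\delta_1}$.

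Granting these identifications, Lemma \ref{lem:Mayer_vietoris} and Corollary \ref{corol:Mayer_vietoris} supply Mayer--Vietoris long exact sequences for the pair $(A,B)$ at levels $\ell$ and $\ell+1$, and Theorem \ref{thrm:natural_Mayer_vietoris} (with $\ell'=\ell+1$) gives the commuting ladder between them induced by $\splinelevelinclusion{\ell,\ell+1}$. In this ladder the vertical maps $H^{(j)}_{\ell}(A)\to H^{(j)}_{\ell+1}(A)$ are isomorphisms by the inductive hypothesis, while the maps $H^{(j)}_{\ell}(B)\to H^{(j)}_{\ell+1}(B)$ and $H^{(j)}_{\ell}(A\cap B)\to H^{(j)}_{\ell+1}(A\cap B)$ are isomorphisms by Corollary \ref{corol:injection_cohomology}, the first applied to the ball $B$ and the second applied, via the identification above, to the ball $\Omega_{\ell+1}^{\mbf{j}_1+\delta_1}(\mbf{i}_1+(m_1+1)\delta_1)$. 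Corollary \ref{cor:natural_Mayer_vietoris} (the Five Lemma argument) then forces the vertical maps $H^{(j)}_{\ell}(A\boxplus B)\to H^{(j)}_{\ell+1}(A\boxplus B)$ to be isomorphisms; since the spline complexes of $A\boxplus B$ coincide with those of $S[1,m_1+1]$, this is exactly the claim for $S[1,m_1+1]$, completing the induction.

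I expect the main obstacle to be organizational rather than conceptual: one must carefully verify that the one-dimensional slice $S[1,m_1]$ and the appended ball $B$ really do fit the template of the generic domains $A_\mbf{i}^\mbf{j}$, $B_\mbf{i}^\mbf{j}$, $C_\mbf{i}^\mbf{j}$ of Lemmas \ref{lem:unions_intersections_domains}--\ref{lem:unions} (the collapsing of the ranges, the index shifts, the $j_k=0$ requirement), and --- most importantly --- that the possibly disconnected set $A\cap B$ genuinely carries the spline complex of an honest topological ball. This last point is exactly where Assumption \ref{assume:shortest_path} enters, through Lemma \ref{lem:intersections}, and it is what makes the Five Lemma step go through.
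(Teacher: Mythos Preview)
Your proposal is correct and follows essentially the same approach as the paper: induction on $m_1$ with base case handled by Corollary \ref{corol:injection_cohomology}, the inductive step setting $A=S[1,m_1]$ and $B=\Omega_{\ell+1}^{\mbf{j}_1}(\mbf{i}_1+(m_1+1)\delta_1)$, identifying the intersection complex via Lemma \ref{lem:intersections} and Corollary \ref{cor:complex_equality_intersections}, and then applying the Mayer--Vietoris ladder (Theorem \ref{thrm:natural_Mayer_vietoris}) together with the Five Lemma (Corollary \ref{cor:natural_Mayer_vietoris}) and Lemma \ref{lem:unions}. If anything, you are more explicit than the paper about the choice $k=1$, $\mbf{m}=(m_1,0,\dots,0)$ and the verification that $(\mbf{j}_1)_1=0$, which is exactly the bookkeeping needed to invoke Lemmas \ref{lem:unions_intersections_domains}--\ref{lem:unions}.
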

\begin{proof}
	We will show the claim for $S[1]$ by considering $S[1,m_1]$ and inducting on $m_1$.
	For the base case of $m_1 = 0$, the claim follows from Corollary \ref{corol:injection_cohomology}.
	Assume that the claim is true for some $m_1 \geq 0$ and, as in Lemmas \ref{lem:intersections} and \ref{lem:unions}, define
	\begin{equation*}
		\begin{split}
			A &:= S[1,m_1]\;,\quad
			B := \Omega^{\mbf{j}_1}_{\ell+1}(\mbf{i}_1+(m_1+1)\delta_1)\;,\quad
			C := S[1,m_1+1]\;.
		\end{split}
	\end{equation*}
	Thus, the induction hypothesis is that the following map induces an isomorphism on the spline cohomology,
	\begin{equation*}
		\splinelevelinclusion{\ell,\ell+1}:\tpbs{\ell,\ell+1}{}(A)\rightarrow\tpbs{\ell+1,\ell+1}{}(A)\;,
	\end{equation*}
	and our objective is to show that the same is true for 
	\begin{equation*}	\splinelevelinclusion{\ell,\ell+1}:\tpbs{\ell,\ell+1}{}(C)\rightarrow\tpbs{\ell+1,\ell+1}{}(C)\;.
	\end{equation*}
	
	First, observe that Lemma \ref{lem:complex_equality} implies that $\tpbs{\ell+s,\ell+1}{}(\square) = \tpbs{\ell+s}{}(\square)$ for any $\square\in\{ A, B, C\},$ and Corollary \ref{cor:complex_equality_intersections} implies  $\tpbs{\ell+s,\ell+1}{}(A \cap B) = \tpbs{\ell+s}{}(A \cap B)$.
	Moreover, from Theorem \ref{thrm:natural_Mayer_vietoris}, the following diagram commutes,
	\begin{equation*}
		\begin{tikzcd}[column sep=small]
			\cdots \arrow{r} & H^{(j)}_{\ell}(A \cap B) \arrow{r}{} \arrow{d}{} &H^{(j)}_{\ell}(A) \oplus H^{(j)}_{\ell}(B) \arrow{r}{} \arrow{d}{} & H^{(j)}_{\ell}(A \boxplus B) \arrow{r}{} \arrow{d}{} & H^{(j+1)}_{\ell}(A \cap B) \arrow{r} \arrow{d}{} & \cdots\\
			\cdots \arrow{r} & H^{(j)}_{\ell+1}(A\cap B) \arrow{r}{} &H^{(j)}_{\ell+1}(A) \oplus H^{(j)}_{\ell+1}(B) \arrow{r}{}  & H^{(j)}_{\ell+1}(A \boxplus B) \arrow{r}{} & H^{(j+1)}_{\ell+1}(A \cap B) \arrow{r} & \cdots
		\end{tikzcd}
	\end{equation*}
	Using Corollary \ref{corol:injection_cohomology} and Lemma \ref{lem:intersections}, the first vertical map is an isomorphism for all $j$.
	By the inductive hypothesis and Corollary \ref{corol:injection_cohomology}, the same is true for the second vertical map.
	Then, from Corollary \ref{cor:natural_Mayer_vietoris}, the third vertical map is also an isomorphism.
	This proves the claim since, from Lemma \ref{lem:unions}, the spline complex $\tpcmplx{\ell+s}(A \boxplus B)$ is the same as $\tpcmplx{\ell+s}(A \cup B) = \tpcmplx{\ell+s}(C)$ for $s = 0,1$.
\end{proof}

\begin{proposition}\label{prop:nd_result}
	Let Assumption \ref{assume:shortest_path} hold.
	Then, the inclusion operation
	\begin{equation}
		\splinelevelinclusion{\ell,\ell+1}:\tpbs{\ell,\ell+1}{}\Big(S[n]\Big)\rightarrow\tpbs{\ell+1,\ell+1}{}\Big(S[n]\Big)
	\end{equation}
	induces an isomorphism on the cohomology of the spline spaces.
\end{proposition}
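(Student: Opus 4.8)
The plan is to prove, by induction on $k$ running from $1$ to $n$, the following stronger statement: for \emph{every} $S[k]$-type domain (that is, for every admissible choice of $\mbf{i}$ and $\mbf{j}$ in the definition of $S[k]$), the inclusion $\splinelevelinclusion{\ell,\ell+1}$ induces an isomorphism on the cohomology of the associated spline complexes. Since $S[n] = \Omega_{\ell+1}$ by \eqref{eq:sliced_partition}, the case $k = n$ is exactly the claim of the proposition. The base case $k = 1$ is Proposition \ref{prop:1d_result}: that proposition is stated for a fixed but arbitrary pair $\mbf{i},\mbf{j}$, so it already covers every $S[1]$-type domain.

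For the inductive step I would assume the statement for all $S[k-1]$-type domains, fix an $S[k]$-type domain $S[k] = S[k,\ndof{\ell}{k}]$, and establish the claim for it by an \emph{inner} induction on $m_k$, following the proof of Proposition \ref{prop:1d_result} almost line for line. For the inner base case $m_k = 0$, the domain $S[k,0]$ is (after cancelling the forced $r_k = 0$) a full sweep over the first $k-1$ directions with the $k$-th index frozen, i.e.\ an $S[k-1]$-type domain, so the outer hypothesis applies. For the inner step, I set $A := S[k,m_k]$, take $B$ to be the single $k$-th slice at $m_k+1$ — this is $B^{\mbf{j}_k}_{\mbf{i}_k}$ in the notation of Lemmas \ref{lem:unions_intersections_domains}--\ref{lem:unions}, where the hypothesis $j_k = 0$ holds because the $k$-th component of $\mbf{j}_k$ vanishes — and $C := S[k,m_k+1]$. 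Lemma \ref{lem:unions_intersections_domains} gives $A \cup B = C$, and Lemma \ref{lem:intersections} identifies the B-spline set of $A \cap B$ (hence, via Lemma \ref{lem:complex_equality} and Corollary \ref{cor:complex_equality_intersections}, its spline complex) with that of $B^{\mbf{j}_k+\delta_k}_{\mbf{i}_k}$; so $A$, $B$, and $A\cap B$ all carry the plain subdomain complexes $\tpbs{\ell+s}{}(\cdot)$, $s = 0,1$.

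The only genuinely new ingredient is a bookkeeping check: both $B = B^{\mbf{j}_k}_{\mbf{i}_k}$ and $B^{\mbf{j}_k+\delta_k}_{\mbf{i}_k}$ are themselves $S[k-1]$-type domains. Indeed, each is a full sweep over the first $k-1$ directions of the domains $\Omega^{\mbf{j}'}_{\ell+1}(\cdot)$ whose base index has first $k-1$ entries equal to $1$, $k$-th entry equal to $m_k+2$, and remaining entries $i_{k+1},\dots,i_n$, and whose superscript has first $k$ entries zero except possibly the $k$-th, which is $0$ (for $B$) or $1$ (for $A\cap B$), and remaining entries $j_{k+1},\dots,j_n$; this matches the definition of an $S[k-1]$-type domain. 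This is a routine translation of multi-indices, but it is the place where an off-by-one error would most easily slip in, so it is the main point to verify carefully. Granting it, the outer inductive hypothesis gives the isomorphism property for $B$ and for $A\cap B$, while the inner inductive hypothesis gives it for $A$.

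Finally, applying the naturality of the Mayer--Vietoris sequence (Theorem \ref{thrm:natural_Mayer_vietoris}) and the Five Lemma (Corollary \ref{cor:natural_Mayer_vietoris}) to the level-$\ell$ and level-$(\ell+1)$ Mayer--Vietoris rows associated to the pair $(A,B)$, the induced map on $H^{(j)}(A \boxplus B)$ is an isomorphism for all $j$; and by Lemma \ref{lem:unions} the complex $\tpbs{\ell+s}{}(A \boxplus B)$ equals $\tpbs{\ell+s}{}(A \cup B) = \tpbs{\ell+s}{}(C)$, which completes the inner induction on $m_k$ and, with it, the outer induction on $k$. Note that Assumption \ref{assume:shortest_path} enters only through Lemmas \ref{lem:intersections} and \ref{lem:unions}, both already established, so nothing further is required.
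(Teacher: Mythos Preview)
Your proof is correct and follows essentially the same double-induction argument as the paper, with only a cosmetic shift in indexing (you induct from $k-1$ to $k$ where the paper goes from $k$ to $k+1$). Your explicit formulation of the outer hypothesis as covering \emph{all} $S[k]$-type domains is a worthwhile clarification, since the paper uses this implicitly when it invokes~$(\star)$ for the slice domains $B_{\mbf{i}_{k+1}}^{\mbf{j}_{k+1}}$ and $B_{\mbf{i}_{k+1}}^{\mbf{j}_{k+1}+\delta_{k+1}}$; your bookkeeping check that these are $S[k-1]$-type domains (with $k$-th index entry $m_k+2$ and $k$-th superscript entry $0$ or $1$) is exactly the point the paper glosses over.
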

\begin{proof}
	We proceed by looking at $S[k]$ and inducting on $k$.
	The claim follows for the base case $k = 1$ from Proposition \ref{prop:1d_result}.
	Next, given $k \geq 1$, assume that the following inclusion induces an isomorphism on spline cohomology,
	\begin{equation*}
		\splinelevelinclusion{\ell,\ell+1}:\tpbs{\ell,\ell+1}{}\Big(S[k]\Big)\rightarrow\tpbs{\ell+1,\ell+1}{}\Big(S[k]\Big)\;.\tag{$\star$}\label{eq:ak}
	\end{equation*}
	Our objective is to show that the same is true for $k + 1$,
	\begin{equation*}
		\splinelevelinclusion{\ell,\ell+1}:\tpbs{\ell,\ell+1}{}(S[k+1])\rightarrow\tpbs{\ell+1,\ell+1}{}(S[k+1])\;.\tag{$\ast$}\label{eq:akp1}
	\end{equation*}
	This will complete the proof and we proceed by inducting on subdomains of $S[k+1]$.\\
	
	\noindent
	\textbf{Nested induction for $S[k+1,m_{k+1}]$}:\\
	We will show the claim for $S[k+1]$ by considering $S[k+1,m_{k+1}]$ and inducting on $m_{k+1}$.
	The base case of $m_{k+1} = 0$ follows from the induction hypothesis in \eqref{eq:ak} since $S[k+1,0] = S[k]$.
	Assume that the claim is true for some $m_{k+1} \geq 0$ and, as in Lemma \ref{lem:intersections}, define
	\begin{equation*}
		\begin{split}
			A &:= S[k+1,m_{k+1}]\;,\quad C := S[k+1,m_{k+1}+1]\\
			B_{\mbf{i}_{k+1}}^{\mbf{j}_{k+1}} &:= \bigcup_{r_1=0}^{\ndof{\ell}{1}}\cdots\bigcup_{r_{k}=0}^{\ndof{\ell}{k}}\Omega_{\ell+1}^{\mbf{j}_{k+1}}\Big({\mbf{i}_{k+1}} +\sum_{l=1}^{k}r_l\delta_l + (m_{k+1}+1)\delta_{k+1}\Big)\;.
		\end{split}
	\end{equation*}
	Simple two-dimensional examples of such domains are shown in Figure \ref{fig:slab_induction_2d}, while more involved three-dimensional examples are shown in Figure \ref{fig:slab_induction_3d}.
	Thus, the induction hypothesis is that the following map induces an isomorphism on the spline cohomology,
	\begin{equation*}
		\splinelevelinclusion{\ell,\ell+1}:\tpbs{\ell,\ell+1}{}(A)\rightarrow\tpbs{\ell+1,\ell+1}{}(A)\;,
	\end{equation*}
	and our objective is to show that the same is true for 
	\begin{equation*}	\splinelevelinclusion{\ell,\ell+1}:\tpbs{\ell,\ell+1}{}(C)\rightarrow\tpbs{\ell+1,\ell+1}{}(C)\;.
	\end{equation*}
	
	First, note that $\tpb{\ell+s,\ell+1}{*}(A \cap B_{\mbf{i}_{k+1}}^{\mbf{j}_{k+1}}) =
	\tpb{\ell+s,\ell+1}{*}(B_{\mbf{i}_{k+1}}^{\mbf{j}_{k+1}+\delta_{k+1}})$ by Lemma \ref{lem:intersections}.
	Since both $B_{\mbf{i}_{k+1}}^{\mbf{j}_{k+1}}$ and $B_{\mbf{i}_{k+1}}^{\mbf{j}_{k+1}+\delta_{k+1}}$ are domains of $S[k]$-type domains,
	the following inclusion map induces an isomorphism on the spline cohomology from the induction hypothesis \eqref{eq:ak},
	\begin{equation*}
		\splinelevelinclusion{\ell,\ell+1}:\tpbs{\ell,\ell+1}{}(\square)\rightarrow\tpbs{\ell+1,\ell+1}{}(\square)\;,\quad \square \in \Big\{B_{\mbf{i}_{k+1}}^{\mbf{j}_{k+1}}\;,\; B_{\mbf{i}_{k+1}}^{\mbf{j}_{k+1}+\delta_{k+1}} \Big\}\;.
		\tag{$\dagger$}\label{eq:akp1_0}
	\end{equation*}

	Next, for any $\square \in \{ A, B_{\mbf{i}_{k+1}}^{\mbf{j}_{k+1}}, C, A \cap B_{\mbf{i}_{k+1}}^{\mbf{j}_{k+1}} \}$, Lemma \ref{lem:complex_equality} and Corollary \ref{cor:complex_equality_intersections} imply that $\tpcmplx{\ell+s,\ell+1}(\square) = \tpcmplx{\ell+s}(\square)$, $s = 0,1$.
	Then, following the same line of reasoning as in Proposition \ref{prop:1d_result}, 
	Theorem \ref{thrm:natural_Mayer_vietoris} implies the following commuting diagram,
	\begin{equation*}
		\small
		\begin{tikzcd}[column sep=0.13in]
			\cdots \arrow{r} & H^{(j)}_{\ell}(A \cap B_{\mbf{i}_{k+1}}^{\mbf{j}_{k+1}}) \arrow{r}{} \arrow{d}{} &H^{(j)}_{\ell}(A) \oplus H^{(j)}_{\ell}(B_{\mbf{i}_{k+1}}^{\mbf{j}_{k+1}}) \arrow{r}{} \arrow{d}{} & H^{(j)}_{\ell}(A \boxplus B_{\mbf{i}_{k+1}}^{\mbf{j}_{k+1}}) \arrow{r}{} \arrow{d}{} & H^{(j+1)}_{\ell}(A \cap B_{\mbf{i}_{k+1}}^{\mbf{j}_{k+1}}) \arrow{r} \arrow{d}{} & \cdots\\
			\cdots \arrow{r} & H^{(j)}_{\ell+1}(A\cap B_{\mbf{i}_{k+1}}^{\mbf{j}_{k+1}}) \arrow{r}{} &H^{(j)}_{\ell+1}(A) \oplus H^{(j)}_{\ell+1}(B_{\mbf{i}_{k+1}}^{\mbf{j}_{k+1}}) \arrow{r}{}  & H^{(j)}_{\ell+1}(A \boxplus B_{\mbf{i}_{k+1}}^{\mbf{j}_{k+1}}) \arrow{r}{} & H^{(j+1)}_{\ell+1}(A \cap B_{\mbf{i}_{k+1}}^{\mbf{j}_{k+1}}) \arrow{r} & \cdots
		\end{tikzcd}
	\end{equation*}
	The first vertical map is an isomorphism for all $j$ from \eqref{eq:akp1_0}, and the second vertical map is an isomorphism for all $j$ from the induction hypotheses on $A = S[k+1,m_{k+1}]$ and \eqref{eq:akp1_0}.
	Therefore, so is the third vertical map from Corollary \ref{cor:natural_Mayer_vietoris}.
	Since, from Lemma \ref{lem:unions}, the spline complex $\tpcmplx{\ell+s}(A \boxplus B_{\mbf{i}_{k+1}}^{\mbf{j}_{k+1}}) = \tpcmplx{\ell+s}(A \cup B_{\mbf{i}_{k+1}}^{\mbf{j}_{k+1}}) = \tpcmplx{\ell+s}(C)$ for $s = 0,1$, this completes the induction on $m_{k+1}$ and, consequently, on $k$.
\end{proof}

\renewcommand*{\xBoxDim}{5cm}%
\renewcommand*{\yBoxDim}{5cm}%
\renewcommand*{\thistextwidth}{0.49}
\begin{figure}
	\centering
\begin{subfigure}{\thistextwidth\textwidth}
	\centering
		\resizebox{\xBoxDim}{\yBoxDim}{%
           	\begin{tikzpicture}
            		\mymanualgrid{0,...,9}{0,...,9}{0.25}{gray}{white};
			\mymanualgrid{1,1.5,2,2.5,3,3.5,4,4.5,5,5.5,6,6.5,7,7.5,8}{1,1.5,2,2.5,3,3.5,4,4.5,5,5.5,6,6.5,7,7.5,8}{1}{black}{white}
			\mymanualgrid{4,5}{4,5}{1}{black}{white}
			\mymanualgrid{1,1.5,2,2.5,3,3.5,4,4.5,5,5.5,6,6.5,7,7.5,8}{1,1.5,2,2.5,3,3.5,4}{1}{black}{myCPlot3}
			\mymanualgrid{1,1.5,2,2.5,3,3.5,4}{4,4.5,5,5.5,6,6.5,7}{1}{black}{myCPlot3}
			\mymanualgrid{5,5.5,6,6.5,7,7.5,8}{4,4.5,5,5.5,6,6.5,7}{1}{black}{myCPlot3}
           	 \end{tikzpicture}
           	 }
	 \caption{Domain $A = S[2,6]$}
\end{subfigure}
	\begin{subfigure}{\thistextwidth\textwidth}
	\centering
		\resizebox{\xBoxDim}{\yBoxDim}{%
           	\begin{tikzpicture}
            		\mymanualgrid{0,...,9}{0,...,9}{0.25}{gray}{white};
			\mymanualgrid{1,1.5,2,2.5,3,3.5,4,4.5,5,5.5,6,6.5,7,7.5,8}{1,1.5,2,2.5,3,3.5,4,4.5,5,5.5,6,6.5,7,7.5,8}{1}{black}{white}
			\mymanualgrid{4,5}{4,5}{1}{black}{white}
			\mymanualgrid{1,1.5,2,2.5,3,3.5,4,4.5,5,5.5,6,6.5,7,7.5,8}{5,5.5,6,6.5,7,7.5,8}{1}{black}{myCPlot1}
			\mymanualgrid{1,1.5,2,2.5,3,3.5,4}{4,4.5,5}{1}{black}{myCPlot1}
			\mymanualgrid{5,5.5,6,6.5,7,7.5,8}{4,4.5,5}{1}{black}{myCPlot1}
           	 \end{tikzpicture}
           	 }
	 \caption{Domain $B_{\mbf{i}_{k+1}}^{\mbf{j}_{k+1}}$}
\end{subfigure}
\\	 %
\begin{subfigure}{\thistextwidth\textwidth}
	\centering
		\resizebox{\xBoxDim}{\yBoxDim}{%
           	\begin{tikzpicture}
            		\mymanualgrid{0,...,9}{0,...,9}{0.25}{gray}{white};
			\mymanualgrid{1,1.5,2,2.5,3,3.5,4,4.5,5,5.5,6,6.5,7,7.5,8}{1,1.5,2,2.5,3,3.5,4,4.5,5,5.5,6,6.5,7,7.5,8}{1}{black}{white}
			\mymanualgrid{4,5}{4,5}{1}{black}{white}
			\mymanualgrid{1,1.5,2,2.5,3,3.5,4}{4,4.5,5,5.5,6,6.5,7}{1}{black}{green}
			\mymanualgrid{5,5.5,6,6.5,7,7.5,8}{4,4.5,5,5.5,6,6.5,7}{1}{black}{green}
           	 \end{tikzpicture}
           	 }
	 \caption{Domain $B_{\mbf{i}_{k+1}}^{\mbf{j}_{k+1}+\delta_{k+1}}$}
\end{subfigure}
	\begin{subfigure}{\thistextwidth\textwidth}
	\centering
		\resizebox{\xBoxDim}{\yBoxDim}{%
           	\begin{tikzpicture}
            		\mymanualgrid{0,...,9}{0,...,9}{0.25}{gray}{white};
			\mymanualgrid{1,1.5,2,2.5,3,3.5,4,4.5,5,5.5,6,6.5,7,7.5,8}{1,1.5,2,2.5,3,3.5,4,4.5,5,5.5,6,6.5,7,7.5,8}{1}{black}{red}
			\mymanualgrid{4,5}{4,5}{1}{black}{white}
           	 \end{tikzpicture}
           	 }
	 \caption{Domain $C=S[2,7]$}
\end{subfigure}
	\caption{
		The above figures correspond to a maximally regular hierarchical spline space defined using $\pdeg{\ell}{k} = 2$ for all $\ell$ and $k$.
		For $k = 1$, and with the proof of Proposition \ref{prop:nd_result} as reference (the part on nested induction), example domains $A, B_{\mbf{i}_{k+1}}^{\mbf{j}_{k+1}}, B_{\mbf{i}_{k+1}}^{\mbf{j}_{k+1}+\delta_{k+1}}$ and $C$ are shown for this hierarchical configuration for $m_{k+1} = 6$.
	 }
	\label{fig:slab_induction_2d}
\end{figure}

\begin{figure}
	\centering
	\begin{subfigure}{0.485\textwidth}
		\includegraphics[width=1\textwidth,trim=5cm 14.5cm 6.5cm 25cm,clip]{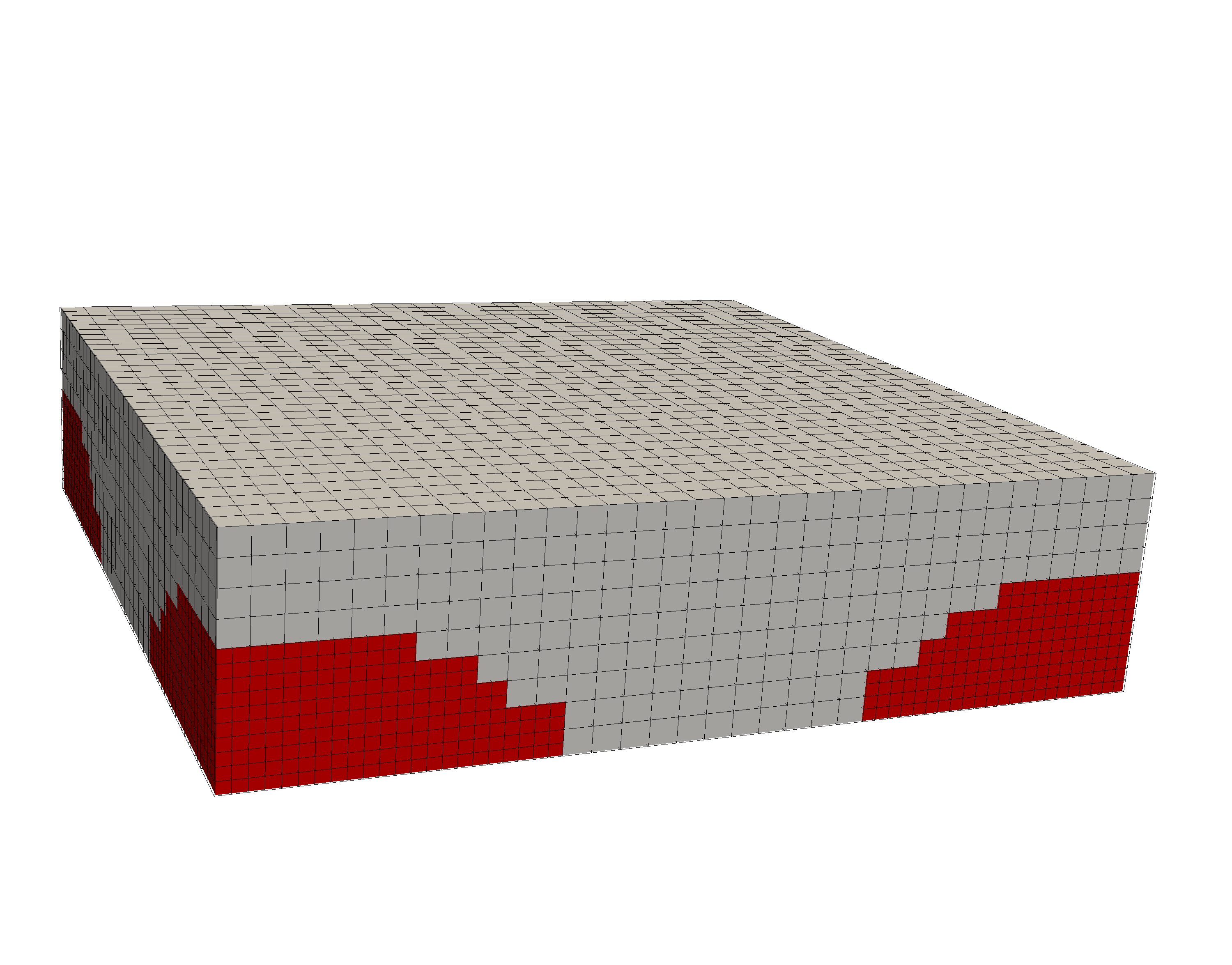}
		\caption{Domain $C = S[3,4]$ with Opaque $\Omega_0$}
	\end{subfigure}
	\;
	\begin{subfigure}{0.485\textwidth}
		\includegraphics[width=1\textwidth,trim=5cm 14.5cm 6.5cm 25cm,clip]{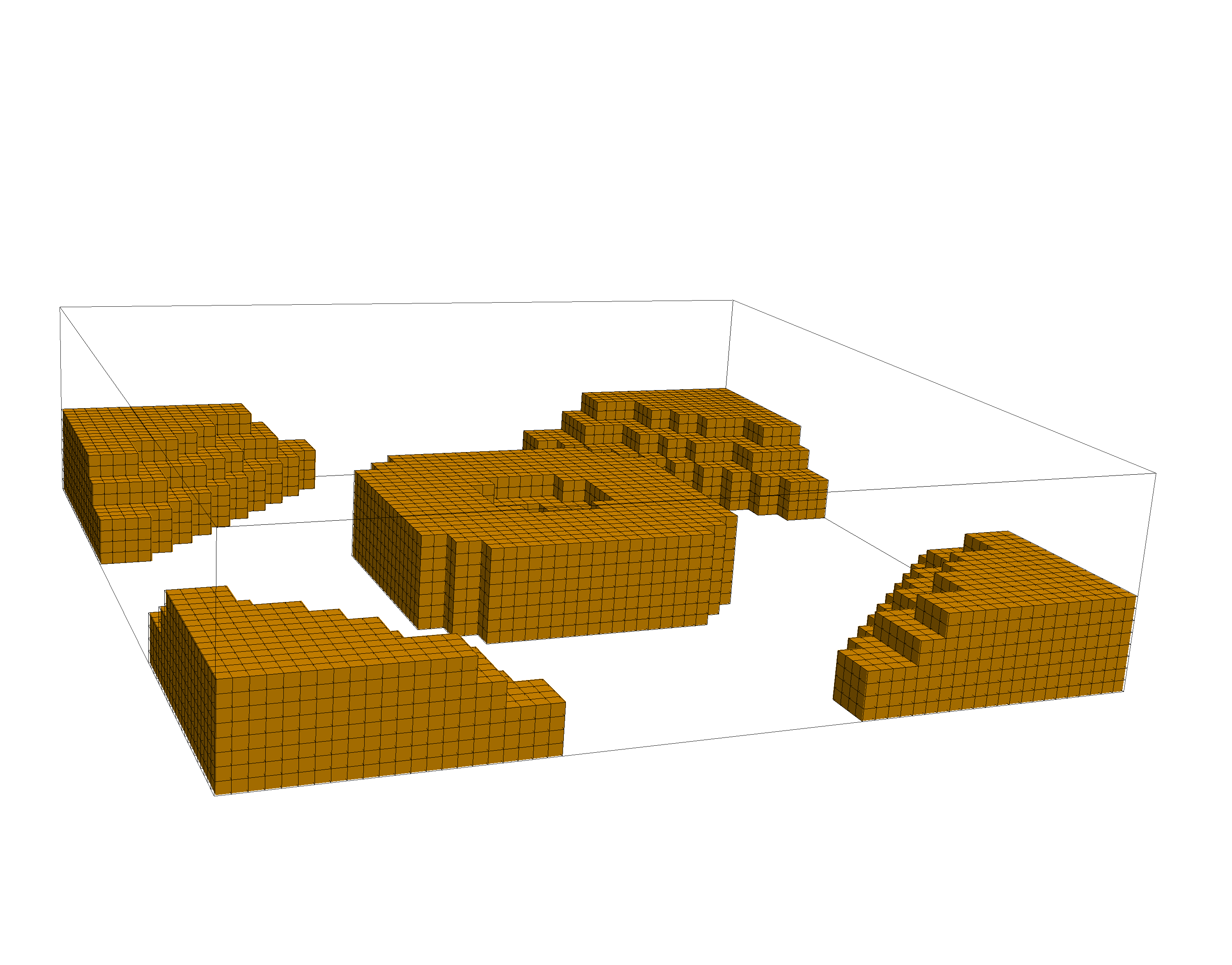}
		\caption{Domain $A = S[3,3]$}
	\end{subfigure}
	\\
	\begin{subfigure}{0.485\textwidth}
		\includegraphics[width=1\textwidth,trim=5cm 14.5cm 6.5cm 25cm,clip]{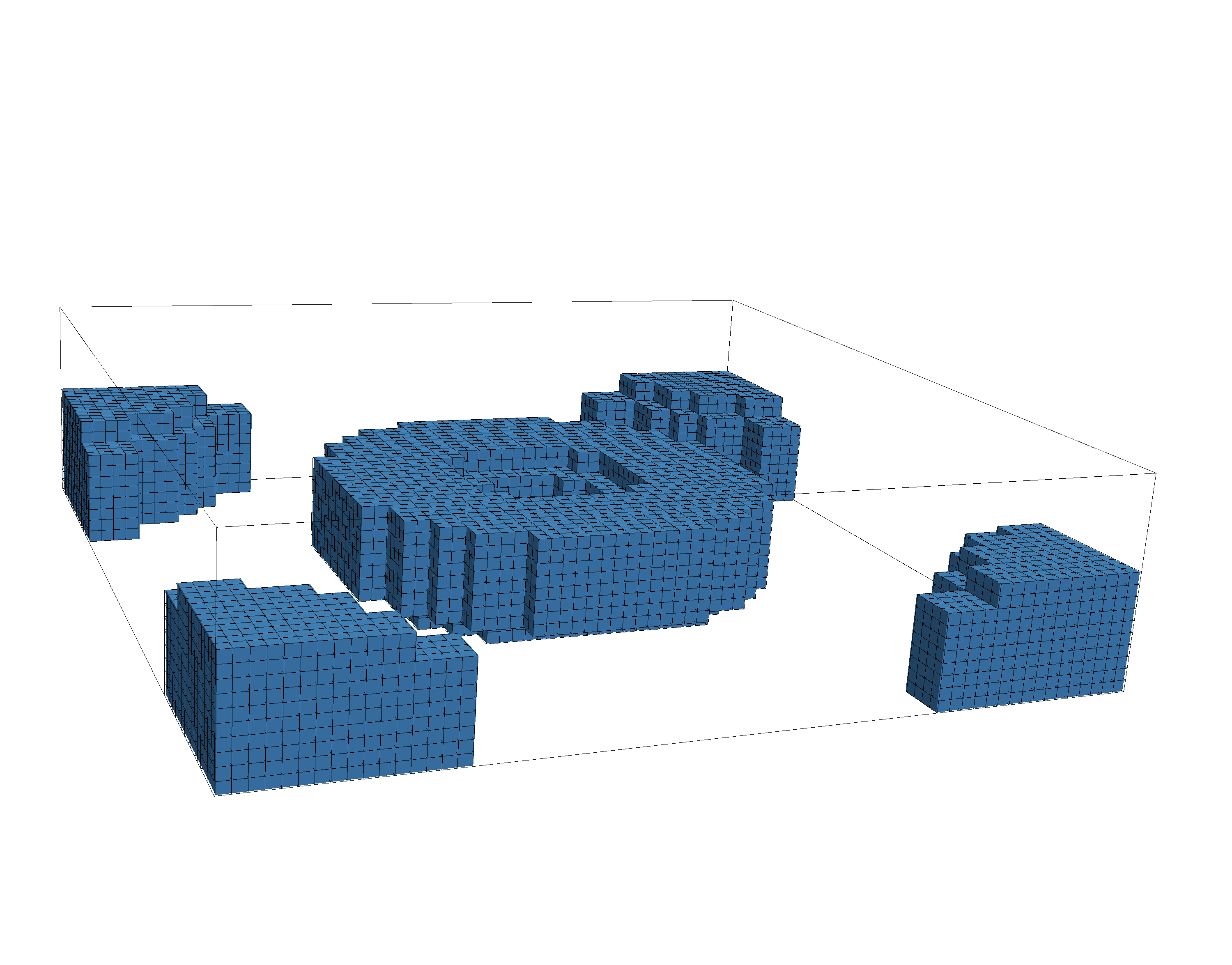}
		\caption{Domain $B_{\mbf{i}_{k+1}}^{\mbf{j}_{k+1}}$}
	\end{subfigure}
	\;
	\begin{subfigure}{0.485\textwidth}
		\includegraphics[width=1\textwidth,trim=5cm 14.5cm 6.5cm 25cm,clip]{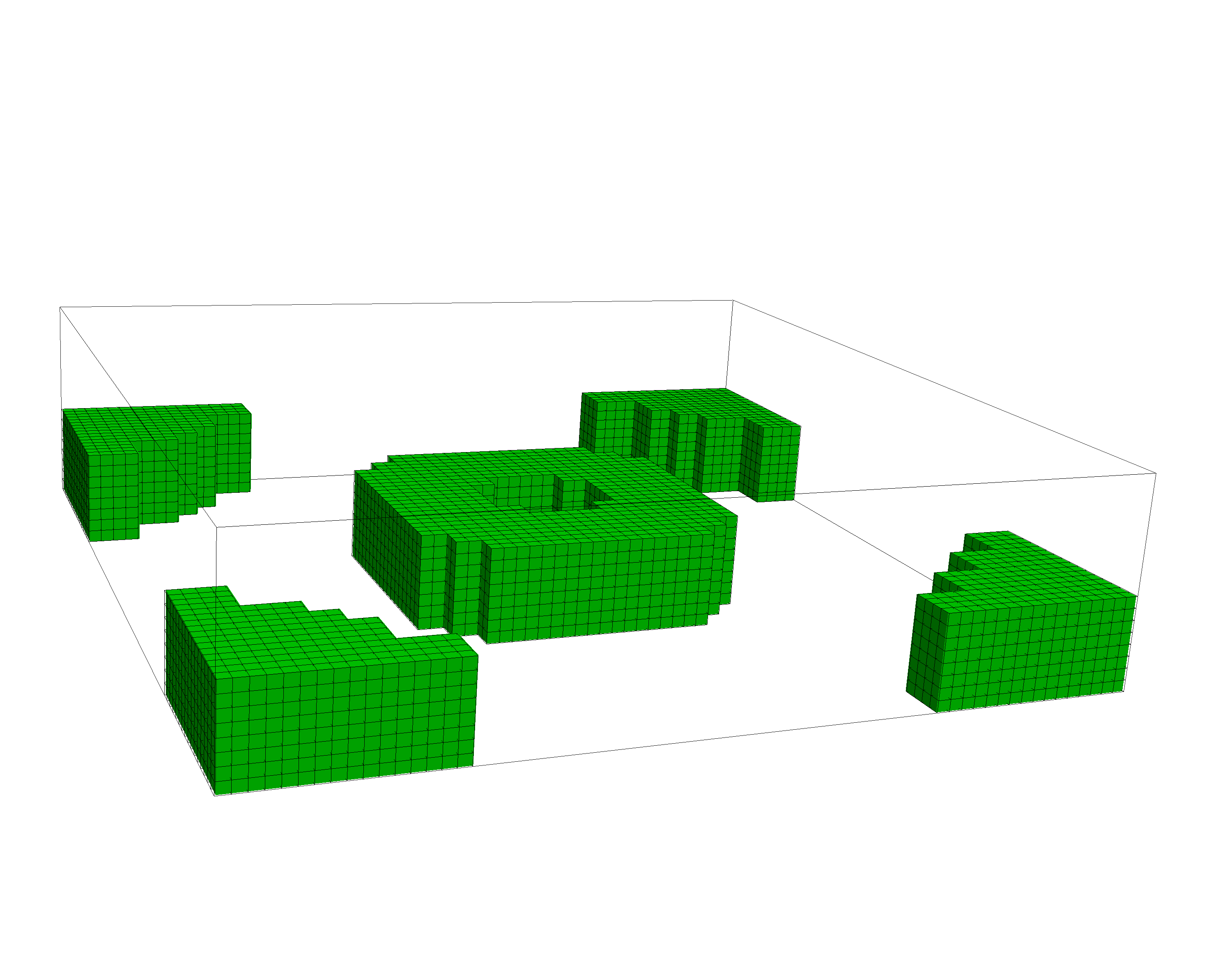}
		\caption{Domain $B_{\mbf{i}_{k+1}}^{\mbf{j}_{k+1}+\delta_{k+1}}$}
	\end{subfigure}
	\\
	\begin{subfigure}{0.75\textwidth}
		\includegraphics[width=1\textwidth,trim=5cm 14.5cm 6.5cm 25cm,clip]{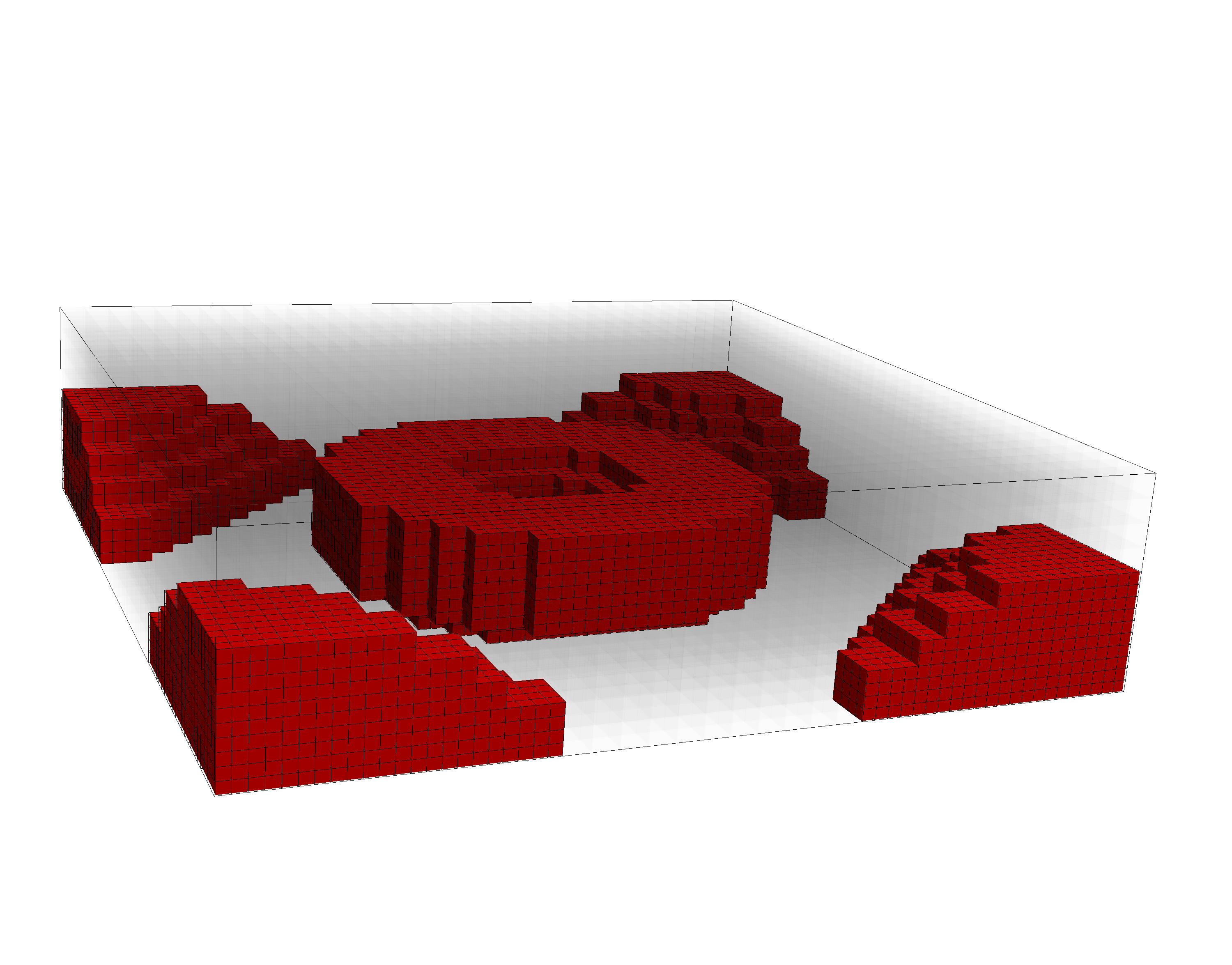}
		\caption{Domain $C = S[3,4] \subset \Omega_1$ with Semitransparent $\Omega_0$}
	\end{subfigure}
	\caption{
		The above figures correspond to a maximally regular hierarchical spline space defined using $\pdeg{\ell}{k} = 3$ for all $\ell$ and $k$.
		For $k = 2$, and with the proof of Proposition \ref{prop:nd_result} as reference (the part on nested induction), example domains $A, B_{\mbf{i}_{k+1}}^{\mbf{j}_{k+1}}, B_{\mbf{i}_{k+1}}^{\mbf{j}_{k+1}+\delta_{k+1}}$ and $C$ are shown for this hierarchical configuration for $m_{k+1} = 3$.
}\label{fig:slab_induction_3d}
\end{figure}

\begin{theorem}\label{thrm:exactness}
	Let Assumption \ref{assume:shortest_path} hold.
	Then, the hierarchical chain complex
	\begin{equation}
		\begin{tikzcd}
			0 \arrow{r}{\subset} & \hbs{\ell}{0} \arrow{r}{d} & \hbs{\ell}{1} \arrow{r}{d} & \cdots \arrow{r}{d} & \hbs{\ell}{n} \arrow{r}{\int} &  \mathbb{R} \arrow{r} &0
		\end{tikzcd}
	\end{equation}
	is exact for any $0 \leq \ell \leq \nref$.
\end{theorem}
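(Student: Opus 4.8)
The plan is to induct on the refinement depth $\ell$. The base case $\ell = 0$ is immediate: by Definition \ref{def:hbs}, $\hbs{0}{\mbf{j}} = \tpbs{0}{\mbf{j}}$ for every $\mbf{j}$, so $\hbs{0}{}$ coincides with the tensor-product B-spline de Rham complex on the box $\Omega = (0,1)^\ndim$ with homogeneous boundary conditions; this is the classical exact isogeometric complex on a contractible box \cite{BRSV11, Evans19}, i.e. $H^{(j)}(\hbs{0}{}) = 0$ for $j < \ndim$ while $\int$ induces an isomorphism $H^{(\ndim)}(\hbs{0}{}) \xrightarrow{\sim} \RR$, which is exactly exactness of the augmented complex in the statement.

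For the inductive step, assume the claim for $\ell$ and pass to $\ell+1$. I would first observe that the hierarchical spaces are nested, $\hbs{\ell}{\mbf{j}} \subseteq \hbs{\ell+1}{\mbf{j}}$: a member of $\hb{\ell}{\mbf{j}}$ either survives into $\hb{\ell+1,c}{\mbf{j}}$, or, having support inside $\Omega_{\ell+1}$, is a level-$\ell$ tensor-product $\mbf{j}$-form and hence lies in $\tpbs{\ell}{\mbf{j}}(\Omega_{\ell+1}) \subseteq \tpbs{\ell+1}{\mbf{j}}(\Omega_{\ell+1}) = \langle \hb{\ell+1,f}{\mbf{j}} \rangle$ by Assumption \ref{assume0}. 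Thus $\splinelevelinclusion{\ell,\ell+1} : \hbs{\ell}{} \hookrightarrow \hbs{\ell+1}{}$ is a cochain map compatible with $\int$, and it suffices to prove that it induces an isomorphism on cohomology in every degree: exactness of $\hbs{\ell}{}$ then transfers to $\hbs{\ell+1}{}$ by chasing the commuting triangle $H^{(\ndim)}(\hbs{\ell}{}) \to H^{(\ndim)}(\hbs{\ell+1}{}) \to \RR$ against $H^{(\ndim)}(\hbs{\ell}{}) \to \RR$.

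To obtain this cohomology isomorphism I would run the Mayer--Vietoris formalism of Section \ref{sec:mayer-vietoris} --- which carries over verbatim to the hierarchical complexes on union-of-supports subdomains, the only inputs being linear independence of the hierarchical B-spline basis and compatibility of $d$ with supports --- simultaneously for $\hbs{\ell}{}$ and $\hbs{\ell+1}{}$ and the decomposition $\Omega = A \cup B$, where $B := \Omega_{\ell+1}$ and $A := \bigcup \{ \supp(\zerob) : \zerob \in \hb{\ell}{\mbf{0}},\ \supp(\zerob) \not\subset \Omega_{\ell+1} \}$; here $\hbs{\ell+s}{}(Y)$ denotes the subcomplex spanned by the hierarchical B-splines supported in $Y$. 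Since the hierarchical $0$-form supports cover $\Omega$ and those contained in $\Omega_{\ell+1}$ contribute only to $B$, we have $A \cup B = \Omega$; and since every $\mbf{j}$-form hierarchical B-spline lies in a suitable $0$-form support (the index-space and extended-knot-domain bookkeeping of Part 1), $\hbs{\ell+s}{}(A \boxplus B) = \hbs{\ell+s}{}$ for $s = 0,1$. On $B = \Omega_{\ell+1}$ one has $\hbs{\ell}{}(B) = \tpbs{\ell,\ell+1}{}(\Omega_{\ell+1})$ and $\hbs{\ell+1}{}(B) = \tpbs{\ell+1,\ell+1}{}(\Omega_{\ell+1})$ --- because the B-splines of $\hb{\ell}{\mbf{j}}$ (resp. $\hb{\ell+1}{\mbf{j}}$) with support in $\Omega_{\ell+1}$ are precisely the level-$\ell$ (resp. level-$(\ell+1)$) tensor-product B-splines supported there, cf. Lemma \ref{lem:complex_equality} --- and $\splinelevelinclusion{\ell,\ell+1}$ induces an isomorphism on their cohomology by Proposition \ref{prop:nd_result}, which is the one place Assumption \ref{assume:shortest_path} is used. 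On $A$ and on $A \cap B$, the portions lying outside $\Omega_{\ell+1}$ give identical level-$\ell$ and level-$(\ell+1)$ complexes, while the portions inside $\Omega_{\ell+1}$ are unions of the local subdomains $\Omega^{\mbf{j}}_{\ell+1}(\mbf{i})$ and are controlled by the same slicing induction used to prove Propositions \ref{prop:1d_result} and \ref{prop:nd_result}; hence $\splinelevelinclusion{\ell,\ell+1}$ is a cohomology isomorphism on $A$ and on $A \cap B$ as well. Corollary \ref{cor:natural_Mayer_vietoris}, applied to the ladder of Mayer--Vietoris sequences linking the two levels, then yields that $\splinelevelinclusion{\ell,\ell+1}$ induces an isomorphism on the cohomology of $A \boxplus B$, i.e. of $\hbs{\ell+1}{}$, completing the induction.

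The step I expect to be the main obstacle is the geometric bookkeeping that makes this decomposition legitimate: verifying that $A \boxplus B$ reconstitutes the entire hierarchical complex (equivalently, that each $\mbf{j}$-form hierarchical B-spline is contained in a $0$-form support correctly positioned relative to $\Omega_{\ell+1}$), and that the level-$\ell$ and level-$(\ell+1)$ complexes restricted to $A$ and to $A \cap B$ are cohomologically matched. Away from $\Omega_{\ell+1}$ both are trivial, and inside $\Omega_{\ell+1}$ they reduce --- through the slicing machinery of Part 3 --- to Proposition \ref{prop:nd_result} and hence to Assumption \ref{assume:shortest_path}; once those reductions are in place the remaining homological algebra (Snake and Five Lemmas) is routine.
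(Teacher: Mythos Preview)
The paper's proof is much shorter than yours: having established in Proposition~\ref{prop:nd_result} that the inclusion $\splinelevelinclusion{\ell,\ell+1}:\tpbs{\ell,\ell+1}{}(\Omega_{\ell+1})\rightarrow\tpbs{\ell+1,\ell+1}{}(\Omega_{\ell+1})$ induces a cohomology isomorphism for every $\ell$, it simply invokes Theorem~5.5 of \cite{Evans19}, which asserts that this condition suffices for exactness of the hierarchical complex. Your proposal is, in effect, an attempt to reprove that cited theorem from scratch via induction on $\ell$ and a Mayer--Vietoris decomposition $\Omega = A \cup B$ of the \emph{hierarchical} complexes.

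There is a genuine gap in your sketch, and it is precisely where you anticipate it. The Mayer--Vietoris machinery of Section~\ref{sec:mayer-vietoris} is developed for the tensor-product complexes $\tpbs{\ell}{}(Y)$; the short exact sequence there relies on the identity $\tpbs{\ell}{}(A)\cap\tpbs{\ell}{}(B)=\tpbs{\ell}{}(A\cap B)$, which holds because the tensor-product B-splines form a single basis with box supports. For the hierarchical basis this identity is not automatic, and your assertion that the formalism ``carries over verbatim'' is unsupported. More seriously, your claim that $\splinelevelinclusion{\ell,\ell+1}$ is a cohomology isomorphism on $A$ and on $A\cap B$ is not justified: $A$ overlaps $\Omega_{\ell+1}$, and on that overlap $\hbs{\ell+1}{}(A)$ contains fine level-$(\ell{+}1)$ B-splines absent from $\hbs{\ell}{}(A)$, so the two complexes are genuinely different there. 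You say this ``reduces to Proposition~\ref{prop:nd_result}'', but that proposition concerns the tensor-product complexes $\tpbs{\ell+s,\ell+1}{}$ on $S[k]$-type subdomains of $\Omega_{\ell+1}$; the hierarchical complex $\hbs{\ell}{}(A)$ mixes B-splines from levels $0,\dots,\ell$ on a domain that is neither contained in nor built from the $\Omega_{\ell+1}^{\mbf{j}}(\mbf{i})$ pieces, so the slicing induction does not apply as stated. Filling this in would require an argument of roughly the same scope as the result you are citing around, which is why the paper defers to \cite{Evans19} at this point.
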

\begin{proof}
	Since $S[n] = \Omega_{\ell+1}$ for $\mbf{i} = 1$, Theorem 5.5 of \cite{Evans19} states that the hierarchical B-spline complex is exact as the following inclusion induces isomorphisms on spline cohomology $\forall \ell$,
	\begin{equation*}
		\splinelevelinclusion{\ell,\ell+1}:\tpbs{\ell,\ell+1}{}(\Omega_{\ell+1})\rightarrow\tpbs{\ell+1,\ell+1}{}(\Omega_{\ell+1})\;.
	\end{equation*}
\end{proof}}

\section{Implementation and Validation}\label{sec:numerics}

A thorough investigation of the numerical stability of these spline spaces, as well as a discussion of practical implementation and refinement strategies, is outside of the scope of this paper.
Instead, we discuss some basic ideas for computationally verifying that assumptions are satisfied for a particular refinement configuration and then investigate the (in)exactness of various refinement configurations.
Future work will explore numerical stability, approximation power of the spline spaces, and a-posteriori error analysis.

\subsection{Verifying Assumption \ref{assume:shortest_path} for a given refinement configuration}
Given a hierarchical refinement configuration, it is clear from the formulation of Assumption \ref{assume:shortest_path} that its verification only requires information from pairs of successive refinement levels.
In this subsection, we explicitly state how Assumption \ref{assume:shortest_path} can be verified in a computer implementation.\\

\noindent
\emph{\textbf{Condition 1}}\\
Let $0 \leq \ell \leq L-1$, $\mbf{i}$ be the index of a 0-form B-spline of level $\ell$, and define $I^{k,-}_{\ell,\ell+1}$ and $I^{k,+}_{\ell,\ell+1}$ to be the following maps,
\begin{equation*}
	\begin{split}
		I^{k,-}_{\ell,\ell+1}(\mbf{i}) &:= \min \left\{ l~:~\knt{l,\ell+1,k} = \knt{i_k,\ell,k}\right\}\;,\\
		I^{k,+}_{\ell,\ell+1}(\mbf{i}) &:= \max \left\{ l~:~\knt{l,\ell+1,k} = \knt{i_k+\pdeg{\ell}{k}+1,\ell,k}\right\}\;.
	\end{split}
\end{equation*}
Consider $\zerob_\mbf{i}, \zerob_{\mbf{i}+\Delta\mbf{i}} \in \tpb{\ell,\ell+1}{\mbf{0}}$ and, without loss of generality, let $\Delta\mbf{i}$ be component-wise non-negative.
Then,
\begin{equation*}
	\begin{gathered}
		\supp(\zerob_\mbf{i}), \supp(\zerob_{\mbf{i}+\Delta\mbf{i}}) \text{ share an $(\ndim-1,\ell+1)$-intersection}\\
		\Updownarrow\\
		\left(I^{k,+}_{\ell,\ell+1}(\mbf{i}) \geq I^{k,-}_{\ell,\ell+1}(\mbf{i}+\Delta\mbf{i})\quad \forall k \in \{1,\dots,\ndim\}\right) 
		\text{ and}\\
		\left(I^{k,+}_{\ell,\ell+1}(\mbf{i}) - I^{k,-}_{\ell,\ell+1}(\mbf{i}+\Delta\mbf{i}) \geq \pdeg{\ell+1}{k} \quad \forall k \in \mathcal{I} \subset \{1,\dots,\ndim\},  |\mathcal{I}| \geq (\ndim - 1) \right)
	\end{gathered}
\end{equation*}
This is how we can check whether the these splines share an $(\ndim-1,\ell+1)$ intersection, as in Assumption \ref{assume:shortest_path}.
If they do not, then there is no need to check the second condition and the assumption is satisfied.
On the other hand, such an intersection exists, then we need to check that the shortest path condition holds.
This can be done as follows.\\

\noindent
\emph{\textbf{Condition 2}}\\
{Assume that $\supp(\zerob_\mbf{i})$ and $\supp(\zerob_{\mbf{i}+\Delta\mbf{i}})$ share an $(n-1,\ell+1)$-intersection.}
Then, a simple computation reveals that there are at most $\Pi_{k=1}^n(\Delta i_k+1)$ shortest paths between $\zerob_{\mbf{i}}$ and $\zerob_{\mbf{i}+\Delta\mbf{i}}$.
Each such shortest path will consist of B-splines $\alpha_{\mbf{i}+\Delta\hat{\mbf{i}}}$ such that $0 \leq \Delta\hat{{i}}_k \leq \Delta{{i}}_k$ holds for all $k$.
Moreover, we also have $\Delta i_k \leq \pdeg{\ell}{k}$ since $\supp(\zerob_\mbf{i}) \cap \supp(\zerob_{\mbf{i}+\Delta\mbf{i}})$ contains at least one \Bezier{} cell, a conservative upper bound that ignores the effect of repeated knots.
Therefore, a brute force check on these limited number of shortest paths can be performed to verify that the second condition from Assumption \ref{assume:shortest_path} holds.
For instance, for a potential shortest path, each B-spline $\alpha_{\mbf{i}+\Delta\hat{\mbf{i}}}$ in it must be such that all \Bezier{} cells in its support have been refined.

Finally, let $B(\mbf{i},\Delta\mbf{i}) \subset \tpb{\ell,\ell+1}{\mbf{0}}$ be the set of B-splines that form a shortest path between $\zerob_\mbf{i}$ and $\zerob_{\mbf{i}+\Delta\mbf{i}}$; by convention we assume that $\zerob_\mbf{i}, \zerob_{\mbf{i}+\Delta\mbf{i}} \in B(\mbf{i},\Delta\mbf{i})$.
Then, the shortest-path check does not need to be performed again for any $\zerob_a,\zerob_b \in B(\mbf{i},\Delta\mbf{i})$ since the shortest path relation is commutative (the shortest path from $\zerob_a$ to $\zerob_b$ is also the shortest path from $\zerob_b$ to $\zerob_a$) and obeys the following recursion condition: if there is a shortest path from $\zerob_a$ to $\zerob_c$ containing $\zerob_b$, a subset of this path is a shortest path between $\zerob_a$ and $\zerob_b$ and another subset is a shortest path between $\zerob_b$ and $\zerob_c$.
This further reduces the computational workload behind this verification task.

\begin{remark}
	While the above explicit and conservative approaches may not be the most efficient ones, they are easy to understand and illustrate how the conditions in Assumption \ref{assume:shortest_path} can be verified in a local manner for any given hierarchical refinement configuration, with the basic process amounting to a sequence of counting operations.
\end{remark}

\subsection{Example applications and limitations of Theorem \ref{thrm:exactness}}

To verify that the current theory holds computationally, we construct the hierarchical B-spline complex of discrete differential forms for a variety of refinement configurations using extensions of GeoPDEs \cite{Vazquez:2016}.
The computations involve finding the nullity of the matrix corresponding to the mixed discretization of the $j$-form Hodge Laplacian for the de Rham complex \cite[Chapter 4]{Arnold:2018} (e.g., using a QR or singular value decomposition) for all $j$.
Specifically, this matrix corresponds to the following weak problem: find $(g,f) \in \hbs{\nref}{j-1} \times \hbs{\nref}{j}$ such that
\begin{equation*}
	\begin{split}
		\inpr[]{v}{g} - \inpr[]{dv}{f} &= 0\;,\quad v \in \hbs{\nref}{j-1}\;,\\
		\inpr[]{w}{dg} + \inpr[]{dw}{df} &= 0\;,\quad w \in \hbs{\nref}{j}\;.
	\end{split}
\end{equation*}
The hierarchical complex is exact in position $j$ if the matrix corresponding to the above problem is of full rank for $j < \ndim$ and has a nullity of one for $j = \ndim$.
Any increments in the nullity are in one-to-one correspondence with so-called discrete harmonic forms, i.e., spline functions in $\hbs{\nref}{j}$ that are cocycles and are orthogonal to coboundaries.
Existence of harmonic forms means the hierarchical spline complex is inexact, and the Hodge Laplacian is well-posed only up to these harmonic forms.

\subsubsection{Inexact Refinement Configurations and Resulting Harmonics}
In this section, we build various hierarchical configurations that violate one or more assumptions leading up to Theorem \ref{thrm:exactness}.
These are minimal examples for which sprurious harmonic $j$-forms exist for $j > 0$.
Spurious harmonic 0-forms cannot be created here because the imposition of homogeneous boundary conditions makes the 0-form Hodge Laplacian well-posed.\\

\noindent
\emph{\textbf{Two-dimensional examples}}\\
In two dimensions, representative configurations that are inexact and their accompanying Greville meshes $\gmesh{0,1}$ and $\gmesh{1,1}$ (which manifest the change in the spline space topology) are shown in Figure \ref{fig:2d_refinements_inexact_configs}.
The harmonic forms that are introduced as a result of this change in topology are shown in Figure \ref{fig:2d_refinements_inexact_harmonics}.
In all cases, it can be seen that the homology of the Greville mesh changes between $\gmesh{0,1}$ and $\gmesh{1,1}$ to produce a spurious harmonic form.
Introducing a new component (e.g. by violating Assumption \ref{assume:zero_union}) introduces additional harmonic 2-forms.
Harmonic 1-forms are created by either merging two connected components of $\gmesh{0,1}$ into a single connected component in $\gmesh{1,1}$ or by modifying the topology of $\gmesh{0,1}$ from being a topological ball into a topological annulus on $\gmesh{1,1}$.
In the case of Figure \ref{fig:2d_refinements_inexact_configs} configuration (d), the refinement pattern introduces two harmonics because it transitions from refinement of two topological balls to a single connected component (introducing a harmonic 1-form) with non-trivial first homology group (introducing another harmonic 1-form).

\renewcommand*{\xBoxDim}{4cm}%
\renewcommand*{\yBoxDim}{4cm}%
\renewcommand*{\thistextwidth}{0.24}
\newcommand*{\nodesize}{1.2cm}
\newcommand*{\trinodesize}{1.5cm}
\newcommand*{\figureseparation}{1.0cm}
\begin{figure}[ht!]
	\centering
	%
	%
	\begin{subfigure}{1\textwidth}
	\centering
		\resizebox{\xBoxDim}{\yBoxDim}{%
		\begin{tikzpicture}
	
                		\mymanualgrid{0,...,4}{0,...,4}{0.25}{gray}{white}
                		\mymanualgrid{1,1.5,2,2.5,3}{1,1.5,2,2.5,3}{1}{black}{cyan}

		\end{tikzpicture}
		}
%
	\nolinebreak
	\hfill
		\resizebox{\xBoxDim}{\yBoxDim}{%
                \begin{tikzpicture}
                		\mymanualgrid{0,2,6,12,18,22,24}{0,2,6,12,18,22,24}{0.25}{gray}{white}
                \end{tikzpicture}
			}
	\nolinebreak
	\hfill
		\resizebox{\xBoxDim}{\yBoxDim}{%
                \begin{tikzpicture}
			 \mymanualgrid{0,1,3,6,9,12,15,18,21,23,24}{0,1,3,6,9,12,15,18,21,23,24} {0.25}{gray}{white}
			 \mymanualgrid{9,12,15}{9,12,15} {0.25}{gray}{red}
			\plotGrevillePoints{0,1,3,6,9,12,15,18,21,23,24}{0,1,3,6,9,12,15,18,21,23,24}{5}{5}{black}{\nodesize}{\trinodesize}
                \end{tikzpicture}
			}
			\caption{Bicubic splines with maximal smoothness and Greville grid refinement from an empty domain to a topological ball.}
	\end{subfigure}
	\\
	%
	%
	\begin{subfigure}{1\textwidth}
	\centering
		\resizebox{\xBoxDim}{\yBoxDim}{%
		\begin{tikzpicture}
			\mymanualgrid{0,...,7}{0,...,7}{0.25}{gray}{white}
                		\mymanualgrid{1,1.5,2,2.5,3,3.5,4}{1,1.5,2,2.5,3,3.5,4}{1}{black}{cyan}
                		\mymanualgrid{3,3.5,4,4.5,5,5.5,6}{3,3.5,4,4.5,5,5.5,6}{1}{black}{cyan}

			\end{tikzpicture}
			}
%
	\nolinebreak
	\hfill
	\resizebox{\xBoxDim}{\yBoxDim}{%
                \begin{tikzpicture}
			\mymanualgrid{0,2,6,10,14,18,22,26,28}{0,2,6,10,14,18,22,26,28}{0.25}{gray}{white}
			\mymanualgrid{6,10,14}{6,10,14}{0.25}{gray}{yellow}
			\mymanualgrid{14,18,22}{14,18,22}{0.25}{gray}{yellow}

			\plotGrevillePoints{0,2,6,10,14,18,22,26,28}{0,2,6,10,14,18,22,26,28}{3}{3}{black}{\nodesize}{\trinodesize}
			\plotGrevillePoints{0,2,6,10,14,18,22,26,28}{0,2,6,10,14,18,22,26,28}{5}{5}{black}{\nodesize}{\trinodesize}
			
                \end{tikzpicture}
			}
%
	\nolinebreak
	\renewcommand*{\nodesize}{0.8cm}
	\renewcommand*{\trinodesize}{1.0cm}
	\hfill
		\resizebox{\xBoxDim}{\yBoxDim}{%
                \begin{tikzpicture}
			\mymanualgrid{0,1,3,5,7,9,11,13,15,17,19,21,23,25,27,28}{0,1,3,5,7,9,11,13,15,17,19,21,23,25,27,28}{0.25}{gray}{white}			
			\mymanualgrid{5,7,9,11,13,15}{5,7,9,11,13,15}{0.25}{gray}{red}			
			\mymanualgrid{13,15,17,19,21,23}{13,15,17,19,21,23}{0.25}{gray}{red}			
			
			\plotGrevillePoints{0,1,3,5,7,9,11,13,15,17,19,21,23,25,27,28}{0,1,3,5,7,9,11,13,15,17,19,21,23,25,27,28}{4,...,7}{4,...,7}{black}{\nodesize}{\trinodesize}
			\plotGrevillePoints{0,1,3,5,7,9,11,13,15,17,19,21,23,25,27,28}{0,1,3,5,7,9,11,13,15,17,19,21,23,25,27,28}{8,...,11}{8,...,11}{black}{\nodesize}{\trinodesize}
                \end{tikzpicture}
			}
			\caption{Biquadratic splines with maximal smoothness and refinement from two topological balls to a single topological ball}
	\end{subfigure}
	\\
	%
	%
	\begin{subfigure}{1\textwidth}
	\centering
		\resizebox{\xBoxDim}{\yBoxDim}{%
		\begin{tikzpicture}
			\mymanualgrid{0,...,9}{0,...,9}{0.25}{gray}{white}
                		\mymanualgrid{1,1.5,2,2.5,3,3.5,4}{1,1.5,2,2.5,3,3.5,4,4.5,5}{1}{black}{cyan}
                		\mymanualgrid{4,4.5,5,5.5,6,6.5,7,7.5,8}{1,1.5,2,2.5,3,3.5,4}{1}{black}{cyan}
                		\mymanualgrid{5,5.5,6,6.5,7,7.5,8}{4,4.5,5,5.5,6,6.5,7,7.5,8}{1}{black}{cyan}
                		\mymanualgrid{3,3.5,4,4.5,5}{5,5.5,6,6.5,7,7.5,8}{1}{black}{cyan}

		\end{tikzpicture}
		}
%
	\nolinebreak
	\hfill
		\resizebox{\xBoxDim}{\yBoxDim}{%
                \begin{tikzpicture}
			\mymanualgrid{0,2,6,10,14,18,22,26,30,34,36}{0,2,6,10,14,18,22,26,30,34,36}{0.25}{gray}{white}

			\mymanualgrid{6,10,14}{6,10,14,18}{0.25}{gray}{yellow}
			\mymanualgrid{14,18,22,26,30}{6,10,14}{0.25}{gray}{yellow}
			\mymanualgrid{22,26,30}{14,18,22,26,30}{0.25}{gray}{yellow}
			\mymanualgrid{14,18,22}{22,26,30}{0.25}{gray}{yellow}
			
			\plotGrevillePoints{0,2,6,10,14,18,22,26,30,34,36}{0,2,6,10,14,18,22,26,30,34,36}{3}{3,4}{black}{\nodesize}{\trinodesize}
			\plotGrevillePoints{0,2,6,10,14,18,22,26,30,34,36}{0,2,6,10,14,18,22,26,30,34,36}{4,...,7}{3}{black}{\nodesize}{\trinodesize}
			\plotGrevillePoints{0,2,6,10,14,18,22,26,30,34,36}{0,2,6,10,14,18,22,26,30,34,36}{7}{4,...,7}{black}{\nodesize}{\trinodesize}
			\plotGrevillePoints{0,2,6,10,14,18,22,26,30,34,36}{0,2,6,10,14,18,22,26,30,34,36}{5,6}{7}{black}{\nodesize}{\trinodesize}
                \end{tikzpicture}
			}
%
	\nolinebreak
	\renewcommand*{\nodesize}{0.6cm}
	\renewcommand*{\trinodesize}{0.8cm}
	\hfill
		\resizebox{\xBoxDim}{\yBoxDim}{%
                \begin{tikzpicture}
			\mymanualgrid{0,1,3,5,7,9,11,13,15,17,19,21,23,25,27,29,31,33,35,36}{0,1,3,5,7,9,11,13,15,17,19,21,23,25,27,29,31,33,35,36}{0.25}{gray}{white}

			\mymanualgrid{5,7,9,11,13,15}{5,7,9,11,13,15,17,19}{0.25}{gray}{red}
			\mymanualgrid{15,17,19,21,23,25,27,29,31}{5,7,9,11,13,15}{0.25}{gray}{red}
			\mymanualgrid{21,23,25,27,29,31}{15,17,19,21,23,25,27,29,31}{0.25}{gray}{red}
			\mymanualgrid{13,15,17,19,21}{21,23,25,27,29,31}{0.25}{gray}{red}
			\mymanualgrid{13,15}{19,21}{0.25}{gray}{red}

			\plotGrevillePoints{0,1,3,5,7,9,11,13,15,17,19,21,23,25,27,29,31,33,35,36}{0,1,3,5,7,9,11,13,15,17,19,21,23,25,27,29,31,33,35,36}{4,...,7}{4,...,9}{black}{\nodesize}{\trinodesize}
			\plotGrevillePoints{0,1,3,5,7,9,11,13,15,17,19,21,23,25,27,29,31,33,35,36}{0,1,3,5,7,9,11,13,15,17,19,21,23,25,27,29,31,33,35,36}{8,...,15}{4,...,7}{black}{\nodesize}{\trinodesize}
			\plotGrevillePoints{0,1,3,5,7,9,11,13,15,17,19,21,23,25,27,29,31,33,35,36}{0,1,3,5,7,9,11,13,15,17,19,21,23,25,27,29,31,33,35,36}{12,...,15}{8,...,15}{black}{\nodesize}{\trinodesize}
			\plotGrevillePoints{0,1,3,5,7,9,11,13,15,17,19,21,23,25,27,29,31,33,35,36}{0,1,3,5,7,9,11,13,15,17,19,21,23,25,27,29,31,33,35,36}{8,...,11}{12,...,15}{black}{\nodesize}{\trinodesize}

			\node at (14,19) [regular polygon, regular polygon sides=3,fill=black,minimum size=\trinodesize,rotate=-90]{};
			\node at (14,21) [regular polygon, regular polygon sides=3,fill=black,minimum size=\trinodesize,rotate=-90]{};
			\node at (14,20) [rectangle,fill=black,minimum size=\nodesize]{};

                \end{tikzpicture}
			}
			\caption{Biquadratic splines with maximal smoothness and refinement from a topological ball to a topological annulus}
	\end{subfigure}
	\\
	%
	%
	\begin{subfigure}{1\textwidth}
	\centering
		\resizebox{\xBoxDim}{\yBoxDim}{%
		\begin{tikzpicture}
			\mymanualgrid{0,...,9}{0,...,9}{0.25}{gray}{white}
			\mymanualgrid{1,1.5,2,2.5,3,3.5,4}{1,1.5,2,2.5,3,3.5,4,4.5,5}{0.25}{black}{cyan}
			\mymanualgrid{4,4.5,5,5.5,6}{1,1.5,2,2.5,3,3.5,4}{0.25}{black}{cyan}
			\mymanualgrid{5,5.5,6,6.5,7,7.5,8}{4,4.5,5,5.5,6,6.5,7,7.5,8}{0.25}{black}{cyan}
			\mymanualgrid{3,3.5,4,4.5,5}{5,5.5,6,6.5,7,7.5,8}{0.25}{black}{cyan}
			\end{tikzpicture}
			}
%
	\renewcommand*{\nodesize}{0.8cm}
	\renewcommand*{\trinodesize}{1.0cm}
	\nolinebreak
	\hfill
		\resizebox{\xBoxDim}{\yBoxDim}{%
                \begin{tikzpicture}
			\mymanualgrid{0,2,6,10,14,18,22,26,30,34,36}{0,2,6,10,14,18,22,26,30,34,36}{0.25}{gray}{white}

			\mymanualgrid{6,10,14}{6,10,14,18}{0.25}{gray}{yellow}
			\mymanualgrid{14,18,22}{6,10,14}{0.25}{gray}{yellow}
			\mymanualgrid{22,26,30}{18,22,26,30}{0.25}{gray}{yellow}
			\mymanualgrid{14,18,22}{22,26,30}{0.25}{gray}{yellow}
			
			\plotGrevillePoints{0,2,6,10,14,18,22,26,30,34,36}{0,2,6,10,14,18,22,26,30,34,36}{3}{3,4}{black}{\nodesize}{\trinodesize}
			\plotGrevillePoints{0,2,6,10,14,18,22,26,30,34,36}{0,2,6,10,14,18,22,26,30,34,36}{4,5}{3}{black}{\nodesize}{\trinodesize}
			\plotGrevillePoints{0,2,6,10,14,18,22,26,30,34,36}{0,2,6,10,14,18,22,26,30,34,36}{7}{6,7}{black}{\nodesize}{\trinodesize}
			\plotGrevillePoints{0,2,6,10,14,18,22,26,30,34,36}{0,2,6,10,14,18,22,26,30,34,36}{5,6}{7}{black}{\nodesize}{\trinodesize}
                \end{tikzpicture}
			}
%
	\renewcommand*{\nodesize}{0.6cm}
	\renewcommand*{\trinodesize}{0.8cm}
	\nolinebreak
	\hfill
		\resizebox{\xBoxDim}{\yBoxDim}{%
                \begin{tikzpicture}
			\mymanualgrid{0,1,3,5,7,9,11,13,15,17,19,21,23,25,27,29,31,33,35,36}{0,1,3,5,7,9,11,13,15,17,19,21,23,25,27,29,31,33,35,36}{0.25}{gray}{white}

			\mymanualgrid{5,7,9,11,13,15}{5,7,9,11,13,15,17,19}{0.25}{gray}{red}
			\mymanualgrid{15,17,19,21,23}{5,7,9,11,13,15}{0.25}{gray}{red}
			\mymanualgrid{21,23,25,27,29,31}{17,19,21,23,25,27,29,31}{0.25}{gray}{red}
			\mymanualgrid{13,15,17,19,21}{21,23,25,27,29,31}{0.25}{gray}{red}
			\mymanualgrid{13,15}{19,21}{0.25}{gray}{red}
			\mymanualgrid{21,23}{15,17}{0.25}{gray}{red}
			
			\plotGrevillePoints{0,1,3,5,7,9,11,13,15,17,19,21,23,25,27,29,31,33,35,36}{0,1,3,5,7,9,11,13,15,17,19,21,23,25,27,29,31,33,35,36}{4,...,7}{4,...,9}{black}{\nodesize}{\trinodesize}
			\plotGrevillePoints{0,1,3,5,7,9,11,13,15,17,19,21,23,25,27,29,31,33,35,36}{0,1,3,5,7,9,11,13,15,17,19,21,23,25,27,29,31,33,35,36}{8,...,11}{4,...,7}{black}{\nodesize}{\trinodesize}
			\plotGrevillePoints{0,1,3,5,7,9,11,13,15,17,19,21,23,25,27,29,31,33,35,36}{0,1,3,5,7,9,11,13,15,17,19,21,23,25,27,29,31,33,35,36}{12,...,15}{10,...,15}{black}{\nodesize}{\trinodesize}
			\plotGrevillePoints{0,1,3,5,7,9,11,13,15,17,19,21,23,25,27,29,31,33,35,36}{0,1,3,5,7,9,11,13,15,17,19,21,23,25,27,29,31,33,35,36}{8,...,11}{12,...,15}{black}{\nodesize}{\trinodesize}
			
			\node at (14,19) [regular polygon, regular polygon sides=3,fill=black,minimum size=\trinodesize,rotate=-90]{};
			\node at (14,21) [regular polygon, regular polygon sides=3,fill=black,minimum size=\trinodesize,rotate=-90]{};
			\node at (14,20) [rectangle,fill=black,minimum size=\nodesize]{};

			\node at (22,15) [regular polygon, regular polygon sides=3,fill=black,minimum size=\trinodesize,rotate=-90]{};
			\node at (22,17) [regular polygon, regular polygon sides=3,fill=black,minimum size=\trinodesize,rotate=-90]{};
			\node at (22,16) [rectangle,fill=black,minimum size=\nodesize]{};

                \end{tikzpicture}
			}
			\caption{Biquadratic splines with maximal smoothness and refinement from two topological balls to a topological annulus}
	\end{subfigure}
	\caption{Bezi\'er meshes (left), coarse Greville subgrids $\gmesh{0,1}$ (center), and refined Greville subgrid $\gmesh{1,1}$ (right) for various inexact refinement schemes are shown.
	Inexactness can be visualized by the changes in the topology between $\gmesh{0,1}$ and $\gmesh{1,1}$ for the various configurations.
	Here, the filled disks correspond to Greville 0-cells, filled triangles correspond to Greville 1-cells, and filled squares correspond to Greville 2-cells.
	}
	\label{fig:2d_refinements_inexact_configs}
\end{figure}
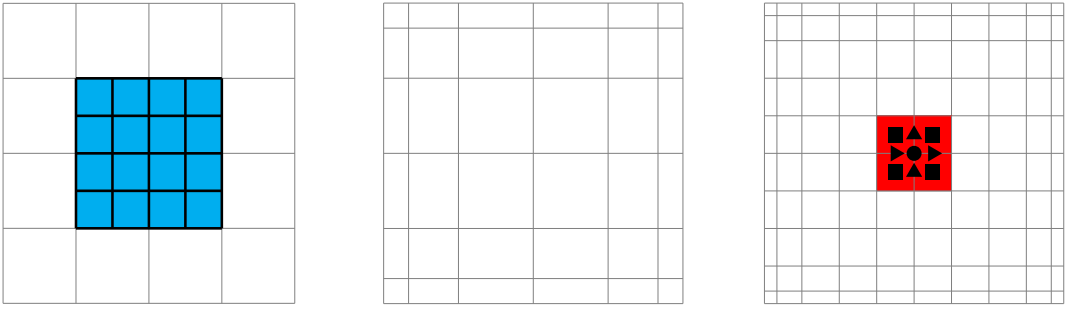
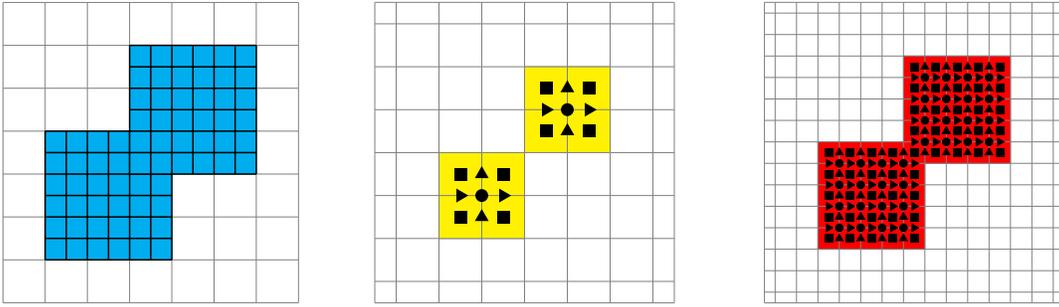
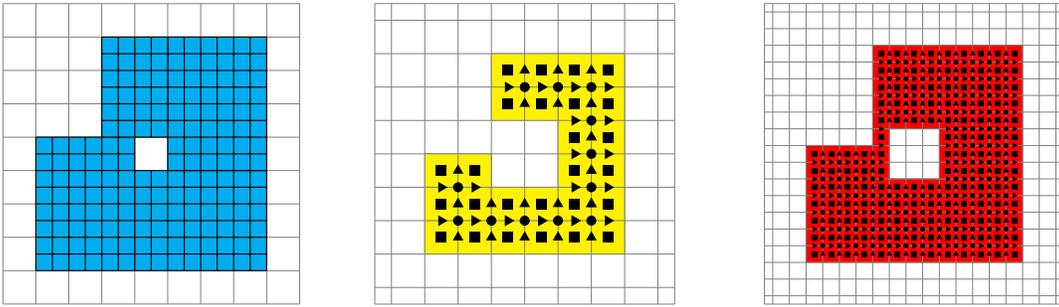
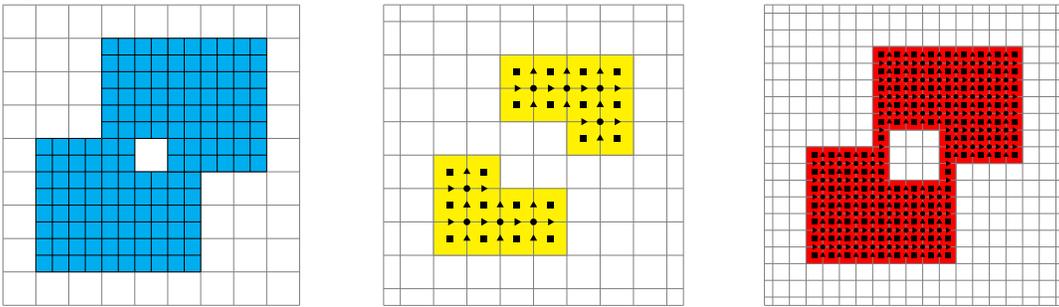
\begin{figure}
	\centering
	\begin{subfigure}{0.31\textwidth}
		\includegraphics[width=1\textwidth,trim=22.4cm 7.9cm 21.8cm 12.55cm,clip]{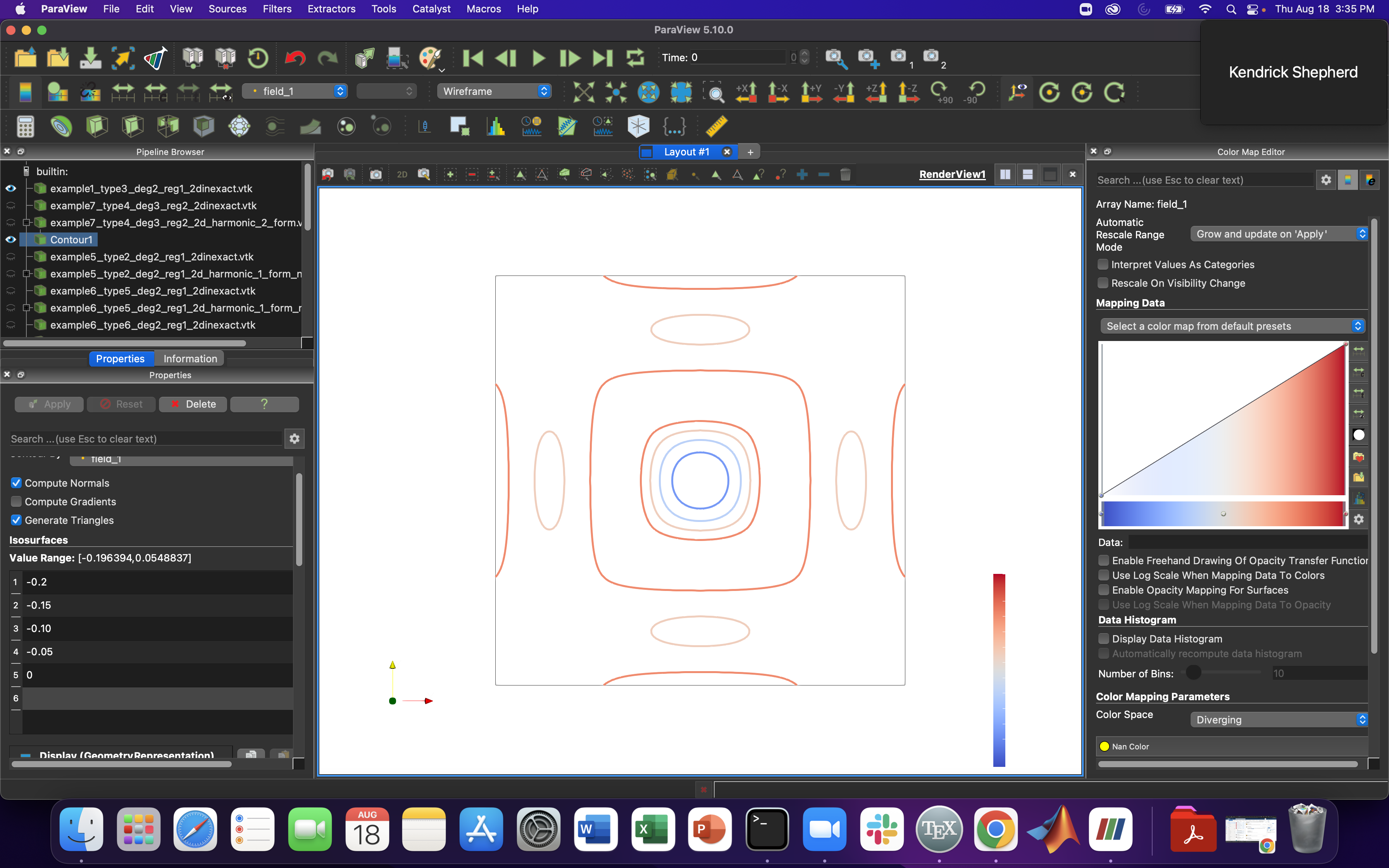}
		\caption{$2$-form, Figure \ref{fig:2d_refinements_inexact_configs}(a)}
	\end{subfigure}
	\;
	\begin{subfigure}{0.313\textwidth}
		\includegraphics[width=1\textwidth,trim=22.4cm 7.9cm 21.8cm 12.55cm,clip]{/Harmonic_1_form_diag_p3_w_contour}
		\caption{$1$-form, Figure \ref{fig:2d_refinements_inexact_configs}(b)}
	\end{subfigure}
	\;
	\begin{subfigure}{0.31\textwidth}
		\includegraphics[width=1\textwidth,trim=22.4cm 7.9cm 21.8cm 12.55cm,clip]{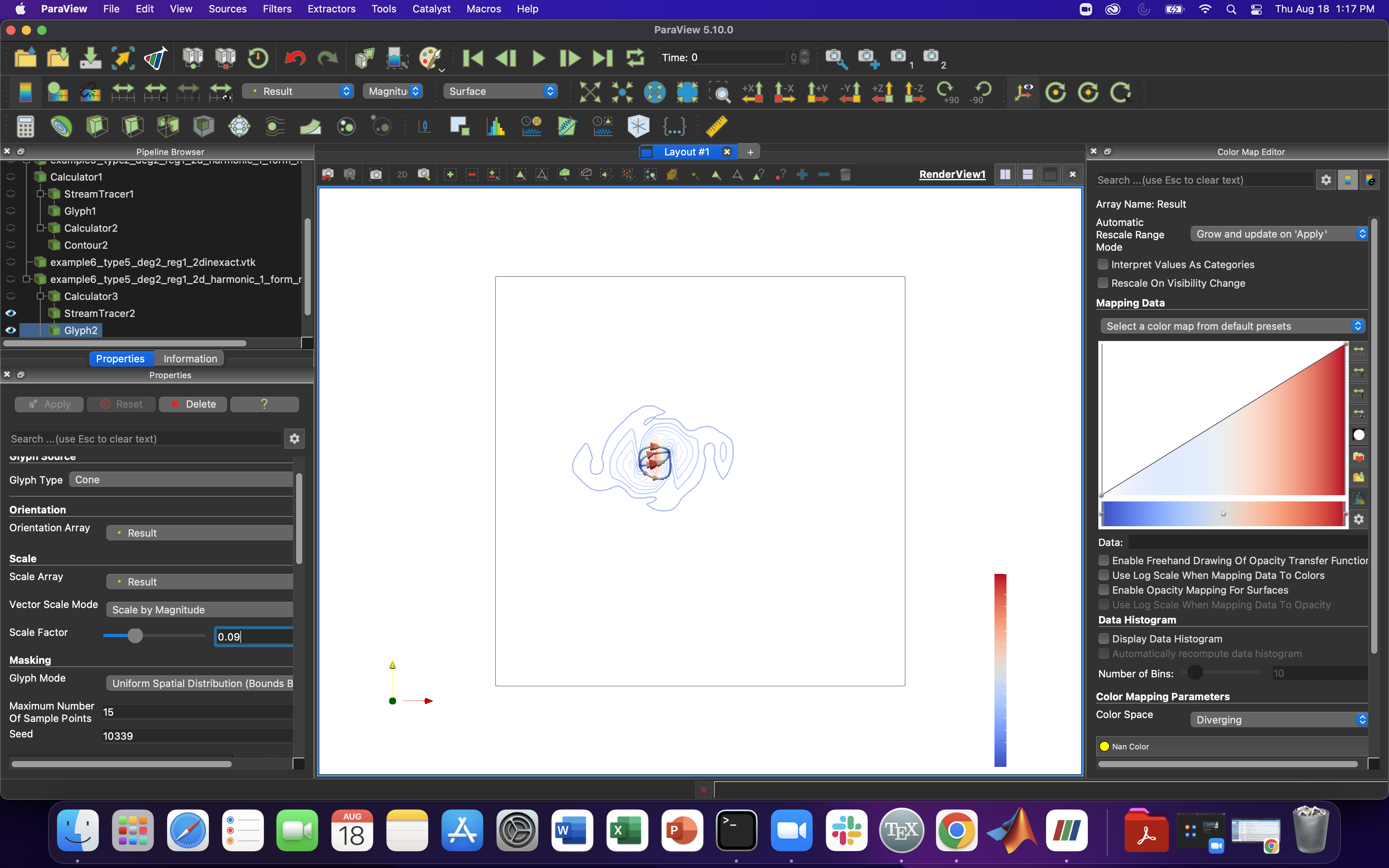}
		\caption{$1$-form, Figure \ref{fig:2d_refinements_inexact_configs}(c)}
	\end{subfigure}
	\\
		\begin{subfigure}{0.65\textwidth}
		\includegraphics[width=0.49\textwidth,trim=22.4cm 7.9cm 21.8cm 12.55cm,clip]{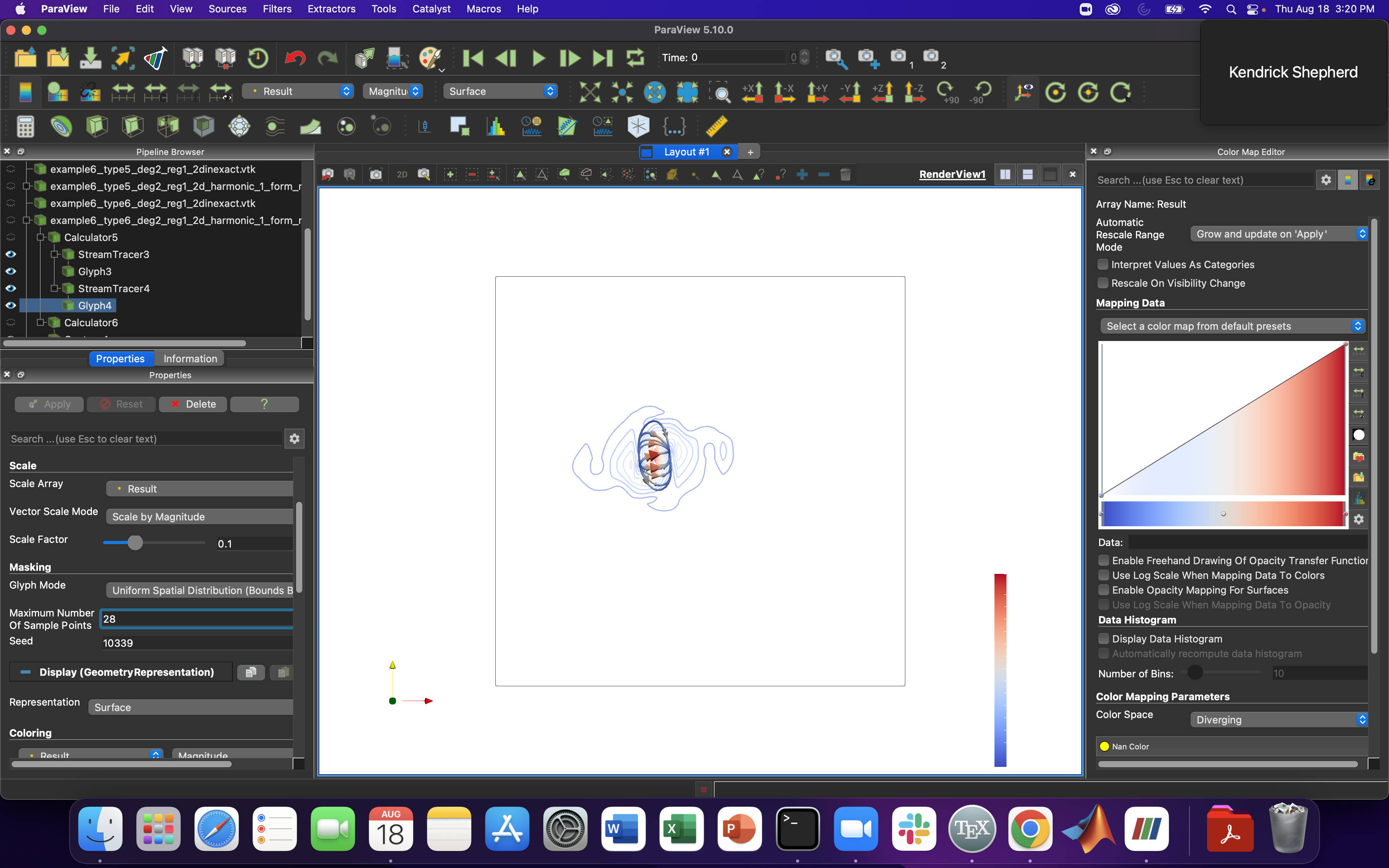}
		\hspace{1em}
		\includegraphics[width=0.49\textwidth,trim=22.4cm 7.9cm 21.8cm 12.55cm,clip]{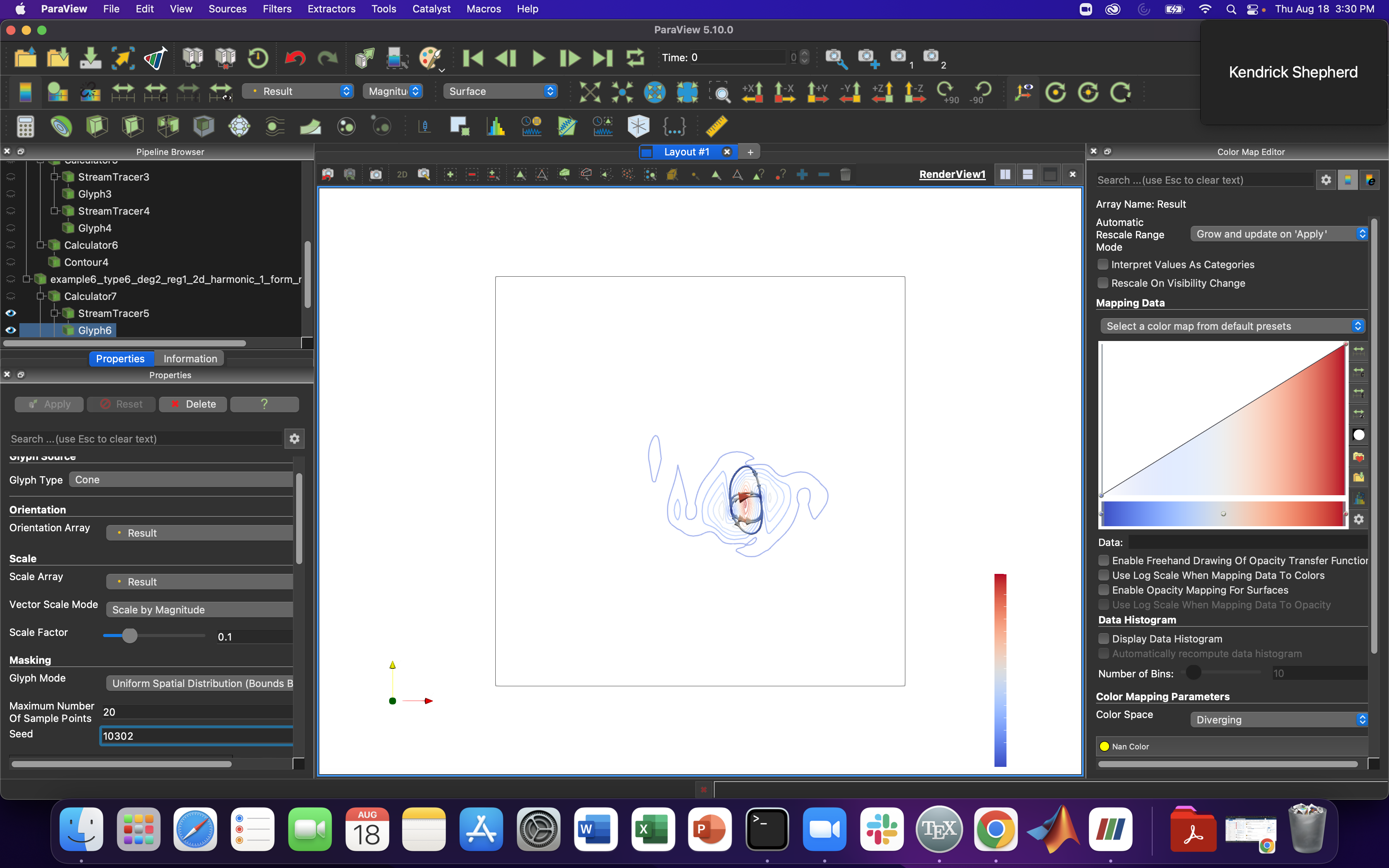}
		\caption{$1$-forms, Figure \ref{fig:2d_refinements_inexact_configs}(d)}
	\end{subfigure}
	\caption{Spurious harmonic forms introduced by refinement patterns of Figure \ref{fig:2d_refinements_inexact_configs} are shown.}
	\label{fig:2d_refinements_inexact_harmonics}
\end{figure}

\vspace{\baselineskip}
\noindent
\emph{\textbf{Three-dimensional examples}}\\
Examples of refinement configurations resulting in spurious harmonics in three dimensions are depicted in Figure \ref{fig:3d_harmonic_configurations}.
Similarly to two dimensions, each of these harmonics corresponds to a change in topology between $\gmesh{0,1}$ and $\gmesh{1,1}$.
Specifically,
harmonic 3-forms are introduced by violating Assumption \ref{assume:zero_union} and including splines in the fine space that do not correspond to removal of splines in the coarse space.
Harmonic 2-forms are introduced both by modifying $\gmesh{0,1}$ from a topological ball to a topological solid torus on $\gmesh{1,1}$ or by modifying $\gmesh{0,1}$ from being two topological balls into a single topological ball in $\gmesh{1,1}$.
Similarly, harmonic 1-forms are produced by refinement patterns converting $\gmesh{0,1}$ from a topological ball into a configuration in $\gmesh{1,1}$ with an internal cavity or by refinement that converts $\gmesh{0,1}$ from being a solid torus into a topological ball in $\gmesh{1,1}$.

\begin{figure}
	\centering
	\begin{subfigure}{0.485\textwidth}
		\includegraphics[width=0.48\textwidth,trim=18cm 2.5cm 17cm 11cm,clip]{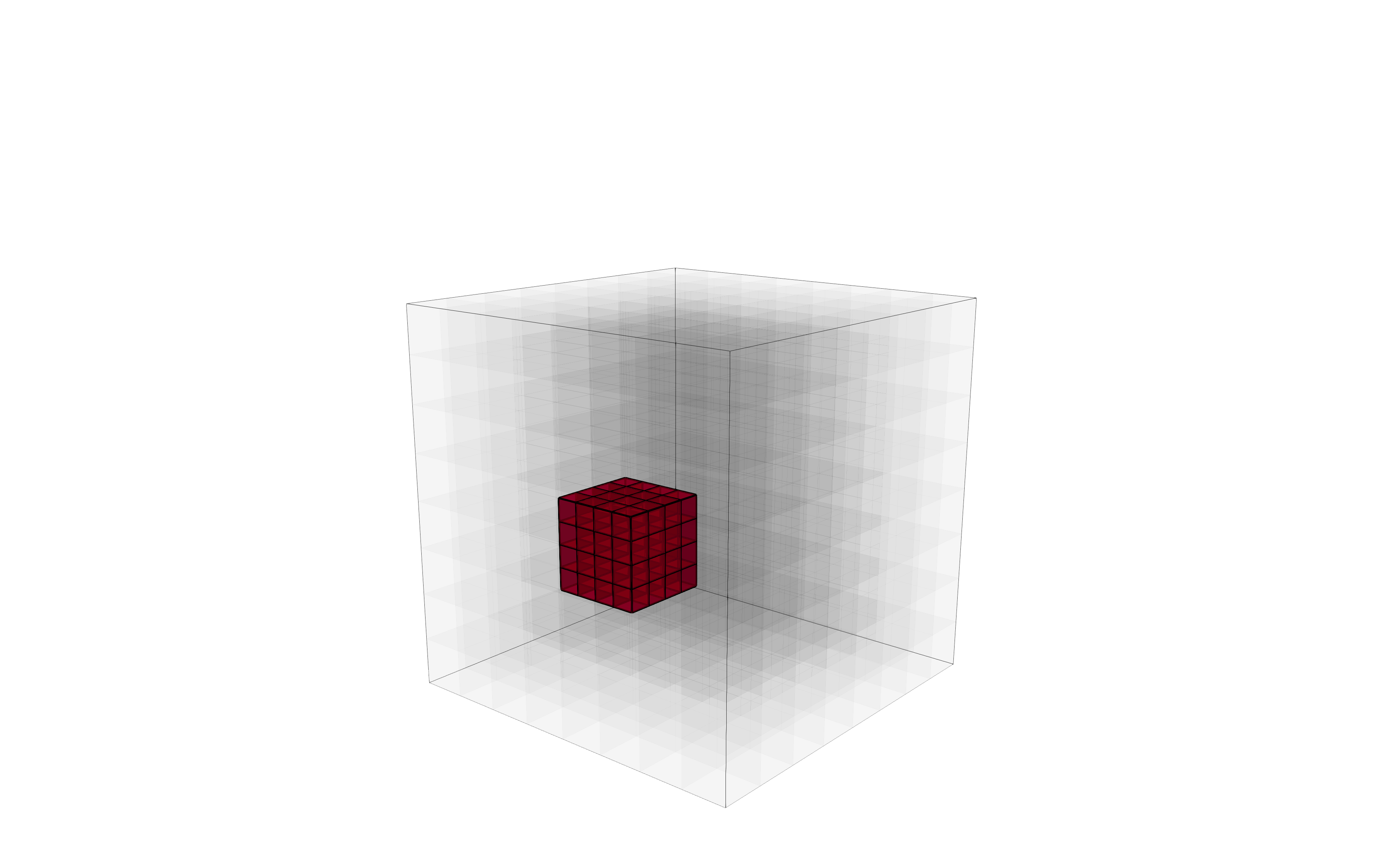}
		\;
		\includegraphics[width=0.48\textwidth,trim=19.5cm 5cm 20cm 12.5cm,clip]{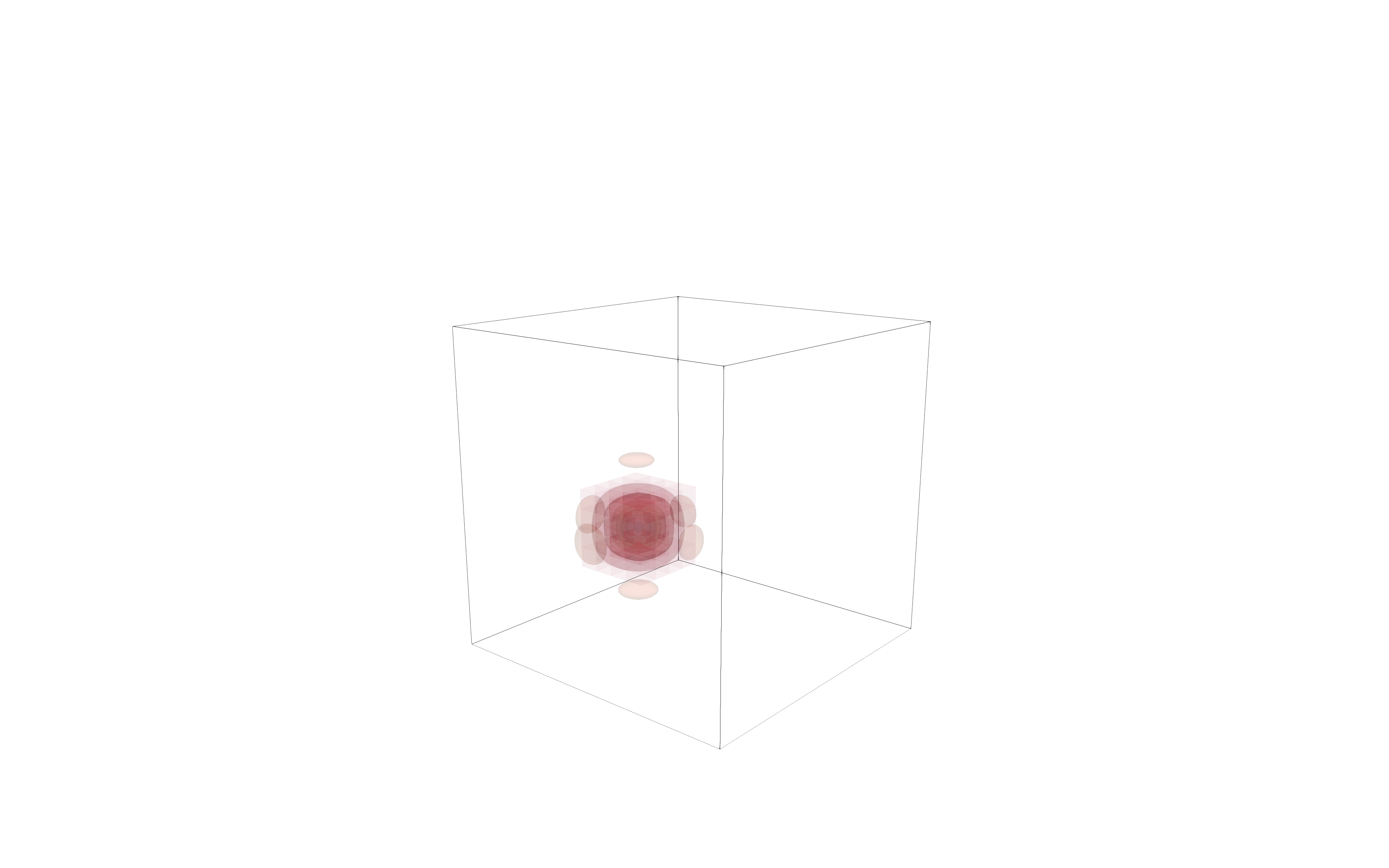}
		\caption{\Og Harmonic $3$-form refinement: $p=3$}
	\end{subfigure}
	\;
	\begin{subfigure}{0.485\textwidth}
		\includegraphics[width=0.48\textwidth,trim=18cm 2.5cm 17cm 11cm,clip]{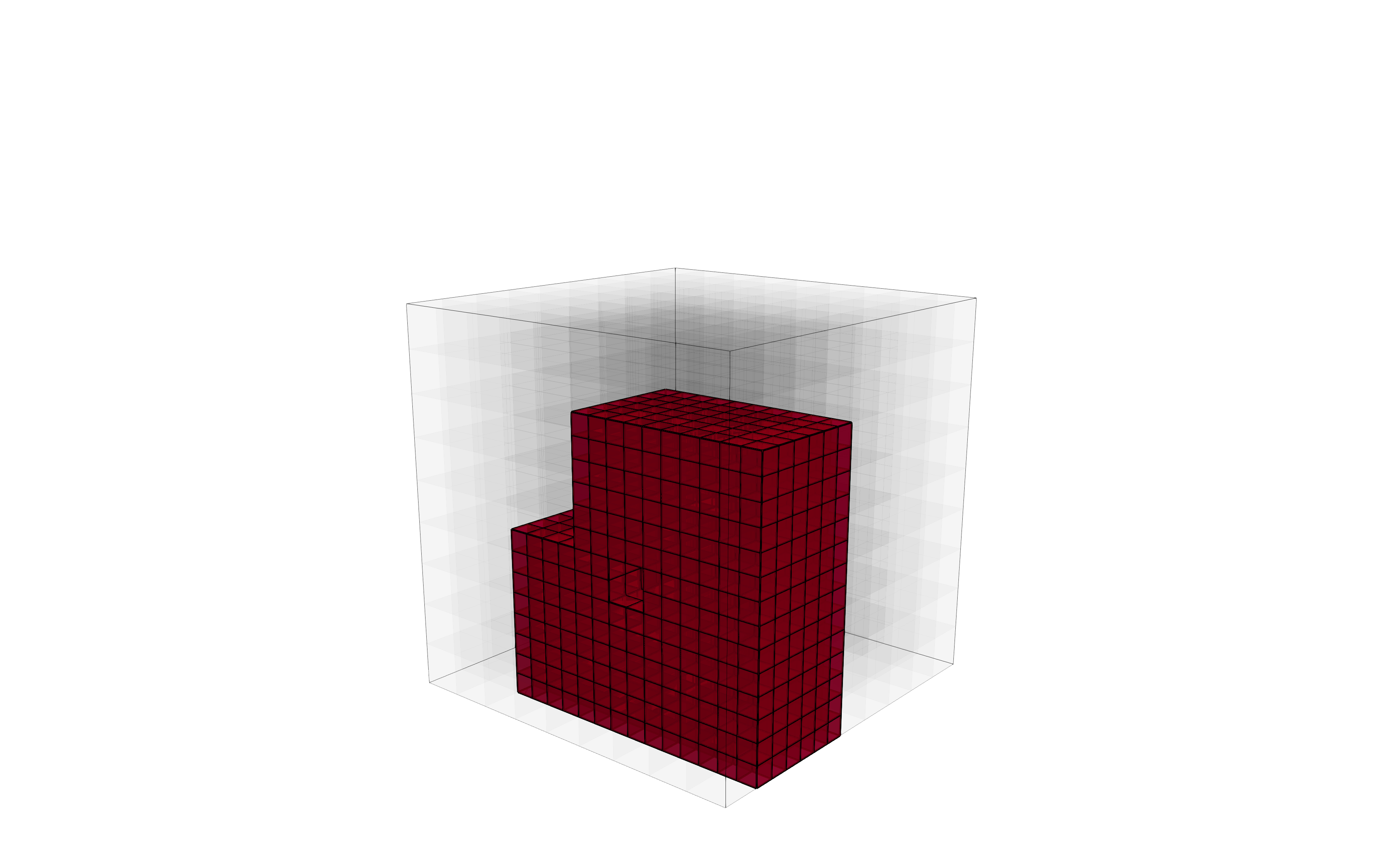}
		\;
		\includegraphics[width=0.48\textwidth,trim=19.5cm 5cm 20cm 12.5cm,clip]{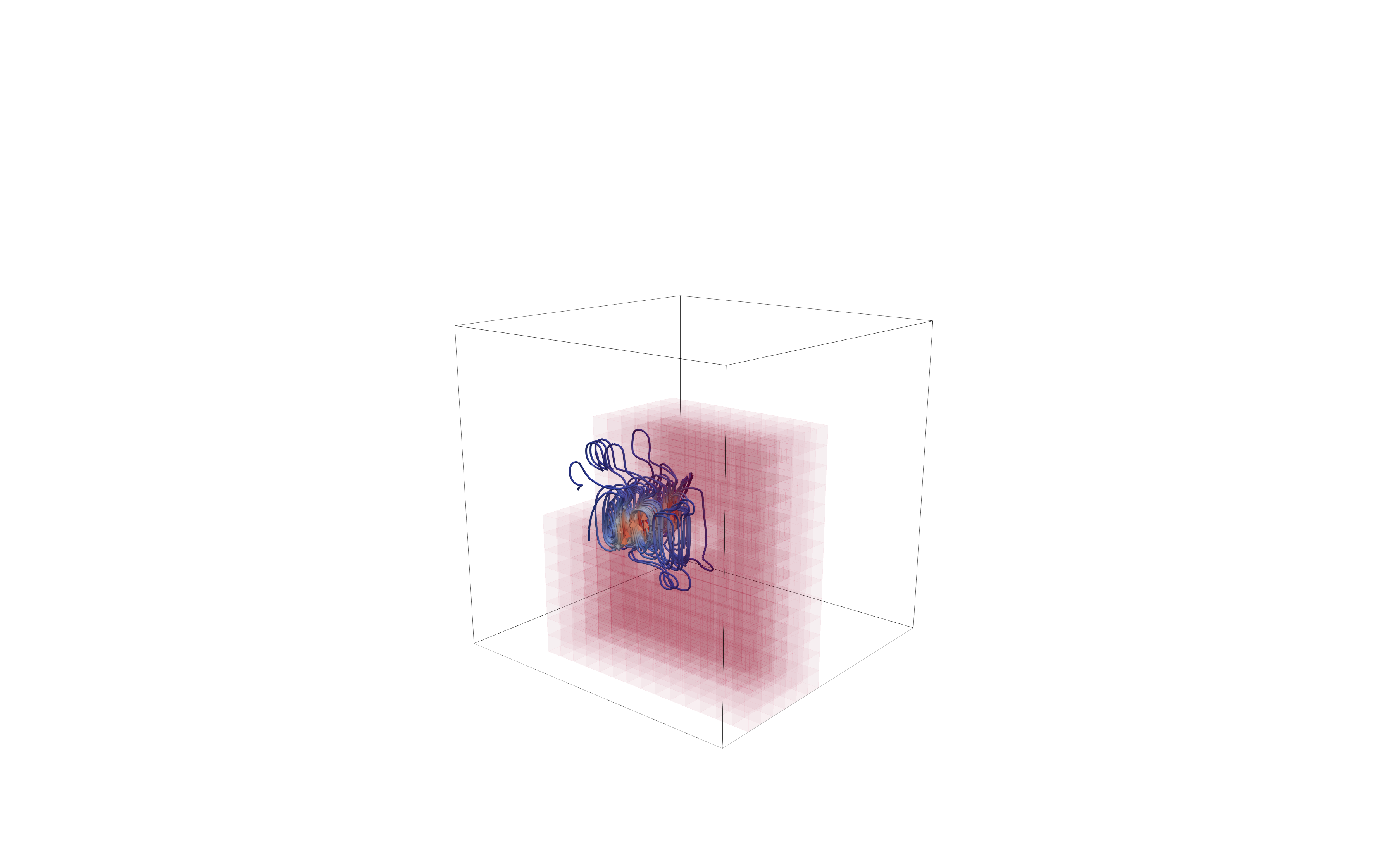}
		\caption{\Og Harmonic $2$-form refinement: $p=2$}
	\end{subfigure}
	\\
	\begin{subfigure}{0.485\textwidth}
		\includegraphics[width=0.48\textwidth,trim=18cm 2.5cm 17cm 11cm,clip]{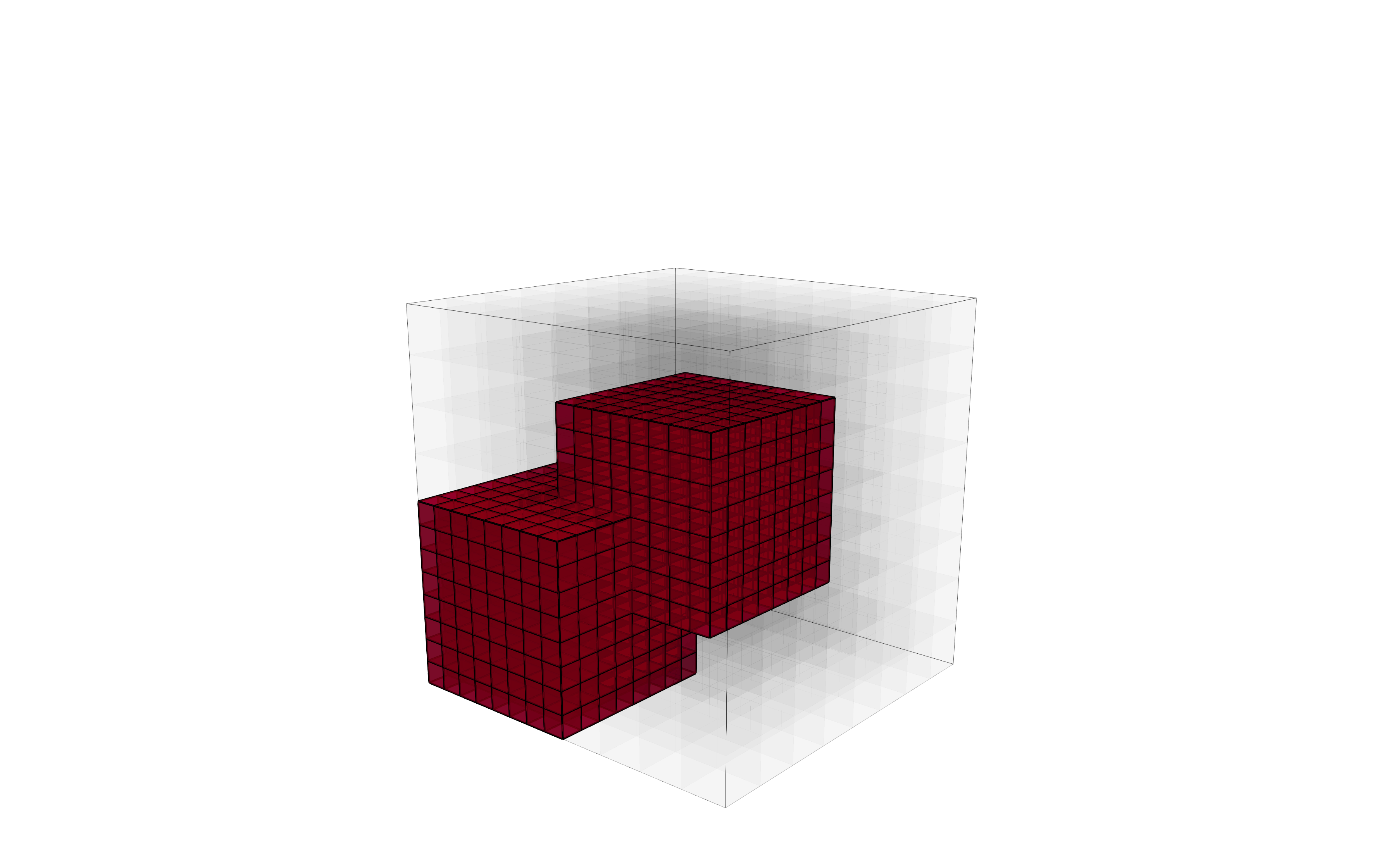}
		\;
		\includegraphics[width=0.48\textwidth,trim=19.5cm 5cm 20cm 12.5cm,clip]{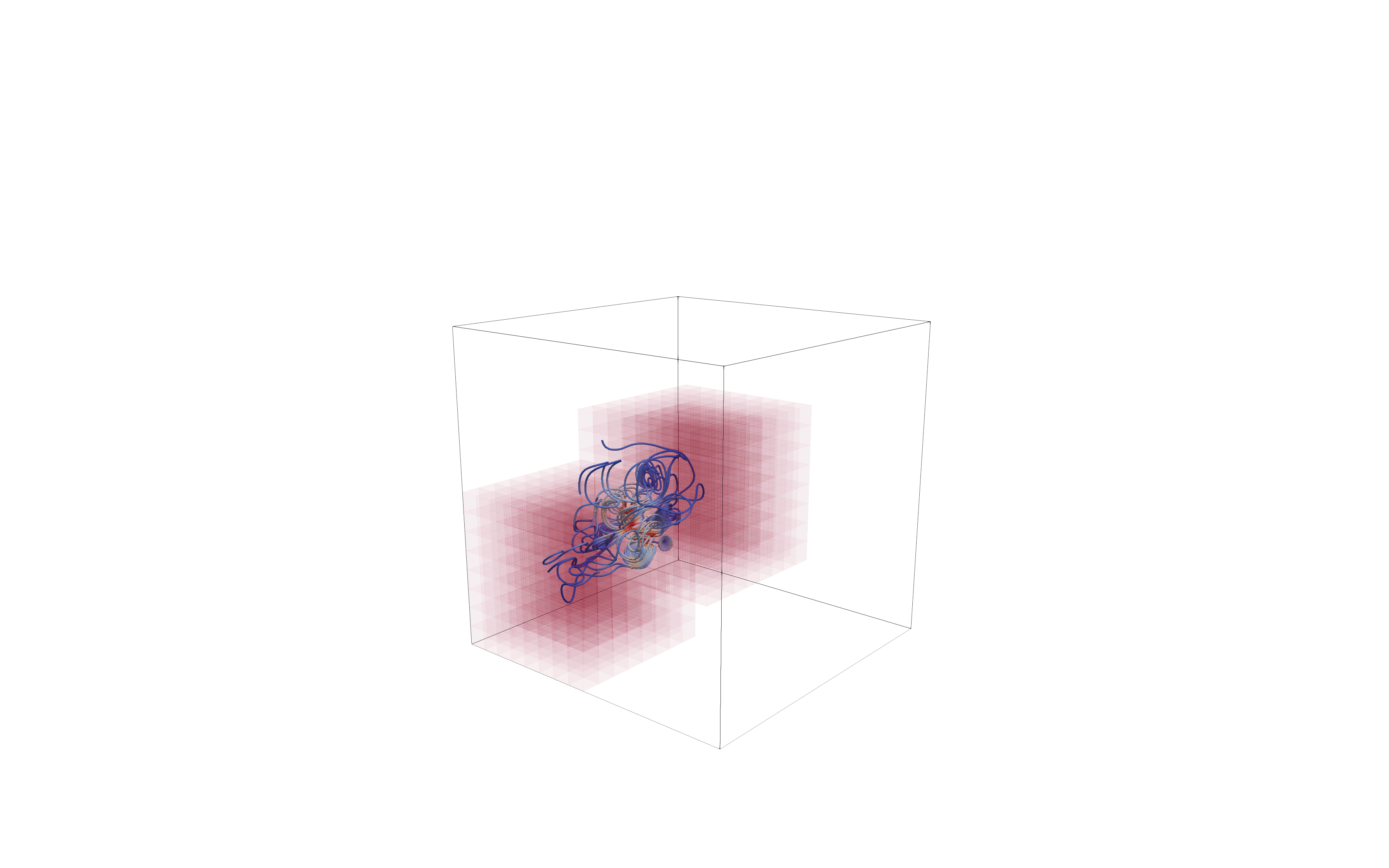}
		\caption{\Og Harmonic $2$-form refinement: $p=3$}
	\end{subfigure}
	\;
	\begin{subfigure}{0.485\textwidth}
		\includegraphics[width=0.48\textwidth,trim=18cm 2.5cm 17cm 11cm,clip]{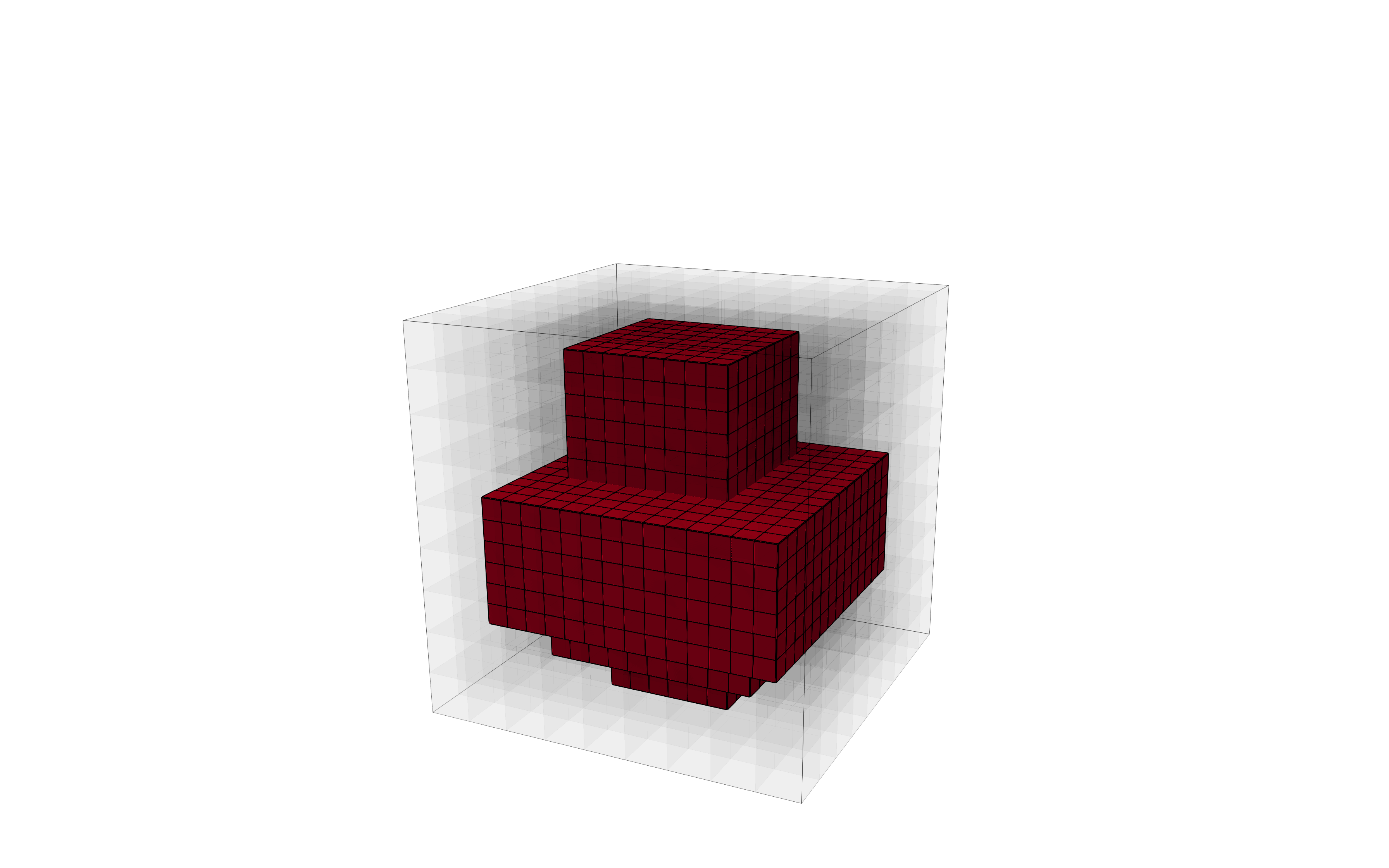}
		\;
		\includegraphics[width=0.48\textwidth,trim=19.5cm 5cm 20cm 12.5cm,clip]{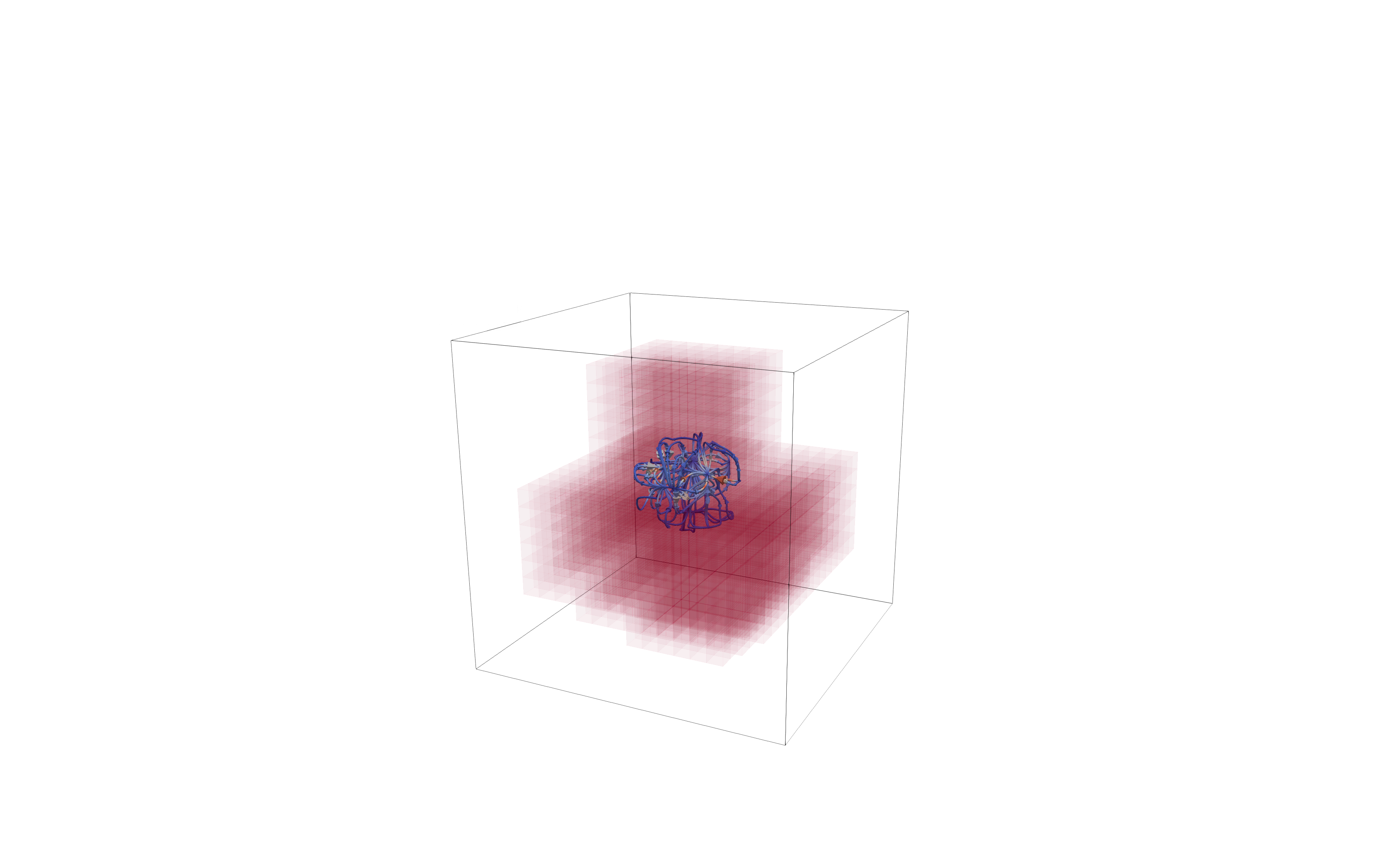}
		\caption{\Og Harmonic $1$-form refinement: $p=2$}
	\end{subfigure}
	\\
	\begin{subfigure}{0.76\textwidth}
		\includegraphics[width=0.315\textwidth,trim=18cm 2.5cm 17cm 11cm,clip]{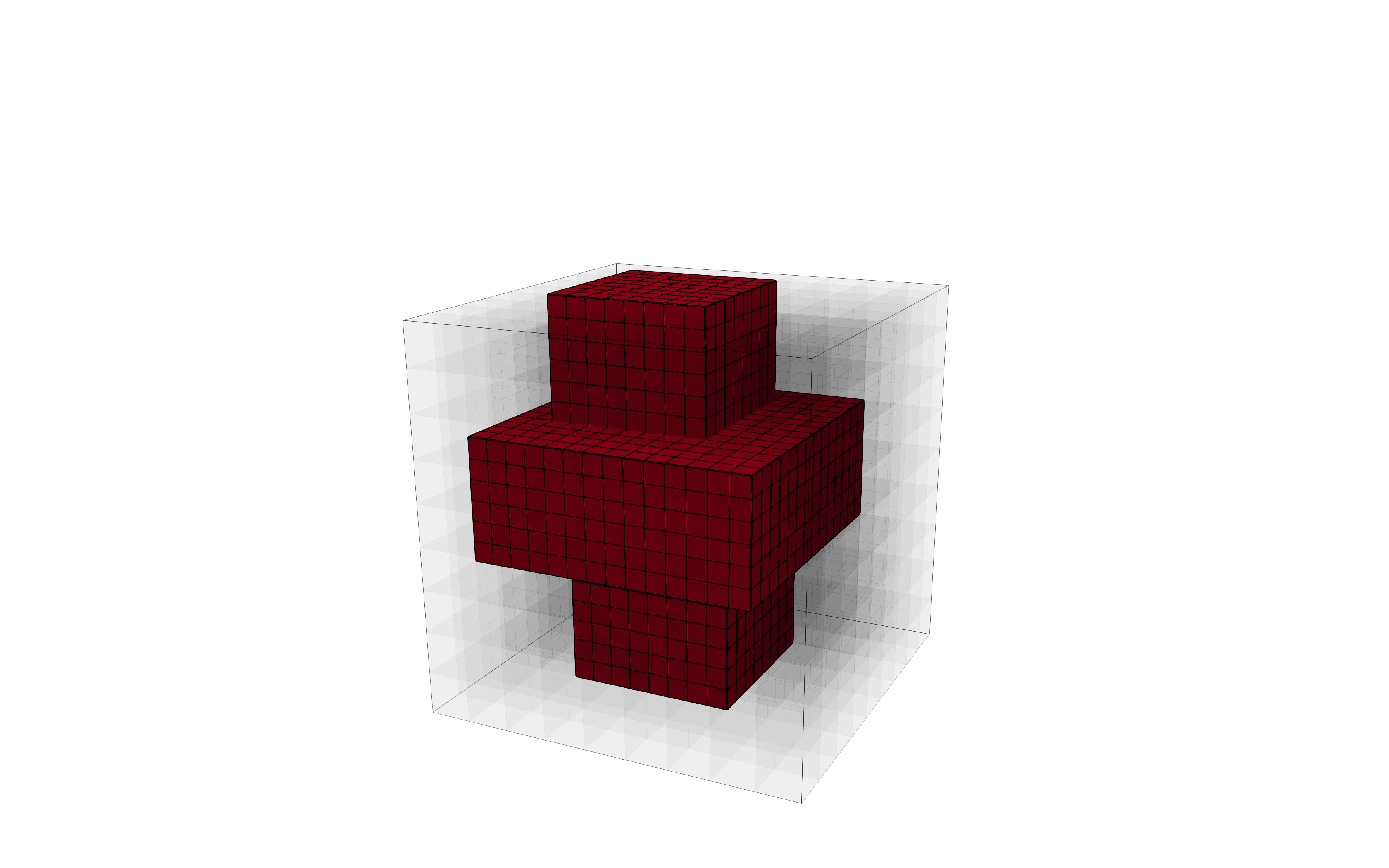}
		\;
		\includegraphics[width=0.315\textwidth,trim=19.5cm 5cm 20cm 12.5cm,clip]{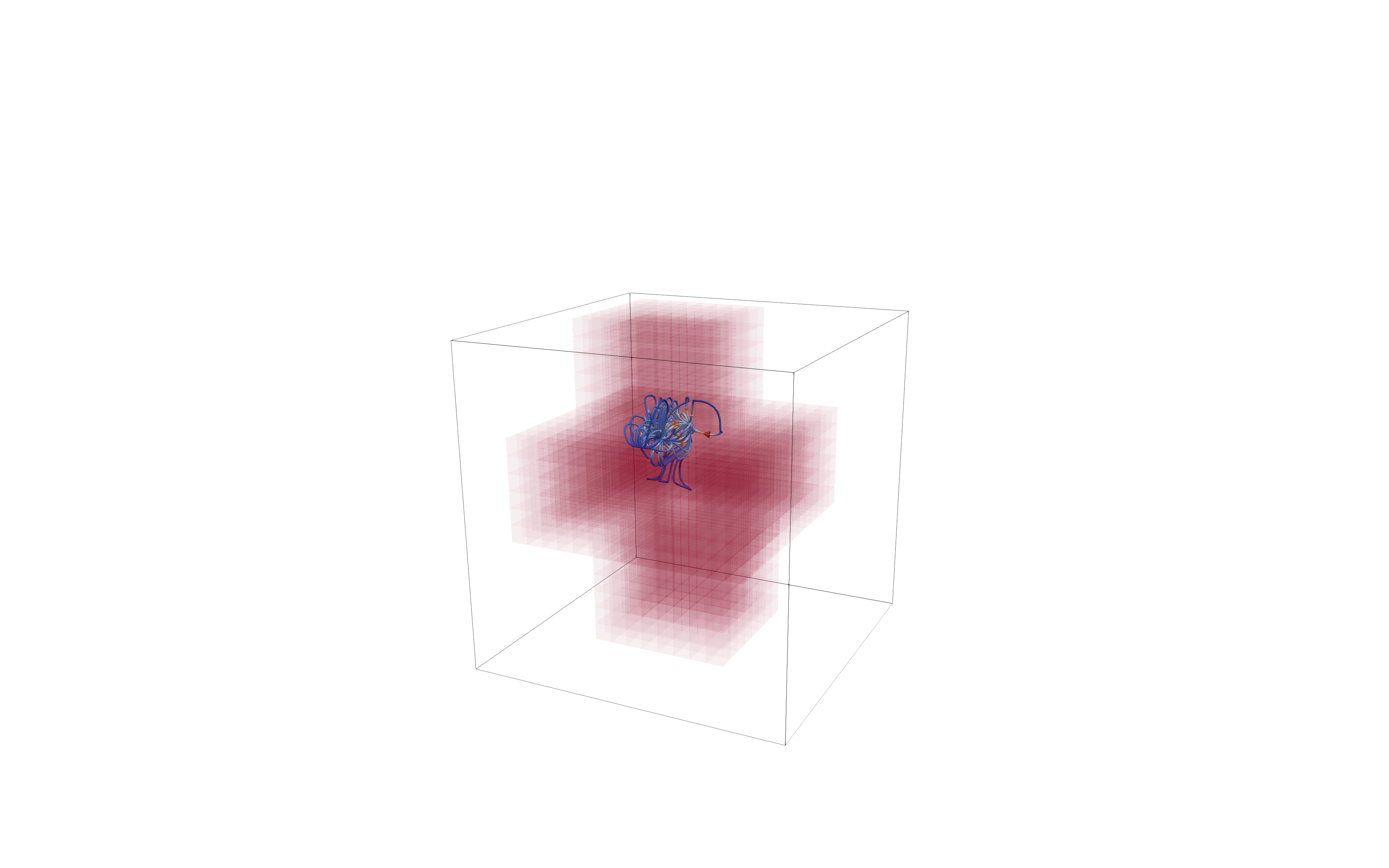}
		\;
		\includegraphics[width=0.315\textwidth,trim=19.5cm 5cm 20cm 12.5cm,clip]{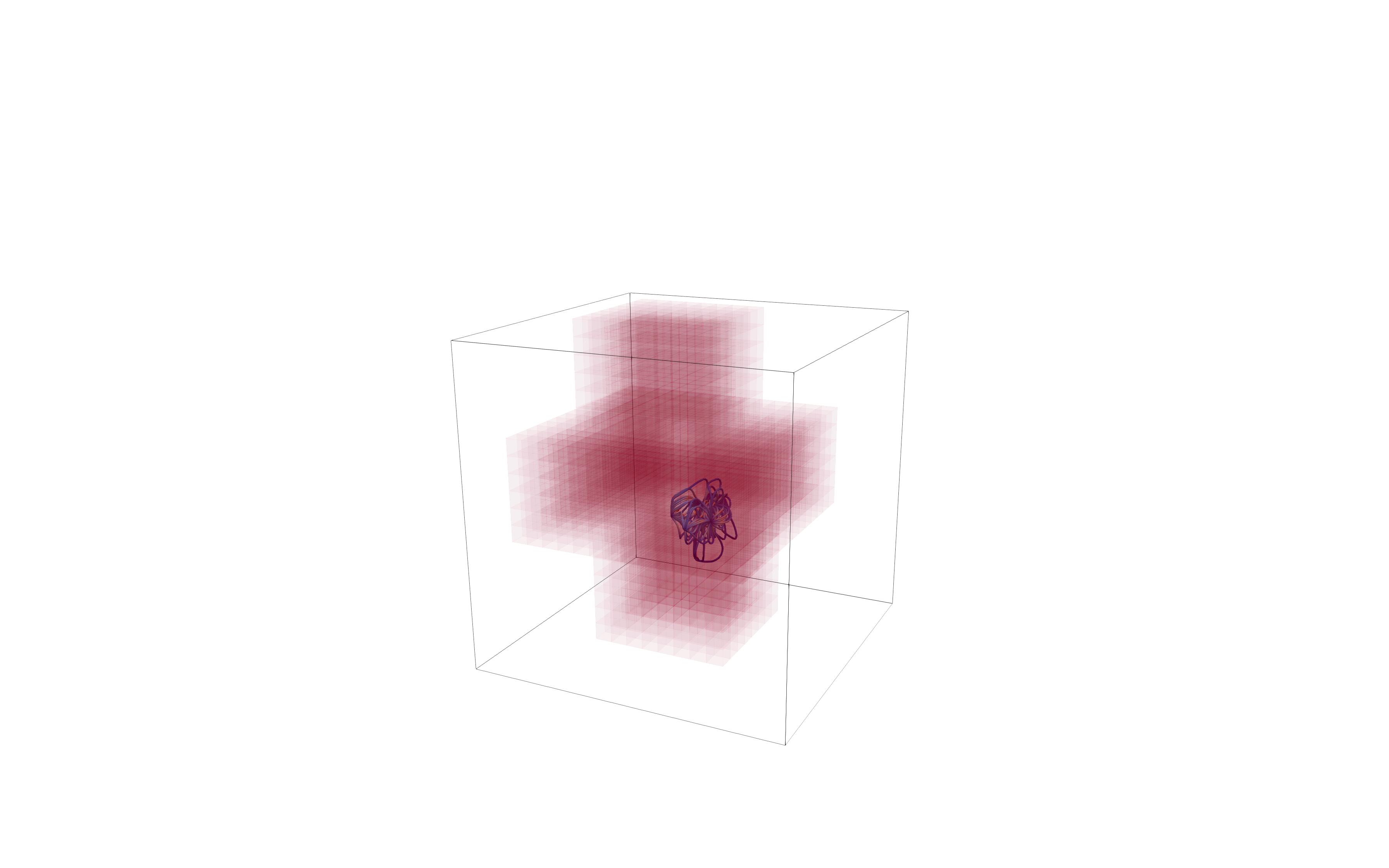}
		\caption{\Og Refinement with two harmonic $1$-forms: $p=2$}
	\end{subfigure}
	\\
	\begin{subfigure}{0.485\textwidth}
		\includegraphics[width=0.48\textwidth,trim=18cm 2.5cm 17cm 11cm,clip]{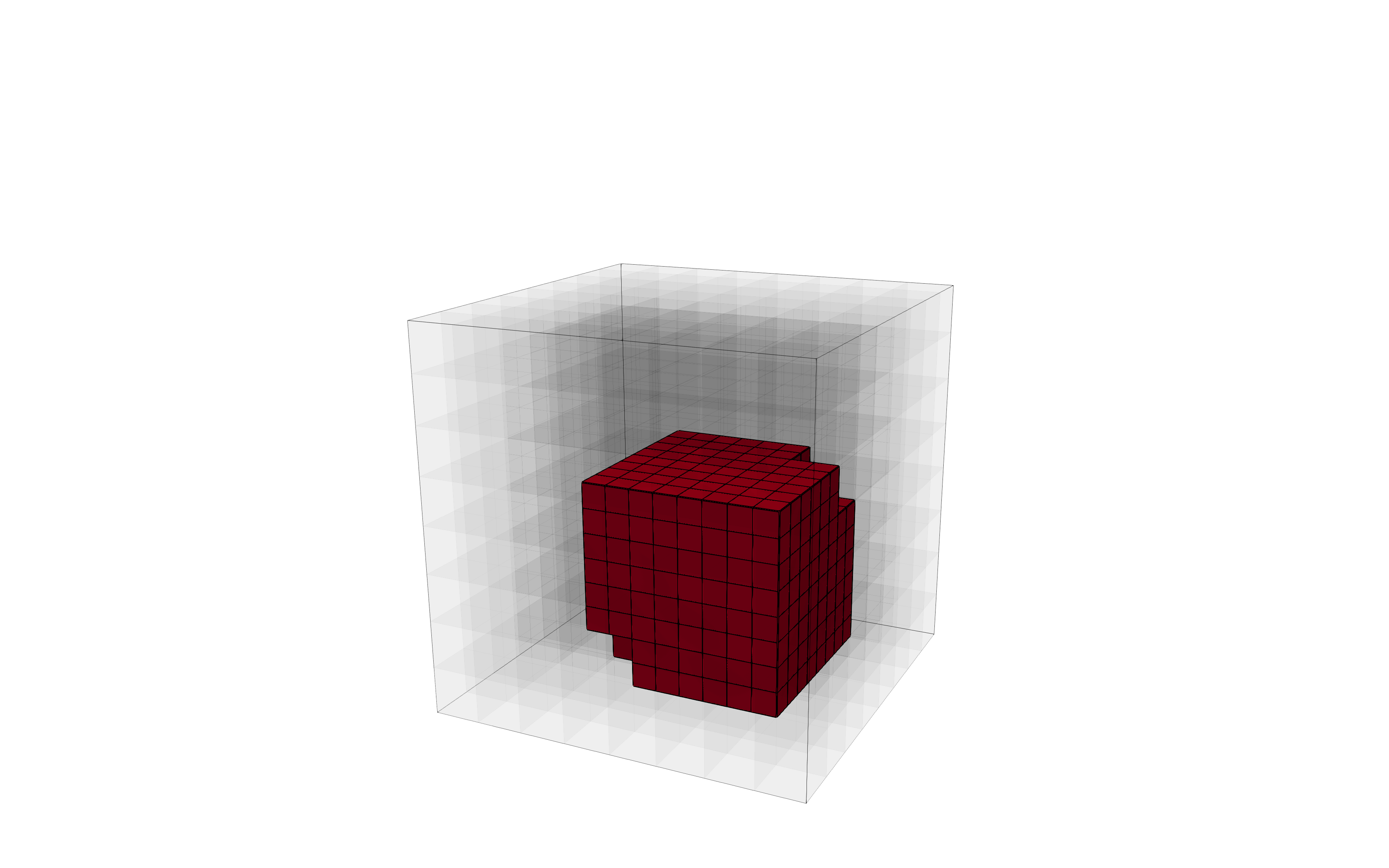}
		\;
		\includegraphics[width=0.48\textwidth,trim=19.5cm 5cm 20cm 12.5cm,clip]{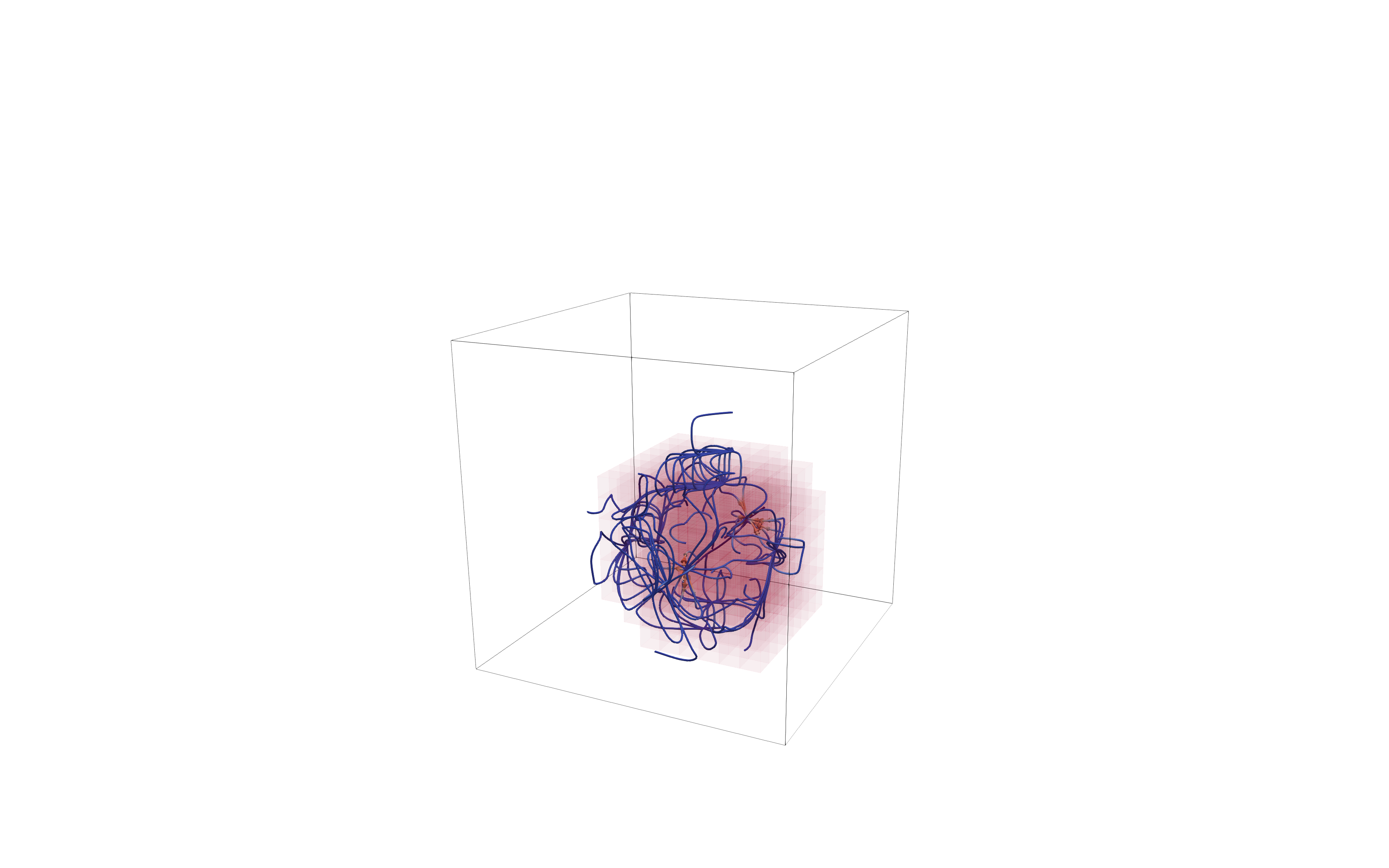}
		\caption{\Og Harmonic $1$-form refinement: $p = 3$}
	\end{subfigure}
	\caption{Inexact refinement patterns in three dimensions are shown for trivariate splines of degree $\pdeg{\ell}{k} = p$ and maximal smoothness for all $\ell$ and $k$.
	Each refinement pattern leads to $\gmesh{0,1}$ to $\gmesh{1,1}$ with different topologies.
	Pattern (a) introduces a new component in $\gmesh{1,1}$, pattern (b) transitions from a ball topology in $\gmesh{0,1}$ to a solid torus in $\gmesh{1,1}$, pattern (c) transitions from two disconnected balls in $\gmesh{0,1}$ to one in $\gmesh{1,1}$, pattern (d) transitions from a ball in $\gmesh{0,1}$ to a simply connected volume with a void in $\gmesh{1,1}$, pattern (e) transitions from a solid torus in $\gmesh{0,1}$ to a simply connected volume with a void in $\gmesh{1,1}$, and pattern (f) transitions from a solid torus in $\gmesh{0,1}$ to a ball in $\gmesh{1,1}$}
	\label{fig:3d_harmonic_configurations}
\end{figure}

\subsubsection{Exact Refinement Patterns}
Having characterized various harmonic forms and described minimal examples showing how changes in topology of the Greville meshes affect exactness of the hierarchical spline complex, we now turn our attention to certain configurations that lead to exact hierarchical spline complexes.\\

\noindent
\emph{\textbf{Two-dimensional examples}}\\
In two dimensions, Figure \ref{fig:2d_refinements_diag} shows a maximal regularity bi-degree $(6,6)$ hierarchical configuration where the refinement domain is built as the union of two 0-form B-splines; different subfigures correspond to different choices of the two B-splines.
Table \ref{tab:2d_refinements_diag} compares these different configurations based on the local exactness characterization of \cite{Evans19} and our local exactness characterization presented herein.
It can be seen that in some cases, the theory of \cite{Evans19} can capture refinements that violate at least one of our assumptions: this is because \cite{Evans19} permits refinement of $2$-form B-splines rather than just $0$-form B-splines (see Assumption \ref{assume:zero_union}).
That said, Assumption \ref{assume:shortest_path} of this work can be much less restrictive than Assumption 5.7 of \cite{Evans19} -- particularly for high-degree splines with high smoothness. 
In the example shown, our proposed assumptions admits five of the six hierarchical configurations that are exact, while \cite{Evans19} only allows for two of them.
This is largely because our result only requires identical topology between the coarse and fine Greville grids, while that of \cite{Evans19} is based on the more restrictive assumption requiring identical topologies for both the Bezi\'er meshes and Greville grids.
On the other hand, the assumptions proposed in this work can also be restrictive in other situations as they require refinement along paths of 0-forms (see Definition \ref{def:shortest_path}).

\renewcommand*{\xBoxDim}{3.3cm}%
\renewcommand*{\yBoxDim}{3.3cm}%
\renewcommand*{\thistextwidth}{0.24}
\begin{figure}[ht]
	\begin{subfigure}{\thistextwidth\textwidth}
	\centering
		\resizebox{\xBoxDim}{\yBoxDim}{%
		\begin{tikzpicture}
                		\mymanualgrid{0,...,17}{0,...,17}{0.25}{gray}{white}
                		\mymanualgrid{1,1.5,2,2.5,3,3.5,4,4.5,5,5.5,6,6.5,7,7.5,8}{1,1.5,2,2.5,3,3.5,4,4.5,5,5.5,6,6.5,7,7.5,8}{1}{black}{cyan}
                		\mymanualgrid{9,9.5,10,10.5,11,11.5,12,12.5,13,13.5,14,14.5,15,15.5,16}{9,9.5,10,10.5,11,11.5,12,12.5,13,13.5,14,14.5,15,15.5,16}{1}{black}{cyan}
			\end{tikzpicture}
			}
			\caption{}
	\end{subfigure}
	\begin{subfigure}{\thistextwidth\textwidth}
	\centering
		\resizebox{\xBoxDim}{\yBoxDim}{%
		\begin{tikzpicture}
                		\mymanualgrid{0,...,17}{0,...,17}{0.25}{gray}{white}
                		\mymanualgrid{1,1.5,2,2.5,3,3.5,4,4.5,5,5.5,6,6.5,7,7.5,8}{1,1.5,2,2.5,3,3.5,4,4.5,5,5.5,6,6.5,7,7.5,8}{1}{black}{cyan}
                		\mymanualgrid{8,8.5,9,9.5,10,10.5,11,11.5,12,12.5,13,13.5,14,14.5,15}{8,8.5,9,9.5,10,10.5,11,11.5,12,12.5,13,13.5,14,14.5,15}{1}{black}{cyan}
			\end{tikzpicture}
			}
			\caption{}
	\end{subfigure}
	\begin{subfigure}{\thistextwidth\textwidth}
	\centering
		\resizebox{\xBoxDim}{\yBoxDim}{%
		\begin{tikzpicture}
                		\mymanualgrid{0,...,17}{0,...,17}{0.25}{gray}{white}
                		\mymanualgrid{2,2.5,3,3.5,4,4.5,5,5.5,6,6.5,7,7.5,8,8.5,9}{2,2.5,3,3.5,4,4.5,5,5.5,6,6.5,7,7.5,8,8.5,9}{1}{black}{cyan}
                		\mymanualgrid{8,8.5,9,9.5,10,10.5,11,11.5,12,12.5,13,13.5,14,14.5,15}{8,8.5,9,9.5,10,10.5,11,11.5,12,12.5,13,13.5,14,14.5,15}{1}{black}{cyan}
			\end{tikzpicture}
			}
			\caption{}
	\end{subfigure}
	\begin{subfigure}{\thistextwidth\textwidth}
	\centering
		\resizebox{\xBoxDim}{\yBoxDim}{%
		\begin{tikzpicture}
                		\mymanualgrid{0,...,17}{0,...,17}{0.25}{gray}{white}
                		\mymanualgrid{2,2.5,3,3.5,4,4.5,5,5.5,6,6.5,7,7.5,8,8.5,9}{2,2.5,3,3.5,4,4.5,5,5.5,6,6.5,7,7.5,8,8.5,9}{1}{black}{cyan}
                		\mymanualgrid{7,7.5,8,8.5,9,9.5,10,10.5,11,11.5,12,12.5,13,13.5,14}{7,7.5,8,8.5,9,9.5,10,10.5,11,11.5,12,12.5,13,13.5,14}{1}{black}{cyan}
			\end{tikzpicture}
			}
			\caption{}
	\end{subfigure}
	\begin{subfigure}{\thistextwidth\textwidth}
	\centering
		\resizebox{\xBoxDim}{\yBoxDim}{%
		\begin{tikzpicture}
                		\mymanualgrid{0,...,17}{0,...,17}{0.25}{gray}{white}
                		\mymanualgrid{3,3.5,4,4.5,5,5.5,6,6.5,7,7.5,8,8.5,9,9.5,10}{3,3.5,4,4.5,5,5.5,6,6.5,7,7.5,8,8.5,9,9.5,10}{1}{black}{cyan}
                		\mymanualgrid{7,7.5,8,8.5,9,9.5,10,10.5,11,11.5,12,12.5,13,13.5,14}{7,7.5,8,8.5,9,9.5,10,10.5,11,11.5,12,12.5,13,13.5,14}{1}{black}{cyan}
			\end{tikzpicture}
			}
			\caption{}
	\end{subfigure}
	\begin{subfigure}{\thistextwidth\textwidth}
	\centering
		\resizebox{\xBoxDim}{\yBoxDim}{%
		\begin{tikzpicture}
                		\mymanualgrid{0,...,17}{0,...,17}{0.25}{gray}{white}
                		\mymanualgrid{3,3.5,4,4.5,5,5.5,6,6.5,7,7.5,8,8.5,9,9.5,10}{3,3.5,4,4.5,5,5.5,6,6.5,7,7.5,8,8.5,9,9.5,10}{1}{black}{cyan}
                		\mymanualgrid{6,6.5,7,7.5,8,8.5,9,9.5,10,10.5,11,11.5,12,12.5,13}{6,6.5,7,7.5,8,8.5,9,9.5,10,10.5,11,11.5,12,12.5,13}{1}{black}{cyan}
			\end{tikzpicture}
			}
			\caption{}
	\end{subfigure}
	\begin{subfigure}{\thistextwidth\textwidth}
	\centering
		\resizebox{\xBoxDim}{\yBoxDim}{%
		\begin{tikzpicture}
                		\mymanualgrid{0,...,17}{0,...,17}{0.25}{gray}{white}
                		\mymanualgrid{4,4.5,5,5.5,6,6.5,7,7.5,8,8.5,9,9.5,10,10.5,11}{4,4.5,5,5.5,6,6.5,7,7.5,8,8.5,9,9.5,10,10.5,11}{1}{black}{cyan}
                		\mymanualgrid{6,6.5,7,7.5,8,8.5,9,9.5,10,10.5,11,11.5,12,12.5,13}{6,6.5,7,7.5,8,8.5,9,9.5,10,10.5,11,11.5,12,12.5,13}{1}{black}{cyan}
			\end{tikzpicture}
			}
			\caption{}
	\end{subfigure}
	\begin{subfigure}{\thistextwidth\textwidth}
	\centering
		\resizebox{\xBoxDim}{\yBoxDim}{%
		\begin{tikzpicture}
                		\mymanualgrid{0,...,17}{0,...,17}{0.25}{gray}{white}
                		\mymanualgrid{4,4.5,5,5.5,6,6.5,7,7.5,8,8.5,9,9.5,10,10.5,11}{4,4.5,5,5.5,6,6.5,7,7.5,8,8.5,9,9.5,10,10.5,11}{1}{black}{cyan}
                		\mymanualgrid{5,5.5,6,6.5,7,7.5,8,8.5,9,9.5,10,10.5,11,11.5,12}{5,5.5,6,6.5,7,7.5,8,8.5,9,9.5,10,10.5,11,11.5,12}{1}{black}{cyan}
			\end{tikzpicture}
			}
			\caption{}
	\end{subfigure}
	\caption{
		The above figures show hierarchical meshes used for building maximally smooth hierarchical B-splines with $\pdeg{\ell}{k} = 6$ for all $\ell$ and $k$.
		The domain $\Omega_{\ell+1}$ is defined to be the union of the supports of two 0-form B-splines at level $\ell$.
		Different subfigures correspond to different choices of the two B-splines and each subfigure progressively leads to a greater overlap of their respective supports.
		Refinement patterns (e), (f), and (g) are not exact, while all others are.
		The assumptions given in this document allow for each of the exact refinement patterns except figure (h), while those of \cite{Evans19} are more restrictive in this respect and cannot capture three of them.
		}\label{fig:2d_refinements_diag}
\end{figure}
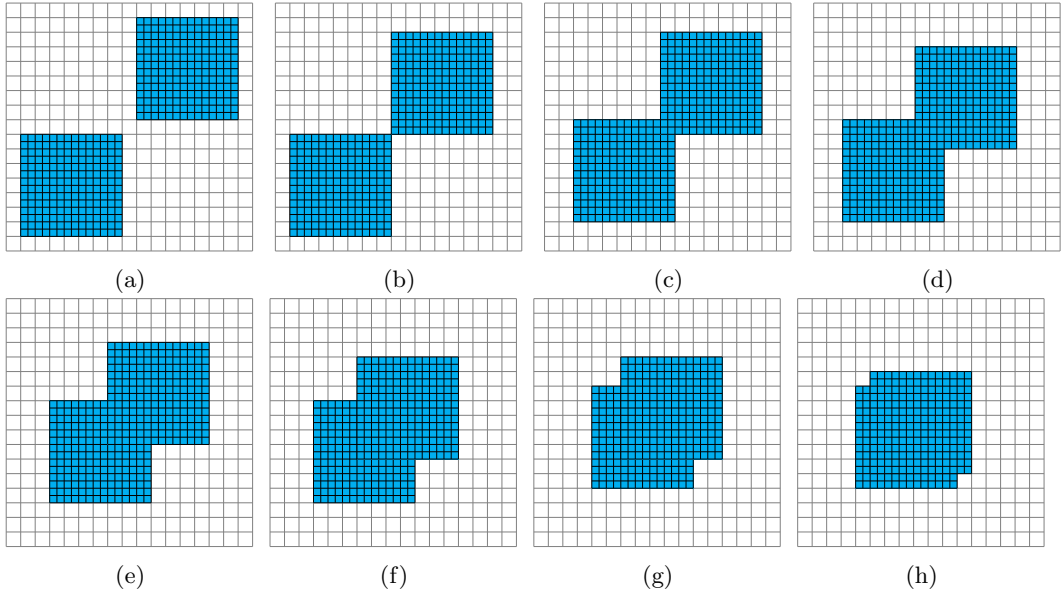

\begin{table}[ht]
	\centering
	\begin{tabular}{|>{\centering\arraybackslash}m{2.8cm}|>{\centering\arraybackslash}m{1.5cm}|>{\centering\arraybackslash}m{1.8cm}|>{\centering\arraybackslash}m{1.8cm}|>{\centering\arraybackslash}m{1.8cm}|>{\centering\arraybackslash}m{1.8cm}|>{\centering\arraybackslash}m{1.8cm}|}
		\hline\hline
		\textsc{Refinement Pattern} & Exact &  Assumption 5.7 \cite{Evans19} & Assumption 3  \\\hline
		\cellcolor[gray]{0.9}Figure \ref{fig:2d_refinements_diag}(a) & Yes  & Yes & Yes \\\hline
		\cellcolor[gray]{0.9}Figure \ref{fig:2d_refinements_diag}(b) & Yes  & No &  Yes \\\hline
		\cellcolor[gray]{0.9}Figure \ref{fig:2d_refinements_diag}(c) & Yes  & No & Yes \\\hline
		\cellcolor[gray]{0.9}Figure \ref{fig:2d_refinements_diag}(d) & Yes  & No &  Yes \\\hline
		\cellcolor[gray]{0.9}Figure \ref{fig:2d_refinements_diag}(e) & No  & No &No \\\hline
		\cellcolor[gray]{0.9}Figure \ref{fig:2d_refinements_diag}(f) & No  & No & No \\\hline
		\cellcolor[gray]{0.9}Figure \ref{fig:2d_refinements_diag}(g) & No  & No & No \\\hline
		\cellcolor[gray]{0.9}Figure \ref{fig:2d_refinements_diag}(h) & Yes  & Yes & No \\\hline
		\hline
	\end{tabular}
\caption{
	For the refinement patterns shown in Figure \ref{fig:2d_refinements_diag}, the table above presents whether the corresponding hierarchical complexes are exact, and whether the local exactness constraints of this work and those of \cite{Evans19} are met.
	Since refinements are performed by combining the supports of two 0-forms, Assumption 2 of this work and Assumption 5.6 of \cite{Evans19} are always satisfied and so we do not include them in the table.
	}\label{tab:2d_refinements_diag}
\end{table}

\vspace{\baselineskip}
\noindent
\emph{\textbf{Three-dimensional examples}}\\
A variety of exact refinement patters in three dimensions are depicted in Figures \ref{fig:exact_configs_supported} and \ref{fig:exact_configs_not_supported}.
Figure \ref{fig:exact_configs_supported} exhibits exact refinement patterns that are permitted by this work, while Figure \ref{fig:exact_configs_not_supported} shows exact refinements that are not permissible given our assumptions.

Since the local exactness condition of \cite{Evans19} does not apply for $\ndim > 2$, we do not compare exact refinement patterns of this work to those of others.
That said, in \cite[Remark 5.9]{Evans19} it is conjectured that the hierarchical B-spline complex would be exact if the intersection between the support of any $j$-form of $\Omega_{\ell}$ and the complement of $\Omega_{\ell+1}$ is homologically trivial (i.e. the zeroth homology group has rank one and all others are of rank zero).
This is a direct extension of \cite[Assumption 5.7]{Evans19} to higher-dimensional spaces.
The results of this paper do not contradict this claim.
Nonetheless, our results show that this conjecture may be more restrictive than is necessary.
Indeed, Assumption \ref{assume:shortest_path} does not require the topology of the coarse and fine Greville grids to match that of the underlying Bezi\'er mesh, as the conjecture of \cite{Evans19} would.
The exact refinement patterns of Figure \ref{fig:exact_configs_supported} permissible under this work illustrate this point: refinements presented in subfigures (a)-(c) would be supported by the extension of \cite{Evans19} to three dimensions, while the refinement patterns in subfigures (d)-(k) would be inadmissible.
We believe that in higher dimensions permissible refinement patterns under the conjuecture of \cite{Evans19} would be similarly much too restrictive.

Finally, the results of this numerical study appear to indicate that our proposed refinement strategy on 0-forms may be more restrictive than a refinement pattern that simply relies on refinement of $\ndim$-forms, as does \cite{Evans19}.
We postulate that Assumptions \ref{assume:zero_union} and \ref{assume:shortest_path} could be modified to only operate on $\ndim$ forms, rather than 0-forms, and still produce a sufficient local exactness result.
Arriving at such a proof, however, would require more advanced techniques than available herein and, in particular, may preclude the use of the Mayer-Vietoris sequence.
We expect that such a result, however, would both generalize and unify the theory of this work and that of \cite{Evans19}.

\begin{figure}
		\centering
	\begin{subfigure}{0.31\textwidth}
		\includegraphics[width=1\textwidth,trim=18cm 2.5cm 17cm 11cm,clip]{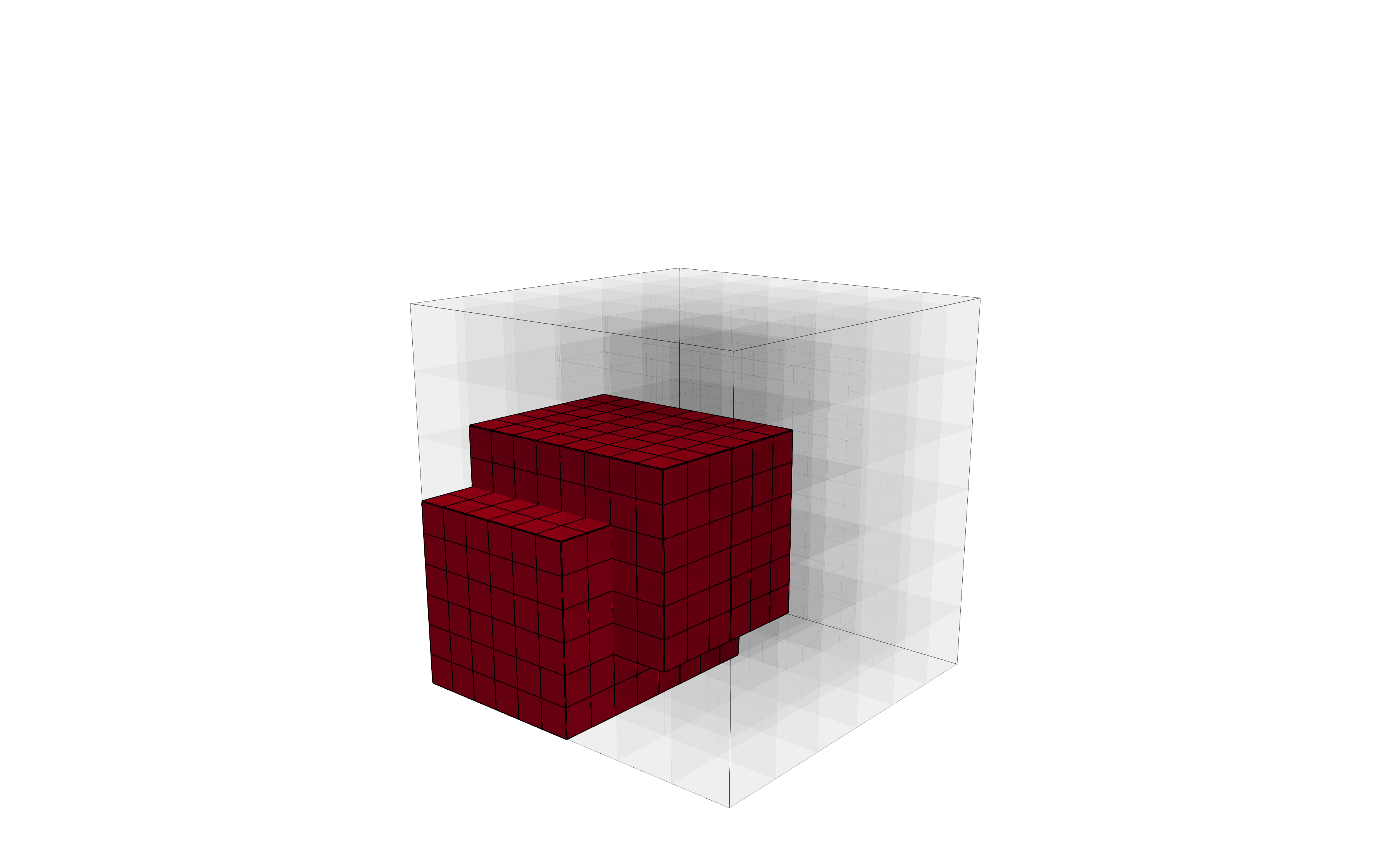}
		\caption{$p=2$}
	\end{subfigure}
	\;
	\begin{subfigure}{0.31\textwidth}
		\includegraphics[width=1\textwidth,trim=18cm 2.5cm 17cm 11cm,clip]{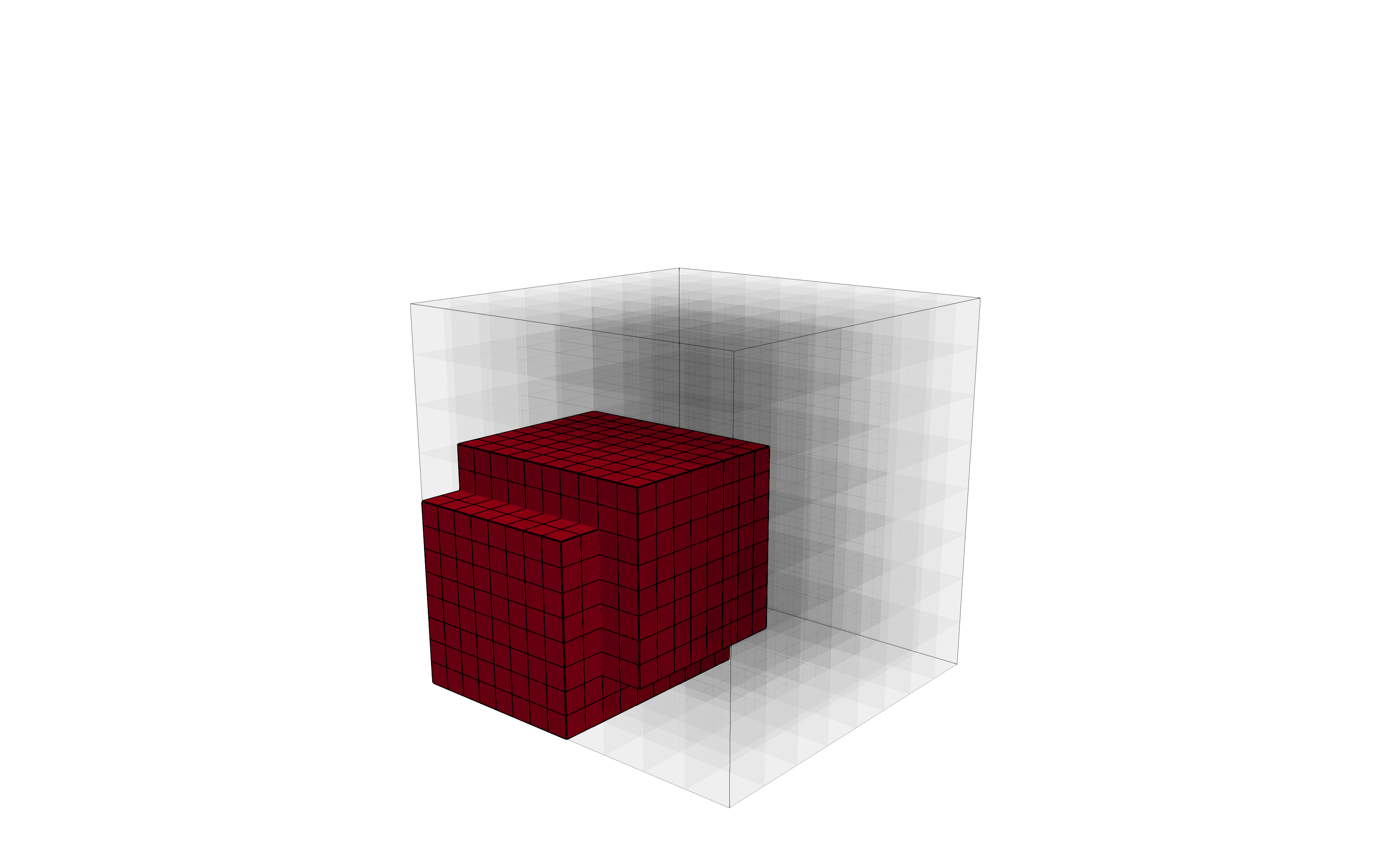}
		\caption{$p=3$}
	\end{subfigure}
	\;
	\begin{subfigure}{0.31\textwidth}
		\includegraphics[width=1\textwidth,trim=18cm 2.5cm 17cm 11cm,clip]{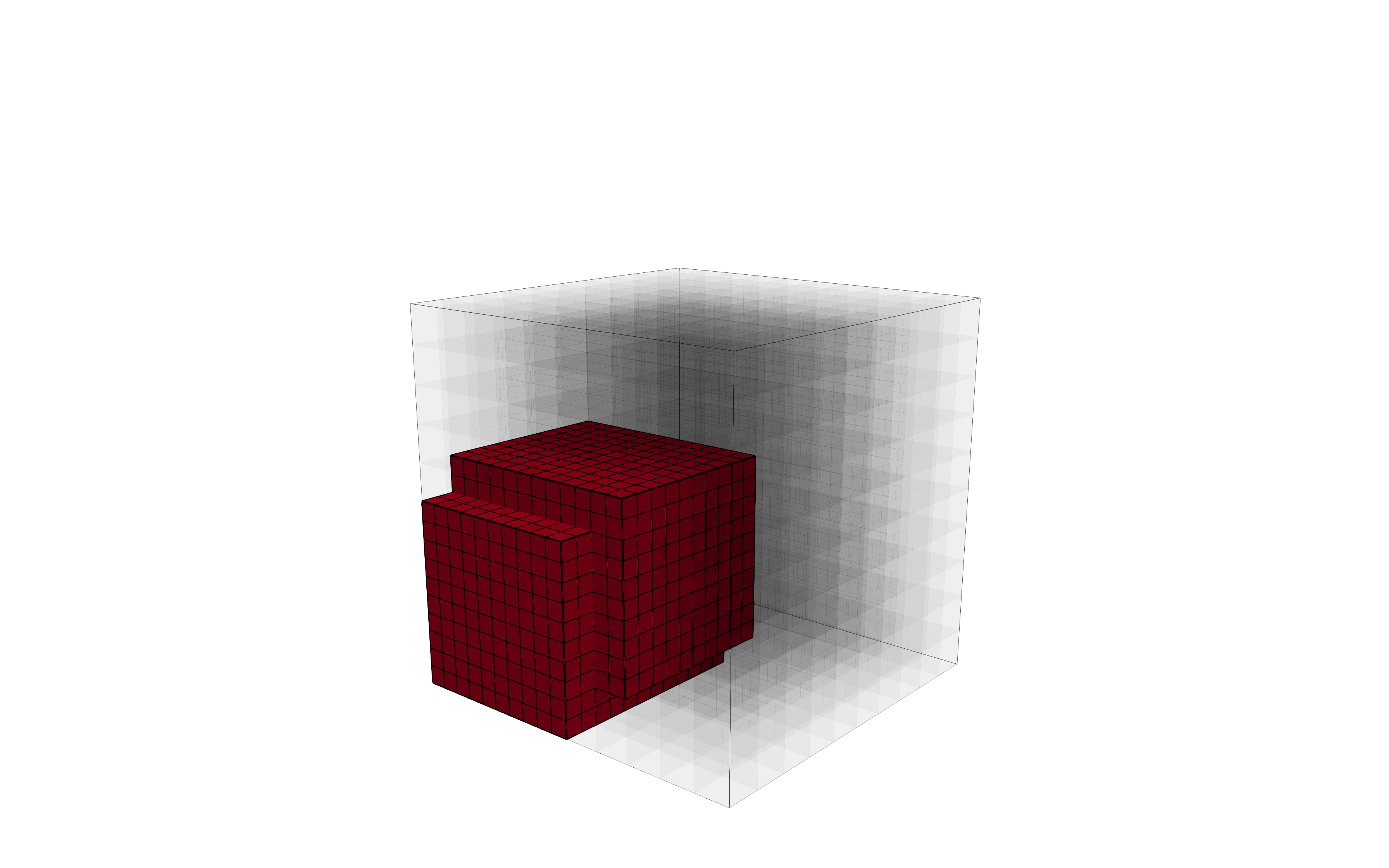}
		\caption{$p=4$}
	\end{subfigure}
	\\
	\begin{subfigure}{0.31\textwidth}
		\includegraphics[width=1\textwidth,trim=18cm 2.5cm 17cm 11cm,clip]{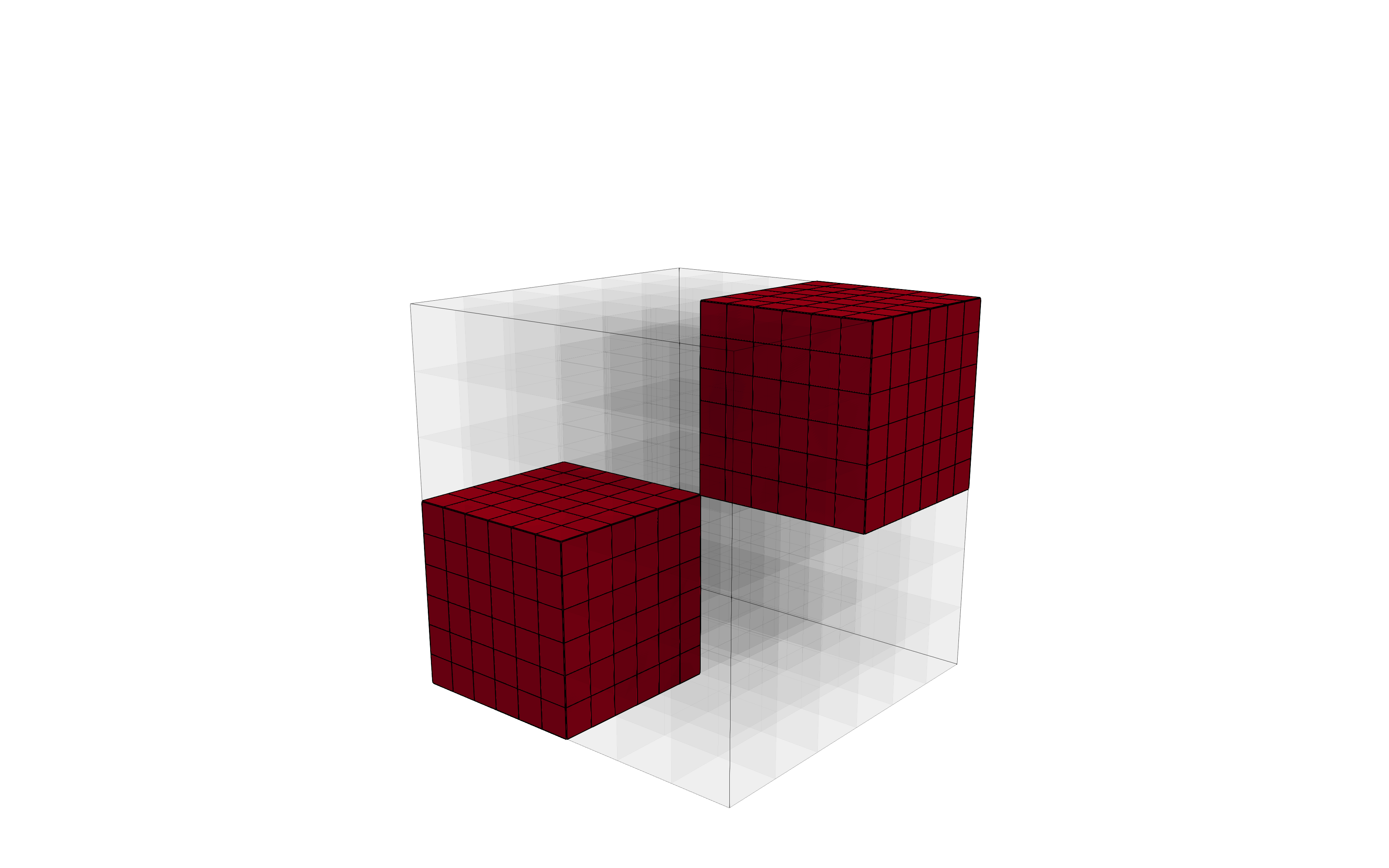}
		\caption{$p=2$}
	\end{subfigure}
	\;
	\begin{subfigure}{0.31\textwidth}
		\includegraphics[width=1\textwidth,trim=18cm 2.5cm 17cm 11cm,clip]{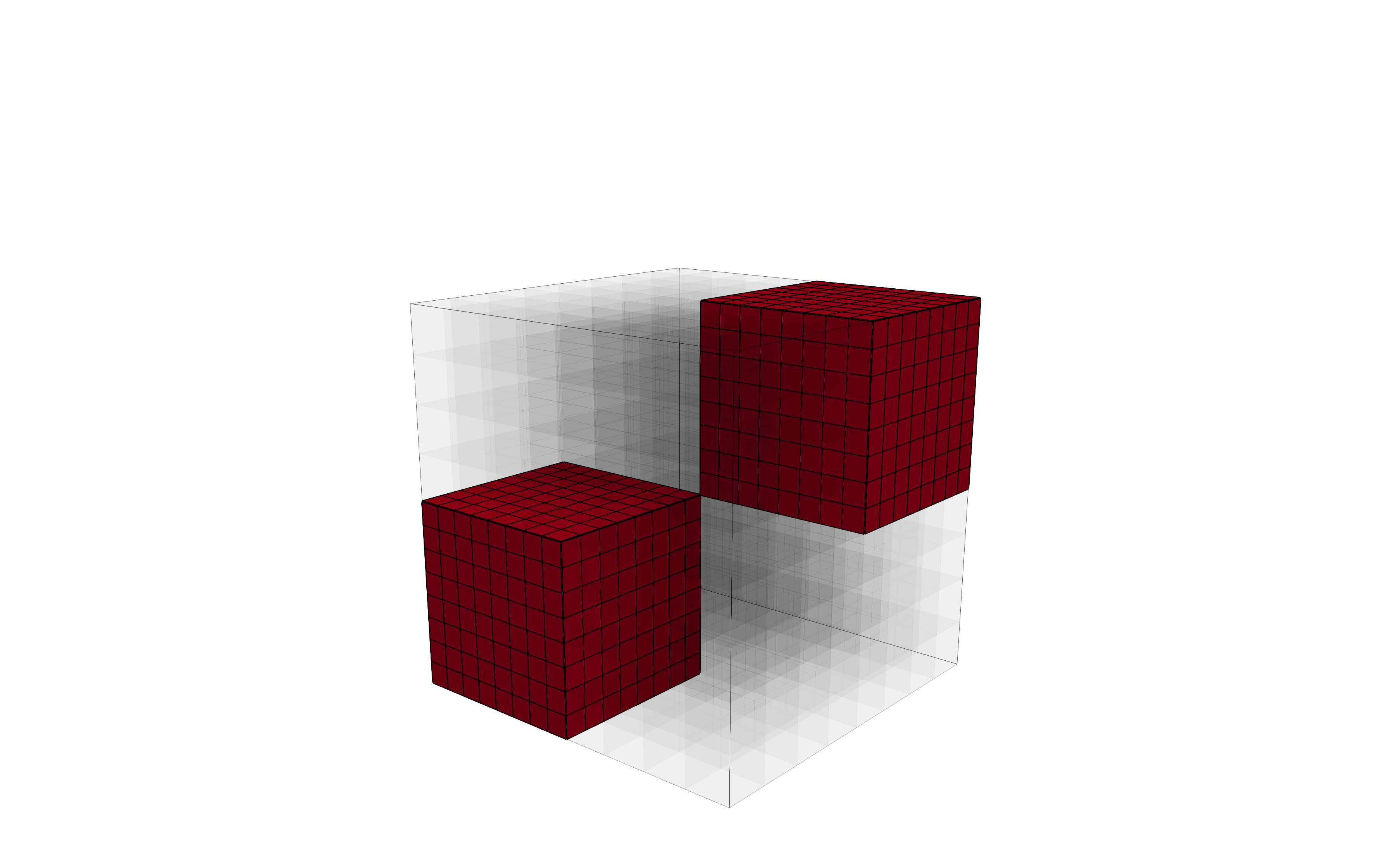}
		\caption{$p=3$}
	\end{subfigure}
	\;
	\begin{subfigure}{0.31\textwidth}
		\includegraphics[width=1\textwidth,trim=18cm 2.5cm 17cm 11cm,clip]{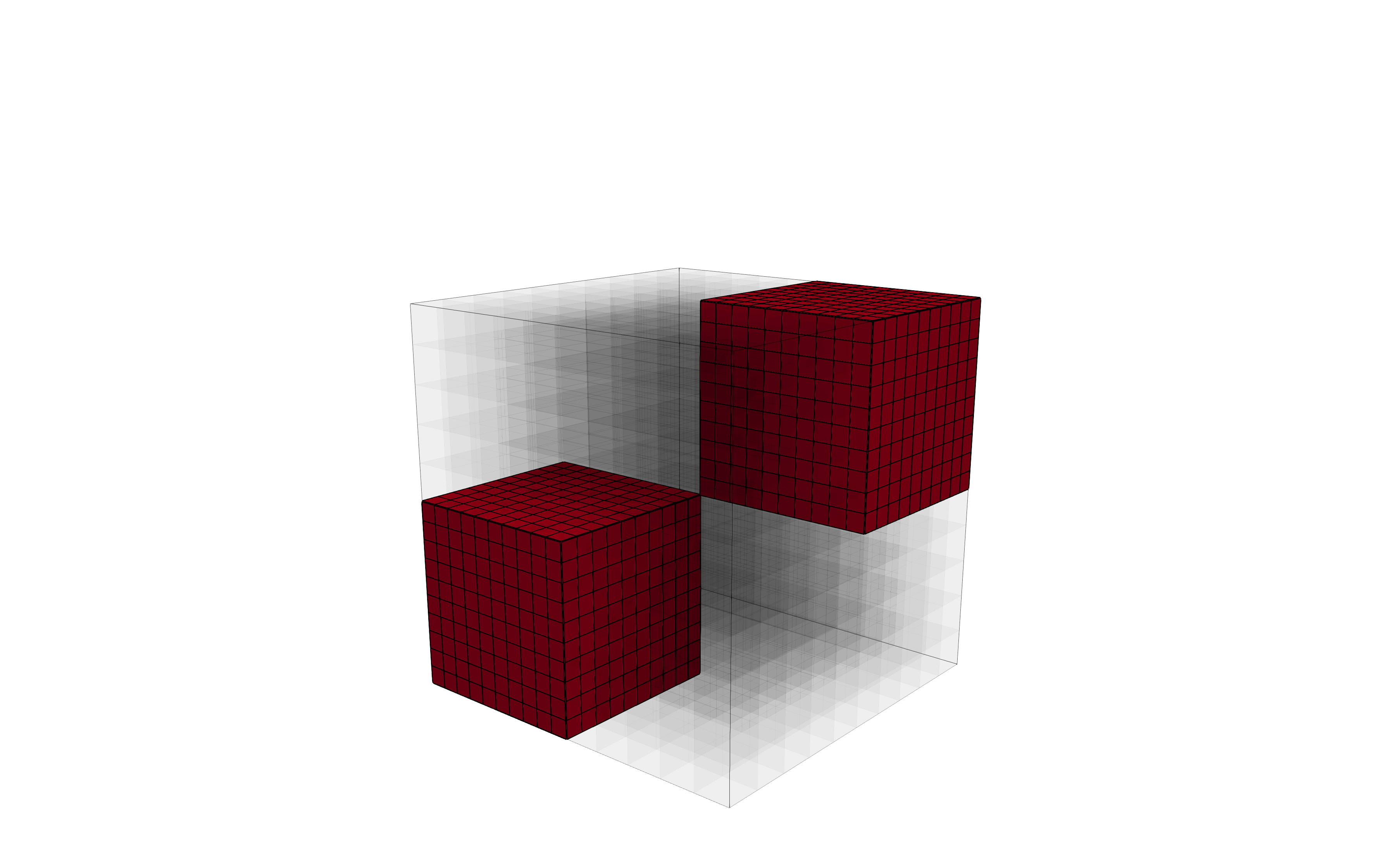}
		\caption{$p=4$}
	\end{subfigure}
	\\
	\begin{subfigure}{0.31\textwidth}
		\includegraphics[width=1\textwidth,trim=18cm 2.5cm 17cm 11cm,clip]{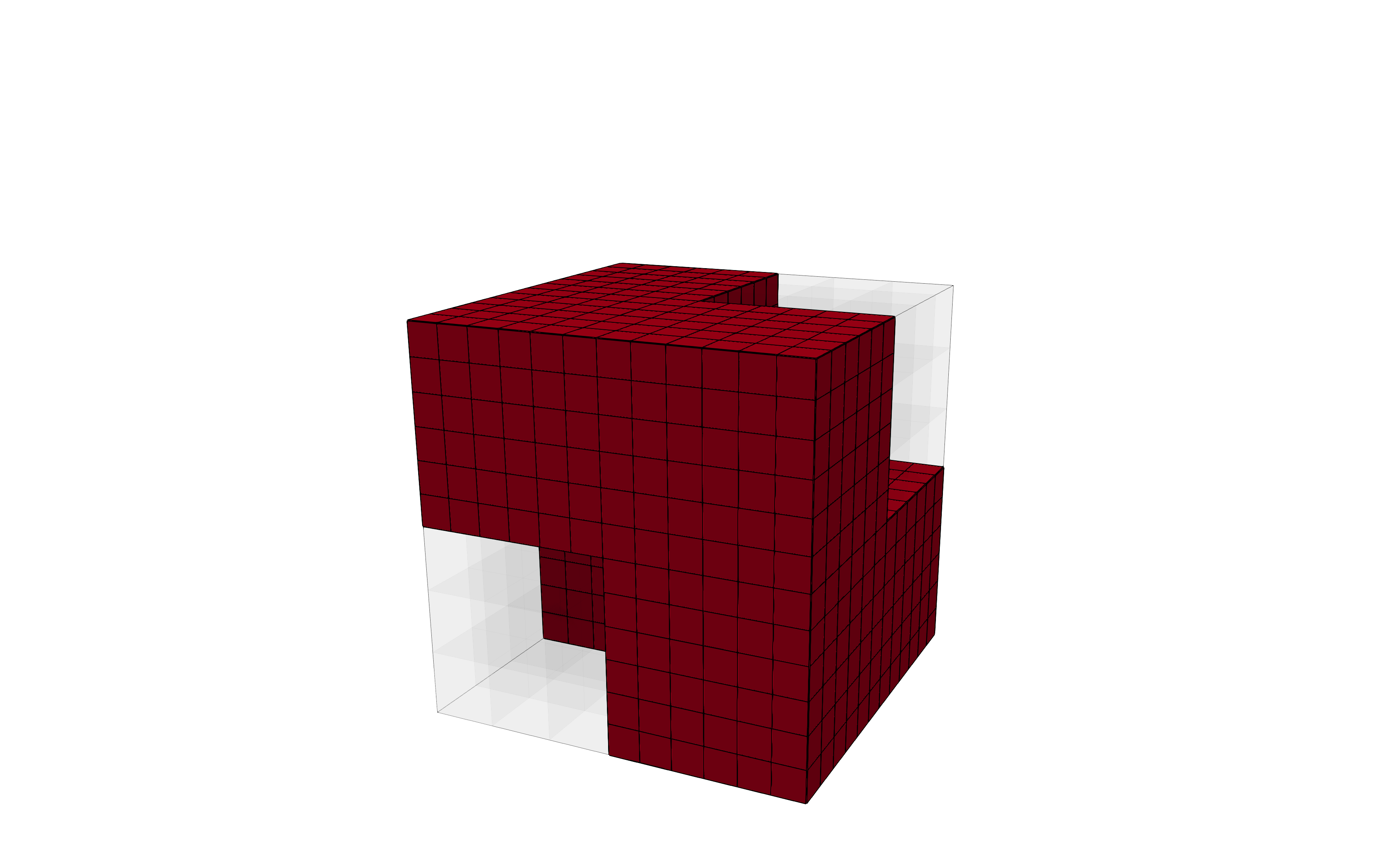}
		\caption{$p=2$}
	\end{subfigure}
	\;
	\begin{subfigure}{0.31\textwidth}
		\includegraphics[width=1\textwidth,trim=18cm 2.5cm 17cm 11cm,clip]{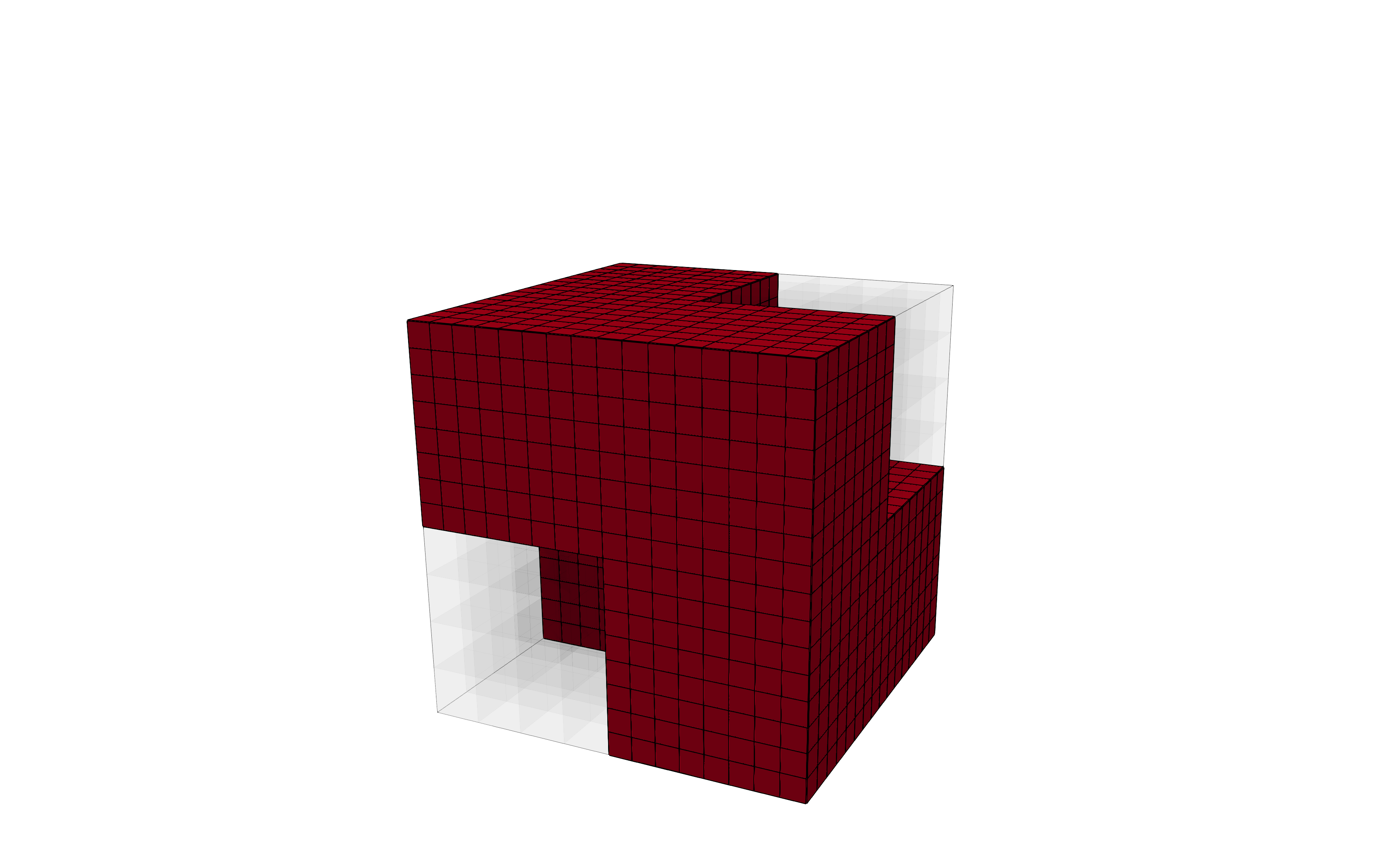}
		\caption{$p=3$}
	\end{subfigure}
	\;
	\begin{subfigure}{0.31\textwidth}
		\includegraphics[width=1\textwidth,trim=18cm 2.5cm 17cm 11cm,clip]{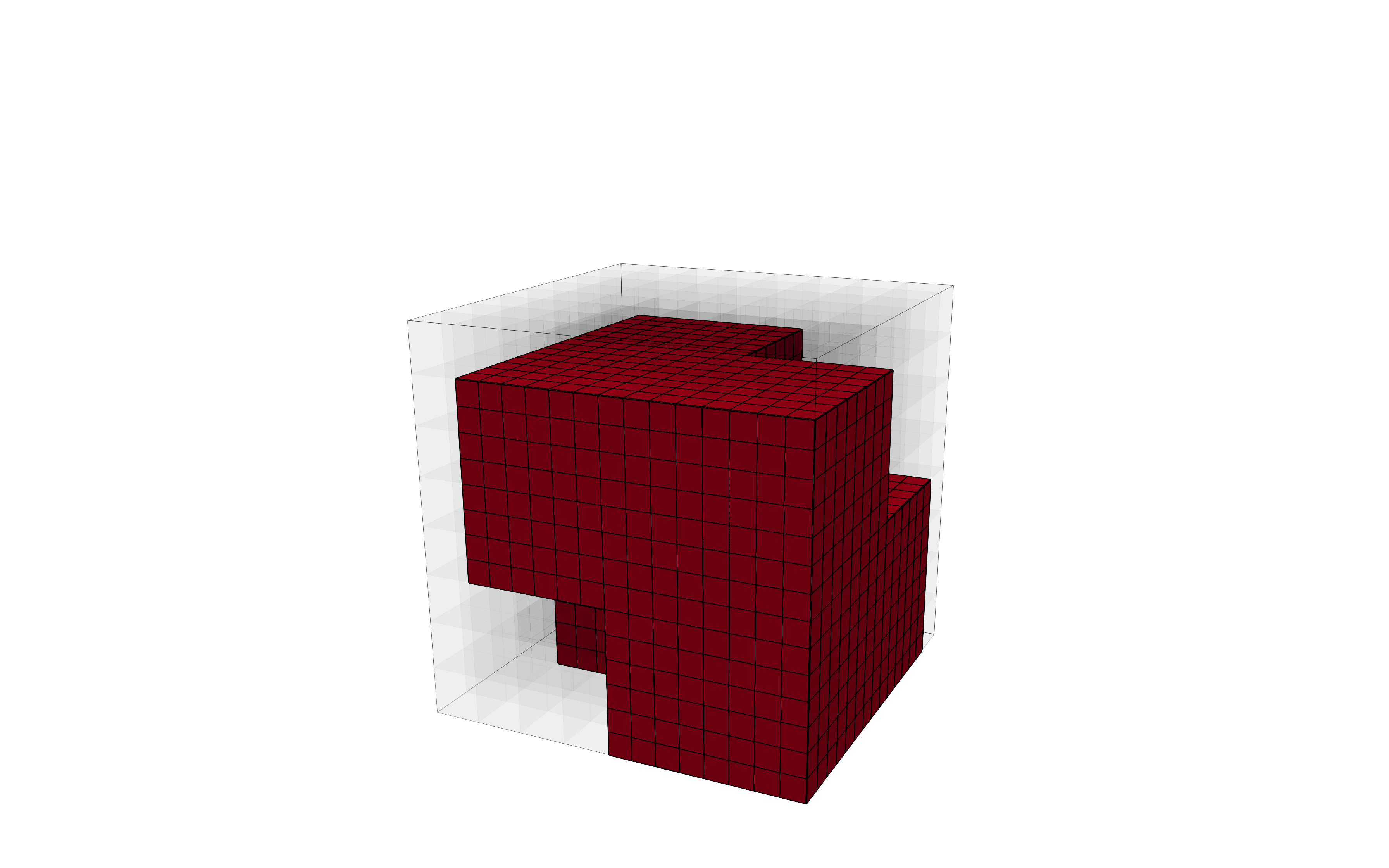}
		\caption{$p=3$}
	\end{subfigure}
	\\
	\begin{subfigure}{0.31\textwidth}
		\includegraphics[width=1\textwidth,trim=20.5cm 5.5cm 21cm 12.8cm,clip]{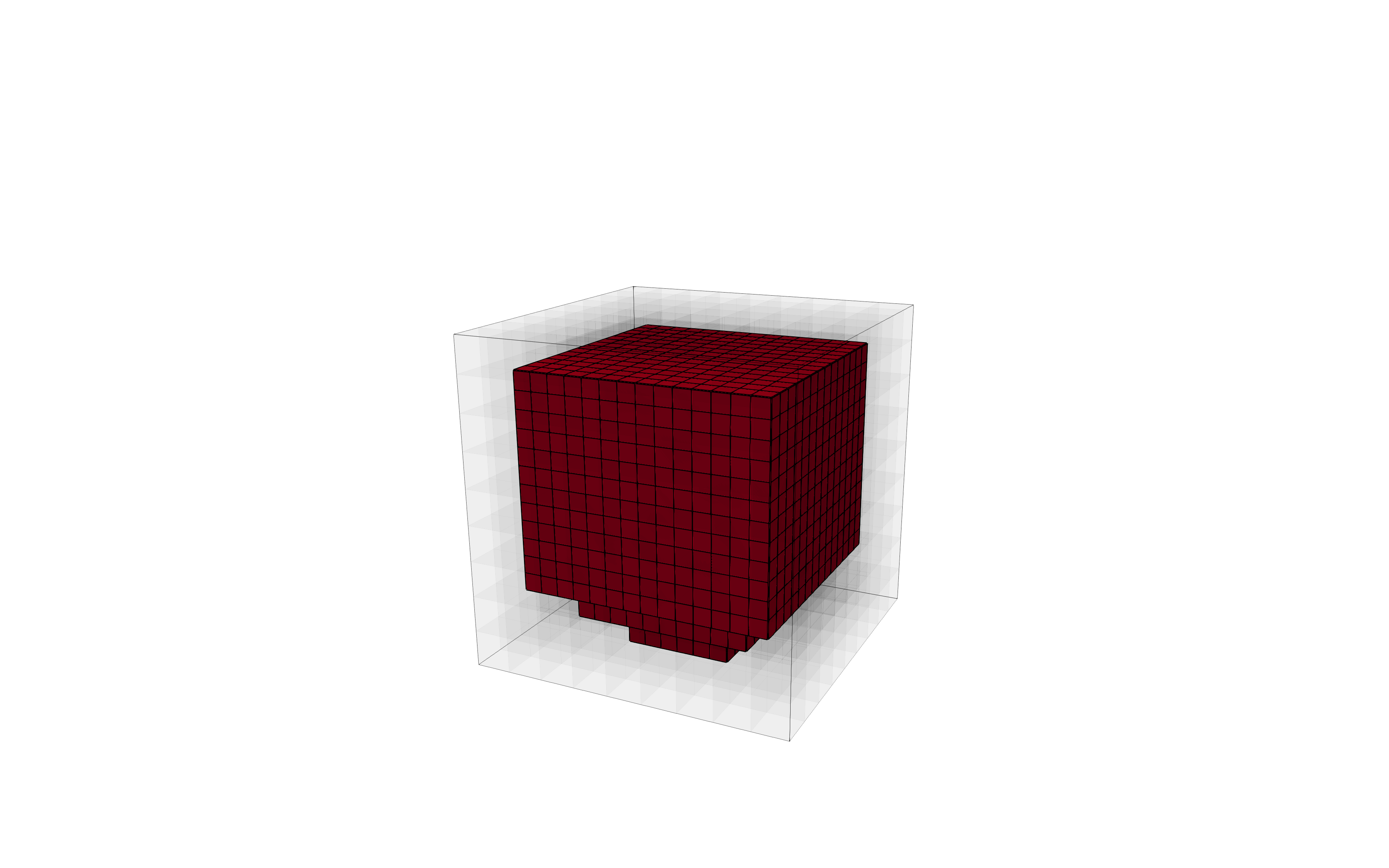}
		\caption{$p=2$ with cavity}
	\end{subfigure}
	\;
	\begin{subfigure}{0.31\textwidth}
		\includegraphics[width=1\textwidth,trim=20.5cm 5.5cm 21cm 12.8cm,clip]{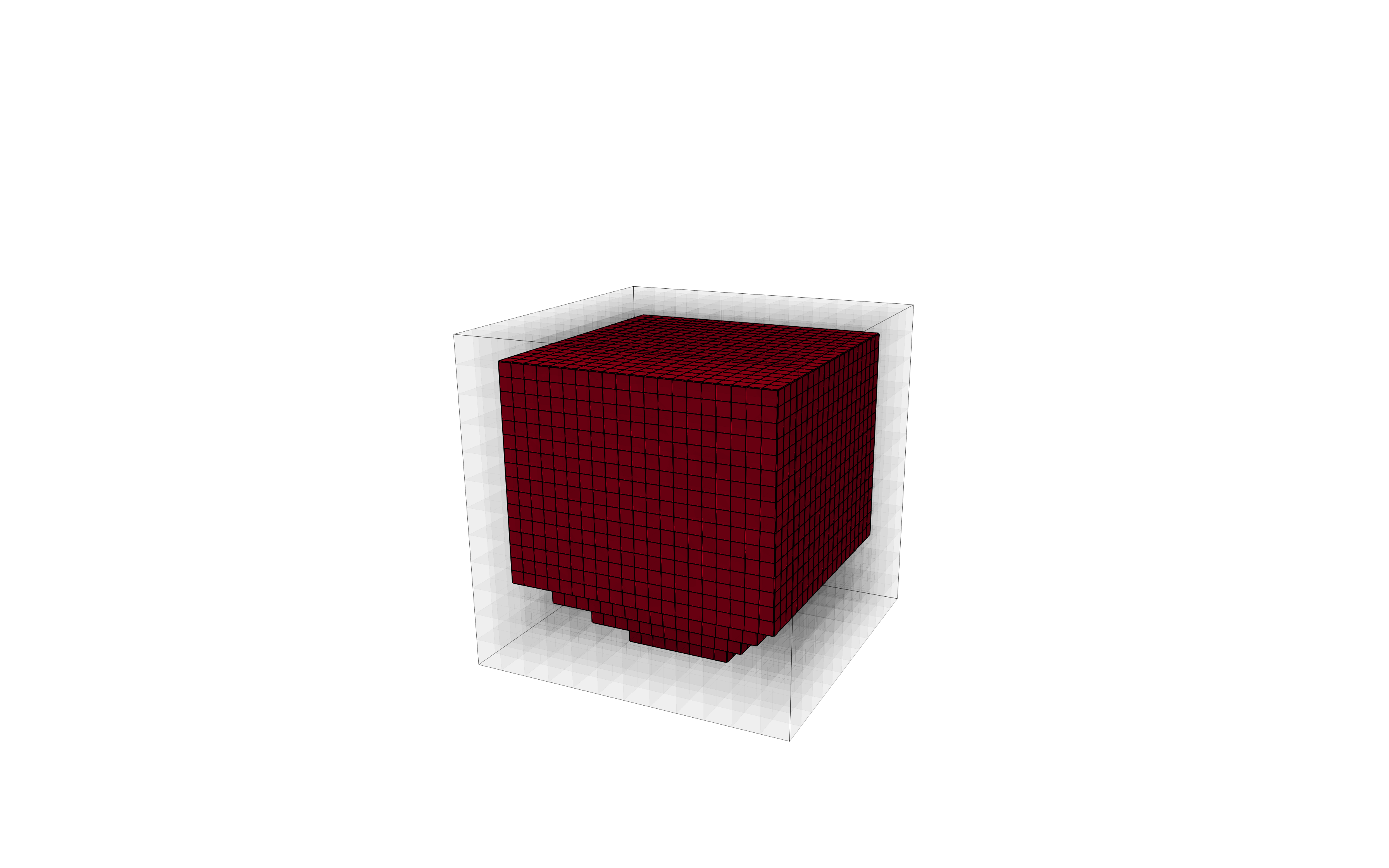}
		\caption{$p=3$ with cavity}
	\end{subfigure}
	\caption{Examples of refinement patterns for splines of maximal smoothness allowed by the proposed local exactness condition are shown above.
	The Greville grids $\gmesh{0,1}$ to $\gmesh{1,1}$ for the refinement patterns in each row are, respectively, a single contractible domain, two disconnected contractible domains, a single component with a non-trivial loop, and a single component with a void.}
	\label{fig:exact_configs_supported}
\end{figure}
\begin{figure}
	\centering
	\begin{subfigure}{0.31\textwidth}
		\includegraphics[width=1\textwidth,trim=18cm 2.5cm 17cm 11cm,clip]{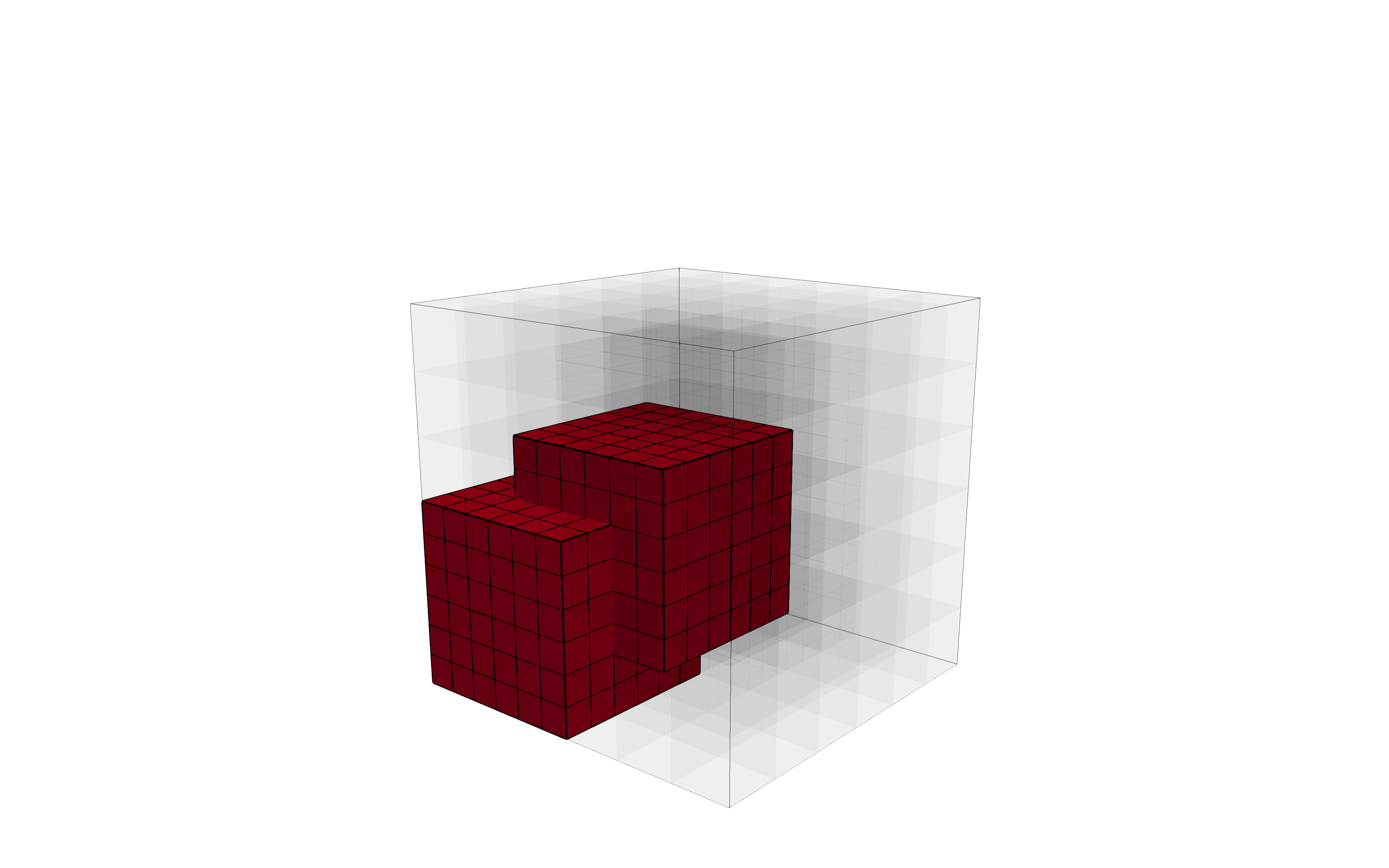}
		\caption{$p=2$}
	\end{subfigure}
	\;
	\begin{subfigure}{0.31\textwidth}
		\includegraphics[width=1\textwidth,trim=18cm 2.5cm 17cm 11cm,clip]{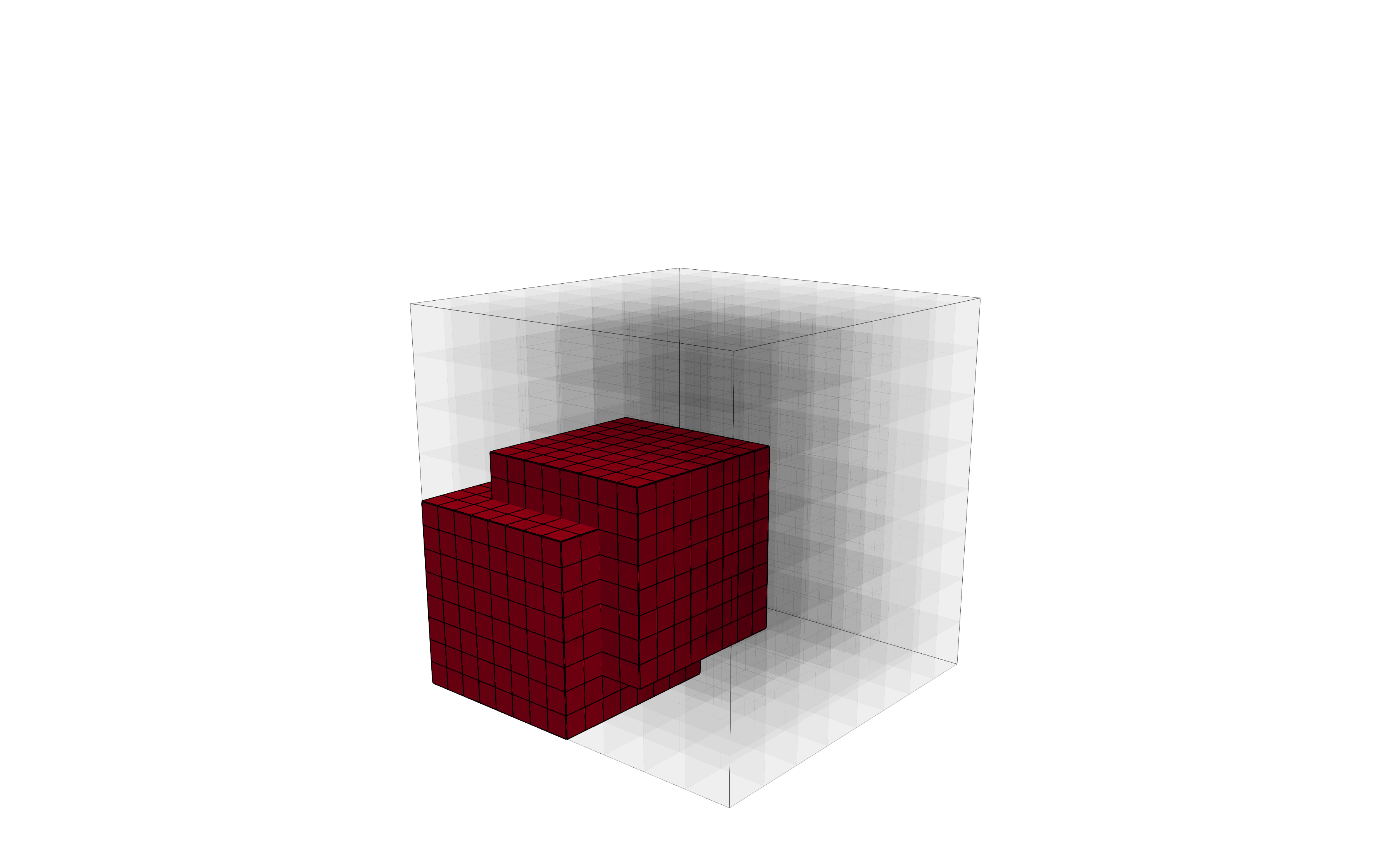}
		\caption{$p=3$}
	\end{subfigure}
	\;
	\begin{subfigure}{0.31\textwidth}
		\includegraphics[width=1\textwidth,trim=18cm 2.5cm 17cm 11cm,clip]{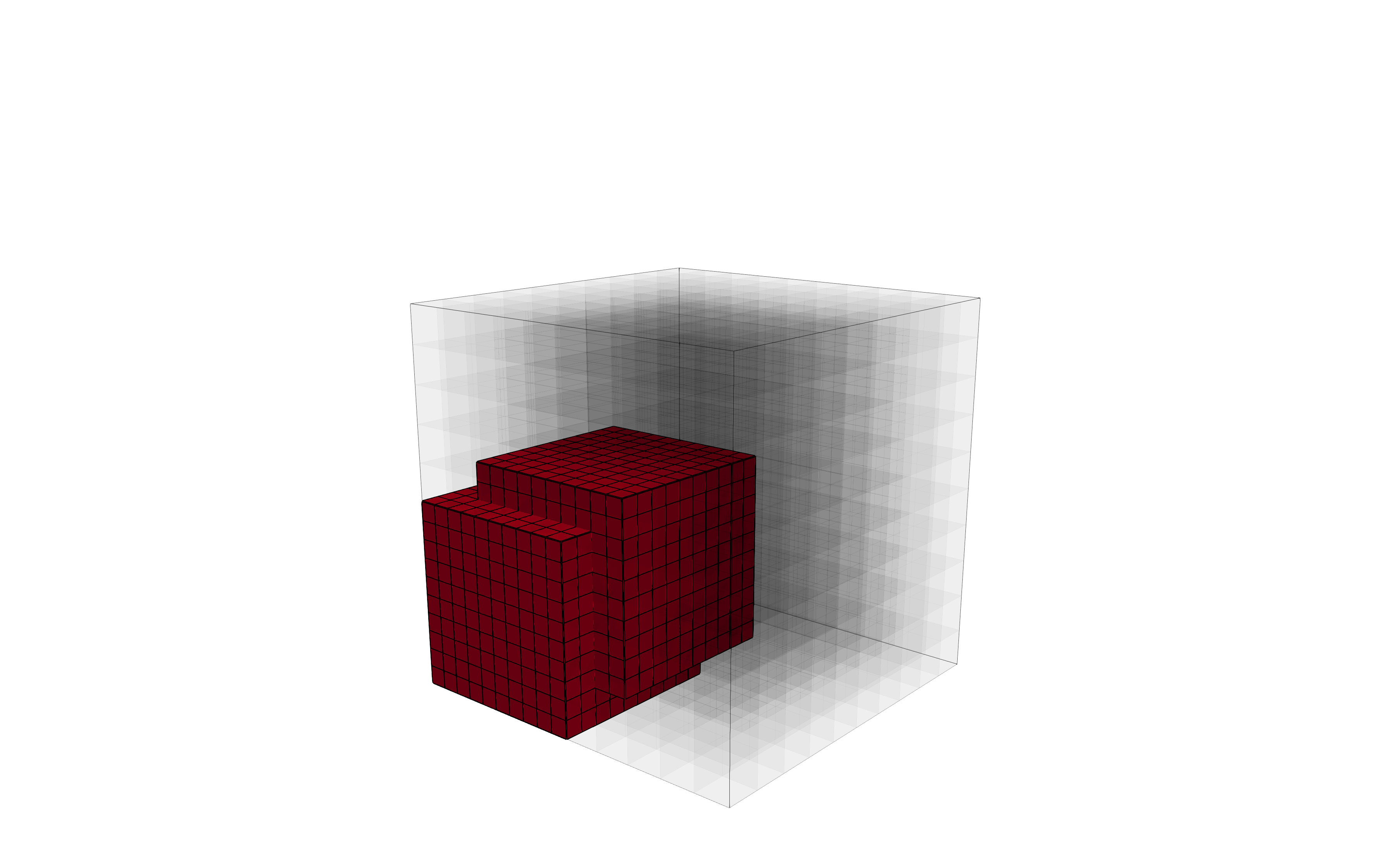}
		\caption{$p=4$}
	\end{subfigure}
	\\
	\begin{subfigure}{0.31\textwidth}
		\includegraphics[width=1\textwidth,trim=18cm 2.5cm 17cm 11cm,clip]{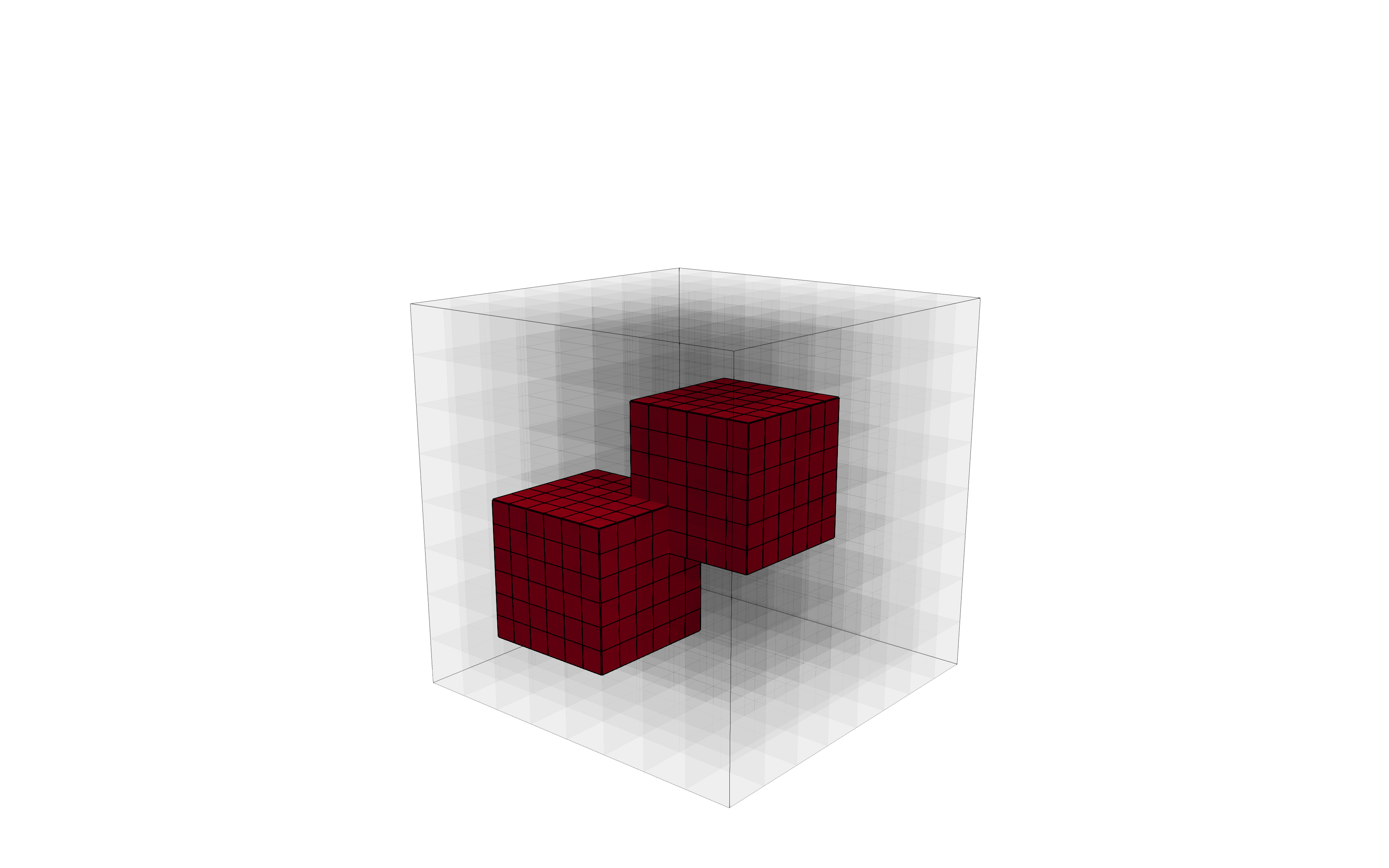}
		\caption{$p=3$}
	\end{subfigure}
	\;
	\begin{subfigure}{0.31\textwidth}
		\includegraphics[width=1\textwidth,trim=18cm 2.5cm 17cm 11cm,clip]{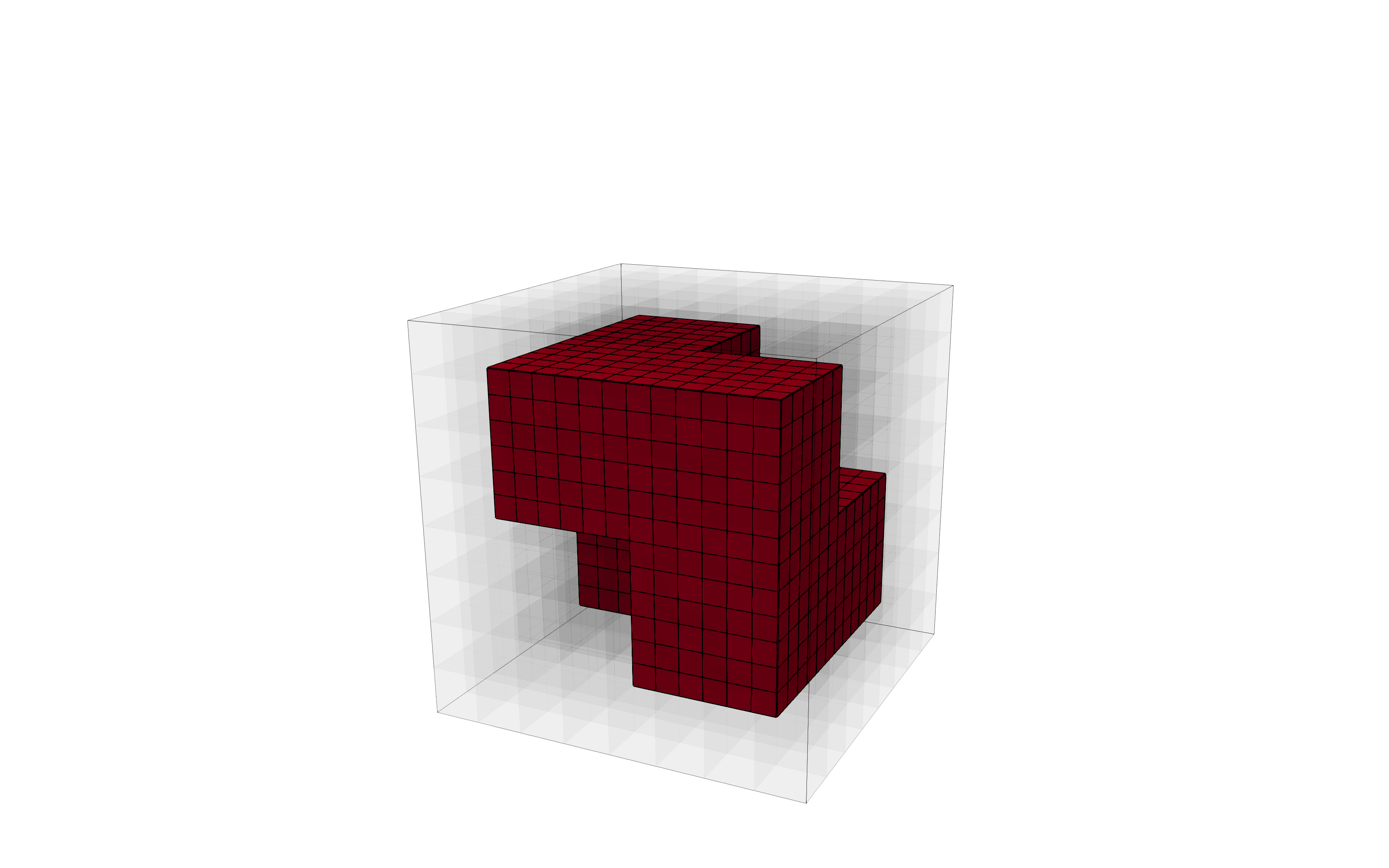}
		\caption{$p=3$}
	\end{subfigure}
	\;
	\begin{subfigure}{0.31\textwidth}
		\includegraphics[width=1\textwidth,trim=18cm 2.5cm 17cm 11cm,clip]{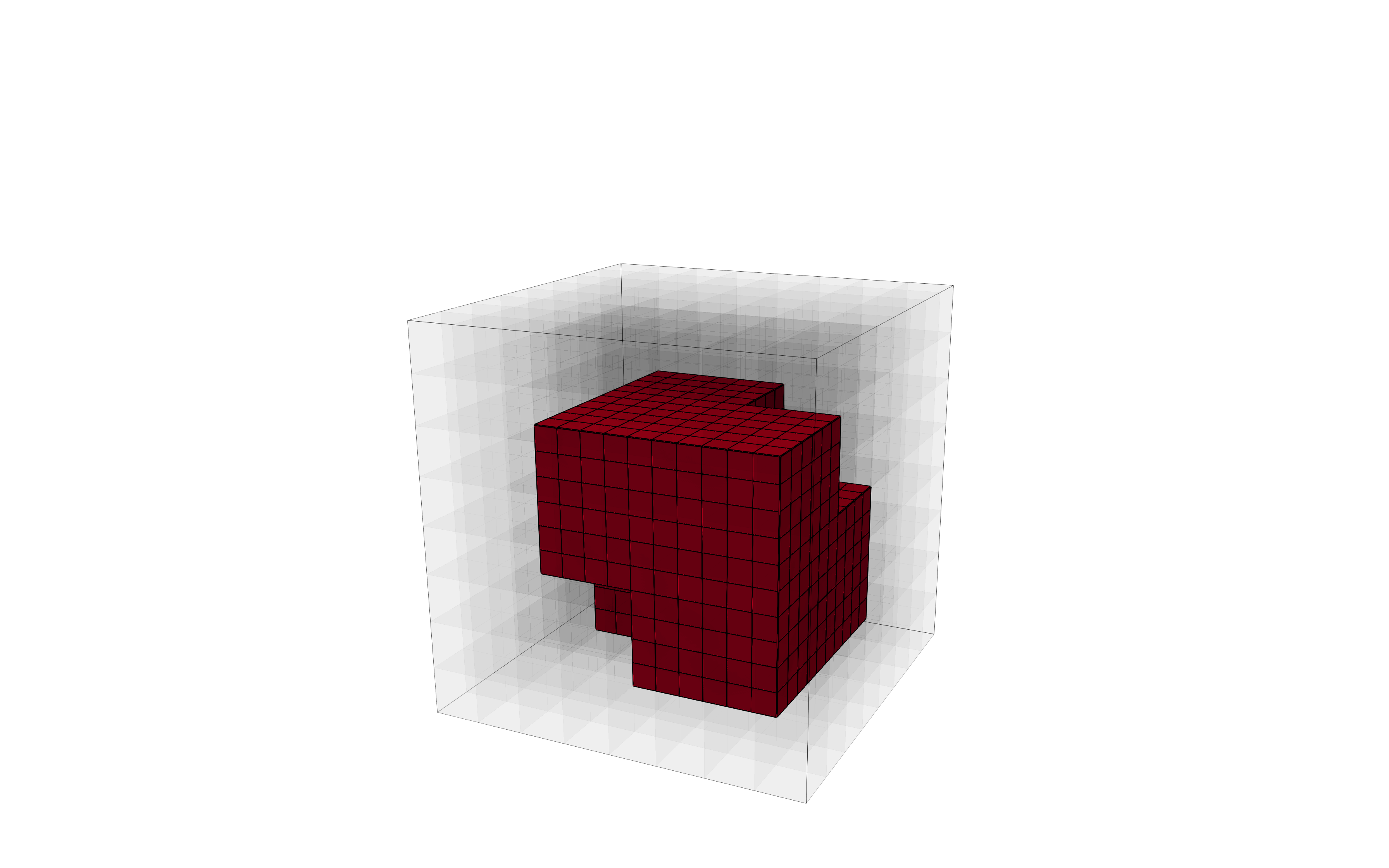}
		\caption{$p=3$}
	\end{subfigure}
	\caption{Refinement patterns that are not supported based on the proposed local exactness condition, but that are exact, are shown above. 
	In the first row, there is no shortest path of translation operators going between the two refined $0$-form basis functions: to meet the proposed criteria, additional refinement would need to be made, e.g., as in the first row of Figure \ref{fig:exact_configs_supported}.
	In the second row, refinements are made using the support of three-form basis functions, rather than $0$-form basis functions.}
	\label{fig:exact_configs_not_supported}
\end{figure}
\section{Conclusions}\label{sec:conclusions}

The incorporation of smooth, locally-refinable splines within the framework of \emph{finite element exterior calculus} can help build stable, accurate and efficient numerical methods.
Motivated by this, we have presented a theoretical analysis of a discrete de Rham complex built using hierarchical B-splines on a hypercube $\Omega \subset \RR^\ndim$.
In particular, we have presented locally-verifiable conditions that are sufficient for ensuring exactness of this discrete de Rham complex.
These theoretical results are accompanied by numerical tests to showcase their applicability.
These numerical tests help us investigate the different refinement patterns allowed by our assumptions and, in the special case of $n = 2$, allow us to contrast our approach with the one from \cite{Evans19}.
We find that our approach is applicable to certain refinement patterns disallowed by \cite{Evans19} and vice versa.
Future work should aim to unify the strengths of each approach.
There are other promising lines of theoretical and applied research that can follow this manuscript, and in the following we briefly introduce some extensions of interest.

For applications, perhaps the most obvious case of interest is the use of our results for building stable adaptive numerical methods for problems in electromagnetism and fluid mechanics on domains in $\RR^3$.
A first interesting open problem here is, given meshes that do not satisfy Assumption \ref{assume:shortest_path}, how to transform them into meshes that do by adding minimal number of degrees of freedom.
Another practical extension is the incorporation of boundary conditions other than Dirichlet boundary conditions.
Alternatively, for solving problems on an arbitrary domain $\Omega$, it will be interesting to build a discrete complex by performing a cuboidal decomposition of $\Omega$ and piecing together the discrete spline complexes defined on each cuboidal subdomain.

There are also several theoretical follow-ups that are of interest in numerical analysis.
Since exactness of the discrete complex is only one of the ingredients in the construction of stable numerical methods, an interesting question is whether there exist commuting projection operators from the continuous complex to the exact discrete subcomplexes that we consider in this document.
Similarly, constructive methods for building local commuting projection operators is of interest for both theoretical and applied studies.

\section*{Acknowledgements}
K. Shepherd was partially supported by the National Science Foundation Graduate Research Fellowship under Grant No. DGE-1610403.
Any opinion, findings, and conclusions or recommendations expressed in this material are those of the authors and do not necessarily reflect the views of the National Science Foundation.
The research of Deepesh Toshniwal is supported by project number 212.150 awarded through the Veni research programme by the Dutch Research Council (NWO).

\bibliographystyle{plain}
\bibliography{Combined_Bibliography.bib}

\end{document}